\documentclass[a4,11pt, pdftex]{amsart}
\usepackage[top=3cm, bottom=3cm, left=2.5cm, right=2.5cm]{geometry}


\usepackage{amssymb,amsmath,amsthm,txfonts}
\usepackage{cite}
\usepackage{nicefrac}

\usepackage[pdftex]{graphicx}
\usepackage{lscape}
\usepackage[hang,small,bf]{caption}
\usepackage[subrefformat=parens]{subcaption}
\captionsetup{compatibility=false}


\newtheorem{thm}{Theorem}[section]
\newtheorem{lem}[thm]{Lemma}

\newtheorem{prop}[thm]{Proposition}
\theoremstyle{definition}
\newtheorem{defn}[thm]{Definition}
\theoremstyle{remark}
\newtheorem{rem}[thm]{\textbf{Remark}}

 \makeatletter
    
    \@addtoreset{equation}{section}
  \makeatother

\makeatletter
      \def\@makefnmark{%
         \leavevmode
            \raise.9ex\hbox{\check@mathfonts
                \fontsize\sf@size\z@\normalfont%
                            \@thefnmark}%
       }
      \makeatother

\makeatletter
\@namedef{subjclassname@2020}{%
  \textup{2020} Mathematics Subject Classification}
\makeatother

\newcommand{\D}{\textrm{div}}

\newcommand{\dd}{\textrm{d}}

\begin{document}

\title[]{Stability of Chandrasekhar's nonlinear force-free fields}
\author[]{Ken Abe}
\date{}
\address[K. ABE]{Department of Mathematics, Graduate School of Science, Osaka Metropolitan University, 3-3-138 Sugimoto, Sumiyoshi-ku Osaka, 558-8585, Japan}
\email{kabe@omu.ac.jp}

\subjclass[2020]{35Q31, 35Q35}
\keywords{Ideal MHD, force-free fields, stability}
\date{\today}

\begin{abstract}
Force-free fields described by $\textrm{curl}\ U=fU$ and $\textrm{div}\ U=0$ appear as coherent structures of the magnetic field $U$ in MHD turbulence as $t\to\infty$. The essential subjects to these equations are \textit{Taylor states}, which are the lowest energy \textit{linear} force-free fields with a constant factor $f\equiv \textrm{const.}$ We establish Taylor state stability of ideal MHD in a bounded and simply connected domain in terms of the existence of weak ideal limits of Leray--Hopf solutions based on Woltjer's principle and Faraco and Lindberg's proof for Taylor's helicity conservation conjecture.
 
On the other hand, Pontin et al. recently observed in computer simulations that turbulent flows relax toward \textit{nonlinear} force-free fields $f\nequiv \textrm{const.}$ rather than Taylor states. Chandrasekhar's nonlinear force-free field, discovered in 1956, is a rare example of a nonlinear force-free field describing a large coherent structure of axisymmetric magnetic field lines with swirls in the presence of Alfv\'en waves. We demonstrate that his traveling wave solution is orbitally stable with the energy norm in ideal MHD regarding weak ideal limits of axisymmetric Leray--Hopf solutions. The energy-norm stability incorporates the strong solution blow-up of ideal MHD and the non-uniqueness of weak ideal limits of Leray--Hopf solutions.

The ideal MHD orbital stability of axisymmetric nonlinear force-free fields is based on the Energy-Casimir method using generalized magnetic helicity. Namely, we consider a new magnetic energy minimization in the whole space and Casimir invariant conservation without using Sobolev regular drifts. The novel property of this \textit{vector} minimization is the minimum strict subadditivity for the generalized magnetic helicity. In contrast, Woltjer's minimum is additive, and the strict subadditivity for the axisymmetric vortex ring problem is unknown. We establish the ideal MHD orbital stability of axisymmetric nonlinear force-free fields based on new ideas on the Grad--Shafranov equation and the usage of magnetic flux Sobolev regularity. This study, inspired by Taylor's relaxation theory, establishes the first stability result for MHS equilibria that differ from Taylor states. 
\end{abstract}

\maketitle

\tableofcontents

\section{Introduction}

The magnetohydrodynamics (MHD) 

\begin{equation}
\begin{aligned}
u_t+u\cdot \nabla  u+\nabla p&=B\cdot \nabla  B+\nu \Delta u,\\
B_t+u\cdot \nabla  B&=B\cdot \nabla  u+\mu \Delta B,\\
\nabla \cdot u=\nabla \cdot B&=0,
\end{aligned} 
\end{equation}\\
describes the velocity field $u(x,t)$, the magnetic field $B(x,t)$, and the total pressure $p(x,t)$ of electrically conducting fluids for $x\in \Omega$ and $t>0$ in a domain $\Omega\subset \mathbb{R}^{3}$, where $\nu>0$ and $\mu>0$ denote viscosity and resistivity. In this paper, we look at the problem (1.1) both for a bounded and simply connected domain $\Omega$ with a $C^{1,1}$-boundary as well as for $\Omega=\mathbb{R}^{3}$.

We consider the non-slip and perfect conductivity conditions when $\Omega$ is bounded, 

\begin{align}
u=0,\quad (\nabla \times B)\times n=0,\quad B\cdot n=0,  
\end{align}\\
for $x\in \partial\Omega$ and $t>0$, where $n$ denotes the unit outward normal vector field on $\partial\Omega$. 

The frozen-field equation $(1.1)_2$ describes the topology-preserving diffusion of magnetic field lines at the zero resistivity limit $\mu=0$ (integral curves of $B$). For the frozen-field equation $(1.1)_2$, we apply the normal trace condition 

\begin{align*}
B\cdot n=0.
\end{align*}\\
The cases $\nu=0$ and $\nu>0$ are ideal MHD and non-resistive MHD, respectively. For ideal MHD, we impose the normal trace condition $u\cdot n=0$.

Ideal MHD (and non-resistive MHD) admits steady states $u=0$ and $B=U$ that satisfy the steady Euler flow 

\begin{equation*}
\begin{aligned}
\nabla \Pi=(\nabla \times U)\times U,\quad \nabla \cdot U&=0\quad \textrm{in}\   \Omega, \\
U\cdot n&=0\quad \textrm{on}\  \partial\Omega,
\end{aligned}
\end{equation*}\\
with the Bernoulli function $\Pi$. Those $U$ that appear in turbulence with vanishing Lorentz force $(\nabla \times U)\times U$ are known as \textit{force-free fields}. The equations for such $U$ are expressed using the proportionality factor $f$ as  

\begin{equation}
\begin{aligned}
\nabla \times U=fU,\quad \nabla \cdot U&=0\quad \textrm{in}\ \Omega, \\
U\cdot n&=0\quad \textrm{on}\ \partial\Omega.
\end{aligned}
\end{equation}\\
The simplest solutions to these equations are \textit{linear} force-free fields, which are eigenfunctions of the rotation operator with eigenvalues $f\equiv \textrm{const.}$ In general, the condition $\nabla \cdot U=0$ implies the first-order equation

\begin{align}
U\cdot \nabla f=0,  
\end{align}\\
and the equations (1.3) are a \textit{nonlinear} system for $U$ and $f\nequiv \textrm{const.}$ Magnetic field lines of $U$ are confined on level sets of $f$, according to the equation (1.4). The primary focus of this study is the \textit{stability} of both linear and nonlinear force-free fields (1.3) in ideal MHD.

\subsection{Taylor states}
In turbulent flows, force-free fields appear as coherent structures. Their stability is based on Taylor's relaxation theory \cite{T74}, \cite{T86}, which is based on total energy and magnetic helicity 

 \begin{align*}
{\mathcal{E}}=\frac{1}{2}\int_{\Omega}\left(|u|^{2}+|B|^{2} \right)\dd x,\quad {\mathcal{H}}=\int_{\Omega}A\cdot B \dd x,  
\end{align*}\\
where $A=\textrm{curl}^{-1}B$ is a unique vector potential such that $\nabla \times A=B$, $\nabla \cdot A=0$ in $\Omega$, $A\times n=0$ on $\partial\Omega$ and $\int_{\Gamma_i}A\cdot n\dd H=0$ for connected components $\Gamma_i$ of $\partial\Omega$, $1\leq i\leq I$ as shown in  \cite[Theorem 3.17]{ABDG}. Woltjer \cite{W58} used magnetic helicity to find \textit{linear} force-free fields by minimizing total energy. Moreau \cite{Moreau} and Moffatt \cite{Moffatt} recognized helicity as a topological quantity measuring knots and links of field lines conserved by the frozen-field equation. More specifically, for the solenoidal space $L^{2}_{\sigma}(\Omega)=\{B\in L^{2}(\Omega)\ |\ \textrm{div}\ B=0\ \textrm{in}\ \Omega,\ B\cdot n=0\ \textrm{on}\ \partial\Omega \}$, minimizers of   

\begin{align}
{\mathcal{I}}_h=\inf\left\{ \frac{1}{2}\int_{\Omega}|B|^{2}\dd x\ \middle|\ B\in L^{2}_{\sigma}(\Omega),\ \int_{\Omega}\textrm{curl}^{-1}B\cdot B\dd x=h  \right\},  
\end{align}\\
are linear force-free fields (Taylor states) \cite[p.1246]{Laurence91}. Rotation is a self-adjoint operator on $L^{2}_{\sigma}(\Omega)$ for a simply connected domain and linear force-free fields are countable with eigenvalues $\cdots\leq f^{-}_{2}\leq f^{-}_1<0<f^{+}_{1}\leq f^{+}_{2}\leq \cdots$ \cite[Theorem 1]{YG90}. Taylor states are a finite number of eigenfunctions associated with the least positive eigenvalue or the largest negative eigenvalue (principal eigenvalues), as demonstrated in Lemma 2.3.

Taylor \cite{T74}, \cite{T86} hypothesized that among all sub-helicities \cite{Moffatt}, only magnetic helicity is approximately conserved with low resistivity in turbulence and used Woltjer's principle as a theoretical foundation for turbulence relaxation toward linear force-free fields as $t\to\infty$. Taylor's theory predicted the relaxed state in a reversed field pinch and other devices successfully \cite{OS93}. Taylor's conjecture is treated mathematically as a question about magnetic helicity conservation at the ideal limit $(\nu,\mu)\to (0,0)$ \cite[p.444]{CKS97}. We refer to Buckmaster and Vicol \cite{BV21} and Faraco et al. \cite{FLS22} for gentle reviews on Taylor's (Woltjer--Taylor) relaxation theory.

Faraco and Lindberg \cite{FL20} provide proof for Taylor's conjecture in terms of \textit{weak ideal limits} of Leray--Hopf solutions. A weak ideal (resp. non-resistive) limit is a weak-star limit of Leray--Hopf solutions to viscous and resistive MHD (1.1)--(1.2) in $L^{\infty}_t L^{2}_x$ as $\nu,\mu\to0$ (resp. as $\mu\to0$ with fixed $\nu>0$). They are fragile notions associated with measure-valued solutions, cf. \cite{DM87m}, \cite{BDS11}, and \textit{conserve} magnetic helicity \cite{FL20} despite the scaling gap to the $L^{3}_{t}L^{3}_{x}$ threshold for magnetic helicity conservation of weak solutions to ideal MHD \cite{Aluie}, \cite{KL07}, \cite{FL18}. See also \cite{FL22} on multiply connected domains. Conservation of low regular quantities at ideal limits is also investigated for other hydrodynamical equations; see \cite{CLNS} for the 2D Euler equations and \cite{CIN} for the SQG equations.

The conservation of total energy and magnetic helicity to weak solutions of ideal MHD is first investigated in \cite{CKS97}, cf. \cite{CET}, \cite{Constantin08} for the Euler equations. The helicity conservation for weak solutions to the Euler equations is investigated in \cite{Constantin08}. In ideal MHD, two regularity thresholds appear for total energy and magnetic helicity conservation, resulting in Onsager-type conjectures \cite{BV21}. Based on the convex integration scheme of De Lellis and Sz\'ekelyhidi, Jr. \cite{DS09}, Faraco et al. \cite{FLS} constructed bounded weak solutions to ideal MHD with compact support in space and time that dissipate total energy with identically vanishing magnetic helicity. See also \cite{BLL15} for the first construction of $2\nicefrac{1}{2}$D weak solutions to ideal MHD. 

By contrast, using Buckmaster and Vicol intermitted convex integration scheme \cite{BV19}, Beekie et al. \cite{BBV} constructed weak solutions of ideal MHD in $L^{\infty}_{t}L^{2}_{x}$ that \textit{do not} conserve magnetic helicity. More specifically, weak solutions constructed in \cite{BBV} increase the absolute value of magnetic helicity, i.e., $2|{\mathcal{H}}(0)|\leq {\mathcal{H}}(1)$. See also \cite{Dai} for non-unique weak solutions to the hall MHD system. Li et al. \cite{YZZ} demonstrated the existence of weak solutions to (hyper) viscous and resistive MHD that do not conserve magnetic helicity and their strong convergence to ideal limits. Based on the convex integration through staircase laminates \cite{Faraco03}, \cite{AFS}, Faraco et al. \cite{FLS21} demonstrated the sharpness of the threshold $L^{3}_{t}L^{3}_{x}$ by constructing weak solutions to the Faraday--Maxwell system in $L^{\infty}_{t}L^{3,\infty}_{x}$ which do not conserve magnetic helicity.
   
Taylor's relaxation theory \cite{T74}, \cite{T86} contains two components: Woltjer's principle and Taylor's conjecture. We begin by noting that proof of Taylor's conjecture for weak ideal limits \cite{FL20}, \cite{FL22} implies Taylor state stability \cite{W58}, \cite{Laurence91}.

\begin{thm}[Taylor state stability]
Let ${\mathcal{S}}_{h}$ be a set of eigenfunctions of the rotation operator on $L^{2}_{\sigma}(\Omega)$ associated with the least positive (resp. largest negative) eigenvalue with magnetic helicity $h>0$ (resp. $h<0$). Let ${\mathcal{S}}_0=\emptyset$. 

The set ${\mathcal{S}}_{h}=\{U_j\}_{j=1}^{N}$ is stable in weak ideal limits of Leray--Hopf solutions to (1.1)--(1.2) in the sense that for arbitrary $\varepsilon>0$, there exists $\delta>0$ such that for $u_0,B_0\in L^{2}_{\sigma}(\Omega)$ satisfying 

\begin{align*}
||u_0||_{L^{2}}+\inf_{1\leq j\leq N}||B_0-U_j||_{L^{2}}+\left|\int_{\Omega}\textrm{curl}^{-1}B_0\cdot B_0\dd x-h\right|\leq \delta,  
\end{align*}\\
there exists a weak ideal limit $(u,B)$ of Leray--Hopf solutions to (1.1)--(1.2) for $(u_0,B_0)$ such that 

\begin{align*}
||u||_{L^{2}}+\inf_{1\leq j\leq N}||B-U_j||_{L^{2}}\leq \varepsilon\quad \textrm{for a.e.}\ t\geq 0.
\end{align*}
\end{thm}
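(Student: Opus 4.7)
The plan is a Cazenave--Lions style contradiction argument combining Woltjer's variational principle with the helicity conservation at weak ideal limits established by Faraco and Lindberg \cite{FL20}. Suppose stability fails: there exist $\varepsilon_0>0$, data $(u_0^k,B_0^k)\in L^2_\sigma(\Omega)^2$ with
\begin{equation*}
\|u_0^k\|_{L^2}+\inf_{1\leq j\leq N}\|B_0^k-U_j\|_{L^2}+\left|\int_\Omega \textrm{curl}^{-1}B_0^k\cdot B_0^k\,\dd x-h\right|\longrightarrow 0,
\end{equation*}
weak ideal limits $(u^k,B^k)$ of Leray--Hopf solutions issued from these data, and times $t_k\geq 0$ such that $\|u^k(t_k)\|_{L^2}+\inf_j\|B^k(t_k)-U_j\|_{L^2}\geq \varepsilon_0$.

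The first step is to exhibit $\{B^k(t_k)\}$ as a minimizing sequence for the Woltjer problem $(1.5)$ with constraint $h$. Every $U_j\in S_h$ shares the common energy $\tfrac12\|U_j\|_{L^2}^2=\mathcal{I}_h$, since on each sign branch the minimum is attained on the principal eigenspace and $\mathcal{I}_h = f_1^{\pm} h/2$. The Leray--Hopf energy inequality passes through the weak ideal limit (weak-$\ast$ lower semicontinuity of the $L^2$ norm together with the nonnegativity of the dissipation terms), giving
\begin{equation*}
\tfrac12\|u^k(t_k)\|_{L^2}^2+\tfrac12\|B^k(t_k)\|_{L^2}^2\leq \tfrac12\|u_0^k\|_{L^2}^2+\tfrac12\|B_0^k\|_{L^2}^2\longrightarrow \mathcal{I}_h.
\end{equation*}
By Taylor's conjecture for weak ideal limits \cite{FL20}, $\mathcal{H}(B^k(t_k))=\mathcal{H}(B_0^k)\to h$. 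Because $\mathcal{I}_h$ depends continuously on $h$, Woltjer's principle applied pointwise in $k$ yields $\tfrac12\|B^k(t_k)\|_{L^2}^2\geq \mathcal{I}_{\mathcal{H}(B^k(t_k))}\to \mathcal{I}_h$. Comparing with the energy bound forces $\|u^k(t_k)\|_{L^2}\to 0$ and $\|B^k(t_k)\|_{L^2}^2\to 2\mathcal{I}_h$.

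Next I extract a subsequence with $B^k(t_k)\rightharpoonup B^\ast$ in $L^2_\sigma(\Omega)$. Since $\textrm{curl}^{-1}$ maps $L^2_\sigma$ compactly into $L^2_\sigma$ (it factors through $H^1$), the helicity functional is sequentially weakly continuous, so $\mathcal{H}(B^\ast)=h$. Weak lower semicontinuity of the norm combined with Woltjer's principle forces $\tfrac12\|B^\ast\|_{L^2}^2=\mathcal{I}_h$, so $B^\ast\in S_h$ by Lemma 2.4. The norm convergence $\|B^k(t_k)\|_{L^2}\to\|B^\ast\|_{L^2}$ then upgrades weak convergence to strong convergence via the Radon--Riesz property of Hilbert space. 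Combined with $\|u^k(t_k)\|_{L^2}\to 0$, this contradicts the lower bound $\varepsilon_0$.

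The main obstacle is the simultaneous availability of the energy inequality and helicity conservation at the level of weak ideal limits. The former is routine since dissipation only helps when passing to the limit, but the latter is precisely the deep content of Faraco--Lindberg \cite{FL20}; without it the variational comparison $\tfrac12\|B^k(t_k)\|_{L^2}^2\geq \mathcal{I}_{\mathcal{H}(B^k(t_k))}$ would be unavailable and the argument would collapse. Once both are in hand, the only remaining subtlety is the weak continuity of $\mathcal{H}$, which is supplied by the compactness of $\textrm{curl}^{-1}$ on $L^2_\sigma(\Omega)$.
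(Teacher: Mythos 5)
Your proposal is correct and follows essentially the same route as the paper: a contradiction argument that turns $\{B^k(t_k)\}$ into a minimizing sequence for Woltjer's principle via the energy inequality and the Faraco--Lindberg helicity conservation, then extracts a strong $L^2$ limit in $S_h$ using the compactness of $\mathrm{curl}^{-1}$, weak lower semicontinuity, and norm convergence. The paper merely packages the compactness step as a separate lemma (Lemma 2.6 / Theorem 2.7) and handles the "a.e. $t$" bookkeeping by intersecting the full-measure sets where the conservation laws hold, which you elide but which is routine.
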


\vspace{5pt}

\begin{rem}
For weak non-resistive limits of Leray--Hopf solutions, the same stability result as Theorem 1.1 holds. The stability result also holds for unique strong solutions to ideal MHD and non-resistive MHD up to maximal existence time; see \cite{CMZ} for local well-posedness of ideal MHD. 
\end{rem}

\begin{rem}[Three Taylor states]
When $\Omega$ is an open ball with radius $R>0$, the three Taylor states ${\mathcal{S}}_{h}=\{U_j\}_{j=1}^{3}$ exist for given helicity $h\neq 0$. They are axisymmetric about the $x_j$-axis. This fact can be observed from the toroidal-poloidal decomposition to linear force-free fields in $\Omega$,  

\begin{align*}
U=\nabla \times (\nabla \times (x\varphi))+\nabla \times (xf \varphi),
\end{align*}\\
by eigenfunctions of the Dirichlet Laplacian satisfying the mean-zero condition on the sphere: 

\begin{align*}
-\Delta \varphi&=f^{2} \varphi, \quad \textrm{in}\ \Omega,\\
\varphi&=0,\quad \textrm{on}\ \partial\Omega,\\
\int_{|x|=\rho}\varphi\dd H&=0,\quad 0<\rho<R.
\end{align*}\\
The smallest $f^{2}$ agrees with the second smallest eigenvalue of the Dirichlet Laplacian $\lambda_2$ because the eigenfunction associated with the principal eigenvalue is radially symmetric and has non-zero mean on the sphere, i.e., $f^{+}_{1}=c_{3/2}/R$ for the first zero point $c_{3/2}=4.4934\cdots$ of the $3/2$-th order Bessel function of the first kind $J_{3/2}$. The multiplicity of $\lambda_2$ is three, and the associated eigenfunctions are expressed as

\begin{align*}
\varphi_{j}&=\frac{1}{\sqrt{|x|}}J_{3/2}(f^{+}_{1}|x|)\frac{x_j}{|x|},\quad j=1,2,3.
\end{align*}\\
We give proof of this fact in Appendix B.
\end{rem}

\vspace{5pt}

When $\Omega$ is multiply connected, the rotation operator on $L^{2}_{\sigma}(\Omega)$ is not self-adjoint. Its spectrum is the point spectrum and agrees with all complex numbers \cite[Theorem 2]{YG90}. Neither Woltjer's principle nor Taylor states are known for this case, cf. \cite[p.1245]{Laurence91}.

When $\Omega=\mathbb{R}^{3}$, linear force-free fields do not exist in $L^{2}_{\sigma}(\mathbb{R}^{3})$ since $-\Delta U=f^{2}U$ and the spectrum of the Laplace operator is the essential spectrum. Nevertheless, there exist linear force-free fields with highly nontrivial topology of magnetic field lines decaying by the order $U=O(|x|^{-1})$ as $|x|\to\infty$ \cite{EP12}, \cite{EP15}. It is known \cite{Na14}, \cite{CC15} that the conditions $U\in L^{q}(\mathbb{R}^{3})$ for $2\leq q\leq 3$ or $U=o(|x|^{-1})$ imply the non-existence of force-free fields in $\mathbb{R}^{3}$.

Taylor \cite{T74}, \cite{T86} originally considered his theory application for the periodic cylinder $\Omega=D\times (-\pi d, \pi d)$ for $D=\{(x_1,x_2)\ |\ x_1^{2}+x_2^{2}<R^{2} \}$ and $d>0$ with the periodic boundary condition at $x_3=\pm \pi d$ \cite[9.1.1]{Biskamp93}, \cite[Example 2.3]{Yeates}. This domain is the simplest multiply connected domain in which the harmonic (constant) vector field $e_z={}^{t}(0,0,1)$ exists. All eigenfunctions to (1.3) for $f\in \mathbb{R}$ are expressed by separation variable solutions using the $m$-th order Bessel function of the first kind $J_m$ for $m\in \mathbb{Z}$. Reiman \cite{Reiman} investigated Taylor states by choosing a particular vector potential of $U$ subject to the constant helicity and the constant toroidal flux $\int_{D}U^{z}\dd x'$. It is known that the Taylor state is axisymmetric for $fR< 3.11$ and helical for $fR\geq  3.11$; see also \cite{Yoshida90}.

\subsection{Chandrasekhar's nonlinear force-free fields}

Recent computer simulations \cite{YRH}, \cite{PWHG}, and \cite{PCRH} demonstrated that turbulent flows relax toward \textit{nonlinear} force-free fields rather than linear force-free fields (Taylor states). The observed nonlinear force-free fields have compactly supported current fields with opposite signs in a uniform magnetic field. Such coherent structures are attributed to integrand $A\cdot B$ redistribution, and sub-helicities are still regarded as essential quantities \cite{Yeates}, \cite{FL22}. They could be used as constraints in an extension of Woltjer's principle to nonlinear force-free fields \cite{T74}, \cite{T86}, \cite{Laurence91}, \cite[Section 4]{Yeates} although such a variational principle is unknown. 

The existence of nonlinear force-free fields (1.3) is linked to their stability. It has long been debated whether nonlinear force-free fields exist besides symmetric solutions. Enciso and Peralta-Salas \cite{EP16} demonstrated that force-free fields do not exist if $f\in C^{2,\alpha}$, $0<\alpha<1$, admits a level set diffeomorphic to a \textit{sphere}. This rigidity result proved non-existence for a wide range of $f$, such as radial or having extrema, and contrasts with rigidity results \cite{Na14}, \cite{CC15} based on the decay of the magnetic field at infinity, such as $U=o(|x|^{-1})$ as $|x|\to\infty$. The nonlinear system $(1.3)$ is an overdetermined problem in general \cite{EP16}, \cite{CK20}, cf. \cite{EP12}, \cite{EP15}, and existence results are available only under symmetry, e.g., \cite{Chandra}, \cite{Tu89}, \cite{A8}. In the axisymmetric setting, both the system (1.3) and the steady Euler flow can be reduced to the Grad--Shafranov equation \cite{Grad}, \cite{Shafranov}; see \cite{Gav}, \cite{CLV}, and \cite{DEPS21} for the existence of compactly supported axisymmetric steady Euler flows. Constantin et al. \cite[p.529]{CDG21b} posed Grad's conjecture \cite[p.144]{Grad67}, which states that non-symmetric steady Euler flows do not exist. With small force \cite{CDG21} or piecewise constant Bernoulli functions \cite{BL96}, \cite{ELP21}, the existence of non-symmetric steady Euler flows is known. See also \cite{CDG22} for more information on the flexibility and rigidity of magnetohydrostatic (MHS) equilibria. The recent works of Pasqualotto \cite{Pasqualotto} and Constantin and Pasqualotto \cite{CP23} constructed steady Euler flows both in tori and bounded domains as long-time limits of solutions to the Voigt--MHD system without assuming any symmetries. In particular, constructed steady states in bounded domains belong to $H^{s}$ for $s\geq 1$ and are not force-free fields \cite[Theorem 1.2 (3)]{CP23}. We discuss magnetic relaxation in subsection 1.5.

The explicit solution of Chandrasekhar \cite{Chandra}, a particular case of Hicks--Moffatt solution \cite{Hicks85}, \cite{Moffatt}, which is an axisymmetric solution with a swirl in $\Omega=\mathbb{R}^{3}$ and a uniform field at infinity, is an essential example of nonlinear force-free fields; see also \cite[2.5.1]{Moffatt19}. In terms of the cylindrical coordinate $(r,\theta,z)$ and Clebsch representation, Chandrasekhar's force-free field is expressed as 

\begin{equation}
\begin{aligned}
&U_C=\nabla \times (\Phi_C\nabla \theta)+G_C\nabla \theta, \quad f_{C}=\lambda^{1/2}1_{(0,\infty)}(\Phi_C),  \\
&\Phi_{C}(z,r)=
\begin{cases}
& \displaystyle\frac{3}{2}Wr^{2}\frac{c_{3/2}^{1/2} J_{3/2}(\lambda^{1/2} \rho)}{J_{5/2}(c_{3/2}) (\lambda^{1/2}\rho)^{3/2}},\qquad \rho=\sqrt{z^{2}+r^{2}}<R,  \\
& \displaystyle-\frac{1}{2}Wr^{2}\left(1-\frac{R^{3}}{\rho^{3}}\right),\hspace{62pt} \rho=\sqrt{z^{2}+r^{2}}\geq R, 
\end{cases}\\
&G_C(z,r)=\lambda^{1/2}\Phi_{C,+}, \qquad R=c_{3/2}\lambda^{-1/2}.
\end{aligned}
\end{equation}\\
The indicator function on $(0,\infty)$ is $1_{(0,\infty)}(s)$ and $s_{+}=s1_{(0,\infty)}(s)$. The strength of the current field $\nabla \times U_C\in L^{\infty}(\mathbb{R}^{3})$ supported in a ball with radius $R$ is denoted by the parameter $\lambda>0$. The parameter $W>0$ denotes the uniform field at infinity, i.e., $U\to -We_z$ as $|x|\to\infty$ for $e_z={}^{t}(0,0,1)$. The factor $f_C$ is a discontinuous function and the level set $f^{-1}_{C}(\lambda^{1/2})$ is a \textit{ball}, cf. \cite{EP16}.\\

\begin{figure}[h]
\begin{minipage}[b]{0.45\linewidth}
\hspace{35pt}
\includegraphics[scale=0.12]{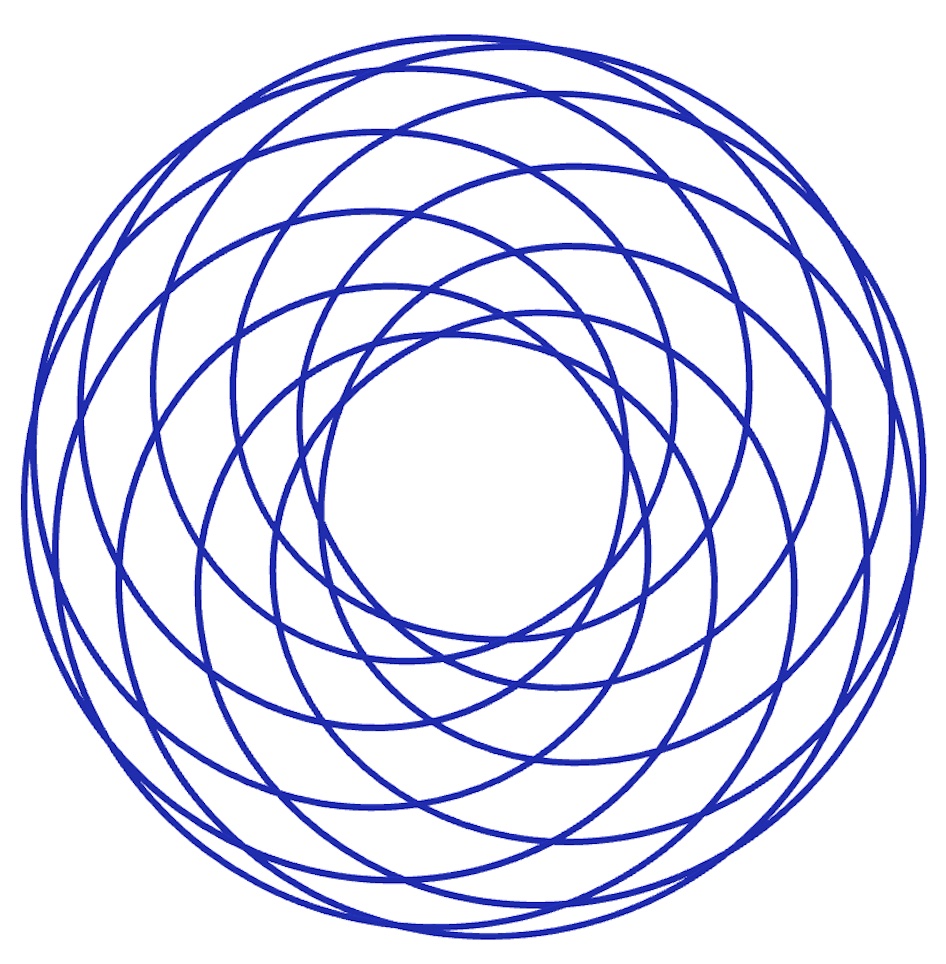}
  \end{minipage}
  \begin{minipage}[b]{0.45\linewidth}
\hspace{20pt}
\includegraphics[scale=0.13]{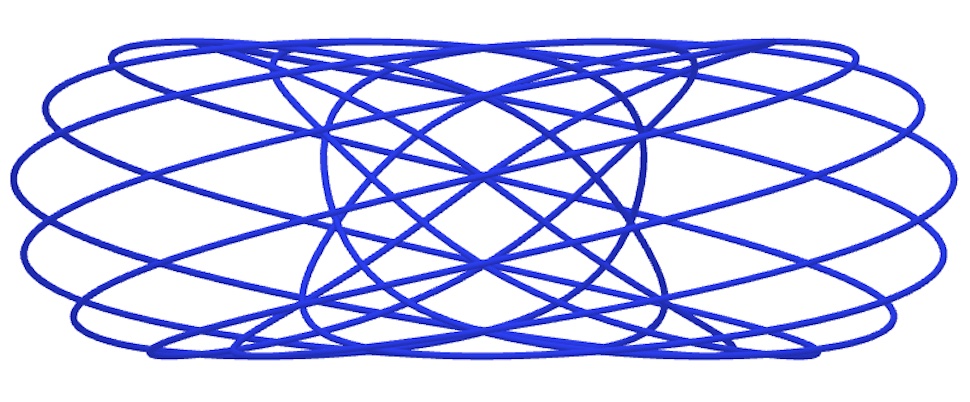}
  \vspace{30pt}
  \end{minipage}\\
  \caption{The (8,11)-torus knots: the field lines have 8 circuits in the toroidal direction and 11 circuits in the poloidal direction}
\end{figure}

Chandrasekhar's nonlinear force-free field inside a ball agrees with the  Taylor state symmetric about the $x_3$-axis in Remark 1.3. Its magnetic field lines are integrable in the sense that the ball $\{x\in \mathbb{R}^{3}\ |\ |x|<R\}$ is fibered into the nested tori $\{x\in \mathbb{R}^{3}\ |\ \Phi_C(z,r)=k\}$ for $k>0$ with field lines making torus knots and links. It is known \cite{Moffatt19} that this torus knot is close to the (8,11)-torus knot; see Figure 1.

The explicit solution (1.6) has two distinguishing features: the compactly supported current field $\nabla \times U_C$ and the uniform magnetic field at infinity $U_{C}\to -We_z$ as $|x|\to\infty$. The first property appears also in the computer simulation of turbulent relaxation \cite{PWHG}, \cite{YRH}, and \cite{PCRH}. We remark that the current fields of linear force-free fields cannot be compactly supported by the unique continuation principle for solutions to the Helmholtz equation $-\Delta U=f^{2}U$, e.g., \cite{Wolff93}. (This fact in $\Omega=\mathbb{R}^{3}$ also can be observed from the Liouville theorems \cite{Na14}, \cite{CC15}.) 

The second property is relevant to Alfv\'en waves \cite{Alf}. Namely, the MHD equations (1.1) with the uniform magnetic field at infinity have an aspect of the one-dimensional wave equations. Indeed, small disturbances to the uniform magnetic field propagate along the field lines and are swept away to infinity in the evolution of ideal MHD (and also viscous and resistive MHD)\cite{BSS}, \cite{He18}, and \cite{Lei18}. The explicit solution (1.6) can form a traveling wave solution to ideal MHD describing a large, robust, coherent structure of magnetic field lines in MHD turbulence in the presence of the Alfv\'en wave. We elaborate on this property in more detail below in subsection 1.3.

This paper investigates the orbital stability of the nonlinear force-free field (1.6) in the ideal MHD equations (1.1) subject to a uniform field condition at infinity. Under axisymmetry, the system (1.1) for $\mu=0$ has an additional Casimir invariant called \textit{generalized magnetic helicity} \cite{Moffatt97}, cf. \cite{KPY}. Based on ideas inspired by Taylor's relaxation theory \cite{W58}, \cite{Laurence91}, \cite{FL20}, \cite{FL22} and Energy-Casimir methods developed for the Euler orbital stability of vortex pairs/axisymmetric vortex rings without swirls \cite{BNL13}, \cite{B21}, \cite{AC22}, and  \cite{Choi22}, we consider a new approach to the ideal MHD orbital stability for axisymmetric nonlinear force-free fields with swirls including the explicit solution (1.6).

\subsection{The main result}

We consider the system (1.1) subject to the uniform field condition 

\begin{align}
(u,B)\to (u_{\infty}, B_{\infty})\quad \textrm{as}\ |x|\to\infty,   
\end{align}\\
for given constants $B_{\infty}=-We_z$, $W>0$ and $u_\infty \in \mathbb{R}^{3}$ parallel to $e_z$ (The system (1.1) for $u_{\infty}\neq 0$ can be reduced to the case $u_{\infty}=0$ by the Galilean transformation $u(x,t)\longmapsto \tilde{u}(x,t)=u(x-u_{\infty}t,t )+u_{\infty}$ and $B(x,t)\longmapsto \tilde{B}(x,t)=B(x-u_{\infty}t ,t)$.) In terms of the Els\"asser fields $Z^{\pm}=u\pm b$ for $B=b+B_{\infty}$, the system (1.1) (with $u_{\infty}=0$) can be expressed as 

\begin{equation*}
\begin{aligned}
Z^{+}_t-B_{\infty}\cdot \nabla  Z^{+}+Z^{-}\cdot \nabla  Z^{+}+\nabla p&=\frac{1}{2}(\nu+\mu) \Delta Z^{+}+\frac{1}{2}(\nu-\mu) \Delta Z^{-},\\
Z^{-}_t+B_{\infty}\cdot \nabla  Z^{-}+Z^{+}\cdot \nabla  Z^{-}+\nabla p&=\frac{1}{2}(\nu-\mu) \Delta Z^{+}+\frac{1}{2}(\nu+\mu) \Delta Z^{-},\\
\nabla \cdot Z^{\pm}&=0.
\end{aligned} 
\end{equation*}\\
The second terms in the equations transport the Els\"asser fields in the opposite directions $\pm B_{\infty}$ and have an aspect of a system of one-dimensional wave equations. It is known \cite{BSS}, \cite{He18}, and \cite{Lei18} that the system (1.1) for $B_{\infty}\neq 0$ ($u_\infty\in \mathbb{R}^{3}$) and $\nu=\mu\geq 0$ admits a unique global-in-time solution $u=v+u_{\infty}$ and $B=b+B_{\infty}$ for small and smooth initial disturbance $(v_0,b_0)$ and the solution approaches a trivial state as $t\to \infty$. 

We study the long-time behavior of \textit{large} axisymmetric solutions to (1.1) for $B_{\infty}\neq 0$, $\nu\geq 0$, and $\mu= 0$. The explicit solution (1.6) offers a specific traveling wave solution to (1.1) for $\mu= 0$,  

\begin{align*}
u=u_{\infty},\quad B(x,t)=U(x-u_{\infty}t).
\end{align*}\\
The profile $U$ is a force-free field satisfying

\begin{equation}
\begin{aligned}
\nabla \times U=fU,\quad \nabla \cdot U&=0\qquad \textrm{in}\ \mathbb{R}^{3}, \\
U&\to B_{\infty}\quad \textrm{as}\ |x|\to\infty.
\end{aligned}
\end{equation}\\
In terms of an equivalent system for the new variables $(v,b)=(u-u_{\infty},B-B_{\infty})$, we state a stability result for the explicit solution (1.6) in the system (1.1) subject to the condition (1.7) :

\begin{equation}
\begin{aligned}
v_t+(v+u_{\infty})\cdot \nabla  v+\nabla p&=(b+B_{\infty})\cdot \nabla  b+\nu \Delta v,\\
b_t+(v+u_{\infty})\cdot \nabla  b&=(b+B_{\infty})\cdot \nabla  v+\mu \Delta b, \\
\nabla \cdot v=\nabla \cdot b&=0.
\end{aligned} 
\end{equation}\\
We apply Clebsch representation for axisymmetric solenoidal vector fields 

\begin{align*}
b=\nabla \times (\phi \nabla \theta)+G \nabla \theta,
\end{align*}\\
with unique Clebsch potentials $\phi(z,r)$ and $G(z,r)$; see Section 3. Potential of $B_{\infty}=-We_z$ is $-\phi_{\infty}$ for $\phi_{\infty}=Wr^{2}/2+\gamma$ and arbitrary constant $\gamma$. The total energy and magnetic helicity of the system (1.1) subject to the condition (1.7) are expressed as 

\begin{align*}
{\mathcal{E}}=\frac{1}{2}\int_{\mathbb{R}^{3}}\left(|v|^{2}+|b|^{2}\right) \dd x, \quad {\mathcal{H}}=
2\int_{\mathbb{R}^{3}}(\phi-\phi_{\infty})\frac{G}{r^{2}} \dd x. 
\end{align*}\\
Unlike simply connected bounded domains, magnetic helicity in $\Omega=\mathbb{R}^{3}$ is ill-defined \cite[Appendix A]{FLS}. Namely, there exist some $B\in L^{2}_{\sigma}(\mathbb{R}^{3})$ whose magnetic helicity diverge. We instead apply generalized magnetic helicity

\begin{align}
H=2\int_{\mathbb{R}^{3}}(\phi-\phi_{\infty})_{+}\frac{G}{r^{2}} \dd x. 
\end{align}\\
The generalized magnetic helicity agrees with the magnetic helicity for $G$ supported in $\{\phi>\phi_{\infty}\}$. Indeed, the generalized magnetic helicity of the explicit solution (1.6) is the magnetic helicity 

\begin{align}
h_C=2\lambda^{1/2}\int_{\mathbb{R}^{3}}\Phi_{C,+}^{2}\frac{1}{r^{2}} \dd x >0.  
\end{align}\\
The constant $h_C$ is quadratic for $W/\lambda$ \cite[p.128]{Moffatt}, as shown in Proposition 8.4. We demonstrate that the generalized magnetic helicity is well-defined for axisymmetric $b=B-B_{\infty}\in L^{2}_{\sigma}(\mathbb{R}^{3})$ and constants $W>0$ and $\gamma\geq 0$ (The generalized magnetic helicity (1.10) is affected by the gauge $\gamma\geq 0$.)

For small axisymmetric disturbances $(v_0,b_0+B_{\infty}-U_{C})$, we show that there exists a weak ideal limit $(v,b)$ such that $(u,B)=(v+u_{\infty},b+B_{\infty})$ is close to the traveling wave solution $(u_{\infty},U_C(x-u_{\infty}t ))$ up to translation in $z$ for all time. This demonstrates the stability of MHS equilibrium that differs from Taylor states.\\

\begin{thm}[Stability of Chandrasekhar's nonlinear force-free fields]
Let $\lambda, W>0$. Let $B_{\infty}=-We_z$ and $\phi_{\infty}=Wr^{2}/2$. Let $u_{\infty}$ be a constant parallel to $e_z$. 

The force-free field $U_C$ in (1.6) with the magnetic helicity $h_C$ is orbitally stable in weak ideal limits of axisymmetric Leray--Hopf solutions to (1.1) subject to the condition (1.7) in the sense that for arbitrary $\varepsilon >0$ there exists $\delta >0$ such that for axisymmetric $v_0, b_0\in L^{2}_{\sigma}(\mathbb{R}^{3})$ satisfying

\begin{align*}
||v_0||_{L^{2}(\mathbb{R}^{3})}
+\inf_{z\in \mathbb{R}}||b_0+B_{\infty}-U_{C}(\cdot +ze_z) ||_{L^{2}(\mathbb{R}^{3})}
+\left|2\int_{\mathbb{R}^{3}} \left(\phi_0-\phi_{\infty}   \right)_{+}\frac{G_0}{r^{2}} \dd x -h_C\right| \leq \delta, 
\end{align*}\\
for the Clebsch potentials $\phi_0$, $G_0$ of $b_0$, there exists a weak ideal limit $(v,b)$ of axisymmetric Leray--Hopf solutions to (1.9) for $(v_0, b_0)$ such that

\begin{align*}
||v||_{L^{2}(\mathbb{R}^{3})}
+\inf_{z\in \mathbb{R}}||b+B_{\infty}-U_{C}(\cdot +ze_{z}) ||_{L^{2}(\mathbb{R}^{3})} \leq \varepsilon, \quad \textrm{for a.e.}\ t\geq 0.
\end{align*}
\end{thm}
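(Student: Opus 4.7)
The plan is to adapt the Cazenave--Lions orbital stability scheme to this axisymmetric MHD setting, using as black boxes the two ingredients that the paper develops from Taylor's theory under axisymmetry: (i) a Woltjer-type variational principle identifying the translation orbit $\{U_C(\cdot+ze_z):z\in\mathbb{R}\}$ as the set of minimizers of total energy among axisymmetric perturbations of $B_\infty$ subject to the generalized-helicity constraint $H=h_C$ (supplemented by whatever axisymmetric Casimir sub-helicities enter as side constraints); and (ii) a Taylor-type conservation result asserting that $H$ (together with any such Casimir constraints) is preserved along weak ideal limits of axisymmetric Leray--Hopf solutions to (1.9).

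With these in hand I would argue by contradiction. Assume the conclusion fails: there exist $\varepsilon_0>0$ and a sequence of axisymmetric data $(v_0^n,b_0^n)\in L^{2}_\sigma(\mathbb{R}^{3})$ with
\begin{align*}
||v_0^n||_{L^{2}}+\inf_{z\in\mathbb{R}}||b_0^n+B_\infty-U_C(\cdot+ze_z)||_{L^{2}}+\left|H(b_0^n+B_\infty)-h_C\right|\to 0,
\end{align*}
and, for each $n$, a choice of axisymmetric weak ideal limit $(v^n,b^n)$ emanating from $(v_0^n,b_0^n)$ together with a time $t_n\geq 0$ at which
\begin{align*}
||v^n(t_n)||_{L^{2}}+\inf_{z\in\mathbb{R}}||b^n(t_n)+B_\infty-U_C(\cdot+ze_z)||_{L^{2}}>\varepsilon_0.
\end{align*}
The energy inequality inherited from Leray--Hopf solutions by the weak ideal limit yields $\mathcal{E}(v^n(t_n),b^n(t_n))\leq \mathcal{E}(v_0^n,b_0^n)\to \tfrac12||U_C-B_\infty||_{L^{2}}^{2}$, and the Taylor-type conservation from (ii) gives $H(b^n(t_n)+B_\infty)=h_C+o(1)$. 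Consequently $(v^n(t_n),b^n(t_n)+B_\infty)$ is a minimizing sequence for the constrained variational problem in (i).

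Invoking the compactness statement accompanying the Woltjer-type principle then produces shifts $z_n\in\mathbb{R}$ and a subsequence along which $v^n(t_n)\to 0$ and $b^n(t_n)+B_\infty-U_C(\cdot+z_ne_z)\to 0$ strongly in $L^{2}(\mathbb{R}^{3})$, directly contradicting the defect at $t=t_n$ and closing the argument. The hard part is this compactness step on the unbounded domain $\mathbb{R}^{3}$: the uniform field $B_\infty$ breaks all transverse translations but leaves the $e_z$-direction invariant, so a concentration-compactness analysis modulo $z$-shifts is unavoidable. Vanishing has to be ruled out using the strict helicity lower bound $H\to h_C>0$ recorded in (1.11), and dichotomy has to be ruled out by showing that any non-trivial splitting of a minimizing sequence is incompatible with simultaneously saturating the energy and the axisymmetric Casimir constraints. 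This rests on a careful analysis of the Grad--Shafranov structure behind (1.6) and is where the bulk of the technical work will lie; once the minimizing set is pinned down to the $z$-orbit of $U_C$ and shown to attract minimizing sequences in $L^{2}$, the stability conclusion follows at once.
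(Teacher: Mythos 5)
Your proposal matches the paper's proof in essentially every respect: the same contradiction (Cazenave--Lions) scheme driven by the energy inequality and generalized-helicity conservation at weak ideal limits, the same concentration--compactness analysis modulo $z$-shifts with vanishing excluded via $h_C>0$ and dichotomy excluded via strict subadditivity of the constrained minimum, and the same identification of the minimizing set with the $z$-orbit of $U_C$ through the uniqueness theory for the Grad--Shafranov equation. The only cosmetic difference is that the paper's variational problem uses generalized magnetic helicity as the sole constraint, with no additional Casimir sub-helicities needed.
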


\vspace{5pt}

\begin{rem}
The same stability result as Theorem 1.4 also holds for weak non-resistive limits of axisymmetric Leray--Hopf solutions and all time, as well as for unique strong axisymmetric solutions to ideal MHD and non-resistive MHD up to maximal existence time. Theorem 1.4 is a particular case $\gamma=0$ of a general stability theorem for a specific class of nonlinear force-free fields with discontinuous factors $f\in L^{\infty}(\mathbb{R}^{3})$ (Theorem 8.2).
\end{rem}

\begin{rem}[Vortex rings with swirls]
For the 3D Euler equations, the orbital stability of axisymmetric vortex rings with swirls is unknown. Indeed, the force-free (Beltrami) field (1.6) also offers a traveling wave solution to the 3D Euler equations (vanishing at infinity),

\begin{align*}
u_E=U_C(x+B_{\infty}t)-B_{\infty},   
\end{align*}\\
which describes an axisymmetric vortex ring with swirls whose vortex is supported in a ball moving at the speed of $B_{\infty}$. (Vortex lines of this solution are identical to magnetic field lines of Chandrasekhar's solution; see Figure 1.) The solution $u_E$ belongs to the lower regularity space $W^{1,\infty}(\mathbb{R}^{3})$ than the space $C^{1,\alpha}(\mathbb{R}^{3})$, $\alpha>0$, for which the Euler equations are locally well-posed \cite{Lichtenstein}, \cite{Gunther}. 
\end{rem}

The novelty of Theorem 1.4 might be the stability of the magnetic field with swirls in the ideal MHD equations. In contrast, the stability/instability of vortex rings with swirls in the 3D Euler equations has been an open question. Research on vortex rings has attracted attention concerning the Cauchy problems of the Euler and the Navier--Stokes equations. The recent breakthrough of Elgindi \cite{Elgindi} shows the existence of backward self-similar blow-up solutions to the Euler equations for some axisymmetric data without swirls $u_0\in C^{1,\alpha}(\mathbb{R}^{3})$ and small $\alpha>0$. In the study of weak solutions, Albritton et al. \cite{ABC} demonstrates the existence of non-unique axisymmetric Leray--Hopf solutions without swirls for the forced Navier--Stokes equations based on the instability of unbounded vortex; see also Vishik \cite{Vishik18}, \cite{Vishik18b}. 

The energy-norm stability in Theorem 1.4 does not contradict the strong solution blow-up to the ideal MHD equations. On the other hand, two (stable and unstable) weak ideal limits of axisymmetric Leray--Hopf solutions to viscous and non-resistive MHD can exist for merely square integrable initial disturbance.

\subsection{Stability: Euler and ideal MHD}

The Euler equations and the ideal MHD equations have different stability principles. We review stability results for the two equations relevant to Theorems 1.1 and 1.4.

\subsubsection{Taylor state stability}

Theorem 1.1 is based on Taylor's relaxation theory. We briefly review stability results until the early 1990s, when Woltjer's principle was established. \\

The concept of force-free field stability appeared in the 1950s in the work of Lundquist \cite{Lundquist}. Lundquist was motivated by Alfv\'en's observation \cite{Alfven50} on the instability of magnetic fields twisted by liquid motion and investigated the stability of linear force-free fields in a cylinder as a particular case of MHS equilibria; see also \cite{Trehan}. A little later, L\"{u}st and Schl\"{u}ter \cite{LS54} and Chandrasekhar \cite{Chandra} recognized the force-free field importance as a natural model of cosmic magnetic fields that explains a large current flowing in the space surrounding stars. Chandrasekhar and Woltjer \cite{CW58} and Woltjer \cite{Woltjera}, \cite{Woltjerb}, \cite{Woltjerc}, \cite{Woltjerb}, \cite{W58} investigated the linear force-free field stability (Chandrasekhar's explicit solution (1.6) was discovered in \cite{Chandra} before Woltjer's minimum energy principle work \cite{W58}.) Variational principles to other MHS equilibria are investigated in \cite{Woltjerb}, \cite{Woltjerc}, \cite{Woltjerd}, and \cite{Chandrasekhar1958}. After Lundquist's work and Chandrasekhar's series of works on the stability of MHS equilibria, Bernstein et al. \cite{Bernstein1958} established a nonlinear stability theorem for MHS equilibria with a sign condition for second derivatives of total energy; see also \cite{VC} and \cite[IX.]{Chandra61}.

In 1965, Arnold \cite{Arnold1965} established a well-known nonlinear stability theorem for steady states in the Euler equations with a sign condition for second derivatives of kinetic energy, i.e., local maxima or minima in equivortical velocity fields, cf. \cite{Bernstein1958}. Arnold's stability theorem is useful for 2D steady flows \cite{MP94}, \cite{AK98}, cf. \cite{WP85}, \cite{SV09}. By contrast, he stated \cite[p.1007]{Arnold1965}, \cite[p.349]{Arnold66} that he could not find any examples of 3D steady flows that met his criterion. Rouchon \cite[Theorem 1.1]{Rouchon} demonstrated that all 3D steady flows are kinetic energy saddle points and concluded that Arnold's criterion is never satisfied; see also \cite[9.3.3]{Moffatt21}. From the 1960s to the 1970s, the helicity concept was developed by Moreau \cite{Moreau} and Moffatt \cite{Moffatt} and incorporated into Taylor's relaxation theory \cite{T74}, \cite{T86}.

Later in the 1980s, Moffatt \cite[p.374]{Moffatt85}, \cite[p.369]{Moffatt85b}  observed that the analogy of the existence of steady flows between Euler and ideal MHD does not extend to their stability; see \cite[9.3]{Moffatt21}. He computed the sign of second derivatives for magnetic energy near the shear flow $G(r)\nabla \theta$, a steady axisymmetric magnetic/velocity field with a swirl. The sign condition implies that $G$ is stable in the ideal MHD equations for decreasing $|G/r^{2}|$ in $r>0$, e.g., $G=r^{-\alpha}$, $\alpha>1$. By contrast, such $G$ is unstable in the Euler equations by Rayleigh's criterion \cite{Rayleigh1917}, \cite[Chapter 3]{DR82}.

In the early 1990s, Laurence and Avellaneda \cite{Laurence91} revisited Woltjer's principle. Woltjer \cite{W58} assumed the restrictive condition to the magnetic field $\textrm{curl}^{-1}\ B=0$ on $\partial\Omega$ and the present form (1.5) is due to \cite{Laurence91}; see also \cite[p.15]{Yeates}. Around the same time, the work \cite{YG90} characterized the spectrum of the rotation operator. 

Taylor state stability (Theorem 1.1) is based on Woltjer's principle and Taylor's conjecture. The present form of Woltjer's principle (1.5) appeared in \cite{Laurence91}, and proof of Taylor's conjecture is due to the recent work of Faraco and Lindberg \cite{FL20}. Theorem 1.1 is the first stability theorem in ideal MHD using helicity, though the proof follows Taylor's conjecture. Three Taylor states in a ball (Remark 1.3) seem first noticed in this study. It questions whether one Taylor state is stable without restricting the solution class.

\subsubsection{Stability of Chandrasekhar's nonlinear force-free fields}

Theorem 1.4 is relevant to the stability of vortex rings in the Euler equations. \\

Research on the stability of vortex rings can be traced back to the works of Kelvin \cite{kelvin1880} and Benjamin \cite{Ben76}. Benjamin \cite[p.20]{Ben76} considered the stability principle for axisymmetric vortex rings without swirls by maximizing kinetic energy by rearranging a vortex with an impulse constraint. Wan \cite{Wan86} investigated the stability of Hill's spherical vortex rings.  The Euler and the ideal MHD equations are noncanonical Hamiltonian PDEs, and little is known about the orbital stability of traveling wave solutions; see \cite{GSS} and \cite{LZ22} for the Grillakis--Shatah--Strauss Hamiltonian PDEs stability theory.  

The Energy-Casimir method is an available stability principle for 2D and axisymmetric steady states and is different from the stability principles for general non-symmetric solutions \cite{Bernstein1958}, \cite{Arnold1965}; see \cite{Holm85}, \cite{BPM19}, \cite[6.10]{BV22}, and \cite{Rein23} for reviews. The Energy-Casimir method is based on additional conservation (Casimir) and developed in the study of other plasma/fluid systems such as the Vlasov--Poisson equations \cite{Guo99}, \cite{GR01} or the Euler--Poisson equations \cite{Rein03}, \cite{LS09}. The instability results in the Euler--Poisson equations can be found in \cite{jang08}, \cite{jang14}.

The MHD stability criteria via the Energy-Casimir method have been developed in the works of Holm et al. \cite{Holm85}, Moffatt \cite{Moffatt85}, \cite{Moffatt85b}, Friedlander and Vishik \cite{FV90}, Vladimirov and Moffatt \cite{Moffatt95}, and Vladimirov et al. \cite{Moffatt96}, \cite{Moffatt97}, \cite{Moffatt99}. See also \cite{FV95}, \cite{VF98}. The work \cite[Criterion 5.2]{Moffatt97} investigates the stability of axisymmetric MHS equilibria.

Axisymmetric vortex rings with swirls have been considered unstable in contrast to the case without swirls. Instability research can be found in the late 1980s in the works of Szeri and Holmes \cite[Theorem 4.2]{SH88} and Lifschitz and Hameiri \cite{LH93}. Around the same time, research on the spectrum of the linearized Euler operator began in the works of Friedlander and Vishik \cite{FV91}, \cite{FV92}, Lifschitz and Hameiri \cite{LH91}, \cite{LH93}, Vishik \cite{V96}; see also Shvydkoy and Friedlander \cite{FS05}. The nonlinear instability results can be found in \cite{Lin04}, \cite{BGS02}, \cite{VF03}, and the existence of unstable manifolds can be found in \cite{LZ13}, \cite{LZ14}, and \cite{LZ22}.

The orbital stability of vortex pairs and axisymmetric vortex rings without swirls has been investigated in recent works. Burton et al. \cite{BNL13} demonstrated the orbital stability of vortex pairs in the 2D Euler equations based on Benjamin's maximization of kinetic energy using a rearrangement with unknown vorticity functions; see also \cite{B21}. The author and Choi \cite{AC22} investigated the orbital stability of relatively restricted class vortex pairs using the minimization of (penalized) enstrophy with prescribed vorticity functions. This stability method also applies to the stability of axisymmetric vortex rings without swirls. It should be noted that stability results \cite{BNL13}, \cite{B21}, and \cite{AC22} are demonstrated for weak solutions without assuming boundedness of vorticity. A particular type of vortex ring without swirls is Hill's spherical vortex rings, which are patch-type solutions; see \cite{Choi22} for the stability result in the Yudovich class \cite{UI}. 

We remark that asymptotic stability, which is convergence to equilibrium as $t\to\infty$, is a more powerful stability concept than orbital (Lyapunov) stability. Bedrossian and Masmoudi \cite{BM15} established the nonlinear asymptotic stability of the Euler equations for 2D Couette flows and disturbances in Gevrey classes. This stability mechanism, inviscid damping, is related to Landau damping in plasma physics \cite{MV11}. See also \cite{IJ20}, \cite{IJ22}, \cite{MZ20}, and \cite{IJ22b} for nonlinear asymptotic stability results. We also discuss recent works \cite{RZ17}, \cite{ZZZ22}, and \cite{LMZZ22} on linear asymptotic stability for sheared velocity and magnetic fields in ideal MHD.

The swirl stability has been revisited in recent works. Gallay and Smets \cite{GalleySmets18}, \cite{GalleySmets19} investigated the linearized Euler equations in the vicinity of the columnar vortex $G(r)\nabla \theta$; see \cite{Gallay20} for a review. Albritton and O\.{z}a\'{n}ski \cite{AO} investigated the instability of the columnar vortices with nonzero axial flows $G(r)\nabla \theta+W(r)\nabla z$. The recent breakthrough of Guo et al. \cite{GHPW}, \cite{GPW} showed that the columnar vortex $r^{2}\nabla \theta=(-x_2,x_1,0)$ is asymptotically stable for slight and smooth axisymmetric disturbance via the Euler--Coriolis equations.

In summary, the stability or instability of axisymmetric vortex rings with swirls in the Euler equations has been an open question, though some orbital stability results are available for vortex pairs/axisymmetric vortex rings without swirls \cite{BNL13}, \cite{B21}, \cite{AC22}, and \cite{Choi22}. Chandrasekhar's nonlinear force-free field offers axisymmetric vortex rings with swirls for the Euler equations and axisymmetric magnetic fields with swirls for the ideal MHD equations. This work demonstrates the stability of axisymmetric magnetic fields with swirls, including Chandrasekhar's explicit solution in the ideal MHD equations. 

The most relevant work to Theorem 1.4 may be the stability criteria for axisymmetric MHS equilibria using the Energy-Casimir method \cite{Moffatt97}. Taylor's relaxation theory inspires the stability principle of Theorem 1.4 and is different from the stability criteria to general MHS equilibria \cite{Moffatt97}. We discuss the ideas of the proof in subsection 1.6.

\subsubsection{Vortex and magnetic fields}

We finally discuss the relationship between vortex and magnetic field evolutions. According to Taylor's relaxation theory, MHD turbulence does not conserve sub-helicities with small resistivity $0<\mu<<1$ due to the reconnection of magnetic field lines. The equations

\begin{align*}
\partial_t B+\nabla\times (B\times u)=\mu \Delta B,
\end{align*}\\
describe the evolution of magnetic field lines with the velocity $u$ obeying the Navier--Stokes equations with Lorenz force. This equation is the frozen-field equation at zero resistivity $\mu=0$. Namely, magnetic field lines are frozen in fluid, i.e., the fluid elements lying on a magnetic field line continue to lie on the same magnetic field line. Moreover, magnetic flux on a closed surface moving with fluid is also conserved. These properties, Alfv\'en's theorem \cite[4.3.1]{Davidson}, are analogs to Helmholtz's first law and Kelvin's theorem for vorticity fields. The reconnection event of magnetic field lines is relevant to the blow-up of strong solutions and the non-uniqueness of weak solutions.

The vortex evolution of the Navier--Stokes equations is described by 

\begin{align*}
\partial_t \omega+\nabla\times (\omega\times u)=\nu \Delta \omega.
\end{align*}\\
It is demonstrated in \cite{CSTY1}, \cite{CSTY2}, and \cite{KNSS} that axisymmetric solutions of the Navier--Stokes equations do not exhibit self-similar blow-ups (For smooth axisymmetric data without swirls, unique global-in-time solutions exist \cite{La59}, \cite{UI}, \cite{LMNP}.) The works \cite{FengSverak}, \cite{GallaySverak}, and  \cite{GallaySverak2} showed the existence and uniqueness of large viscous axisymmetric vortex rings without swirls for vortex filament initial data. See also \cite{BGH} and \cite{BG} for non-symmetric vortex filament solutions and \cite{Seis17} for the relationship between binormal curvature flow. The recent work of Gallay and {\v{S}}ver\'{a}k \cite{GS23} describes the detailed behavior of viscous axisymmetric vortex rings without swirls in the small viscosity regime $0<\nu <<1$ with the Kelvin--Saffman formula for the speed of viscous vortex rings by extending Arnold's stability approach to viscous flows; see also \cite{GS21}.

The non-uniqueness of finite energy weak solutions to the Navier--Stokes equations is demonstrated in the groundbreaking work of Buckmaster and Vicol \cite{BV19} by the intermitted convex integration. See also Buckmaster et al. \cite{BCV22}, Luo \cite{Luo19}, and Cheskidov and Luo \cite{CL22}. Jia, {\v{S}}ver\'{a}k, and Guillod \cite{JS15}, \cite{JS}, \cite{GuS} investigated the non-uniqueness of Leray--Hopf solutions to the Navier--Stokes equations based on the spectral stability of self-similar solutions. The non-uniqueness of forced axisymmetric Leray--Hopf solutions without swirls is demonstrated in \cite{ABC} based on the instability of self-similar solutions, cf. \cite{ABCDGK}. \\

\subsection{Remarks on magnetic relaxation}

The total energy of solutions to non-resistive MHD, i.e., (1.1)--(1.2) for $\mu=0$, $\nu>0$, decreases with the velocity field $\nabla u\in L^{2}(0,\infty; L^{2})$. Arnold's inequality, conversely, limits magnetic energy from below for initial data with nonzero magnetic helicity; see (2.4). By letting $t\to \infty$, one may construct steady Euler flows for a given $B_0$. This means of constructing steady Euler flows is called \textit{magnetic relaxation} posed by Arnold \cite{Arnold74}, \cite[ChapterIII]{AK98} and Moffatt \cite{Moffatt85}, \cite[Section 8]{Moffatt21}. The equations of the velocity field do not have to be the Navier--Stokes equations, and other models are considered in \cite{Moffatt85}, \cite{Moffatt90}, \cite{Vallis}, \cite{Ni02}, \cite{Nunez}, \cite{Brenier14}, \cite{Pasqualotto}, and \cite{BFV21}.

In Remark 1.2, weak non-resistive limits are near Taylor states for a.e. $t\geq 0$. It does not, however, imply convergence to a Taylor state as $t\to\infty$. Magnetic relaxation can produce a broader range of steady flows than that of force-free fields \cite{Kom21}. Remember that some $B_0$ can form nontrivial links with magnetic helicity $h=0$ \cite[p.367]{Moffatt85}, \cite[8.1.1]{Moffatt21}, \cite[p.2]{Kom21}. Komendarczyk \cite{Kom21} investigated magnetic energy minimization for given $B_0\in L^{2}_{\sigma}(\Omega)$ among weak $L^{2}$-closure of pushforward $B=\varphi_*B_0$ with non-increasing energy for volume-preserving diffeomorphism $\varphi$ of $\Omega$ whose restriction on $\partial\Omega$ is identity, i.e., 

\begin{align*}
\tilde{{\mathcal{I}}}(B_0)=\inf\left\{\frac{1}{2}\int_{\Omega}|B|^{2}\dd x\ \middle|\ B\in \overline{M}^{w}(\Omega, B_0) \right\},
\end{align*}\\
for $M(\Omega, B_0)=\{B\in L^{2}_{\sigma}(\Omega)|\ B=\varphi_*B_0,\  \varphi\in \textrm{Diff} (\Omega, \dd x),\ ||B||_{L^{2}}\leq ||B_0||_{L^{2}}  \}$. The work \cite{Kom21} demonstrated that for some $B_0\in L^{2}_{\sigma}(\Omega)$ with helicity $h=0$, a set of minimizers $\tilde{{\mathcal{S}}}(B_0)$ of $\tilde{{\mathcal{I}}}(B_0)$ is not empty, i.e., $\tilde{{\mathcal{S}}}(B_0)\neq \emptyset$. This means that for such $B_0$, magnetic relaxation minimum energy states are nontrivial, whereas Taylor states are trivial, i.e., ${\mathcal{S}}_0=\emptyset$. 

Beekie et al. \cite{BFV21} obtained detailed information on the large-time behavior for the velocity field satisfying 

\begin{align*}
\nabla p=B\cdot \nabla B-\nu(-\Delta)^{\kappa} u. 
\end{align*}\\
The frozen-field equation $(1.1)_2$ with the above velocity field (the MRE equations) is shown to be globally well-posed for $B_0\in H^{s}(\mathbb{T}^{n})$, $s,\kappa>n/2+1$, satisfying $\nabla \cdot B_0=0$, and the strong convergence limits $\lim_{t\to\infty}||\nabla u||_{L^{\infty}}=0$ holds \cite[Theorems 3.1, 4.1]{BFV21}. In the 2D setting, asymptotic stability of the steady state $B=e_1$ and $u=0$ is obtained, as are examples of 3D exact solutions exhibiting an exponential growth of the current field \cite[Theorems 5.1, 6.4]{BFV21}. See also \cite{Elgindi17} and \cite{CCL19} for asymptotic stability of the IPM equations and \cite{Elgindi20} for the exponential growth of the vorticity field of the 3D Euler equations. 

Recently, Constantin and Pasqualotto \cite{CP23} constructed steady Euler flow both in $\mathbb{T}^{3}$ and in a bounded domain by magnetic relaxation via the Voigt-MHD system. The Voigt-MHD system is a system that modifies the terms $u_t$ and $B_t$ in the non-resistive MHD (1.1) for $\nu>0$ and $\mu=0$ by $\mathbb{L}^{\alpha}u_t$ and $\mathbb{L}^{\alpha}B_t$ for the Stokes operator $\mathbb{L}$ and $\alpha>0$ (with Dirichlet boundary conditions for bounded domains.) It is shown in \cite{CP23} that both in $\mathbb{T}^{3}$ and in a bounded domain, for arbitrary initial data $(u_0, B_0)\in D(\mathbb{L}^{\alpha/2})$ and $\alpha\geq 1$ satisfying compatibility conditions, there exists a unique global-in-time solution to the Voigt-MHD system $(u,B)$, a steady state $B_{\infty}$, and a sequence $\{t_n\}$ satisfying $t_n\to\infty$ such that $B(\cdot, t_n)$ converges to $B_{\infty}$ weakly in $D(\mathbb{L}^{\alpha/2})$ and strongly in $D(\mathbb{L}^{\gamma/2})$ for $\gamma<\alpha$. The Voigt-MHD system conserves the modified helicity, and constructed steady states are non-trivial for initial data with non-zero modified helicity. The steady states are not force-free for bounded domains due to the Dirichlet boundary condition to the magnetic field.

Enciso and Peralta-Salas \cite{EPS23} investigate the relationship between initial conditions and end states in magnetic relaxation for the axisymmetric toroidal domain $\Omega$ in a class of braided fields,

\begin{align*}
{\mathcal{B}}(\overline{\Omega})=\{B_0\in C^{\infty}(\overline{\Omega})\cap L^{2}_{\sigma}(\Omega)\ |\ B_0\cdot \nabla \theta>0\ \}.
\end{align*}\\
It is shown in \cite[Theorem 1.1]{EPS23} that for an open subset of ${\mathcal{B}}(\overline{\Omega})$, there exists a concretely characterized dense set such that initial data in this set cannot be deformed into smooth MHS equilibrium by a volume-preserving diffeomorphism. The work \cite[Theorem 2.7]{EPS23} also provides a sufficient initial condition for relaxation toward linear force-free fields.

We close this remark by discussing the relaxation in the 2D Euler equations. The long-time behavior of solutions to the 2D Euler equations is relevant to 2D turbulence, and the question of how solutions are relaxed is a subject of active research; see Khesin et al. \cite{KMK23} for a review. The compactness of vorticity is relevant to the end state of the turbulence \cite{GSV15}, and it is conjectured in \cite{Sh13}, \cite{SverakLec} that vorticity of the 2D Euler equations is generically not compact as $t\to\infty$. The numerical works \cite{MV20}, \cite{MV22} indicate merging the identical sign vortices and turbulence relaxation toward a point vortex motion. Mathematically rigorous results for such relaxation are unknown \cite[Problem 27]{KMK23}. 

A similarity between the long-time behavior of magnetic relaxation and the 2D Euler equations is that both magnetic and vorticity fields can lose continuity at the end states. For magnetic relaxation, the current sheet formation is observed in, e.g., \cite[8.2.2]{Moffatt21}, and exemplified in \cite[Remark 6.3]{BFV21}. For the 2D Euler equations, the relaxation toward singular vortex is investigated in \cite{Elgindi2022} for forward self-similar solutions.

\subsection{Ideas and main ingredients}

The following are the main ideas for proving Theorem 1.4:\\ 

\noindent
(a) Total energy minimization under the constraint of conserved generalized magnetic helicity, \\
\noindent
(b) Existence of weak ideal limits of axisymmetric Leray--Hopf solutions with non-increasing total energy and conserved generalized magnetic helicity.\\

For the stability of traveling wave solutions in the 2D Euler equations, we can consider (a) enstrophy minimization under the kinetic energy constraint \cite{AC22}, cf. \cite{BNL13}, \cite{B21}, and (b) the existence of global weak solutions with conserved enstrophy and kinetic energy. Enstrophy conservation is due to the renormalized property of vorticity equations \cite{LNM06}, cf. \cite{CS15}. See \cite{CLNS} for more information on energy conservation. Due to additional Casimir invariants, axisymmetric vortex rings with no swirl have a similar minimization \cite{FT81} and a renormalized property \cite{NS22}. We take advantage of the analogy between 2D hydrodynamic and 3D MHD turbulence \cite{Hasegawa85}, \cite[7.3]{Biskamp93}.

\subsubsection{Taylor state stability: the toy model}

The simpler case is Taylor state stability (Theorem 1.1). Part (a) corresponds to Woltjer's principle \cite{W58}, \cite{Laurence91}, and part (b) to Taylor's conjecture \cite{FL20}, \cite{FL22}. We exploit from the part (a), (i) \textit{compactness of minimizing sequences}. Namely, we use the fact that any sequences $\{B_n\}\subset L^{2}_{\sigma}(\Omega)$ satisfying $\frac{1}{2}\int_{\Omega}|B_n|^{2}\dd x\to {\mathcal{I}}_h$, $\int_{\Omega}\textrm{curl}^{-1}B_n\cdot B_n\dd x\to h$ are relatively compact in $L^{2}(\Omega)$. The minimum $\mathcal{I}_{h}$ has the explicit form as shown in (2.6), i.e., 

\begin{align*}
\mathcal{I}_{h}
=
\begin{cases}
\ \mathcal{I}_{1}h & h\geq 0,\\
\ -\mathcal{I}_{-1}h & h< 0.
\end{cases}
\end{align*}\\
When $\Omega$ is a ball, the minimum $\mathcal{I}_{h}$ is an even function. We emphasize that the minimum $\mathcal{I}_{h}$ is additive, i.e., 

\begin{align*}
\mathcal{I}_{h_1+h_2}=\mathcal{I}_{h_1}+\mathcal{I}_{h_2},\quad  h_1,h_2>0.
\end{align*}\\
A key point of (b) is (ii) \textit{compactness of Leray--Hopf solutions}. For Leray--Hopf solutions $(u_j,B_j)$ to (1.1)--(1.2) satisfying the equality 

\begin{align}
\int_{\Omega}\textrm{curl}^{-1}B_j\cdot B_j\dd x+2\mu_j\int_{0}^{t}\int_{\Omega}\nabla \times B_j\cdot B_j\dd x\dd s=\int_{\Omega}\textrm{curl}^{-1}B_{0}\cdot B_0\dd x,
\end{align}\\
the vector potentials $\{\textrm{curl}^{-1}B_j\}$ are relatively compact in $L^{2}_{\textrm{loc}}(0,T; L^{2}(\Omega))$ by the Aubin--Lions lemma as $(\nu_j,\mu_j)\to (0,0)$. By letting $j\to\infty$, the magnetic helicity of weak ideal limits is conserved. (The energy inequality eliminates the second term.)

In weak ideal limits, the two points (i) and (ii) imply the stability of a set of minimizers (Taylor states) ${\mathcal{S}}_h$ to $\mathcal{I}_h$. Theorem 1.1 is derived by characterizing Taylor states as a finite number of eigenfunctions of the rotation operator associated with the principal eigenvalues using Rayleigh's formulas (Lemma 2.3). 

\subsubsection{The new ingredients}
To prove the stability of the nonlinear force-free field (1.6) (Theorem 1.4), in part (a), we consider a new minimization principle: 

\begin{equation}
\begin{aligned}
&I_{h,W,\gamma}=\\
&\inf\left\{\frac{1}{2}\int_{\mathbb{R}^{3}}|b|^{2}\dd x\ \middle|\  b=\nabla \times (\phi\nabla \theta)+G\nabla \theta\in L^{2}_{\sigma,\textrm{axi}}(\mathbb{R}^{3}),
 2\int_{\mathbb{R}^{3}}(\phi-\phi_{\infty})_{+}\frac{G}{r^{2}} \dd x=h\ \right\},   
\end{aligned}
\end{equation}\\
for given constants $h\in \mathbb{R}$, $W>0$, and $\gamma\geq 0$, where $L^{2}_{\sigma,\textrm{axi}}(\mathbb{R}^{3})$ denotes the space of all axisymmetric solenoidal vector fields in $L^{2}_{\sigma}(\mathbb{R}^{3})$ and $\phi_{\infty}=Wr^{2}/2+\gamma$. The minimization problem (1.13) is well-defined by the Arnold-type inequality to the generalized magnetic helicity $|H[b]|\lesssim ||b||_{L^{2}}^{8/3}$  as shown in Lemma 3.10, cf. (2.4). Minimizers of (1.13) produce axisymmetric nonlinear force-free fields (1.8) for $B_{\infty}=-We_z$ with discontinuous factors $f\in L^{\infty}(\mathbb{R}^{3})$ (Lemma 4.3). This appears to be the first variational principle that provides nonlinear force-free fields using the conservation of ideal MHD. Previous constructions use un-conserved quantities to ideal MHD \cite{Tu89} or a minimax method for flux functions \cite{A8}. 

In the part (b), a key quantity is \textit{generalized magnetic mean-square potential},

\begin{align}
\int_{\mathbb{R}^{3} }(\phi-\phi_{\infty} )_{+}^{2}\dd x.
\end{align}\\
Faraco and Lindberg \cite[Theorem 5.4]{FL20} demonstrated the conservation of magnetic mean-square potential at weak ideal limits of Leray--Hopf solutions for 2D bounded and multiply connected domains. We show the conservation of generalized magnetic mean-square potential for weak ideal limits of axisymmetric Leray--Hopf solutions for fixed initial data.

\subsubsection{The difficulties of the proof}

Theorem 1.4 is the first stability result for traveling wave solutions to ideal MHD in $\mathbb{R}^{3}$. The significant differences from the Euler orbital stability of traveling wave solutions (vortex pairs/axisymmetric vortex rings without swirls) \cite{BNL13}, \cite{B21}, \cite{AC22}, and \cite{Choi22} are the following: \\

\begin{itemize}
\item Minimization of magnetic (vector) fields in $\mathbb{R}^{3}$
\item Flux transport for merely square integrable drifts\\
\end{itemize}

The \textit{vector} minimization problem for axisymmetric magnetic fields with swirls (1.13) is first studied in this work. The orbital stability of vortex pairs/axisymmetric vortex rings without swirls \cite{BNL13}, \cite{B21}, \cite{AC22}, and \cite{Choi22} are based on \textit{scalar} minimization problems. We obtain the first strict subadditivity result for the minimum $I_h$ to the vector minimization problem (1.13).

The second significant difference appears in the transport structures of vorticity/magnetic fields. The vorticity of the 2D Euler equations, as well as (the normalized) vorticity of the 3D axisymmetric Euler equations without swirls, is transported by Sobolev regular drift, yielding the renormalized property for the Casimir invariant conservation. By contrast, the magnetic flux of axisymmetric ideal MHD with swirls is transported by merely square integrable drifts with no Sobolev regularity. We show, nevertheless, that the magnetic flux is a renormalized solution and conserves Casimir invariants.

In the subsequent sub-subsections, we discuss the two main ideas for the difficulties above.

\subsubsection{The strict subadditivity: the first main idea}

The approach to proving (i) compactness of minimizing sequences for $I_{h}=I_{h, W,\gamma}$ in (1.13) is the strict subadditivity of the minimum 

\begin{align}
I_{h_1+h_2}<I_{h_1}+I_{h_2},\quad h_1,h_2>0.
\end{align}\\
This strict subadditivity for the vector field minimization problem is first obtained in this study. For scalar function minimization problems, Lions \cite{Lions84a}, \cite{Lions84b} discovered the strict subadditivity importance for the compactness of a minimizing sequence in an unbounded domain. More specifically, the strict subadditivity of the minimum is a crucial property to exclude the possibility of a scalar-minimizing sequence dichotomy in the application of his concentration-compactness lemma and obtain the compactness of the minimizing sequence; see, e.g., \cite[Part 4]{Pava}. This concentration-compactness principle to scalar functions is widely used in plasma stability research \cite{Rein23}.

The vector field minimization problem (1.13) may be included in an extensive application of Lions's theory. However, the compactness of vector-minimizing sequences, especially the dichotomy absence, is a non-trivial question. The main issues are the validity of the strict subadditivity (1.15) and, secondly, the dichotomy exclusion for a vector-minimizing sequence. The first issue arises because a scalar function argument does not apply to (1.13) due to the sign-changing integrand of the generalized magnetic helicity. The second issue has been investigated for a (non-negative) scalar function minimization problem to vortex pairs/axisymmetric vortex rings without swirls \cite{BNL13}, \cite{B21}, \cite{AC22}, and \cite{Choi22} in which the dichotomy of a minimizing sequence in $\mathbb{R}^{2}_{+}$ is excluded. However, the strict subadditivity of the minimum for those problems has yet to be discovered.

The strict subadditivity of the minimum (1.15) for the whole space problem (1.13) is the opposite nature to the additivity of the minimum $\mathcal{I}_h$ for the bounded domain problem (1.5). The desired property of $I_h$ to exclude the dichotomy possibility of a vector-minimizing sequence (which can occur only in $\mathbb{R}^{3}$) is \textit{strict} subadditivity (The additivity does not exclude the dichotomy possibility.) We emphasize that the toy model (1.5) is available only for (simply connected) bounded domains and is not extendable for the whole space in which magnetic helicity can diverge \cite{FLS} and no force-free fields on $L^{2}_{\sigma}(\mathbb{R}^{3})$ exist \cite{Na14}, \cite{CC15}.

The clue to the dichotomy exclusion for vector-minimizing sequences to the problem (1.13) is the compactness of a minimizing sequence in $\mathbb{R}^{2}_{+}$ in the vortex pairs problem. The work \cite{AC22} used the following minimization principle yielding translating vortex pairs (traveling waves):\\

\begin{quote}
Minimize enstrophy (Casimir) minus kinetic energy subject to non-negative vortices in $\mathbb{R}^{2}_{+}$ under the constraint on impulse $= h$ and mass $\leq 1$. \\
\end{quote}
The minimum of this principle is monotone for $h>0$, but the strict subadditivity is unknown due to the multiple constraints \cite[Remarks 2.4]{AC22}. The work \cite{AC22} directly excluded the dichotomy of a non-symmetric minimizing sequence by using the existence of horizontally symmetric minimizers with compactly supported vortex. The dichotomy absence from minimizing sequences may be consistent with numerically observed 2D turbulence relaxation toward merging vortices \cite{MV20}, \cite{MV22}, and \cite{KMK23}. We emphasize that the compactness of a \textit{non-symmetric} minimizing sequence is essential for the orbital stability of traveling waves since the horizontal symmetry is not preserved for the stability solution class.

We show that, in contrast to vortex problems, the strict subadditivity (1.15) does hold for the magnetic field vector minimization problem (1.13) in $\mathbb{R}^{3}$, and that by using it, the dichotomy of a (non-symmetric) vector-minimizing sequence does not occur. A heuristic idea is to look at the Euler--Lagrange equation to the problem (1.13): $G=\kappa(\phi -\phi_{\infty})_{+}$ for $\phi_{\infty}=Wr^{2}/2+\gamma$, $W>0$, $\gamma\geq 0$ with a Lagrange multiplier $\kappa\in \mathbb{R}$ and the Grad--Shafranov equation

\begin{align}
-L\phi=\kappa^{2}(\phi-\phi_{\infty} )_{+}\quad \textrm{in}\ \mathbb{R}^{2}_{+},\quad \phi=0\quad \textrm{on}\ \partial\mathbb{R}^{2}_{+},
\end{align}\\
where $\mathbb{R}^{2}_{+}=\{{}^{t}(z,r)\ |\ z\in \mathbb{R},\ r>0\}$ and $L=\partial_z^{2}+\partial_r^{2}-r^{-1}\partial_r$. The minimizer provides the  axisymmetric nonlinear force-free field $U=b+B_{\infty}$ and $f=\kappa 1_{(0,\infty)}(\phi-\phi_{\infty})$ whose helicity is expressed as 

\begin{align*}
h=2\kappa \int_{\mathbb{R}^{3}}(\phi-\phi_{\infty})_{+}^{2}\frac{1}{r^{2}}\dd x. 
\end{align*}\\
This integrand is positive, and the strict subadditivity (1.15) follows the existence of minimizers to (1.13), as shown in Lemma 4.10.  

The technical idea for demonstrating the existence of minimizers to the problem (1.13) is to minimize the magnetic energy in a smaller class than that in (1.13). Namely, we consider a smaller set $T_{h}$ for $h\neq 0$ consisting of all axisymmetric solenoidal vector fields $b=\nabla \times (\phi\nabla \theta)+G\nabla \theta$ such that 

\begin{align*}
\phi\geq 0,\quad \int_{\mathbb{R}^{3}}(\phi-\phi_{\infty})_{+}^{2}\frac{1}{r^{2}}\dd x>0,\quad G=\kappa (\phi-\phi_{\infty})_{+},\quad \kappa=\frac{h}{2\int_{\mathbb{R}^{3}}(\phi-\phi_{\infty})_{+}^{2}\frac{1}{r^{2}}\dd x}.
\end{align*}\\
The set $T_{h}$ is identified with a set of non-negative scalar functions in $\mathbb{R}^{2}_{+}$ and it turns out that by investigating the properties of minimizers to the problem (1.13), the set $T_h$ is larger than the set of minimizers and hence the minimum $I_h$ is invariant by the restriction to $T_h$. Moreover, the constant $I_h$ is also invariant by the restriction to the set $\tilde{T}_{h}$ consisting of symmetric and non-increasing functions in $z$, i.e., $\phi(z,r)=\phi(-z,r)$, by Steiner symmetrization, i.e., $2 I_{h}=\inf_{b\in \tilde{T}_h}||b||_{L^{2}}^{2}$. We show the existence of symmetric minimizers by estimating symmetric functions for large $|x|$ (Theorem 4.9) and deduce that the minimum $I_h$ is a continuous, monotone, and strictly subadditive even function for $h>0$ and lower semicontinuous at $h=0$.

By using these properties of $I_h$ and the concentration--compactness principle, we show that any non-symmetric vector-minimizing sequences $\{b_n\}\subset L^{2}_{\sigma,\textrm{axi}}(\mathbb{R}^{3})$ with potentials $\phi_n$ and $G_n$ satisfying

\begin{align*}
\frac{1}{2}\int_{\mathbb{R}^{3}}|b_n|^{2}\dd x\to I_h,\quad 2\int_{\mathbb{R}^{3}}(\phi_n-\phi_{\infty})_{+}\frac{G_n}{r^{2}}\dd x\to h,
\end{align*}\\
are relatively compact in $L^{2}(\mathbb{R}^{3})$ up to translation in $z$. We use the Lipschitz continuity of generalized magnetic helicity $H[\cdot]: L^{2}_{\sigma,\textrm{axi}}(\mathbb{R}^{3})\to \mathbb{R}$ (Proposition 5.4).

\subsubsection{The global flux convergence: the second main idea}

To explain the idea in (ii) the compactness of Leray--Hopf solutions for generalized magnetic helicity conservation, we denote the axisymmetric Leray-Hopf solution to (1.9) with viscosity $\nu_j>0$ and resistivity $\mu_j>0$ by $(v_j,b_j)$ and its weak ideal limit by $(v,b)$. We denote Clebsch potentials of $b_j$ and $b$ by $(\phi_j, G_j)$ and $(\phi, G)$, respectively. The axisymmetric Leray--Hopf solution satisfies the generalized magnetic helicity equality (Lemma 6.7)

\begin{align}
\int_{\mathbb{R}^{3}}\Phi_{j,+}\frac{G_j}{r^{2}}\dd x
+\mu_j\int_{0}^{t}\int_{\mathbb{R}^{3}}\nabla \times B_j\cdot B_j 1_{(0,\infty)}(\Phi_{j} )\dd x\dd s=\int_{\mathbb{R}^{3}}\Phi_{0,+}\frac{G_0}{r^{2}}\dd x, \end{align}\\
for $\Phi_j=\phi_j-\phi_{\infty}$ and $B_j=b_j+B_{\infty}$. By the definition of the weak ideal limit and Aubin--Lions lemma, 
$r^{-1}G_j \overset{\ast}{\rightharpoonup} r^{-1}G$ in $L^{\infty}(0,T; L^{2}(\mathbb{R}^{3}) )$ and 

\begin{align*}
\Phi_{j}\to  \Phi \quad \textrm{in}\ L^{2}(0,T; L^{2}_{\textrm{loc}}(\mathbb{R}^{3})).
\end{align*}\\
The second term in the equality (1.17) is eliminated at the weak ideal limit by the energy inequality, as in the case of bounded domains. However, the local flux convergence is insufficient for the helicity conservation at the weak ideal limit because of the unboundedness of the domain.

A similar problem arises in the case of the vorticity equations to the 2D Navier--Stokes equations in $\mathbb{R}^{2}$

\begin{align*}
\partial_t\omega_j+u_j\cdot \nabla \omega_j=\nu_{j}\Delta \omega_j.
\end{align*}\\
The velocity $u_j$ is determined by the vorticity $\omega_j$ via the Biot--Savart law. The vorticity $\omega_j$ is continuous on $L^{2}(\mathbb{R}^{2})$ at time zero and satisfies the enstrophy equality

\begin{align*}
\int_{\mathbb{R}^{2} }|\omega_j|^{2}\dd x+2\nu_j \int_{0}^{t}\int_{\mathbb{R}^{2}}|\nabla \omega_j|^{2}\dd x\dd s=\int_{\mathbb{R}^{2} }|\omega_0|^{2}\dd x.
\end{align*}\\
At the vanishing viscosity limit, $\omega_j$ converges to $\omega$ weakly-star in $L^{\infty}(0,T; L^{2}(\mathbb{R}^{2}))$ and the limit satisfies the transport equation 

\begin{align*}
\partial_t \omega+u \cdot \nabla \omega=0.
\end{align*}\\
The renormalized property of the limit $\omega$ thanks to Sobolev regularity of $u\in L^{\infty}(0,T; \dot{H}^{1}(\mathbb{R}^{2}) )$ \cite[p.360]{LNM06} is a crucial fact strengthening the weak convergence of the vorticity $\omega_j$. The renormalized property implies the enstrophy conservation at the vanishing viscosity limit  

\begin{align*}
\int_{\mathbb{R}^{2} }|\omega|^{2}\dd x=\int_{\mathbb{R}^{2} }|\omega_{0}|^{2}\dd x,
\end{align*}\\
and therefore a weak convergence of $\omega_j$ is in fact a stronger convergence 

\begin{align*}
\omega_j\to \omega\quad  \textrm{in}\ L^{2}(0,T; L^{2}(\mathbb{R}^{2}) ).
\end{align*}\\
Sobolev regularity of the drift is critical for the renormalized property of transport equation solutions \cite[Corollary II.2]{DL89}, \cite[p.1213]{AC14} and is used in \cite{LNM06} for enstrophy conservation at vanishing viscosity limits, cf. \cite{CS15}.

The limit velocity $u=v+u_{\infty}$ of the (viscous and resistive) axisymmetric MHD is, by contrast, merely in $L^{\infty}(0,T; L^{2}_{\textrm{loc}}(\mathbb{R}^{3}))$ and lacks Sobolev regularity. The flux $\Phi_j$ of the axisymmetric MHD equations satisfies the drift-diffusion equation 

\begin{align*}
\partial_t \Phi_{j}+u_j\cdot \nabla \Phi_{j}=\mu_j\left(\Delta-\frac{2}{r}\partial_r \right)\Phi_j, 
\end{align*}\\
for the Navier--Stokes velocity field $u_j=v_j+u_{\infty}$ with Lorenz force and the generalized mean-square potential equality (Lemma 6.8)

\begin{align}
\int_{\mathbb{R}^{3} }|\Phi_{j,+}|^{2}\dd x+2\mu_j \int_{0}^{t}\int_{\mathbb{R}^{3}}|\nabla \Phi_{j,+}|^{2}\dd x\dd s=\int_{\mathbb{R}^{3} }|\Phi_{0,+}|^{2}\dd x.
\end{align}\\
At the weak ideal limit, $\Phi_{j}$ converges to $\Phi$ in $L^{2}(0,T; L^{2}_{\textrm{loc}}(\mathbb{R}^{3}))$ and $u_j$ converges to $u$ weakly-star in $L^{\infty}(0,T; L^{2}_{\textrm{loc}}(\mathbb{R}^{3}))$. 

The main observation is that, despite merely square integrable drifts in weak ideal limits, the limit flux possesses Sobolev regularity $r^{-1}\nabla \Phi\in L^{\infty}(0,T; L^{2}_{\textrm{loc}}(\mathbb{R}^{3}))$ thanks to the total energy bound. This Sobolev regularity implies that the nonlinear term $u\cdot \nabla \Phi$ is integrable and that the flux satisfies the transport equation 

\begin{align*}
\Phi_{t}+u\cdot \nabla \Phi=0.
\end{align*}\\
For the 2D MHD system in a bounded domain, the flux of a weak ideal limit is a weak solution to the transport equation \cite[Lemma 5.6]{FL20} in contrast to the 3D case. More specifically, the work \cite{FL20} showed that any solutions to the transport equation in $L^{\infty}_t\dot{H}^{1}_{x}$ conserve magnetic mean-square potential with divergence-free drifts in $L^{\infty}_{t}L^{2}_x$. We show that the flux of weak ideal limits to axisymmetric Leray--Hopf solutions in $\mathbb{R}^{3}$ is a weak solution to the transport equation and conserves the generalized mean-square potential 

\begin{align}
\int_{\mathbb{R}^{3} }|\Phi_{+}|^{2}\dd x=\int_{\mathbb{R}^{3} }|\Phi_{0,+}|^{2}\dd x.
\end{align}\\
Heuristically, Sobolev regularity of $\Phi$ (in $x$ and $t$) implies the renormalized property 

\begin{align*}
g(\Phi)_{t}+u\cdot \nabla g(\Phi)=0,
\end{align*}\\
for regular $g(s)$ by the chain rule in Sobolev space. The generalized mean-square potential conservation (1.19) strengthens the local flux convergence to the \textit{global} convergence 

\begin{align}
\Phi_{j,+}\to  \Phi_{+}\quad \textrm{in}\ L^{2}(0,T; L^{2}(\mathbb{R}^{3})).
\end{align}\\
This more robust convergence implies the generalized magnetic helicity conservation at the weak ideal limit

\begin{align}
\int_{\mathbb{R}^{3}}\Phi_{+}\frac{G}{r^{2}}\dd x
=\int_{\mathbb{R}^{3}}\Phi_{0,+}\frac{G_0}{r^{2}}\dd x.
\end{align}

\vspace{5pt}

\subsubsection{The gauge dependence of generalized magnetic helicity}

Theorem 1.4 is based on the usage of the gauge-dependent generalized magnetic helicity (1.10). For simply connected bounded domains, magnetic helicity is unaffected by the vector potential change $A\longmapsto A+\nabla\varphi$ (gauge-invariant). For multiply connected bounded domains, MacTaggart and Valli \cite{MV19} discovered the gauge-invariant definition of magnetic helicity; see also \cite[p.5, (6)]{FL22}. 

In Section 3, we show that axisymmetric solenoidal vector fields $b\in L^{2}_{\sigma,\textrm{axi}}(\mathbb{R}^{3})$ are expressed as $b=\nabla \times (\phi\nabla \theta)+G\nabla \theta$ with unique potentials $\phi$ and $G$ and admit the unique vector potential
 
\begin{align*}
a=\nabla \times (\eta\nabla \theta)+\phi\nabla \theta, 
\end{align*}\\
where $\eta$ is a solution to the Dirichlet problem $-L\eta=G$ in $\mathbb{R}^{2}_{+}$ and $\eta=0$ on $\partial \mathbb{R}^{2}_{+}$. The constant $B_{\infty}=-We_z$ can be expressed as $B_{\infty}=-\nabla \times (\phi_{\infty}\nabla \theta)$ with the axisymmetric potential $\phi_{\infty}=Wr^{2}/2+\gamma$ which is unique up to constant and we have Clebsch representation to 

\begin{align*}
B=b+B_{\infty}=\nabla \times (\Phi\nabla \theta)+G\nabla \theta,\quad \Phi=\phi-\phi_{\infty}.
\end{align*}\\
The vector field

\begin{align*}
A=\nabla \times (\eta\nabla \theta)+\Phi\nabla \theta
\end{align*}\\
is (one of) the vector potentials of $B$ and the pseudo-scalar $A\cdot B$ with this vector potential is expressed as 

\begin{align*}
A\cdot B=2\Phi \frac{G}{r^{2}}+\nabla \cdot (\Phi\nabla \theta \times A). 
\end{align*}\\
The magnetic helicity of the axisymmetric solenoidal vector field $B$ can be computed formally by integrating this pseudo-scalar in $\mathbb{R}^{3}$. The constant shift of the potential $\phi_{\infty}$ induces the change $A\longmapsto A+C\nabla \theta$ and affects magnetic helicity.

The generalized magnetic helicity is based on symmetry conservation (Casimir invariant). We show the identity

\begin{align*}
A_t\cdot Bg(\Phi)=\partial_t \left(g(\Phi)\frac{G}{r^{2}} \right)+\nabla \cdot (g(\Phi)\nabla \theta\times A ),
\end{align*}\\
for axisymmetric solenoidal vector field $B$ and any function $g(s)$ and deduce the generalized magnetic helicity conservation equality (1.17) from the vector potential equation  

\begin{align*}
A_{t}+B\times u+\nabla Q=-\mu \nabla \times B,
\end{align*}\\
with the choice $g(s)=2s_{+}$ (The choice $g(s)=2s$ corresponds to the magnetic helicity equality.)

\subsubsection{Orbital stability}

The generalized magnetic helicity is affected by the gauge $\gamma\geq 0$, and so are the minimum $I_{h,W,\gamma}$ and the set of minimizers $S_{h,W,\gamma}$ in the minimization (1.13). The current field $\nabla \times U$ of the minimizer $U=\nabla \times ((\phi-\phi_{\infty})\nabla \theta)+\kappa (\phi-\phi_{\infty})_{+}\nabla \theta$ supported in $\{ \phi>\phi_{\infty} \}$ does not intersect the $z$-axis for the positive gauge $\gamma>0$. For the particular gauge $\gamma=0$, the minimizer is a translation of the explicit solution (1.6) for the strength $\kappa^{2}$ by the uniqueness of the Grad--Shafranov equation (1.16) (Theorem 8.3) via the moving plane method \cite{Fra92}, \cite{Fra00}. For $h=h_{C}(W,\lambda)$ in (1.11), $\kappa^{2}=\lambda$ by the explicit form of the constant $h_C$ (Proposition 8.4) and the set $S_{h_C,W,0}$ agrees with translations of the explicit solution $U_C$ with the strength $\lambda$, i.e., 

\begin{align*}
S_{h_C,W,0}=\{U_C(\cdot +ze_z)-B_{\infty} |\ z\in \mathbb{R}\ \}.
\end{align*}\\
In weak ideal limits of axisymmetric Leray--Hopf solutions, the two points (i) and (ii) imply the stability of a set of minimizers $S_{h,W,\gamma}$ to $I_{h,W,\gamma}$ in (1.13) for $h\in \mathbb{R}$, $W>0$, and $\gamma\geq 0$ (Theorem 8.2). Theorem 1.4 is derived from the case $h=h_C$ and $\gamma=0$. 

The scaling property of the minimum (1.13) is $I_{h,W,\gamma}=(W/2)^{-2}I_{\tilde{h},2,\tilde{\gamma}}$ for $\tilde{h}=(W/2)^{-2}h$ and $\tilde{\gamma}=(W/2)^{-1}\gamma$. We will fix $W=2$ from Section 3 to Section 7 and deduce results for $W>0$ from those for $W=2$ in Section 8.

\subsection{Organization of the study} 

The following is how this paper is structured. Section 2 demonstrates the Taylor state stability (Theorem 1.1). In Section 3, we prove Clebsch representation and well-definedness of generalized magnetic helicity (1.10) and generalized magnetic mean-square potential (1.14). Section 4 develops the variational problem (1.13) (for $W=2$) and establishes the strict subadditivity (1.15). Section 5 demonstrates the compactness of minimizing sequences of (1.13) by the concentration--compactness principle. Section 6 shows the equalities (1.17) and (1.18) for axisymmetric Leray--Hopf solutions. Section 7 proves the conservation of generalized magnetic helicity at weak ideal limits. Section 8 demonstrates the orbital stability of a set of minimizers to (1.13) and deduces Theorem 1.4 from a uniqueness theorem. Appendix A is devoted to the existence of axisymmetric Leray--Hopf solutions to (1.9). Appendix B investigates Taylor states in a ball (Remark 1.3.)

\subsection{Acknowledgements} 
The author thanks Professors Daniel Faraco and Sauli Lindberg for sending him the review \cite{FLS22}. He is also grateful to Professors Alexey Cheskidov, Kyudong Choi, Kenji Nakanishi, Masahito Ohta, and Zensho Yoshida for their valuable comments. This work was made possible in part by the JSPS through the Grant-in-aid for Young Scientist 20K14347 and by  MEXT Promotion of Distinctive Joint Research Center Program JPMXP0619217849.\\


\section{Linear force-free fields}
We demonstrate Taylor state stability (Theorem 1.1). We show that minimizers of Woltjer's principle (1.5) (Taylor states) are eigenfunctions of the rotation operator associated with the principal eigenvalues by Rayleigh's formulas. We then prove Taylor state stability by a contradiction argument.

\subsection{Magnetic helicity}
Let $\Omega\subset \mathbb{R}^{3}$ be a bounded and simply connected domain with a $C^{1,1}$-boundary $\partial\Omega$ consisting of connected components $\Gamma_{1},\dots, \Gamma_{I}$. We consider the direct sum decomposition

\begin{equation}
\begin{aligned}
L^{p}(\Omega)&=L^{p}_{\sigma}(\Omega)\oplus G^{p} (\Omega),\quad 1<p<\infty,\\ 
L^{p}_{\sigma}(\Omega)&=\{u\in L^{p}(\Omega)\ |\ \nabla \cdot u=0\ \textrm{in}\ \Omega,\ u\cdot n=0\ \textrm{on}\ \partial\Omega\},\\
G^{p}(\Omega)&=\{\nabla q\in L^{p}(\Omega)\ |\ q\in L^{1}_{\textrm{loc}}(\Omega) \},
\end{aligned}
\end{equation}\\
and the associated projection operator $\mathbb{P}:  L^{p}(\Omega)\to L^{p}_{\sigma}(\Omega)$. We denote by $C_{c,\sigma}^{\infty}(\Omega)$ the space of all smooth solenoidal vector fields with compact support in $\Omega$. The $L^{p}$-closure of $C_{c,\sigma}^{\infty}(\Omega)$ is $L^{p}_{\sigma}(\Omega)$ and the $W^{1,p}$-closure of $C_{c,\sigma}^{\infty}(\Omega)$ is $L^{p}_{\sigma}\cap W^{1,p}_{0}(\Omega)$. The same decomposition and density properties hold also for $\Omega=\mathbb{R}^{3}$ \cite[Theorem III.1.2, 2.3, 5.1]{Gal}. 

According to \cite{ABDG} and \cite{FL20}, we define spaces of vector potentials. Let $H(\D,\Omega)=\{u\in L^{2}(\Omega)\ |\ \nabla \cdot u\in L^{2}(\Omega) \}$ and $H(\textrm{curl},\Omega)=\{u\in L^{2}(\Omega)\ |\ \nabla\times u\in L^{2}(\Omega) \}$. By the trace operator $T: H^{1}(\Omega)\ni \varphi\longmapsto T\varphi\in H^{1/2}(\partial\Omega)$, the  tangential trace $u\times n$ for $u\in H(\textrm{curl},\Omega)$ and the normal trace $u\cdot n$ for $u\in H(\textrm{div},\Omega)$ are defined as elements in $H^{-1/2}(\partial\Omega)=H^{1/2}(\partial\Omega)^{*}$ in the sense that  

\begin{equation}
\begin{aligned}
<u\times n, \xi>_{\partial\Omega}&=\int_{\Omega}\nabla \times u\cdot  \xi\dd x-\int_{\Omega}u\cdot \nabla \times \xi\dd x,\\
<u\cdot n, \varphi>_{\partial\Omega}&=\int_{\Omega}u\cdot \nabla \varphi\dd x+\int_{\Omega}\nabla \cdot  u \varphi\dd x,
\end{aligned}
\end{equation}\\
for $\xi, \varphi\in H^{1}(\Omega)$. Here, $<\cdot,\cdot>_{\partial\Omega}$ denotes the duality pairing between $H^{-1/2}(\partial\Omega)$ and $H^{1/2}(\partial\Omega)$. The traces can be defined also in the $L^{p}$ setting \cite{FL20}. Let $X(\Omega)=H(\D,\Omega)\cap H(\textrm{curl},\Omega)$. Let $X_{N}(\Omega)=\{u\in X(\Omega)\ |\ u\times n=0\ \textrm{on}\ \partial\Omega \}$ and $X_{T}(\Omega)=\{u\in X(\Omega)\ |\ u\cdot n=0\ \textrm{on}\ \partial\Omega \}$. When $\Omega$ is $C^{1,1}$ or convex, $X_N(\Omega)$ and $X_T(\Omega)$ continuously embed into $H^{1}(\Omega)$ \cite[Theorems 2.9, 2.12, 2.17]{ABDG}, i.e. 

\begin{equation}
X_N(\Omega), X_T(\Omega)\subset H^{1}(\Omega).
\end{equation}\\
When $\Omega$ is Lipschitz, $X_N(\Omega)$ and $X_T(\Omega)$ continuously embed into $H^{s}(\Omega)$ for some $s>1/2$ and compactly embed into $L^{2}(\Omega)$ \cite[Proposition 3.7, Theorem 2.8]{ABDG}. We set $K_{N}(\Omega)=\{u\in X_N(\Omega)\ |\ \nabla \times  u=0,\ \nabla \cdot u=0\ \textrm{in}\ \Omega \}$ and $K_{T}(\Omega)=\{u\in X_T(\Omega)\ |\ \nabla \times u=0,\ \nabla \cdot u=0\ \textrm{in}\ \Omega \}$. For a simply connected domain $\Omega$ with connected $\Gamma_{1},\dots, \Gamma_{I}$ of $\partial\Omega$, $\textrm{dim}\ K_{T}(\Omega)=0$ and $\textrm{dim}\ K_{N}(\Omega)=I$ \cite[Propositions 3.14, 3.18]{ABDG}.

For $B\in L^{2}_{\sigma}(\Omega)$, there exists a unique $A\in X_N(\Omega)$ such that $\nabla \cdot A=0$ in $\Omega$ and $\int_{\Gamma_i}A\cdot n\dd H=0$ for $1\leq i\leq I$ \cite[Theorem 3.17]{ABDG}. We denote such $A$ by $A=\textrm{curl}^{-1}B$ (Coulomb gauge). Magnetic helicity in a simply connected domain is gauge invariant in the sense that 

\begin{align*}
\int_{\Omega}\tilde{A}\cdot B\dd x=\int_{\Omega}\textrm{curl}^{-1}B\cdot B\dd x,
\end{align*}\\
for any vector potential $\tilde{A}$ such that $\nabla \times \tilde{A}=B$. We use the integrand $\textrm{curl}^{-1}B \cdot B$ for the magnetic helicity.

\subsection{Rayleigh's formulas}

We recall the fact that the rotation operator $S=\nabla \times$ is a self-adjoint operator on $L^{2}_{\sigma}(\Omega)$ \cite[Theorem 1.1]{YG90}. Let $D(S)=\{u\in L^{2}_{\sigma}(\Omega)\ |\ S u\in L^{2}_{\sigma}(\Omega) \}$ and $Su=\nabla \times u$ for $u\in D(S)$. By the continuous embedding (2.3), $u\times n\in H^{1/2}(\partial\Omega)$ for $u\in D(S)\subset H^{1}(\Omega)$. The vector field $u$ is irrotational on $\partial\Omega$ for $u\in D(S)$, i.e., $d\alpha=0$ for the dual 1-form $\alpha$ of $u$ on $\partial\Omega$ and the exterior derivative $d$. Thus there exists a 0-form $\phi\in H^{3/2}(\partial\Omega)$ such that $\alpha=d \phi$.

Let $D(S^{*})$ be a set of all $v\in L^{2}_{\sigma}(\Omega)$ such that there exists some $w\in L^{2}_{\sigma}(\Omega)$ such that $(Su,v)=(u,w)$ for all $u\in L^{2}_{\sigma}(\Omega)$, where $(\cdot,\cdot)$ is the inner product on $L^{2}_{\sigma}(\Omega)$. We set the adjoint operator by $S^{*}v=w$ for $v\in D(S^{*})$. For $u,v\in D(S)$, we denote by $\alpha$ and $\beta$, dual 1-forms of $u$ and $v$ on $\partial\Omega$. Since $\alpha=d\phi$ and $d\beta=0$, 

\begin{align*}
d(\phi\wedge \beta)=d\phi\wedge \beta-\phi\wedge d\beta=d\phi\wedge \beta=\alpha\wedge \beta.
\end{align*}\\
By Stokes's theorem,

\begin{align*}
<n\times u,v>_{\partial\Omega}=\int_{\partial\Omega}u\times v\cdot  n\dd H
=\int_{\partial\Omega}\alpha\wedge \beta=\int_{\partial\Omega}d(\phi \wedge \beta)=0.
\end{align*}\\
This implies that $(Su,v)=(u,Sv)$, $u,v\in D(S)$ and $D(S)\subset D(S^{*})$ by $(2.2)_1$. Thus, $S$ is a symmetric operator. For $\tilde{u}\in C^{\infty}_{c}(\Omega)$, $u=\mathbb{P}\tilde{u}\in D(S)$ since $\nabla \times u=\nabla \times \tilde{u}$. By $(\nabla \times \tilde{u},v)=( \tilde{u},S^{*}v)$, $\nabla \times v=S^{*}v\in L^{2}_{\sigma}(\Omega)$ and $D(S^{*})\subset D(S)$. Thus, $S$ is a self-adjoint operator. The paper \cite{YG90} assumes that $\partial\Omega$ is smooth. The same result holds for domains with  $C^{1,1}$-boundaries by the continuous embedding (2.3).

The operator $S: L^{2}_{\sigma}(\Omega)\supset D(S)\longrightarrow L^{2}_{\sigma}(\Omega)$ is injective since $K_T(\Omega)=\emptyset$. The operator $S$ is surjective since for $B\in L^{2}_{\sigma}(\Omega)$, there exists a unique $\hat{A}\in X_{T}(\Omega)$ such that $\nabla \times \hat{A}=B$, $\nabla \cdot \hat{A}=0$ in $\Omega$ \cite[Theorem 3.12]{ABDG}, i.e.,  $\hat{A}\in D(S)$. The inverse operator $S^{-1}: L^{2}_{\sigma}(\Omega)\ni B\longmapsto \hat{A}\in  D(S)$ is compact by (2.3). Thus, eigenvalues are countable real numbers. Each eigenfunction space is finite-dimensional by the Fredholm alternative, and there exist eigenfunctions $\{\bold{B}_j\}$ making a complete orthonormal basis on $L^{2}_{\sigma}(\Omega)$. They provide eigenvalues $\{\lambda_j\}$ and eigenfunctions $\{\bold{B}_j\}$ of the operator $S$.

We label $\{\bold{B}_j\}$ according to multiplicity of $\{\lambda_j\}$ so that each eigenvalue $\lambda_j$ corresponds to one eigenfunction $\bold{B}_j$. We also denote $\{\lambda_j\}$ by $\cdots<f_{2}^{-}\leq f_{1}^{-}<0<f_{1}^{+}\leq f_{2}^{+}<\cdots$. The eigenfunction $\bold{B}_j$ and $\bold{A}_j=\textrm{curl}^{-1}B_j$ satisfy

\begin{align*}
\int_{\Omega}\bold{A}_j\cdot \bold{B}_k\dd x=\frac{1}{\lambda_k}\int_{\Omega}\bold{A}_j\cdot \nabla \times \bold{B}_k\dd x=\frac{1}{\lambda_k}\int_{\Omega}\bold{B}_j\cdot  \bold{B}_k\dd x=\frac{1}{\lambda_k}\delta_{j,k}.
\end{align*}\\
The expansion of $B\in L^{2}_{\sigma}(\Omega)$ and $A=\textrm{curl}^{-1}B$ are 

\begin{align*}
B=\sum_{j}c_j\bold{B}_j,\quad c_j=\int_{\Omega}B\cdot \bold{B}_{j}\dd x,\quad A=\sum_{j}c_j\bold{A}_{j}.
\end{align*}\\
Magnetic helicity of $B$ is expressed as 

\begin{align*}
\int_{\Omega}A\cdot B\dd x=\sum_{j}\frac{1}{\lambda_j}c_j^{2}=\sum_{j}\frac{1}{f_j^{+}}c_j^{2}+\sum_{j}\frac{1}{f_j^{-}}c_j^{2}.
\end{align*}\\
This helicity expansion yields the best constants for Arnold's inequalities \cite[p.122]{AK98}
 
\begin{equation} 
\begin{aligned}
\int_{\Omega}A\cdot B\dd x&\leq \frac{1}{f_1^{+}}\int_{\Omega}|B|^{2}\dd x,\\
-\int_{\Omega}A\cdot B\dd x &\leq \frac{1}{-f_1^{-}}\int_{\Omega}|B|^{2}\dd x,
\end{aligned}
\end{equation}
for $B\in L^{2}_{\sigma}(\Omega)$ and $A=\textrm{curl}^{-1}B$. The equalities hold for eigenfunctions associated with $f_1^{+}$ and $f_1^{-}$. Thus, normalization implies Rayleigh's formulas

\begin{equation} 
\begin{aligned}
f_{1}^{+}&=\inf \left\{\int_{\Omega}|B|^{2}\dd x\  \middle|\ B\in L^{2}_{\sigma}(\Omega),\  \int_{\Omega}\textrm{curl}^{-1}B\cdot B\dd x=1 \right\}, \\
-f_{1}^{-}&=\inf \left\{\int_{\Omega}|B|^{2}\dd x\  \middle|\ B\in L^{2}_{\sigma}(\Omega),\  \int_{\Omega}\textrm{curl}^{-1}B\cdot B\dd x=-1 \right\}.
\end{aligned}
\end{equation}\\

\subsection{Woltjer's principle}

According to \cite[p.1246]{Laurence91}, we consider Woltjer's principle with the constant $h\in \mathbb{R}$,  

\begin{align*}
{\mathcal{I}}_h=\inf\left\{\frac{1}{2}\int_{\Omega}|B|^{2}\dd x\  \middle|\ B\in L^{2}_{\sigma}(\Omega),\  \int_{\Omega}\textrm{curl}^{-1}B\cdot B\dd x=h \right\}.
\end{align*}\\
We denote by ${\mathcal{S}}_h$ the set of minimizers to $\mathcal{I}_h$. By the scaling $B=|h|^{1/2}\tilde{B}$, 

\begin{equation}
\mathcal{I}_{h}=
\begin{cases}
\ h \mathcal{I}_{1}   &h>0, \\
\ -h \mathcal{I}_{-1} &h<0,\\
\ 0  &h=0,
\end{cases}
\qquad 
S_{h}=
\begin{cases}
\ h^{1/2} S_{1}   &h>0, \\
\ (-h)^{1/2} S_{-1} &h<0,\\
\ \emptyset  &h=0.
\end{cases}
\end{equation}\\
The minimum $\mathcal{I}_{h}$ is Lipschitz continuous and additive for $h\in \mathbb{R}$. Rayleigh's formulas (2.5) are expressed in terms of the minimum as 

\begin{align}
f_{1}^{+}=2{\mathcal{I}}_1,\quad -f_{-}^{+}=2{\mathcal{I}}_{-1}.  
\end{align}\\
We will identify the set of minimizers ${\mathcal{S}}_h$ with the set of a finite number of eigenfunctions associated with $f_1^{+}$ for $h>0$ (resp. $f_1^{-}$ for $h<0$) by using (2.7) and a Lagrange multiplier \cite[8.4.1]{E}, cf. \cite[p.1247]{Laurence91}.

\begin{prop}
Suppose that $U\in L^{2}_{\sigma}(\Omega)$ is an eigenfunction of the rotation operator associated with the eigenvalue $f_1^{+}$ (resp. $f^{-}_{1}$) with helicity $1$ (resp. $-1$). Then, $\frac{1}{2}\int_{\Omega}|U|^{2}\dd x=\mathcal{I}_1$ (resp. $\frac{1}{2}\int_{\Omega}|U|^{2}\dd x=\mathcal{I}_{-1}$). 
\end{prop}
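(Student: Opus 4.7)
The plan is to show the two inequalities $\mathcal{I}_1 \ge \frac12\int_\Omega |U|^2\,\dd x$ and $\mathcal{I}_1 \le \frac12\int_\Omega |U|^2\,\dd x$ using the two ingredients already established: Arnold's inequality (2.4) and the explicit evaluation of magnetic helicity for an eigenfunction. We handle the case $h=1$ first; the case $h=-1$ is identical after replacing $f_1^+$ by $-f_1^-$ and switching the sign in Arnold's inequality.

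For the lower bound, I would invoke Arnold's inequality (2.4) directly: for any competitor $B\in L^{2}_\sigma(\Omega)$ with $\int_\Omega \textrm{curl}^{-1}B\cdot B\,\dd x = 1$,
\begin{align*}
1 = \int_\Omega \textrm{curl}^{-1}B\cdot B\,\dd x \le \frac{1}{f_1^+}\int_\Omega |B|^2\,\dd x,
\end{align*}
so $\tfrac12\int_\Omega |B|^2\,\dd x \ge f_1^+/2$. Taking the infimum over admissible $B$ gives $\mathcal{I}_1 \ge f_1^+/2$.

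For the upper bound, I would use $U$ as a trial function. Since $U$ is an eigenfunction of the rotation operator with eigenvalue $f_1^+$, writing $A = \textrm{curl}^{-1}U$ and using $\nabla \times U = f_1^+ U$,
\begin{align*}
\int_\Omega |U|^2\,\dd x = \int_\Omega \nabla \times U \cdot A\,\dd x = f_1^+\int_\Omega U\cdot A\,\dd x = f_1^+,
\end{align*}
because the helicity of $U$ is $1$. (The integration by parts is justified since $U\in D(S)\subset H^1(\Omega)$ and $A\in X_N(\Omega)$, so the boundary term from $(2.2)_1$ vanishes by $A\times n = 0$.) Hence $U$ is admissible in the variational problem for $\mathcal{I}_1$ and yields $\mathcal{I}_1 \le \tfrac12\int_\Omega|U|^2\,\dd x = f_1^+/2$. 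Combining both bounds gives $\frac12\int_\Omega|U|^2\,\dd x = f_1^+/2 = \mathcal{I}_1$, which is consistent with (2.7).

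There is no real obstacle here: the statement is essentially the observation that equality in Arnold's inequality (2.4) is attained precisely on eigenfunctions associated with $f_1^+$, which was already noted after (2.4) and is encoded in the Rayleigh formulas (2.5). The only thing to be careful about is making sure the integration by parts between $U$ and its potential $A=\textrm{curl}^{-1}U$ is valid in the relevant function spaces, which follows from $A\in X_N(\Omega)$ (so $A\times n=0$) together with the regularity embedding (2.3).
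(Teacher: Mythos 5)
Your proof is correct and follows essentially the same route as the paper: the key step in both is the computation $\int_\Omega|U|^2\,\dd x=f_1^+\int_\Omega U\cdot\textrm{curl}^{-1}U\,\dd x=f_1^+$ for a unit-helicity eigenfunction, combined with the identity $f_1^+=2\mathcal{I}_1$. The only difference is that you re-derive that identity from Arnold's inequality (2.4), whereas the paper simply cites Rayleigh's formula (2.7), which was already established by exactly that argument.
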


\begin{proof}
By the formula (2.7),

\begin{align*}
\frac{1}{2}\int_{\Omega}|U|^{2}\dd x=\frac{1}{2}\int_{\Omega}U\cdot \nabla \times( \textrm{curl}^{-1}U)\dd x
&=\frac{1}{2}\int_{\Omega}\nabla \times U\cdot \textrm{curl}^{-1}U\dd x \\
&=\frac{f_{1}^{+}}{2}\int_{\Omega} U\cdot  \textrm{curl}^{-1}U\dd x
={\mathcal{I}}_1.
\end{align*}
\end{proof}

\begin{prop}
Suppose that $U\in L^{2}_{\sigma}(\Omega)$ satisfies $\frac{1}{2}\int_{\Omega}|U|^{2}\dd x={\mathcal{I}}_1$with helicity 1 (resp. $\frac{1}{2}\int_{\Omega}|U|^{2}\dd x={\mathcal{I}}_{-1}$ with helicity $-1$). Then, $U$ is an eigenfunction of the rotation operator for the eigenvalue $f_{1}^{+}$ (resp. $f_1^{-}$). 
\end{prop}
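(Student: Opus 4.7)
The plan is to derive the Euler--Lagrange equation for the constrained minimization by a two-parameter variation (in place of a formal Lagrange multiplier), and then read off the eigenvalue directly from Rayleigh's formula (2.7). Write $A = \textrm{curl}^{-1}U \in X_N(\Omega)$ and consider the positive helicity case; the $h=-1$ case is identical up to sign.

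First I would record the symmetry identity
\begin{equation*}
\int_{\Omega}\textrm{curl}^{-1} f \cdot g\, \dd x = \int_{\Omega} f \cdot \textrm{curl}^{-1} g\, \dd x,\qquad f,g\in L^{2}_{\sigma}(\Omega),
\end{equation*}
which is proved exactly as in the self-adjointness argument for $S$: integrating by parts produces a boundary term of the form $\langle (\textrm{curl}^{-1}f)\times n, \textrm{curl}^{-1}g\rangle_{\partial\Omega}$, and this vanishes because both vector potentials lie in $X_N(\Omega)$. As a consequence, the differential of the helicity functional $G(B) = \int_\Omega \textrm{curl}^{-1}B\cdot B\,\dd x$ at $U$ acts by $G'(U)[B] = 2\int_\Omega A\cdot B\,\dd x$.

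Next, for an arbitrary test field $B\in L^{2}_{\sigma}(\Omega)$ I would introduce the two-parameter family $V_{\varepsilon,\delta} = (1+\delta)U + \varepsilon B$ and expand
\begin{equation*}
G(V_{\varepsilon,\delta}) = (1+\delta)^{2} + 2\varepsilon(1+\delta)\int_{\Omega} A\cdot B\,\dd x + \varepsilon^{2}G(B).
\end{equation*}
Since $\partial_\delta G(V_{\varepsilon,\delta})|_{(0,0)} = 2 \ne 0$, the implicit function theorem yields a $C^{1}$ curve $\delta=\delta(\varepsilon)$ with $\delta(0)=0$, $\delta'(0) = -\int_\Omega A\cdot B\,\dd x$, and $G(V_{\varepsilon,\delta(\varepsilon)})\equiv 1$. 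Because $U$ minimizes $F(B) = \tfrac{1}{2}\int_\Omega |B|^{2}\dd x$ subject to this constraint, differentiating $F(V_{\varepsilon,\delta(\varepsilon)})$ at $\varepsilon=0$ gives
\begin{equation*}
0 = \int_{\Omega} U\cdot B\,\dd x + \delta'(0)\int_{\Omega}|U|^{2}\dd x = \int_{\Omega} U\cdot B\,\dd x - 2{\mathcal{I}}_1 \int_{\Omega} A\cdot B\,\dd x.
\end{equation*}
By formula (2.7), $2{\mathcal{I}}_1 = f_{1}^{+}$, so $\int_\Omega (U - f_{1}^{+}A)\cdot B\,\dd x = 0$ for every $B\in L^{2}_{\sigma}(\Omega)$.

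The Helmholtz decomposition (2.1) then forces $U - f_{1}^{+}A \in G^{2}(\Omega)$, i.e.\ $U - f_{1}^{+}A = \nabla p$ for some $p\in L^{1}_{\textrm{loc}}(\Omega)$. Taking the curl yields $\nabla \times U = f_{1}^{+}\nabla\times A = f_{1}^{+} U$ in the distributional sense, which shows $\nabla\times U\in L^{2}_{\sigma}(\Omega)$, hence $U\in D(S)$, and $SU = f_{1}^{+}U$ as required. For the $h=-1$ case the same argument produces $SU = -2{\mathcal{I}}_{-1}U = f_{1}^{-}U$. The step I expect to be most delicate (though still routine) is the symmetry identity for $\textrm{curl}^{-1}$: one must be careful to use precisely the gauge-invariant representative in $X_N(\Omega)$, since the tangential-trace condition $A\times n = 0$ is what kills the Stokes boundary term and makes the helicity constraint differentiable in the above form.
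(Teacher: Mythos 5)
Your proof is correct and follows essentially the same route as the paper: a two-parameter variation handled by the implicit function theorem, the Helmholtz decomposition (2.1) to pass from the weak Euler--Lagrange identity to $\nabla\times U=fU$, and Rayleigh's formula (2.7) to identify the eigenvalue. The only difference is cosmetic -- you take the auxiliary direction to be $U$ itself (legitimate since the helicity constraint makes $\partial_\delta G(0,0)=2\neq 0$), which lets you read off the multiplier as $2\mathcal{I}_1=f_1^{+}$ immediately, whereas the paper uses a generic $U_0$ and identifies $f=f_1^{+}$ in a short separate computation afterward.
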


\begin{proof}
By $\int_{\Omega}\textrm{curl}^{-1}U\cdot U\dd x=1$, there exists $U_0\in L^{2}_{\sigma}(\Omega)$ such that $\int_{\Omega}\textrm{curl}^{-1}U \cdot U_0\dd x\neq 0$. We take an arbitrary $\tilde{U}\in L^{2}_{\sigma}(\Omega)$ and set 

\begin{align*}
j(\tau,s)=\int_{\Omega}\textrm{curl}^{-1}(U+\tau \tilde{U}+s U_0)\cdot (U+\tau \tilde{U}+s U_0)\dd x. 
\end{align*}\\
The function $j(\tau,s)$ satisfies $j(0,0)=1$ and 

\begin{align*}
\frac{\partial j}{\partial s}(0,0)=2\int_{\Omega}\textrm{curl}^{-1}U\cdot U_0\dd x\neq 0.
\end{align*}\\
By the implicit function theorem, there exists a $C^{1}$-function $s=s(\tau)$ such that $j(\tau,s(\tau))=1$ for sufficiently small $|\tau|$. By differentiating this by $\tau$, $\dot{s}(0)=-\partial_{\tau} j(0,0)/\partial_{s} j(0,0)$. Since $U$ is a minimizer, 

\begin{align*}
0=\frac{\dd}{\dd \tau}\int_{\Omega}|U+\tau \tilde{U}+s(\tau)U_0|^{2}\dd x\Bigg|_{\tau=0}
=2\int_{\Omega}U\cdot( \tilde{U}+\dot{s}(0) U_0) \dd x.
\end{align*}\\
By substituting $\dot{s}(0)$ into the above, 

\begin{align*}
\int_{\Omega}(U-f \textrm{curl}^{-1}U)\cdot \tilde{U}\dd x=0,\quad f=\frac{\int_{\Omega}U_0\cdot U\dd x}{\int_{\Omega}\textrm{curl}^{-1}U_0\cdot U\dd x}.
\end{align*}\\
Since $\tilde{U}\in L^{2}_{\sigma}(\Omega)$ is arbitrary, by (2.1) there exists some $q$ such that $U- f \textrm{curl}^{-1}U=\nabla q$. Thus $\nabla \times U=f U$. By Rayleigh's formulas (2.7), 

\begin{align*}
f_{1}^{+}=2\mathcal{I}_1=\int_{\Omega}|U|^{2}\dd x=\int_{\Omega}U\cdot \nabla \times (\textrm{curl}^{-1}U)\dd x=f\int_{\Omega}U\cdot  \textrm{curl}^{-1}U\dd x= f.
\end{align*}\\
Hence $U$ is an eigenfunction for the eigenvalue $f_1^{+}$. 
\end{proof}

\begin{lem}
Let $\{U_{j}^{+}\}_{j=1}^{N^{+}}$ (resp. $\{U_{j}^{-}\}_{j=1}^{N^{-}}$) be eigenfunctions of the rotation operator associated with the eigenvalues $f_1^{+}$ with helicity $h>0$ (resp. $f_1^{-}$ with helicity $h<0$). Then,

\begin{align}
{\mathcal{S}}_h=
\begin{cases}
\ \{U_j^{+}\}_{j=1}^{N^{+}} & h>0,\\
\ \{U_j^{-}\}_{j=1}^{N^{-}} &h<0,\\
\ \emptyset &h=0.
\end{cases}
\end{align}
\end{lem}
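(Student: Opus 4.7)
The plan is to deduce Lemma 2.4 directly from Propositions 2.2 and 2.3 combined with the scaling relation (2.6), treating the three cases $h>0$, $h<0$, $h=0$ separately.

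For $h>0$, I would first verify the inclusion $\{U_j^{+}\}_{j=1}^{N^{+}}\subseteq {\mathcal{S}}_h$. Let $U$ be any eigenfunction of the rotation operator with $\nabla\times U = f_1^{+} U$ and $\int_{\Omega}\textrm{curl}^{-1}U\cdot U\,\dd x = h$. The rescaled field $\tilde U := h^{-1/2}U$ remains an eigenfunction for $f_1^{+}$ and has unit helicity, so Proposition 2.2 gives $\tfrac12\int_{\Omega}|\tilde U|^{2}\dd x = {\mathcal{I}}_1$; the scaling ${\mathcal{I}}_h = h{\mathcal{I}}_1$ from (2.6) then yields $\tfrac12\int_{\Omega}|U|^{2}\dd x = {\mathcal{I}}_h$, so $U\in {\mathcal{S}}_h$.

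For the reverse inclusion, take $U\in {\mathcal{S}}_h$. Then $\tilde U := h^{-1/2}U$ has helicity $1$ and energy $\tfrac12\int_{\Omega}|\tilde U|^{2}\dd x = h^{-1}{\mathcal{I}}_h = {\mathcal{I}}_1$ by (2.6), i.e. $\tilde U\in {\mathcal{S}}_1$. Proposition 2.3 then forces $\nabla\times\tilde U = f_1^{+}\tilde U$; multiplying by $h^{1/2}$ shows that $U$ is itself an eigenfunction of the rotation operator for $f_1^{+}$ with helicity $h$, placing it in $\{U_j^{+}\}_{j=1}^{N^{+}}$.

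The case $h<0$ is handled by the same argument after replacing the rescaling by $\tilde U := (-h)^{-1/2}U$ and invoking the parenthetical halves of Propositions 2.2 and 2.3, which relate ${\mathcal{I}}_{-1}$ to $f_1^{-}$. The case $h=0$ is immediate from the definition $S_0=\emptyset$ recorded in (2.6). I do not anticipate any real obstacle; the proof is purely a scaling-plus-bookkeeping deduction from the two preceding propositions. The only cosmetic point worth flagging is that, when the eigenspace associated with $f_1^{\pm}$ has dimension greater than one, the set $\{U_j^{\pm}\}$ is in fact a quadric hypersurface in that eigenspace (so $N^{\pm}$ should be read as possibly infinite), but this does not impact any step of the argument.
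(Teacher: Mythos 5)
Your proof is correct and follows essentially the same route as the paper: both reduce to the characterization ${\mathcal{S}}_{\pm 1}$ via Propositions 2.2 and 2.3 and then transfer to general $h$ by the scaling (2.6). Your parenthetical remark that the set of eigenfunctions with prescribed helicity is a sphere in the eigenspace (hence infinite when $N^{\pm}\geq 2$) is a fair reading of the statement, and does not affect the set equality being proved.
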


\begin{proof}
By Propositions 2.1 and 2.2, 

\begin{align*}
{\mathcal{S}}_1= \left\{U\in L^{2}_{\sigma}(\Omega)\ \middle|\ \nabla \times U=f_1^{+}U,\ \ \int_{\Omega}\textrm{curl}^{-1}U\cdot U\dd x=1 \right\}. 
\end{align*}\\
The multiplicity $N^{+}$ of $f_{1}^{+}$ is finite, and the right-hand side consists of finite elements. By scaling (2.6), $(2.8)_1$ follows. The same argument applies to the case $h<0$.
\end{proof}

\subsection{Minimizing sequences}

To prove the stability of the set ${\mathcal{S}}_h$, we use the compactness of minimizing sequences.

\begin{lem}
Let $h\in \mathbb{R}$. For a sequence $\{B_n\}\subset L^{2}_{\sigma}(\Omega)$ satisfying $\frac{1}{2}\int_{\Omega}|B_n|^{2}\dd x\to {\mathcal{I}}_h$ and $\int_{\Omega}\textrm{curl}^{-1}B_n\cdot  B_n\dd x\to h$, there exist $\{n_k\}$ and $B\in {\mathcal{S}}_{h}$ such that $B_{n_k}\to B$ in $L^{2}(\Omega)$. 
\end{lem}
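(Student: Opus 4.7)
My plan is to apply the classical direct method in the calculus of variations, with the bounded domain $\Omega$ providing exactly the compactness needed to pass to the limit in the helicity constraint. The overall structure is standard for problems in which the nonlinear constraint is a quadratic form that is sequentially continuous along weakly convergent bounded sequences, the critical ingredient being the compactness of $\mathrm{curl}^{-1}$ on a bounded domain.

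First, since $\tfrac{1}{2}||B_n||_{L^{2}}^{2}\to \mathcal{I}_h$, the sequence $\{B_n\}$ is bounded in $L^{2}_{\sigma}(\Omega)$, so after passing to a (not relabelled) subsequence I may assume $B_n\rightharpoonup B$ weakly in $L^{2}(\Omega)$. Setting $A_n=\mathrm{curl}^{-1}B_n\in X_N(\Omega)$, the uniqueness of the vector potential recalled in Subsection 2.1 makes $\mathrm{curl}^{-1}:L^{2}_{\sigma}(\Omega)\to X_N(\Omega)$ a bounded linear operator. The continuous embedding (2.3) combined with Rellich's compact embedding on the bounded domain $\Omega$ then shows that $\{A_n\}$ is precompact in $L^{2}(\Omega)$. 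A further subsequence therefore satisfies $A_n\to A$ strongly in $L^{2}(\Omega)$, and weak continuity of $\mathrm{curl}^{-1}$ identifies $A=\mathrm{curl}^{-1}B$.

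Next I pass to the limit in the bilinear helicity pairing by weak-times-strong convergence,
\[
\int_{\Omega}\mathrm{curl}^{-1}B_n\cdot B_n\,\dd x\longrightarrow \int_{\Omega}\mathrm{curl}^{-1}B\cdot B\,\dd x=h,
\]
which shows that $B$ is admissible for $\mathcal{I}_h$. Weak lower semicontinuity of the $L^{2}$-norm gives $\tfrac{1}{2}||B||_{L^{2}}^{2}\leq \mathcal{I}_h$, and admissibility forces the reverse inequality; hence $B\in \mathcal{S}_h$ and $||B_n||_{L^{2}}\to ||B||_{L^{2}}$. The combination of weak convergence with convergence of norms in the Hilbert space $L^{2}(\Omega)$ upgrades to strong convergence $B_n\to B$ in $L^{2}(\Omega)$, which is the desired conclusion.

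The only delicate point is the compactness of $\mathrm{curl}^{-1}$, which enters essentially through the boundedness of $\Omega$ via Rellich's theorem. This compactness is exactly what fails in the unbounded setting $\Omega=\mathbb{R}^{3}$ treated in the rest of the paper, where strict subadditivity (1.15) and the concentration--compactness principle of Lions must replace the direct method used here.
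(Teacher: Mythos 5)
Your proposal is correct and follows essentially the same route as the paper's proof of this lemma: weak $L^{2}$ compactness of $\{B_n\}$, strong $L^{2}$ convergence of the vector potentials via the embedding (2.3) and Rellich--Kondrakov, weak-times-strong passage to the limit in the helicity pairing, and the standard weak lower semicontinuity plus norm-convergence argument to upgrade to strong convergence. No gaps.
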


\begin{proof}
By (2.3) and Rellich--Kondrakov theorem, there exists a subsequence (still denoted by $\{B_n\}$) and $B\in L^{2}_{\sigma}(\Omega)$ such that 

\begin{align*}
B_n&\rightharpoonup B\quad \textrm{in}\ L^{2}(\Omega),\\
A_n=\textrm{curl}^{-1}B_n&\to A=\textrm{curl}^{-1}B \quad \textrm{in}\ L^{2}(\Omega).
\end{align*}\\
They imply that

\begin{align*}
\left|\int_{\Omega} (A_n\cdot B_n-A\cdot B)\dd x\right|
\leq ||A_n-A||_{L^{2}} \sup_n||B_n||_{L^{2}}+\left|\int_{\Omega} (A\cdot B_n-A\cdot B)\dd x\right|\to 0.
\end{align*}\\
By $\int_{\Omega}A\cdot B\dd x=h$ and 

\begin{align*}
{\mathcal{I}}_h\leq \int_{\Omega}|B|^{2}\dd x\leq \liminf_{n\to\infty}\int_{\Omega}|B_n|^{2}\dd x={\mathcal{I}}_h,
\end{align*}\\
we conclude that $B\in {\mathcal{S}}_{h}$ and $B_n\to B$ in $L^{2}(\Omega)$. 
\end{proof}

\vspace{15pt}

For application to stability, we state Lemma 2.4 in terms of total energy. 

\vspace{15pt}

\begin{thm}
Let $h\in \mathbb{R}$. For a sequence $\{(u_n,B_n)\}\subset L^{2}_{\sigma}(\Omega)$ satisfying $\frac{1}{2}\int_{\Omega}(|u_n|^{2}+ |B_n|^{2})\dd x\to {\mathcal{I}}_h$ and $\int_{\Omega}\textrm{curl}^{-1}B_n\cdot  B_n\dd x\to h$. There exist $\{n_k\}\subset \mathbb{N}$ and $B \in {\mathcal{S}}_h$ such that $(u_{n_k}, B_{n_k})\to (0,B)$ in $L^{2}(\Omega)$.
\end{thm}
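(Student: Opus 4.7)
The plan is to reduce Theorem 2.7 directly to Lemma 2.6 by showing that the kinetic part $u_n$ must vanish in $L^2(\Omega)$, after which $\{B_n\}$ inherits the hypotheses of Lemma 2.6. The key observation is that the variational problem defining $\mathcal{I}_h$ constrains only the magnetic energy, so convergence of the \emph{total} energy to $\mathcal{I}_h$ forces the kinetic energy to vanish.

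First I would set $h_n := \int_\Omega \textrm{curl}^{-1} B_n \cdot B_n\,\dd x$ and note $h_n \to h$. By the very definition of $\mathcal{I}_{h_n}$ we have
\begin{align*}
\tfrac{1}{2}\int_\Omega |B_n|^2\,\dd x \;\geq\; \mathcal{I}_{h_n}.
\end{align*}
The map $h \mapsto \mathcal{I}_h$ is continuous (in fact piecewise linear) by the scaling (2.6), so $\mathcal{I}_{h_n} \to \mathcal{I}_h$, giving
\begin{align*}
\liminf_{n\to\infty} \tfrac{1}{2}\int_\Omega |B_n|^2\,\dd x \;\geq\; \mathcal{I}_h.
\end{align*}
On the other hand, the hypothesis $\frac{1}{2}\int_\Omega(|u_n|^2+|B_n|^2)\,\dd x \to \mathcal{I}_h$ together with $|u_n|^2 \geq 0$ forces $\limsup_{n\to\infty} \frac{1}{2}\int_\Omega |B_n|^2\,\dd x \leq \mathcal{I}_h$. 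Combining the two bounds yields $\frac{1}{2}\int_\Omega |B_n|^2\,\dd x \to \mathcal{I}_h$, and hence $\frac{1}{2}\int_\Omega |u_n|^2\,\dd x \to 0$, i.e. $u_n \to 0$ strongly in $L^2(\Omega)$.

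Next I would apply Lemma 2.6 to $\{B_n\}$, which now satisfies the required two conditions, to extract a subsequence $B_{n_k} \to B$ in $L^2(\Omega)$ with $B \in \mathcal{S}_h$. Combined with the already established strong convergence $u_n \to 0$, this yields $(u_{n_k}, B_{n_k}) \to (0,B)$ in $L^2(\Omega)$, as required.

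There is no serious obstacle here; the argument is a direct reduction. The only mildly delicate point is the continuity of $h \mapsto \mathcal{I}_h$ at $h$, which is transparent from formula (2.6). (Implicit in the statement is that $h \neq 0$, since $\mathcal{S}_0 = \emptyset$; when $h = 0$ one only obtains the trivial limit $(0,0)$ from the chain of inequalities above.)
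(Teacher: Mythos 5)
Your proposal is correct and is essentially identical to the paper's own proof: the same chain $\mathcal{I}_{h_n}\leq \frac{1}{2}\int_\Omega|B_n|^2\dd x\leq \frac{1}{2}\int_\Omega(|u_n|^2+|B_n|^2)\dd x$, the continuity of $h\mapsto\mathcal{I}_h$, the resulting squeeze giving $\int_\Omega|B_n|^2\dd x\to 2\mathcal{I}_h$ and $\int_\Omega|u_n|^2\dd x\to 0$, followed by an application of Lemma 2.6. Your parenthetical remark about the degenerate case $h=0$ is a reasonable observation but does not change the argument.
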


\begin{proof}
For $h_n=\int_{\Omega}\textrm{curl}^{-1}B_n\cdot  B_n\dd x$, 

\begin{align*}
{\mathcal{I}}_{h_n} \leq \frac{1}{2}\int_{\Omega}|B_n|^{2}\dd x\leq\frac{1}{2} \int_{\Omega}\left(|u_n|^{2}+ |B_n|^{2}\right)\dd x. 
\end{align*}\\
Since ${\mathcal{I}}_{h}$ is continuous for $h\in \mathbb{R}$, letting $n\to\infty$ implies that  

\begin{align*}
\int_{\Omega}|B_n|^{2}\dd x\to I_h,\quad \int_{\Omega}|u_n|^{2}\dd x\to 0.
\end{align*}\\
We apply Lemma 2.4 and conclude.
\end{proof}

\begin{rem}
The assertion of Lemma 2.4 holds even for domains with Lipschitz boundaries since the embedding from $X_{N}(\Omega)$ into $L^{2}(\Omega)$ is compact. (The $C^{1}$-regularity is assumed in \cite[p.1247]{Laurence91}). The paper \cite{Laurence91} also studies minimization with an inhomogeneous boundary condition by using relative helicity \cite{BF84}, \cite{JC84}, and \cite{FA85}.     
\end{rem}

\subsection{Leray--Hopf solutions}

To prove the stability of force-free fields in weak ideal limits (Theorem 1.1), we recall definitions of Leray--Hopf solutions to (1.1)--(1.2) \cite[p.716]{FL20}, \cite[p.60]{GLL} and of their weak ideal limits \cite[p.709]{FL20}. The existence of Leray--Hopf solutions has been demonstrated for simply connected domains in \cite{DL72}, \cite[p.647]{ST83}, \cite[p.60]{GLL}, and \cite[Theorem 2.1]{FL20} and for multiply connected domains in \cite[Appendix A]{FL20}.\\

\begin{defn}[Leray--Hopf solutions]
Let $u_0,B_0\in L^{2}_{\sigma}(\Omega)$. Let 

\begin{align*}
u\in C_{w}([0,T]; L^{2}_{\sigma}(\Omega))\cap L^{2}(0,T; H^{1}_{0}(\Omega) ),\quad B\in C_{w}([0,T]; L^{2}_{\sigma}(\Omega))\cap L^{2}(0,T; H^{1}(\Omega) ).
\end{align*}\\
Suppose that $u_t\in L^{1}(0,T; (L^{2}_{\sigma}\cap H^{1}_{0})(\Omega)^{*})$, $B_t\in L^{1}(0,T; (L^{2}_{\sigma}\cap H^{1})(\Omega)^{*})$ and 

\begin{align*}
&<u_t,\xi>+\int_{\Omega}(u\cdot \nabla u-B\cdot \nabla B)\cdot \xi\dd x+\nu \int_{\Omega}\nabla u:\nabla \xi\dd x=0,\\
&<B_t,\zeta>+\int_{\Omega}(B\times u)\cdot \nabla \times \zeta\dd x +\mu \int_{\Omega}\nabla \times B\cdot \nabla \times \zeta \dd x=0,
\end{align*}\\
for a.e. $t\in [0,T]$ and every $\xi\in L^{2}_{\sigma}\cap H^{1}_{0}(\Omega)$, $\zeta\in L^{2}_{\sigma}\cap H^{1}(\Omega)$. Suppose furthermore that $(u(\cdot ,0),B(\cdot ,0))=(u_0, B_0)$ and 

\begin{align*}
\frac{1}{2}\int_{\Omega}\left(|u|^{2}+|B|^{2}\right)\dd x
+\int_0^{t}\int_{\Omega}\left(\nu |\nabla u|^{2}+\mu |\nabla B|^{2}\right)\dd x\dd s
\leq \frac{1}{2}\int_{\Omega}\left(|u_0|^{2}+|B_0|^{2}\right)\dd x,
\end{align*}\\
for all $t\in [0,T]$. Then, we call $(u,B)$ Leray--Hopf solutions to (1.1)--(1.2).\\
\end{defn}

\begin{thm}
There exists a Leray--Hopf solution to (1.1)--(1.2). \\
\end{thm}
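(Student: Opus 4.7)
The plan is the standard Galerkin approximation for the coupled viscous and resistive MHD system, adapted to the mixed boundary conditions on $B$ in (1.2). When $\Omega$ is bounded, both the Stokes operator on $L^{2}_{\sigma}\cap H^{1}_{0}(\Omega)$ and the rotation operator $S$ on $L^{2}_{\sigma}(\Omega)$ from Subsection 2.2 are self-adjoint with compact resolvents, so their orthonormal eigenfunction bases $\{\mathbf{w}_{k}\}$ and $\{\mathbf{B}_{j}\}$ supply finite-dimensional approximating spaces compatible with the weak formulation in Definition 2.9. Projecting (1.1) onto the $N$-dimensional spans yields a quadratic ODE system for the coefficients of $(u^{N},B^{N})$ that is locally solvable by Picard--Lindel\"of.

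Next I would test the Galerkin equations against $(u^{N},B^{N})$ themselves. The cubic transport terms vanish thanks to the divergence-free and boundary conditions $u^{N}|_{\partial\Omega}=0$ and $B^{N}\cdot n|_{\partial\Omega}=0$, while the two coupling terms cancel via
\begin{equation*}
\int_{\Omega}(B\cdot \nabla B)\cdot u \dd x + \int_{\Omega}(B\cdot \nabla u)\cdot B \dd x = 0,
\end{equation*}
leading to the energy identity
\begin{equation*}
\frac{1}{2}\int_{\Omega}(|u^{N}|^{2}+|B^{N}|^{2})\dd x + \int_{0}^{t}\int_{\Omega}\bigl(\nu|\nabla u^{N}|^{2}+\mu|\nabla\times B^{N}|^{2}\bigr)\dd x\dd s = \frac{1}{2}\int_{\Omega}(|u_{0}^{N}|^{2}+|B_{0}^{N}|^{2})\dd x.
\end{equation*}
Combined with $\|\nabla B^{N}\|_{L^{2}}\leq C(\|\nabla\times B^{N}\|_{L^{2}}+\|\nabla\cdot B^{N}\|_{L^{2}})$ from the embedding (2.3), this gives uniform estimates for $(u^{N},B^{N})$ in $L^{\infty}(0,T; L^{2}_{\sigma})\cap L^{2}(0,T; H^{1})$ and global-in-time existence of the approximations.

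Then I would pass to the limit $N\to\infty$. Gagliardo--Nirenberg interpolation $\|f\|_{L^{4}}\leq C\|f\|_{L^{2}}^{1/4}\|\nabla f\|_{L^{2}}^{3/4}$ controls the nonlinearities and yields $u^{N}_{t}\in L^{4/3}(0,T;(L^{2}_{\sigma}\cap H^{1}_{0})^{*})$ and $B^{N}_{t}\in L^{4/3}(0,T;(L^{2}_{\sigma}\cap H^{1})^{*})$ uniformly in $N$. The Aubin--Lions lemma then extracts a subsequence converging strongly in $L^{2}(0,T; L^{2}_{\mathrm{loc}}(\Omega))$, which is enough to pass to the limit in the quadratic terms $u\cdot\nabla u$, $B\cdot\nabla B$ and $(B\times u)\cdot\nabla\times \zeta$ appearing in Definition 2.9. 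Weak lower semicontinuity of the parabolic norms delivers the energy inequality, and a standard Lions--Magenes argument gives $(u,B)\in C_{w}([0,T]; L^{2}_{\sigma}(\Omega))\times C_{w}([0,T]; L^{2}_{\sigma}(\Omega))$ with the prescribed initial data.

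The main technical delicacy will be the perfect conductivity boundary condition. The integration by parts $-\int_{\Omega}\Delta B\cdot \zeta\dd x = \int_{\Omega}(\nabla\times B)\cdot(\nabla\times\zeta)\dd x$ used in Definition 2.9 requires the boundary contribution $\int_{\partial\Omega}((\nabla\times B)\times\zeta)\cdot n\dd H$ to vanish, which uses $(\nabla\times B)\times n = 0$ together with the identity $((a\times \zeta)\cdot n)=(a\cdot(\zeta\times n))$ and $n\cdot(\zeta\times n)=0$. The Galerkin basis must therefore sit inside $D(S)$ so that this condition is enforced at the approximate level; here the choice of $\{\mathbf{B}_{j}\}$ as eigenfunctions of $S$ is essential. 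On $\Omega=\mathbb{R}^{3}$ the eigenfunction basis of $S$ is unavailable (cf. Remark 2.1) and one instead approximates by smooth solenoidal functions of compact support together with a domain exhaustion, while on multiply connected $\Omega$ one must adjoin the finite-dimensional harmonic field space $K_{N}(\Omega)$ to satisfy the flux conditions on each $\Gamma_{i}$. These modifications are carried out in \cite{DL72}, \cite{ST83}, \cite{GLL}, and \cite[Appendix A]{FL20}, whose constructions I would invoke to complete the proof.
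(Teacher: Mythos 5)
Your Galerkin sketch is the standard construction, and it is essentially what the paper relies on: Theorem 2.10 is not proved in the text but delegated to \cite{DL72}, \cite{ST83}, \cite{GLL}, and \cite[Theorem 2.1]{FL20} (the paper's own constructive work, in Appendix A, concerns the whole-space system (1.9) and uses Leray's mollification of the drift rather than a Galerkin basis). Your energy identity, the cancellation of the two coupling terms, the $L^{4/3}$ bound on the time derivatives via Gagliardo--Nirenberg, and the Aubin--Lions passage to the limit are all correct and match the cited constructions.

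One claim in your final paragraph is wrong, though it does not derail the argument. The perfect conductivity condition $(\nabla \times B)\times n=0$ is the \emph{natural} boundary condition of the weak formulation $\mu\int_{\Omega}(\nabla\times B)\cdot(\nabla\times \zeta)\dd x$ with $\zeta\in L^{2}_{\sigma}\cap H^{1}(\Omega)$: it is recovered only in the limit, in the weak sense, and need not --- and here cannot --- be imposed on the Galerkin basis. Indeed the eigenfunctions of $S$ do not satisfy it: $\nabla\times \mathbf{B}_{j}=\lambda_{j}\mathbf{B}_{j}$ gives $(\nabla\times \mathbf{B}_{j})\times n=\lambda_{j}\,\mathbf{B}_{j}\times n$, and only the normal trace $\mathbf{B}_{j}\cdot n$ vanishes, not the tangential one. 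Since your energy identity is obtained by testing the weak form against $(u^{N},B^{N})$ --- which produces $\mu\|\nabla\times B^{N}\|_{L^{2}}^{2}$ directly, with no boundary term to discard --- nothing in the proof actually uses the claim, so the construction stands; but the assertion that the choice of $\{\mathbf{B}_{j}\}$ is essential for enforcing (1.2) at the approximate level should be dropped (any basis of $L^{2}_{\sigma}\cap H^{1}(\Omega)$ works; the eigenfunctions of $S$ are merely convenient and lie in $H^{1}(\Omega)$ by (2.3)). A minor secondary point: the inequality coming from (2.3) is $\|B^{N}\|_{H^{1}}\leq C(\|B^{N}\|_{L^{2}}+\|\nabla\times B^{N}\|_{L^{2}})$ for solenoidal fields with vanishing normal trace; the $L^{2}$ term should be kept, though it is harmless since the energy controls it.
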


\begin{defn}[Weak ideal limits]
Let $(u_j,B_j)$ be a Leray--Hopf solution to (1.1)--(1.2) for $\nu_j,\mu_j>0$ and $(u_{0,j}, B_{0,j})$ such that $(u_{0,j},B_{0,j}  )\rightharpoonup (u_0, B_0)$ in $L^{2}(\Omega)$ as $(\nu_j,\mu_j)\to (0,0)$. Assume that 

\begin{align*}
(u_j, B_j)\overset{\ast}{\rightharpoonup} (u,B)\quad \textrm{in}\ L^{\infty}(0,T; L^{2}(\Omega) ).
\end{align*}\\
Then, we call $(u,B)$ a weak ideal limit of $(u_j,B_j)$. If instead $\nu_j=\nu>0$ for every $j$ and $\mu_j\to 0$, we call $(u,B)$ a weak non-resistive limit of $(u_j,B_j)$.\\
\end{defn}

By the equation $(1.1)_2$, the vector potential $A=\textrm{curl}^{-1}B$ and  some potential $Q$ satisfy

\begin{align}
A_t+B\times u+\nabla Q=-\mu\nabla \times B.
\end{align}\\
For smooth solutions, the equality of magnetic helicity (1.12) follows by multiplying $B$ by (2.9) and $A$ by $(1.1)_2$, respectively, and integration by parts. The equality (1.12) for Leray--Hopf solutions for all $t\in [0,T]$ is proved in \cite[Lemma 4.5]{FL20} by an approximation argument for the time variable.

\subsection{Weak ideal limits}

We state magnetic helicity conservation at weak ideal limits \cite[Theorem 1.2]{FL20} in terms of the vector potential $\textrm{curl}^{-1}B$. 

\begin{thm}
Suppose that $(u,B)$ is a weak ideal limit of Leray--Hopf solutions to (1.1)--(1.2). Then,  

\begin{align}
\int_{\Omega}\textrm{curl}^{-1}B\cdot B\dd x=\int_{\Omega}\textrm{curl}^{-1}B_0\cdot B_0\dd x,
\end{align}
for a.e. $t\in [0,T]$.
\end{thm}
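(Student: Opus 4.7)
The plan is to pass to the limit in the helicity identity $(1.12)$ applied to the approximating Leray--Hopf solutions $(u_j,B_j)$. Writing $A_j = \textrm{curl}^{-1}B_j$, this identity reads
\[
\int_\Omega A_j(t)\cdot B_j(t)\dd x + 2\mu_j\int_0^t\int_\Omega \nabla\times B_j\cdot B_j\dd x\dd s = \int_\Omega A_{0,j}\cdot B_{0,j}\dd x.
\]
I would show that (a) the initial helicity converges to $\int_\Omega A_0\cdot B_0\dd x$, (b) the dissipation term vanishes as $j\to\infty$, and (c) the time-$t$ helicity converges to $\int_\Omega A(t)\cdot B(t)\dd x$ for a.e.\ $t$. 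Claims (a) and (c) rely on one observation: by the continuous embedding $(2.3)$, the operator $\textrm{curl}^{-1}:L^2_\sigma(\Omega)\to H^1(\Omega)$ is bounded, so the Rellich--Kondrakov theorem converts weak $L^2_\sigma$-convergence of $B_j$ into strong $L^2$-convergence of $A_j$.

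For (a), the weak convergence $B_{0,j}\rightharpoonup B_0$ in $L^2_\sigma(\Omega)$ from Definition 2.11 yields $A_{0,j}\rightharpoonup A_0$ weakly in $H^1$, hence $A_{0,j}\to A_0$ strongly in $L^2$ by Rellich; the weak-strong pairing gives $\int_\Omega A_{0,j}\cdot B_{0,j}\dd x\to\int_\Omega A_0\cdot B_0\dd x$. For (b), the energy inequality in Definition 2.9 supplies $\sup_j\|B_j\|_{L^\infty_t L^2_x}\leq C$ and $\sup_j\mu_j\int_0^T\|\nabla B_j\|_{L^2}^2\dd s\leq C$, so Cauchy--Schwarz bounds the dissipation term by
\[
2\mu_j^{1/2}T^{1/2}\|B_j\|_{L^\infty_t L^2_x}\left(\mu_j\int_0^T\|\nabla B_j\|_{L^2}^2\dd s\right)^{1/2}=O(\mu_j^{1/2})\to 0.
\]

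For (c), I would first establish strong compactness of $\{A_j\}$ in $L^2_tL^2_x$ via the Aubin--Lions lemma. The bound $A_j\in L^\infty_t H^1_x$ is uniform, and projecting $(2.9)$ onto $L^2_\sigma$ to eliminate $\nabla Q_j$ yields $\partial_t A_j = -\mathbb{P}(B_j\times u_j)-\mu_j\mathbb{P}(\nabla\times B_j)$. Using $B_j\in L^\infty_t L^2_x$ and $u_j\in L^2_t L^6_x$ (from $H^1_0\hookrightarrow L^6$), the nonlinear term is uniformly bounded in $L^2(0,T;L^{3/2}(\Omega))$ and the resistive term is $O(\mu_j^{1/2})$ in $L^2_{t,x}$, so $\partial_t A_j$ is uniformly bounded in $L^2(0,T;W^{-1,3/2}(\Omega))$. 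Aubin--Lions with the compact embedding $H^1\hookrightarrow\hookrightarrow L^2$ extracts a subsequence with $A_j\to A$ strongly in $L^2(0,T;L^2(\Omega))$, and weak continuity of curl identifies $A=\textrm{curl}^{-1}B$.

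The main obstacle is promoting $L^2_tL^2_x$ strong compactness of $A_j$ to pointwise-in-time convergence required by the helicity identity. This is handled by a weak Arzela--Ascoli argument: the uniform bounds on $\partial_t A_j$ in a negative Sobolev space together with the uniform $H^1_x$ bound give weak $L^2_x$-equicontinuity in $t$, so up to a further subsequence $A_j(t)\rightharpoonup A(t)$ in $L^2_x$ for every $t$, and Rellich upgrades this to strong convergence. The analogous time-equicontinuity of $B_j$ (as a Leray--Hopf solution, $B_j\in C_w([0,T];L^2_\sigma)$ with $\partial_t B_j$ bounded in a dual space) yields $B_j(t)\rightharpoonup B(t)$ weakly in $L^2_x$ for every $t$. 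A diagonal extraction aligns these two subsequences, and the weak-strong pairing then gives $\int_\Omega A_j(t)\cdot B_j(t)\dd x\to\int_\Omega A(t)\cdot B(t)\dd x$ for a.e.\ $t$, completing the passage to the limit in $(1.12)$ and yielding $(2.10)$.
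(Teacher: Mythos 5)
Your overall architecture matches the paper's (which follows Faraco--Lindberg): pass to the limit in the helicity identity (1.12), kill the dissipation term with the energy inequality, and use Aubin--Lions compactness of the vector potentials together with weak--strong pairing. Steps (a) and (b) are fine. But step (c) contains a genuine gap: you bound $B_j\times u_j$ uniformly in $L^2(0,T;L^{3/2}(\Omega))$ using ``$u_j\in L^2_tL^6_x$ from $H^1_0\hookrightarrow L^6$.'' That uniform bound is not available for weak \emph{ideal} limits. The energy inequality in Definition 2.9 only gives $\nu_j\int_0^T\|\nabla u_j\|_{L^2}^2\,\dd s\leq C$, so $\|\nabla u_j\|_{L^2_{t,x}}\leq C\nu_j^{-1/2}$ blows up as $\nu_j\to 0$; the only uniform control on $u_j$ is $L^\infty_tL^2_x$. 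Consequently your claimed uniform bound on $\partial_tA_j$ in $L^2(0,T;W^{-1,3/2}(\Omega))$ fails, and both the Aubin--Lions extraction and the weak Arzel\`a--Ascoli equicontinuity argument, as written, rest on it. (Your estimate would be fine for weak \emph{nonresistive} limits, where $\nu$ is fixed.)

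The repair is exactly the point the paper emphasizes: with only $u_j,B_j\in L^\infty_tL^2_x$ uniformly, one has $B_j\times u_j\in L^\infty_tL^1_x$ uniformly, so $\partial_tA_j$ (after mollifying in time to justify the uncurled equation (2.9) from Definition 2.9) must be measured in the dual of a space embedding into $L^\infty(\Omega)$, namely $(L^2_\sigma\cap W^{1,4}_0)(\Omega)^*$ via $W^{1,4}_0(\Omega)\subset L^\infty(\Omega)$. Since $H^1(\Omega)$ still embeds compactly into $L^2(\Omega)$ and $L^2(\Omega)$ embeds into this dual space, Aubin--Lions then yields $A_j\to A$ in $L^2_{\mathrm{loc}}(0,T;L^2(\Omega))$, and the limit passage in (1.12) goes through (the paper tests in time against $\rho\in C_c^\infty$ rather than extracting pointwise-in-time convergence, but that difference is cosmetic). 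With this substitution your proof becomes the paper's.
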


\begin{thm}
There exists a weak ideal limit $(u,B)$ of Leray--Hopf solutions to (1.1)--(1.2) for $(u_0,B_0)\in L^{2}_{\sigma}(\Omega)$ satisfying (2.10) and 

\begin{align*}
\int_{\Omega}\left(|u|^{2}+|B|^{2}\right) \dd x
\leq \int_{\Omega}\left(|u_0|^{2}+|B_0|^{2}\right)\dd x,    
\end{align*}\\ 
for a.e. $t\in [0,T]$.
\end{thm}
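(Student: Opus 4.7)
The plan is to construct the limit from Leray--Hopf approximations sharing the same initial data, extract a weak-$*$ convergent subsequence via Banach--Alaoglu, inherit magnetic helicity conservation immediately from Theorem 2.12, and pass the energy inequality to the limit by a time-averaging argument.

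First I would choose arbitrary sequences $\nu_j, \mu_j \to 0^+$ and, for each $j$, apply Theorem 2.10 with the \emph{fixed} initial datum $(u_{0,j}, B_{0,j}) := (u_0, B_0)$ to obtain a Leray--Hopf solution $(u_j, B_j)$ of (1.1)--(1.2) on $[0, T]$. Discarding the nonnegative dissipation terms in the energy inequality of Definition 2.9 yields the uniform pointwise bound $\| u_j(t) \|_{L^2}^2 + \| B_j(t) \|_{L^2}^2 \leq \| u_0 \|_{L^2}^2 + \| B_0 \|_{L^2}^2$ for every $t \in [0, T]$, so $\{(u_j, B_j)\}$ is bounded in $L^\infty(0, T; L^2_\sigma(\Omega))$. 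Since this space is the dual of the separable Bochner space $L^1(0, T; L^2_\sigma(\Omega))$, the Banach--Alaoglu theorem extracts a subsequence, still denoted $(u_j, B_j)$, with $(u_j, B_j) \overset{\ast}{\rightharpoonup} (u, B)$ in $L^\infty(0, T; L^2(\Omega))$ for some $(u, B) \in L^\infty(0, T; L^2_\sigma(\Omega))$. By Definition 2.11 this $(u, B)$ is a weak ideal limit, and Theorem 2.12 then immediately delivers the helicity identity (2.10) for a.e.\ $t \in [0, T]$.

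It remains to pass the energy bound to the limit. The difficulty is that the squared $L^2$-norm is not weakly continuous, so one cannot test directly. I would bypass this by time-averaging. For any $[a, b] \subset [0, T]$, weak-$*$ convergence gives $\int_a^b (u_j, B_j)(s)\, \dd s \rightharpoonup \int_a^b (u, B)(s)\, \dd s$ in $L^2(\Omega)$, while the Jensen/Cauchy--Schwarz estimate $\|\int_a^b g(s)\, \dd s\|_{L^2}^2 \leq (b-a)\int_a^b \|g(s)\|_{L^2}^2\, \dd s$ combined with the uniform pointwise bound above yields $\|\int_a^b (u_j, B_j)(s)\, \dd s\|_{L^2}^2 \leq (b-a)^2 \big(\| u_0 \|_{L^2}^2 + \| B_0 \|_{L^2}^2\big)$. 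Weak lower semicontinuity of the norm then produces
\begin{align*}
\left\| \frac{1}{b - a} \int_a^b (u, B)(s)\, \dd s \right\|_{L^2}^2 \leq \| u_0 \|_{L^2}^2 + \| B_0 \|_{L^2}^2.
\end{align*}
Since $L^2_\sigma(\Omega)$ is separable, Pettis's theorem gives $(u, B)$ a strongly measurable representative in $L^\infty(0, T; L^2_\sigma(\Omega))$, and Bochner--Lebesgue differentiation ensures $(b-a)^{-1}\int_a^b (u, B)(s)\, \dd s \to (u, B)(t_0)$ in $L^2(\Omega)$ as $a, b \to t_0$ for a.e.\ $t_0 \in [0, T]$; the claimed energy inequality at $t_0$ follows.

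The main obstacle is precisely this last step: extracting an a.e.-in-$t$ pointwise $L^2$ bound from a weak-$*$ statement in $L^\infty_t L^2_x$ without any spatial compactness of $(u_j, B_j)$. The averaging plus Lebesgue-point device trades the missing strong convergence for the Bochner regularity of the limit. With this in hand, the two conclusions of the theorem follow with no further work beyond invoking Theorems 2.10 and 2.12.
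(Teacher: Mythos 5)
Your proposal is correct and follows essentially the same route as the paper: construct Leray--Hopf solutions for $(\nu_j,\mu_j)\to(0,0)$ with fixed data via Theorem 2.10, extract a weak-$*$ convergent subsequence, obtain (2.10) from Theorem 2.12, and deduce the energy bound from lower semi-continuity of the norm under weak-$*$ convergence in $L^{\infty}(0,T;L^{2}(\Omega))$. Your time-averaging/Lebesgue-point device is simply an explicit proof of the a.e.-in-$t$ consequence of that lower semi-continuity, which the paper cites without detail.
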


\begin{proof}
The assertion follows Theorems 2.8, 2.10, and lower semi-continuity of the norm for the weak-star convergence in $L^{\infty}(0,T; L^{2}(\Omega))$. \\
\end{proof}

\begin{rem}
When $\Omega$ is multiply connected, helicity conservation at weak ideal limits is demonstrated in \cite{FL22} by using a gauge-invariant definition of magnetic helicity \cite{MV19}, cf. \cite{FL20}. (The boundary regularity is reduced from $C^{1,1}$ to  Lipschitz in \cite{MV19}.)
\end{rem}

The proof of Theorem 2.10 is based on the following approximation lemma and the Aubin--Lions lemma \cite[Lemmas 2.10, 2.12]{FL20}, \cite[Proposition 1.2.32]{TNVW}. We state these lemmas for the later usage in Sections 6 and 7.

Let $X$ be a Banach space. Let $0<\delta <T/2$. Let $g\in C^{\infty}_{c}(\mathbb{R})$ satisfy $\textrm{spt}\ g\in (-\delta,\delta)$. For $f\in L^{1}(0,T; X)$, we denote the convolution in $(0,T)$ by $f*g=\int_{0}^{T}f(s)g(t-s)\dd s\in C^{\infty}((\delta,T-\delta); X )$. For $\chi\in C^{\infty}_{c}(\mathbb{R})$ satisfying $\textrm{spt}\ \chi\in (-1,1)$ and $\int_{\mathbb{R}}\chi\dd t=1$, we set $\chi^{\varepsilon}(t)=\varepsilon^{-1}\chi(\varepsilon^{-1}t)$ for $\varepsilon>0$ and $f^{\varepsilon}=f*\chi^{\varepsilon}$. 

\begin{lem}
Let $0<\delta<T/2$ and $1\leq p<\infty$. For $f\in L^{p}(0,T; X)$, $f^{\varepsilon}\to f$ in $L^{p}(\delta,T-\delta;X)$ as $\varepsilon\to 0$.
\end{lem}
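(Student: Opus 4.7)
The plan is the classical density-plus-uniform-continuity argument adapted to the Bochner setting, split into a uniform Minkowski-type bound, convergence on a dense subclass, and a standard density closure.

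First I would establish a Minkowski-type bound. For $0<\varepsilon<\delta$ and $t\in(\delta,T-\delta)$, the cutoff $\chi^{\varepsilon}(t-\cdot)$ is supported in $(t-\varepsilon,t+\varepsilon)\subset(0,T)$, so $f^{\varepsilon}(t)$ is well defined and equals the convolution of $\chi^{\varepsilon}$ with the zero extension of $f$. Applying the integral form of Minkowski's inequality in the Bochner setting yields
\begin{align*}
\|f^{\varepsilon}\|_{L^{p}(\delta,T-\delta;X)}\leq \|\chi^{\varepsilon}\|_{L^{1}(\mathbb{R})}\,\|f\|_{L^{p}(0,T;X)}=\|f\|_{L^{p}(0,T;X)}.
\end{align*}
This bound is what will let the error of replacing $f$ by a continuous approximant be absorbed.

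Next I would verify convergence on the dense subclass $C_{c}((0,T);X)$. For such a $g$, uniform continuity (a compactly supported continuous map into a Banach space is uniformly continuous) gives
\begin{align*}
\sup_{t\in\mathbb{R}}\|g^{\varepsilon}(t)-g(t)\|_{X}\leq \sup_{|h|<\varepsilon}\|g(\cdot-h)-g(\cdot)\|_{C(\mathbb{R};X)}\longrightarrow 0
\end{align*}
as $\varepsilon\to 0$. Once $\varepsilon$ is smaller than the distance from $\textrm{spt}\,g$ to $\{0,T\}$, both $g$ and $g^{\varepsilon}$ are supported in a fixed compact subset of $(0,T)$, so taking the $L^{p}$-norm on the bounded interval $(\delta,T-\delta)$ upgrades uniform convergence to convergence in $L^{p}(\delta,T-\delta;X)$.

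Finally I would close the argument by density. Since $C_{c}((0,T);X)$ is dense in $L^{p}(0,T;X)$ (via $X$-valued simple functions, using strong measurability and hence almost separable-valuedness of $f$), for $\eta>0$ I pick $g\in C_{c}((0,T);X)$ with $\|f-g\|_{L^{p}(0,T;X)}<\eta$, and then the triangle inequality together with the first step applied to $f-g$ gives
\begin{align*}
\|f^{\varepsilon}-f\|_{L^{p}(\delta,T-\delta;X)}\leq \|(f-g)^{\varepsilon}\|_{L^{p}(\delta,T-\delta;X)}+\|g^{\varepsilon}-g\|_{L^{p}(\delta,T-\delta;X)}+\|g-f\|_{L^{p}(\delta,T-\delta;X)}\leq 2\eta+\|g^{\varepsilon}-g\|_{L^{p}(\delta,T-\delta;X)}.
\end{align*}
Sending $\varepsilon\to 0$ by the second step and then $\eta\to 0$ concludes. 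The statement is essentially textbook, and there is no real obstacle: the only points requiring mild care are the vector-valued Minkowski inequality and the density of $C_{c}((0,T);X)$ in $L^{p}(0,T;X)$, both standard facts of Bochner integration.
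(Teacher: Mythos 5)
Your argument is correct and is the standard density proof; the paper itself does not prove Lemma 2.15 but simply cites \cite[Lemmas 2.10, 2.12]{FL20} and \cite[Proposition 1.2.32]{TNVW}, where essentially this same three-step argument (Minkowski/Young bound, uniform convergence for continuous compactly supported approximants, density closure) is carried out. One small normalization point: the paper assumes only $\int_{\mathbb{R}}\chi\,\dd t=1$, not $\chi\geq 0$, so $\|\chi^{\varepsilon}\|_{L^{1}(\mathbb{R})}=\|\chi\|_{L^{1}(\mathbb{R})}$ may exceed $1$; your Minkowski bound should read $\|f^{\varepsilon}\|_{L^{p}(\delta,T-\delta;X)}\leq\|\chi\|_{L^{1}(\mathbb{R})}\|f\|_{L^{p}(0,T;X)}$, and the final estimate becomes $(1+\|\chi\|_{L^{1}})\eta+\|g^{\varepsilon}-g\|_{L^{p}}$ rather than $2\eta+\|g^{\varepsilon}-g\|_{L^{p}}$, which does not affect the conclusion.
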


\begin{lem}[Aubin--Lions lemma]
Let $X$, $Y$, and $Z$ be reflexive Banach spaces such that $X$ embeds compactly into $Y$ and $Y$ embeds into $Z$. Let $1<p<\infty$ and $1\leq q\leq \infty$. The space $\{A\in L^{p}(0,T; X)\ |\ A_t\in L^{q}(0,T; Z)\  \}$ embeds compactly into $L^{p}(0,T; Y)$.
\end{lem}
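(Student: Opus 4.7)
The plan is to follow the classical proof, which proceeds in three movements: an Ehrling-type interpolation, a weak-extraction reduction, and a compactness argument exploiting the time regularity. First I would establish the interpolation lemma that for every $\varepsilon>0$ there is a constant $C_\varepsilon>0$ with
\begin{align*}
\|u\|_{Y}\leq \varepsilon \|u\|_{X}+C_\varepsilon \|u\|_{Z}\qquad \text{for all } u\in X.
\end{align*}
This is proved by contradiction: if it failed, one could find $\{u_k\}\subset X$ with $\|u_k\|_Y=1$, $\|u_k\|_X\leq 1/\varepsilon$ and $\|u_k\|_Z\to 0$; the compact embedding $X\hookrightarrow\hookrightarrow Y$ produces a subsequence converging in $Y$ to a limit of unit $Y$-norm, whereas the continuous embedding $Y\hookrightarrow Z$ forces that limit to vanish in $Z$ and therefore in $Y$, a contradiction.

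Next I would take an arbitrary bounded sequence $\{A_n\}$ in $W:=\{A\in L^p(0,T;X): A_t\in L^q(0,T;Z)\}$. Since $X$ is reflexive and $1<p<\infty$, the space $L^p(0,T;X)$ is reflexive, so after extracting a subsequence $A_n\rightharpoonup A$ in $L^p(0,T;X)$. Replacing $A_n$ by $A_n-A$ (and noting that this preserves the bound on the time derivative in $L^q(0,T;Z)$), we may assume $A=0$. Applying the interpolation lemma pointwise in $t$ and integrating the $p$-th power gives
\begin{align*}
\|A_n\|_{L^p(0,T;Y)}^{p}\leq 2^{p-1}\varepsilon^{p}\|A_n\|_{L^p(0,T;X)}^{p}+2^{p-1}C_\varepsilon^{p}\|A_n\|_{L^p(0,T;Z)}^{p}.
\end{align*}
The first term is uniformly bounded by $CM\varepsilon^p$, so after letting $n\to\infty$ and then $\varepsilon\to 0$ the problem reduces to proving the strong convergence $A_n\to 0$ in $L^p(0,T;Z)$.

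For the latter, I would use the uniform bound on $\partial_tA_n$ in $L^q(0,T;Z)$ to control time translations. For $0<h<T$, write $A_n(t+h)-A_n(t)=\int_{t}^{t+h}\partial_tA_n(s)\dd s$ as a Bochner integral in $Z$; Hölder's inequality then yields a bound on $\|A_n(\cdot+h)-A_n(\cdot)\|_{L^p(0,T-h;Z)}$ that vanishes as $h\to 0$ uniformly in $n$. Combined with uniform boundedness in $L^p(0,T;X)$ and the two embeddings $X\hookrightarrow\hookrightarrow Y\hookrightarrow Z$, Simon's $L^p$-in-time refinement of Fréchet--Kolmogorov produces a subsequence converging strongly in $L^p(0,T;Z)$ (or one can work with the primitives $B_n(t)=\int_0^t A_n(s)\dd s$, which are equicontinuous into $X$ and pointwise weakly converge to $0$, then recover convergence of $A_n$ itself in $Z$ via the time equicontinuity and dominated convergence). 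Weak convergence in $L^p(0,T;X)\hookrightarrow L^p(0,T;Z)$ then forces the strong limit to be $0$, which together with the Ehrling step finishes the proof.

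The main obstacle is the endpoint $q=1$: Hölder then yields only absolute continuity of the integral $h\mapsto \int_{t}^{t+h}\|\partial_tA_n\|_Z\dd s$ rather than a quantitative modulus $h^{1/q'}$, so uniform equicontinuity in $L^p(0,T;Z)$ must be obtained from the absolute continuity of the Lebesgue integral applied uniformly along the subsequence (for instance by first mollifying in time and estimating $\|A_n-A_n^{\varepsilon}\|_{L^p(0,T;Z)}$ uniformly). For $q>1$ the quantitative Hölder bound is elementary; in either case the Ehrling--reflexivity--compactness trilogy is the conceptual backbone, and the only subtlety lies in the time-translation estimate.
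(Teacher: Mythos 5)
The paper does not prove this lemma at all: Lemma 2.16 is quoted from the literature (\cite{FL20}, \cite{TNVW}) and used as a black box, so there is no in-paper argument to compare against. Your outline is the standard Aubin--Lions--Simon proof, and its three steps --- the Ehrling interpolation $\|u\|_Y\leq\varepsilon\|u\|_X+C_\varepsilon\|u\|_Z$, the reduction to a weakly null sequence via reflexivity of $L^p(0,T;X)$, and strong convergence in $L^p(0,T;Z)$ obtained from time-translation estimates --- are the right skeleton and do yield the statement as given.

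The one soft spot is your treatment of the endpoint $q=1$. The mechanism you invoke there, ``absolute continuity of the Lebesgue integral applied uniformly along the subsequence,'' is not available: a sequence bounded in $L^1(0,T;Z)$ need not be uniformly integrable, so you cannot make $\int_t^{t+h}\|\partial_tA_n\|_Z\,\dd s$ small uniformly in $n$ for fixed small $h$ at each $t$. The standard repair is to integrate the translation identity in $t$ before estimating: by Fubini,
\begin{align*}
\int_0^{T-h}\|A_n(t+h)-A_n(t)\|_Z\,\dd t\leq h\,\|\partial_tA_n\|_{L^{1}(0,T;Z)},
\end{align*}
which is $O(h)$ uniformly in $n$ with no uniform integrability required; the same estimate is what makes the mollification bound $\|A_n-A_n*\chi^{\varepsilon}\|_{L^{1}(\delta,T-\delta;Z)}\lesssim\varepsilon$ that you mention parenthetically go through. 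To upgrade from $L^1$- to $L^p$-in-time translations one then uses the uniform $L^{\infty}(0,T;Z)$ bound coming from $W^{1,1}(0,T;Z)\subset C([0,T];Z)$ together with interpolation, and the Ehrling step finishes the proof for all $1\leq q\leq\infty$. With that replacement your argument is complete.
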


A vital part of the proof of Theorem 2.10 for a simply connected domain is the strong convergence of the vector potential 

\begin{align*}
A_j=\textrm{curl}^{-1}B_j\to A=\textrm{curl}^{-1}B\quad \textrm{in}\ L^{2}_{\textrm{loc}}(0,T; L^{2}(\Omega)), 
\end{align*}\\
for Leray--Hopf solutions $(u_j,B_j)$ and the weak ideal limit $(u,B)$ by application of the Aubin--Lions lemma. This convergence implies (2.10) by letting $j\to\infty$ to the equality (1.12). To apply the Aubin--Lions lemma, the paper \cite{FL20} showed a uniform bound 

\begin{align*}
\sup_{j} ||\partial_tA_{j}^{\varepsilon_j} ||_{L^{2}(\delta,T-\delta; (L^{2}_{\sigma}\cap W^{1,4}_{0})(\Omega)^{*} ) }<\infty.
\end{align*}\\
This uniform bound is demonstrated by the equation (2.9) and the Sobolev embedding $W^{1,4}_0(\Omega)\subset L^{\infty}(\Omega)$. The vector potential $A_j$ is approximated by $A^{\varepsilon_j}_{j}$ with $\varepsilon_j>0$ to deduce (2.9) from the definition of Leray--Hopf solutions (Definition 2.7). 

\begin{rem}[2D case]
For 2D bounded and multiply connected domains, magnetic mean-square potential conservation at weak ideal limits of Leray--Hopf solutions is demonstrated in \cite[Theorem5.4]{FL20}. For a simply connected domain, there exists a flux function $\phi$ and a stream function $\psi$ vanishing on $\partial\Omega$ such that $B=\nabla^{\perp}\phi $ and $u=\nabla^{\perp}\psi $ for $\nabla^{\perp}={}^{t}(\partial_{x_2},-\partial_{x_1})$. An equivalent equation to $(1.1)_2$ in this setting is 

\begin{align*}
\phi_t+u\cdot \nabla \phi=\mu\Delta \phi.
\end{align*}\\
Leray--Hopf solutions satisfy this equation on $L^{2}(\Omega)$ for a.e. $t\in [0,T]$, and by integration by parts, the equality

\begin{align*}
\int_{\Omega}|\phi|^{2}\dd x+2\mu\int_{0}^{t}\int_{\Omega}|\nabla \phi|^{2}\dd x\dd s=\int_{\Omega}|\phi_0|^{2}\dd x,
\end{align*}\\
holds for $t\in [0,T]$. The paper \cite{FL20} showed a uniform bound 

\begin{align*}
\sup_{j} ||\partial_t\phi_j||_{L^{1}(0,T; H^{1}_{0}(\Omega)^{*} ) }<\infty,
\end{align*}\\
by applying the Hardy space theory of compensated compactness of Coifmann et al. \cite{CLMS} and Fefferman's ${\mathcal{H}}^{1}-\textrm{BMO}$ duality \cite{FS72} for the zero extension of $(u,B)$ to $\mathbb{R}^{2}$. The uniform bound implies the strong convergence 

\begin{align*}
\phi_j\to \phi\quad \textrm{in}\ L^{2}(0,T; L^{2}(\Omega) ), 
\end{align*}\\
and that the limit $\phi$ is a distributional solution to the transport equation 

\begin{align*}
\phi_t+u\cdot \nabla \phi=0.
\end{align*}\\
It is remarked that the convergence of $\phi_j$ implies $\phi_j^{2}\to \phi^{2}$ in $L^{2}(0,T; L^{2}(\Omega) )$ by the uniform bound of $\nabla \phi_j\in L^{\infty}(0,T; L^{2}(\Omega))$, and the limit $\phi^{2}$ satisfies the equation $\partial_t\phi^{2}+u\cdot \nabla \phi^{2}=0$ in the distributional sense. Magnetic mean-square potential conservation at weak ideal limits follows by integrating this equation in time. The paper \cite[Lemma 5.7]{FL20} showed magnetic mean-square potential conservation more generally for any transport equation solutions under the condition $u\in L^{\infty}(0,T; L^{2}(\Omega) )$ and $\nabla \phi\in C_w([0,T]; L^{2}(\Omega) )$ by an approximation argument for spatial and time variables, cf. \cite{DL89}, \cite{AC14}.
\end{rem}

\subsection{Application to stability}

Theorems 2.5 and 2.11 imply the stability of the set of minimizers $\mathcal{S}_{h}$. 

\begin{prop}
Let $h\in \mathbb{R}$. Let ${\mathcal{S}}_h$ be as in (2.6). For $\varepsilon>0$, there exists $\delta>0$ such that for $u_0,B_0\in L^{2}_{\sigma}(\Omega)$ satisfying 

\begin{align*}
||u_0||_{L^{2}}+\inf_{U\in S_h}||B_0-U||_{L^{2}}+\left|\int_{\Omega}\textrm{curl}^{-1}B_0\cdot B_0\dd x-h\right|\leq \delta,
\end{align*}\\
there exists a weak ideal limit $(u,B)$ of Leray--Hopf solutions to (1.1)--(1.2) for $(u_0,B_0)$ such that 

\begin{align*}
\textrm{ess sup}_{t>0} \left\{||u||_{L^{2}}+\inf_{U\in {\mathcal{S}}_h}||B-U||_{L^{2}}   \right\}\leq \varepsilon.
\end{align*}
\end{prop}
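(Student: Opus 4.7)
The plan is to argue by contradiction, combining compactness of total-energy minimizing sequences (Theorem 2.7) with the existence, guaranteed by Theorem 2.13, of a weak ideal limit that both satisfies the energy inequality and conserves magnetic helicity via (2.10). Suppose the conclusion fails: there exist $\varepsilon_0>0$ and sequences $(u_{0,n},B_{0,n})\subset L^{2}_{\sigma}(\Omega)$ with
\begin{align*}
||u_{0,n}||_{L^{2}}+\inf_{U\in {\mathcal{S}}_h}||B_{0,n}-U||_{L^{2}}+\left|\int_{\Omega}\textrm{curl}^{-1}B_{0,n}\cdot B_{0,n}\dd x-h\right|\longrightarrow 0,
\end{align*}
for which \emph{every} weak ideal limit for $(u_{0,n},B_{0,n})$ violates the stated bound. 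For each $n$ I would select, using Theorem 2.13, a weak ideal limit $(u_n,B_n)$ that enjoys (2.10) and the energy inequality, and then pick $t_n$ in the positive-measure set where $||u_n(t)||_{L^{2}}+\inf_{U\in {\mathcal{S}}_h}||B_n(t)-U||_{L^{2}}>\varepsilon_0/2$ and where (2.10) and the energy inequality simultaneously hold (this is the intersection of a positive-measure set with two full-measure sets).

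The next step is to control the total energy and magnetic helicity of $(u_n(t_n),B_n(t_n))$ as $n\to\infty$. Choose $U_n\in {\mathcal{S}}_h$ with $||B_{0,n}-U_n||_{L^{2}}\to 0$; since $||U_n||_{L^{2}}^{2}=2{\mathcal{I}}_h$, the initial total energy tends to ${\mathcal{I}}_h$, and the energy inequality yields
\begin{align*}
\limsup_{n\to\infty}\frac{1}{2}\int_{\Omega}\left(|u_n(t_n)|^{2}+|B_n(t_n)|^{2}\right)\dd x\leq {\mathcal{I}}_h.
\end{align*}
Helicity conservation (2.10) gives $h_n:=\int_{\Omega}\textrm{curl}^{-1}B_n(t_n)\cdot B_n(t_n)\dd x\to h$, and the definition of ${\mathcal{I}}_{h_n}$ together with the Lipschitz continuity of ${\mathcal{I}}_{\cdot}$ recorded in (2.6) furnishes the matching lower bound
\begin{align*}
{\mathcal{I}}_h+o(1)={\mathcal{I}}_{h_n}\leq \frac{1}{2}\int_{\Omega}|B_n(t_n)|^{2}\dd x.
\end{align*}
Sandwiching these estimates forces $\frac{1}{2}\int_{\Omega}(|u_n(t_n)|^{2}+|B_n(t_n)|^{2})\dd x\to {\mathcal{I}}_h$ and also $||u_n(t_n)||_{L^{2}}\to 0$.

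The final step is Theorem 2.7 applied to the sequence $(u_n(t_n),B_n(t_n))$: its hypotheses are now met, so along a subsequence $(u_n(t_n),B_n(t_n))\to (0,B)$ strongly in $L^{2}(\Omega)$ for some $B\in {\mathcal{S}}_h$. This contradicts $||u_n(t_n)||_{L^{2}}+\inf_{U\in {\mathcal{S}}_h}||B_n(t_n)-U||_{L^{2}}>\varepsilon_0/2$ for every $n$, and finishes the argument. The only genuinely delicate point is the very beginning---selecting the \emph{particular} weak ideal limit produced by Theorem 2.13 at which both conservation statements hold, and verifying the violation of stability can still be arranged at such a time. Once this bookkeeping is in place, the argument is a clean compactness-plus-conservation loop and no further estimates are required.
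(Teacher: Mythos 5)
Your proposal is correct and follows essentially the same route as the paper: a contradiction argument that selects the weak ideal limit of Theorem 2.13, picks times $t_n$ in the intersection of the bad set with the full-measure sets where the energy inequality and helicity conservation (2.10) hold, sandwiches the total energy between ${\mathcal{I}}_{h_n}$ and the initial energy using the continuity of $h\mapsto{\mathcal{I}}_h$, and invokes the compactness Theorem 2.7. The bookkeeping point you flag at the end is handled in the paper exactly as you describe, by intersecting countably many full-measure sets $F_n$, so no further work is needed.
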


\vspace{5pt}

\begin{proof}
We argue by contradiction. Suppose that the assertion was false. Then there exists some $\varepsilon_0>0$ such that for any $n\geq 1$, there exists $u_{0,n}$, $B_{0,n}\in L^{2}_{\sigma}(\Omega)$ such that

\begin{align*}
||u_{0,n}||_{L^{2}}+\inf_{U\in {\mathcal{S}}_h}||B_{0,n}-U||_{L^{2}}+\left|\int_{\Omega}\textrm{curl}^{-1}B_{0,n}\cdot B_{0,n}\dd x-h\right|\leq \frac{1}{n},
\end{align*}\\
and the weak ideal limit $(u_n,B_n)$ of Leray--Hopf solutions in Theorem 2.11 satisfying 

\begin{align*}
\textrm{ess sup}_{t>0} \left\{||u_n||_{L^{2}}+\inf_{U\in {\mathcal{S}}_h}||B_n-U||_{L^{2}}   \right\}\geq  \varepsilon_0>0.
\end{align*}\\
We denote by $F_n$ the set of all points $t\in (0,\infty)$ such that 

 \begin{align*}
&\int_{\Omega}(|u_n|^{2}+|B_n|^{2}) \dd x
\leq \int_{\Omega}(|u_{0,n}|^{2}+|B_{0,n}|^{2})\dd x,    \\
&\int_{\Omega}\textrm{curl}^{-1}B_{n}\cdot B_{n}\dd x
=\int_{\Omega}\textrm{curl}^{-1}B_{0,n}\cdot B_{0,n}\dd x.  
\end{align*}\\ 
The set $F_n^{c}=(0,\infty)\backslash F_n$ has measure zero. For $F=\cap_{n=1}^{\infty} F_n$, $F^{c}$ has measure zero and the above inequalities hold for all $t\in F$ and $n\geq 1$. We take a point $t_n\in F$ such that 

 \begin{align*}
||u_n||_{L^{2}}(t_n)+\inf_{U\in {\mathcal{S}}_h}||B_n-U||_{L^{2}}(t_n)  \geq  \frac{\varepsilon_0}{2}.
\end{align*}\\ 
We write $(u_n,B_n)= (u_n,B_n)(\cdot,t_n)$ by suppressing $t_n$. For $h_n=\int_{\Omega}\textrm{curl}^{-1}B_{0,n}\cdot B_{0,n}\dd x$, 

\begin{align*}
{\mathcal{I}}_{h_n}\leq\frac{1}{2}\int_{\Omega} |B_{0,n}|^{2}\dd x \leq \frac{1}{2} \left(\inf_{U\in {\mathcal{S}}_h}||B_{0,n}-U||_{L^{2}}+\sqrt{2}{\mathcal{I}}_h^{1/2}\right)^{2}.
\end{align*}\\
By continuity of ${\mathcal{I}}_h$ for $h\in \mathbb{R}$, letting $n\to\infty$ implies that 

\begin{align*}
h_n&\to h,\\
\frac{1}{2}\int_{\Omega}\left(|u_{0,n}|^{2}+|B_{0,n}|^{2}\right)\dd x &\to {\mathcal{I}}_h.
\end{align*}\\
By helicity conservation and non-increasing total energy, 

\begin{align*}
&\int_{\Omega}\textrm{curl}^{-1}B_{n}\cdot B_{n}\dd x=\int_{\Omega}\textrm{curl}^{-1}B_{0,n}\cdot B_{0,n}\dd x=h_n, \\
&\mathcal{I}_{h_n}\leq \frac{1}{2}\int_{\Omega}\left(|u_{n}|^{2}+|B_{n}|^{2}\right)\dd x\leq  \frac{1}{2}\int_{\Omega}\left(|u_{0,n}|^{2}+|B_{0,n}|^{2}\right)\dd x,
\end{align*}\\
letting $n\to\infty$ implies that

\begin{align*}
\frac{1}{2}\int_{\Omega}\left(|u_{n}|^{2}+|B_{n}|^{2}\right)\dd x \to {\mathcal{I}}_h.
\end{align*}\\
By Theorem 2.5, there exists a subsequence (still denoted by $\{(u_n,B_n)\}$) and some $
B \in {\mathcal{S}}_h$ such that $(u_n,B_n)\to (0,
B)$ in $L^{2}(\Omega)$. Thus 

\begin{align*}
0=\lim_{n\to\infty}\left\{ ||u_n||_{L^{2}}+||B_n-B||_{L^{2}} \right\} 
\geq\liminf_{n\to\infty}\left\{  ||u_n||_{L^{2}}+\inf_{U\in {\mathcal{S}}_h}||B_n-U||_{L^{2}}\right\}
\geq \frac{\varepsilon_0}{2}>0.
\end{align*}\\
We obtained a contradiction. The proof is complete.
\end{proof}

\begin{proof}[Proof of Theorem 1.1]
By characterizing ${\mathcal{S}}_h$ in Lemma 2.3, we deduce Theorem 1.1 from Proposition 2.16. 
\end{proof}

\begin{rem}
It is observed from the proof of Theorem 1.1 that the same stability result holds for weak non-resistive limits of Leray--Hopf solutions (decreasing total energy) by their magnetic helicity conservation \cite[Theorem 1.5]{FL20}. The stability also holds for unique strong solutions to ideal MHD and non-resistive MHD up to maximal existence time. 
\end{rem}

\section{Nonlinear force-free fields}
We apply Clebsch representation for axisymmetric solenoidal vector fields in $L^{2}(\mathbb{R}^{3})$ and their vector potential and define generalized magnetic helicity (1.10) and generalized magnetic mean-square potential (1.14) with the function $\phi_{\infty}=r^{2}+\gamma$ for $\gamma\geq 0$.

\subsection{Clebsch representation}

Clebsch representation is a generalization of the 2D stream (flux) function representation of solenoidal vector fields to 3D symmetric solenoidal vector fields. The simplest Clebsch representation is that for translationally symmetric solenoidal vector fields $b(x_1,x_2)={}^{t}(b_1(x_1,x_2),b_2(x_1,x_2),b_3(x_1,x_2))\in L^{2}_{\sigma}(\mathbb{R}^{2})$, 

\begin{align*}
b(x_1,x_2)=\nabla \times (\phi(x_1,x_2)\nabla z)+G(x_1,x_2)\nabla z.
\end{align*}\\
Here, $\nabla=\nabla_x$ is gradient for $x\in \mathbb{R}^{3}$ and $z=x_3$. We call $\phi$ and $G$ Clebsch potentials. The $L^{2}$-norm of $b$ is expressed as 

\begin{align*}
\int_{\mathbb{R}^{2}}|b|^{2}\dd x'=\int_{\mathbb{R}^{2}}\left(|\nabla \phi|^{2}+|G|^{2}\right)\dd x',
\end{align*}\\
and thus the 3-dimensional space $L^{2}_{\sigma}(\mathbb{R}^{2})$ is identified with the product space $ \dot{H}^{1}(\mathbb{R}^{2})\times L^{2}(\mathbb{R}^{2})$. Clebsch representation of the vector potential to $b$ is expressed as 

\begin{align*}
a(x_1,x_2)=\nabla \times (\eta(x_1,x_2)\nabla z)+\phi(x_1,x_2)\nabla z,
\end{align*}\\
where $\eta$ is a solution of the Poisson equation $-\Delta_{\mathbb{R}^{2}}\eta=G$ in $\mathbb{R}^{2}$. The vector field $a\in \textrm{BMO}(\mathbb{R}^{2})$ is a unique potential such that $\nabla \times a=b$ and $\nabla \cdot a=0$. 

We consider Clebsch representation for axisymmetric solenoidal vector fields $b\in L^{2}_{\sigma,\textrm{axi}}(\mathbb{R}^{3})=\{b\in L^{2}_{\sigma}(\mathbb{R}^{3})\ |\ b: \textrm{axisymmetric}\ \}$,  

\begin{align}
b(x)=\nabla \times (\phi(z,r)\nabla \theta)+G(z,r)\nabla \theta. 
\end{align}\\
Here, $(r,\theta,z)$ is the cylindrical coordinate. The $L^{2}$-norm of $b$ is expressed as 

\begin{align}
\int_{\mathbb{R}^{3}}|b|^{2}\dd x=2\pi\int_{\mathbb{R}^{2}_{+}}\left(|\nabla \phi|^{2}+|G|^{2}\right)\frac{1}{r}\dd z\dd r. 
\end{align}\\
Clebsch representation of the vector potential of $b$ is expressed as 

\begin{align}
a(x)=\nabla \times (\eta(z,r)\nabla \theta)+\phi(z,r)\nabla \theta,
\end{align}\\
where $\eta$ is a solution of the Dirichlet problem 

\begin{align}
-L\eta=G\quad \textrm{in}\ \mathbb{R}^{2}_{+},\quad \eta=0\quad \textrm{on}\ \partial\mathbb{R}^{2}_{+}, 
\end{align}\\
for the operator $L=\Delta_{z,r}-r^{-1}\partial_r$. The Green function of this problem \cite[p.4]{FT81}, \cite[p.472]{Friedman82}, \cite[19.1]{SverakLec} is of the form 

\begin{equation}
\begin{aligned}
&\eta(z,r)=\int_{\mathbb{R}^{2}_{+}}{\mathcal{G}}(z,r,z',r')\frac{G(z',r')}{r'}\dd z'\dd r',\\
&{\mathcal{G}}(z,r,z',r')= \frac{rr'}{2\pi}\int_{0}^{\pi}\frac{\cos\theta \dd \theta}{\sqrt{|z-z'|^{2}+r^{2}+r'^{2}-2rr'\cos\theta  }}.
\end{aligned}
\end{equation}\\
We denote $(3.5)_1$ by $\eta=(-L)^{-1}G$. The Green function can be estimated by asymptotic expansions of complete elliptic integrals of the first and the second kind, e.g., \cite{Friedman82}. Its higher-order estimates are derived in \cite[Corollary 2.9]{FengSverak} from the representation

\begin{equation}
\begin{aligned}
{\mathcal{G}}(z,r,z',r')&=\frac{\sqrt{rr'}}{2\pi}F(s),\quad s= \frac{|z-z'|^{2}+|r-r'|^{2}}{rr'},\\
F(s)&=\int_{0}^{\pi}\frac{\cos\theta}{\sqrt{2(1-\cos\theta)+s}}\dd \theta.
\end{aligned}
\end{equation}\\
The function $F(s)$ has the asymptotics $F(s)=-(1/2)\log s+\log{8}-2+O(s\log s)$ as $s\to 0$ and $O(s^{-3/2})$ as $s\to\infty$. The $k$-th derivative $F^{(k)}(s)$ is also estimated from the asymptotic expansion. They satisfy

\begin{equation}
\begin{aligned}
&F(s)\lesssim \frac{1}{s^{\tau}},\quad 0<\tau\leq \frac{3}{2}, \\
&F^{(k)}(s)\lesssim \frac{1}{s^{k+\tau}},\quad 0\leq \tau\leq \frac{3}{2},\ k\in \mathbb{N}.
\end{aligned}
\end{equation}\\

\subsection{The weighted Hilbert space}
Let $L^{2}(\mathbb{R}^{2}_{+};r^{-1})$ denote the weighted $L^{2}$ space on the cross section $\mathbb{R}^{2}_{+}=\{{}^{t}(z,r)\ |\ z\in \mathbb{R},\ r>0\}$ with the measure $r^{-1}\dd z\dd r$. Let $\dot{H}^{1}_{0}(\mathbb{R}^{2}_{+};r^{-1} )$ denote the homogeneous $L^{2}(\mathbb{R}^{2}_{+};r^{-1})$-Sobolev space of trace zero functions on $\partial\mathbb{R}^{2}_{+}$. We take the trace at $r=0$ in $H^{1/2}_{\textrm{loc}}(\mathbb{R})$ by $L^{2}(\mathbb{R}^{2}_{+};r^{-1})\subset L^{2}_{\textrm{loc}}(\overline{\mathbb{R}^{2}_{+}})$. By the weighted Sobolev inequality \cite[Lemma 3]{Van13},  

\begin{align}
\left(\int_{\mathbb{R}^{2}_{+}}|\phi|^{p}\frac{1}{r^{2+p/2}}\dd z\dd r\right)^{1/p}
\leq C \left(\int_{\mathbb{R}^{2}_{+}}|\nabla\phi|^{2}\frac{1}{r}\dd z\dd r\right)^{1/2},\quad 2\leq p<\infty, 
\end{align}\\
the space $\dot{H}^{1}_{0}(\mathbb{R}^{2}_{+};r^{-1} )$ continuously embeds into $L^{p}(\mathbb{R}^{2}_{+}; r^{-2-p/2})$. Moreover, Rellich--Kondrakov theorem holds in the weighted space \cite[Lemma 3.1]{A8}, i.e.,  

\begin{align}
\dot{H}^{1}_{0}(\mathbb{R}^{2}_{+};r^{-1})\subset \subset L^{p}_{\textrm{loc}}(\overline{\mathbb{R}^{2}_{+}};r^{-1}),\quad 1\leq p <\infty.    
\end{align}\\
The space $\dot{H}^{1}_{0}(\mathbb{R}^{2}_{+};r^{-1} )$ is a Hilbert space isometrically isomorphic to $\dot{H}^{1}_{\textrm{axi}}(\mathbb{R}^{5})= \{ \varphi\in \dot{H}^{1}(\mathbb{R}^{5})\ | \ \varphi: \textrm{axisymmetric } \}$ \cite[Lemma 2.2]{AF88} by the transform

\begin{align}
\phi(z,r)\longmapsto \varphi(y)=\frac{\phi(z,r)}{r^{2}},\quad y={}^{t}(y_1,y'),\ y_1=z,\ |y'|=r,     
\end{align}\\
 in the sense that 

\begin{equation}
\begin{aligned}
\int_{\mathbb{R}^{2}_{+}}\nabla \phi\cdot \nabla \tilde{\phi}\frac{2\pi^{2}}{r}\dd z\dd r
=\int_{\mathbb{R}^{5}}\nabla_y\varphi \cdot \nabla_y\tilde{\varphi} \dd y,\quad \varphi=\frac{\phi}{r^{2}}, \tilde{\varphi}=\frac{\tilde{\phi}}{r^{2}}.
\end{aligned}
\end{equation}\\
The inverse transform of (3.10) induces the isometries from $L^{1}(\mathbb{R}^{5})$ into $L^{1}(\mathbb{R}^{2})$ and from $L^{2}(\mathbb{R}^{5})$ into $L^{2}(\mathbb{R}^{2}_{+};r^{-1})$, respectively, in the sense that 

\begin{equation}
\begin{aligned}
||\phi||_{L^{1}(\mathbb{R}^{3})}&=\frac{1}{\pi} ||\varphi||_{L^{1}(\mathbb{R}^{5}) }, \\
||\phi||_{L^{2}(\mathbb{R}^{2}_{+};r^{-1} )}&=\frac{1}{\sqrt{2}\pi} ||\varphi||_{L^{2}(\mathbb{R}^{5}) }.
\end{aligned}
\end{equation}

\vspace{5pt}

We show the Poincar\'e inequality with the weighted measure for the later usage in Section 5.

\begin{prop}
There exists a constant $C$ such that 

\begin{align}
\int_{D(0,2R)\backslash D(0,R)}|\phi |^{2}\frac{1}{r}\dd z\dd r\leq CR^{2} \int_{D(0,2R)\backslash D(0,R)}|\nabla \phi |^{2}\frac{1}{r}\dd z\dd r, 
\end{align}\\
for $\phi\in \dot{H}^{1}_{0}(\mathbb{R}^{2}_{+};r^{-1})$, where $D(0,R)=\{{}^{t}(z,r)\in \mathbb{R}^{2}_{+} \ |\ |z|^{2}+r^{2}<R^{2} \}$ and $R>0$.  
\end{prop}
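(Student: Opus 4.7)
The plan is a direct path-integral / Cauchy--Schwarz estimate that exploits the trace-zero condition $\phi|_{r=0}=0$ together with the geometry of the half-annulus $D(0,2R)\setminus D(0,R)$: its boundary meets $\partial\mathbb{R}^{2}_{+}$ in the two segments $\{r=0,\ R\leq|z|\leq 2R\}$, so every interior point is joined by a short circular arc to a point where $\phi$ vanishes.

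A routine rescaling $\tilde\phi(z',r')=\phi(Rz',Rr')$ reduces the inequality to the case $R=1$ (the left-hand side picks up a factor $R$ and the right-hand side a factor $R^{-1}$, producing the desired $R^{2}$). Working in polar coordinates $z=\rho\cos\alpha$, $r=\rho\sin\alpha$ on $D(0,2)\setminus D(0,1)$, I fix $(z_{0},r_{0})$ with $z_{0}>0$ and corresponding $(\rho_{0},\alpha_{0})\in (1,2)\times(0,\pi/2]$; the case $z_{0}<0$ will be symmetric. The circular arc $\gamma\colon\alpha\mapsto(\rho_{0}\cos\alpha,\rho_{0}\sin\alpha)$ for $\alpha\in[0,\alpha_{0}]$ sits in the closed half-annulus and ends at $(\rho_{0},0)$ with $\rho_{0}\in(1,2)$, i.e.\ at a trace-zero boundary point. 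The fundamental theorem of calculus together with Cauchy--Schwarz and $|\partial_{\alpha}\phi|^{2}\leq \rho_{0}^{2}|\nabla\phi|^{2}$ will give the pointwise bound
\[
|\phi(z_{0},r_{0})|^{2}\leq \rho_{0}^{2}\,\alpha_{0}\int_{0}^{\alpha_{0}}|\nabla\phi(\rho_{0},\alpha)|^{2}\,\dd\alpha.
\]

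Dividing by $r_{0}=\rho_{0}\sin\alpha_{0}$ and integrating over $\{z_{0}>0\}$ with the polar measure $\dd z_{0}\,\dd r_{0}=\rho_{0}\,\dd\rho_{0}\,\dd\alpha_{0}$ yields an expression of the form $\int_{1}^{2}\rho_{0}^{2}\,\dd\rho_{0}\int_{0}^{\pi/2}(\alpha_{0}/\sin\alpha_{0})\int_{0}^{\alpha_{0}}|\nabla\phi|^{2}\,\dd\alpha\,\dd\alpha_{0}$. Swapping the two angular integrals and using that $\alpha_{0}/\sin\alpha_{0}$ is bounded on $(0,\pi/2]$ (it extends continuously to $1$ at $\alpha_{0}=0$), together with $\rho_{0}^{2}\leq 4$ and $\sin\alpha\leq 1$, converts the right-hand side into a universal multiple of $\int_{\{z_{0}>0\}}|\nabla\phi|^{2}/r\,\dd z\,\dd r$. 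The symmetric estimate on $\{z_{0}<0\}$ using the arc ending at $(-\rho_{0},0)$ then completes the proof at $R=1$.

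The only nontrivial geometric input to verify is that the chosen arc stays in the closed half-annulus and terminates on a trace-zero boundary point. Both are immediate from the choice $\rho_{0}\in(1,2)$: the arc lives on the circle of radius $\rho_{0}$ (so inside the annulus) and ends at $(\pm\rho_{0},0)$ with $|\pm\rho_{0}|\in(1,2)$, which lies on the trace-zero segments $\{r=0,\ 1\leq|z|\leq 2\}\subset \partial(D(0,2)\setminus D(0,1))$. Without this geometric feature the trace-zero condition alone would not close the estimate, and one would have to fall back on a compactness argument via the isomorphism $\phi\mapsto\phi/r^{2}$ into $\dot H^{1}_{\textrm{axi}}(\mathbb{R}^{5})$ combined with the non-integrability of the weight $1/r$ on the half-annulus.
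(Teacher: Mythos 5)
Your proof is correct, but it takes a genuinely different route from the paper's. The paper rescales to $R=1$ and then argues by contradiction: a normalized sequence with vanishing gradient norm is transported to $\dot{H}^{1}_{\textrm{axi}}(\mathbb{R}^{5})$ via $\varphi=\phi/r^{2}$, the Rellich--Kondrakov theorem on the spherical shell $B(0,2)\backslash B(0,1)$ extracts a strong $L^{2}$ limit, and the limit is annihilated by $\nabla\phi=0$ together with $\phi(z,0)=0$; the constant is non-constructive. Your argument is direct: the circular arcs of radius $\rho_{0}\in(1,2)$ terminate on the trace-zero segments $\{r=0,\ 1\leq |z|\leq 2\}$, so the fundamental theorem of calculus plus Cauchy--Schwarz gives the pointwise bound, and the bookkeeping closes -- after Tonelli the inner integral $\int_{\alpha}^{\pi/2}(\alpha_{0}/\sin\alpha_{0})\,\dd\alpha_{0}\leq \pi^{2}/4$, and $\rho_{0}^{2}\leq 4$ together with $1\leq 1/\sin\alpha$ restore the weight, yielding the explicit constant $C=\pi^{2}$ with no compactness and no transform to $\mathbb{R}^{5}$. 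What you should make explicit is the justification of the fundamental theorem of calculus along a.e.\ arc for a general $\phi\in\dot{H}^{1}_{0}(\mathbb{R}^{2}_{+};r^{-1})$: on the annulus the weight $1/r$ is bounded below and $\phi\in L^{p}(\mathbb{R}^{2}_{+};r^{-2-p/2})$ by (3.8), so $\phi$ is an ordinary $H^{1}$ function there with zero trace on the flat segments, and either the ACL characterization combined with the trace-zero hypothesis, or a density argument (functions $r^{2}\varphi$ with $\varphi\in C^{\infty}_{c}$ axisymmetric in $\mathbb{R}^{5}$ are dense and vanish to second order at $r=0$), makes the endpoint value $\phi(\rho_{0},0)=0$ rigorous for a.e.\ $\rho_{0}$. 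That is routine, and it is the only point at which the trace hypothesis actually enters; with it, your proof is complete and arguably more informative than the paper's.
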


\vspace{5pt}

\begin{proof}
We reduce to the case $R=1$ by dilation. Suppose that (3.13) were false, then there exists a sequence $\{\phi_n\}\subset \dot{H}^{1}_{0}(\mathbb{R}^{2}_{+};r^{-1})$ such that 

\begin{align*}
\int_{D(0,2)\backslash D(0,1)}|\phi_n |^{2}\frac{1}{r}\dd z\dd r=1,\quad 
\int_{D(0,2)\backslash D(0,1)}|\nabla \phi_n |^{2}\frac{1}{r}\dd z\dd r\to 0.
\end{align*}\\
By the transform (3.10), $\varphi_n=\phi_n/r^{2}$ satisfies 

\begin{align*}
\int_{B(0,2)\backslash B(0,1)}|\nabla \varphi_n |^{2}\dd y=
\int_{D(0,2)\backslash D(0,1)}|\nabla \phi_n |^{2}\frac{2\pi^{2}}{r}\dd z\dd r,
\end{align*}\\
where $B(0,R_0)$ denotes the open ball in $\mathbb{R}^{5}$ centered at the origin with radius $R_0>0$. By Rellich--Kondrakov theorem, there exist $\{n_k\}$ and some axisymmetric $\varphi$ such that $\varphi_{n_{k}}\to \varphi$ in $L^{2}(B(0,2)\backslash B(0,1))$. Thus the function $\phi=r^{2}\varphi$ satisfies 

\begin{align*}
\int_{D(0,2)\backslash D(0,1)}| \phi_{n_{k}}-\phi |^{2}\frac{2\pi^{2}}{r}\dd z\dd r
=\int_{B(0,2)\backslash B(0,1)}| \varphi_{n_{k}}-\varphi |^{2}\dd y
\to 0.
\end{align*}\\
This implies that 

\begin{align*}
\int_{D(0,2)\backslash D(0,1)}|\phi |^{2}\frac{1}{r}\dd z\dd r=1.
\end{align*}\\
Since $\nabla \phi=0$ and $\phi(z,0)=0$, we have $\phi\equiv 0$. We obtained a contradiction. 
\end{proof}

\begin{rem}
The weighted Sobolev inequality (3.8) also holds in a disk $D(0,R)$. For example, 

\begin{align}
\left(\int_{D(0,R)}|\phi|^{10/3}\frac{1}{r^{11/3}}\dd z\dd r\right)^{3/10}
\leq C \left(\int_{D(0,R)}|\nabla\phi|^{2}\frac{1}{r}\dd z\dd r\right)^{1/2},
\end{align}\\
holds for $\phi\in \dot{H}^{1}_{0}(\mathbb{R}^{2}_{+};r^{-1})$. The Poincar\'e inequality $||\phi||_{L^{2}(D(0,1);r^{-1}) }\leq C ||\nabla \phi||_{L^{2}(D(0,1);r^{-1}) }$ can be demonstrated in a similar way as (3.13) and hence the homogeneous Sobolev inequality $||\varphi||_{L^{10/3}(B(0,1))}\leq C|| \nabla \varphi||_{L^{2}(B(0,1))}$ holds for $\varphi\in \dot{H}^{1}_{\textrm{axi}}(\mathbb{R}^{5})$ by the transform (3.10). The inverse transform of (3.10) and a dilation argument imply (3.14).
\end{rem}

\subsection{Anisotropic estimates}
We demonstrate the Clebsch representation (3.1) and (3.3).

\begin{lem}
For $b\in L^{2}_{\sigma,\textrm{axi}}(\mathbb{R}^{3})$, there exist unique $\phi\in \dot{H}^{1}_{0} (\mathbb{R}^{2}_{+};r^{-1})$ and $G\in L^{2}(\mathbb{R}^{2}_{+};r^{-1})$ such that (3.1) holds.
\end{lem}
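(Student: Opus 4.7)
The approach is to extract $G$ directly from the azimuthal component of $b$ and to build $\phi$ as a stream function for the poloidal part $b^{P}=b-b_\theta e_\theta$. Writing $b=b_r e_r+b_\theta e_\theta+b_z e_z$ with $b_r,b_\theta,b_z$ functions of $(z,r)$ only and using $\nabla\theta=r^{-1}e_\theta$, set $G(z,r):=rb_\theta(z,r)$, so that $G\nabla\theta=b_\theta e_\theta$ and
\begin{align*}
\int_{\mathbb{R}^{2}_{+}}|G|^{2}\frac{1}{r}\dd z\dd r=\frac{1}{2\pi}\int_{\mathbb{R}^{3}}|b_\theta|^{2}\dd x\leq \frac{1}{2\pi}\|b\|_{L^{2}(\mathbb{R}^{3})}^{2},
\end{align*}
which places $G$ in $L^{2}(\mathbb{R}^{2}_{+};r^{-1})$.

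A direct computation in cylindrical coordinates gives $\nabla\times(\phi\nabla\theta)=-r^{-1}(\partial_z\phi)e_r+r^{-1}(\partial_r\phi)e_z$, so (3.1) reduces to finding $\phi$ with $\partial_z\phi=-rb_r$ and $\partial_r\phi=rb_z$. The compatibility condition $\partial_r(rb_r)+\partial_z(rb_z)=0$ is exactly the axisymmetric solenoidal identity $\nabla\cdot b=0$. I would make this rigorous by density: approximate $b$ in $L^{2}(\mathbb{R}^{3})$ by $b_n\in C^{\infty}_{c,\sigma,\textrm{axi}}(\mathbb{R}^{3})$ and set
\begin{align*}
\phi_n(z,r):=\int_{0}^{r}r' b_{n,z}(z,r')\dd r'.
\end{align*}
For smooth axisymmetric fields $b_{n,r}(z,0)=0$, so integrating the identity $r'\partial_z b_{n,z}=-\partial_{r'}(r'b_{n,r})$, which follows from $\nabla\cdot b_n=0$, in $r'$ gives $\partial_z\phi_n=-rb_{n,r}$, while $\partial_r\phi_n=rb_{n,z}$ is immediate and $\phi_n|_{r=0}=0$.

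Since $|\nabla\phi_n|^{2}=r^{2}|b_n^{P}|^{2}$ with $b_n^{P}=b_{n,r}e_r+b_{n,z}e_z$, formula (3.2) yields
\begin{align*}
\|\nabla(\phi_n-\phi_m)\|_{L^{2}(\mathbb{R}^{2}_{+};r^{-1})}^{2}=\frac{1}{2\pi}\|b_n^{P}-b_m^{P}\|_{L^{2}(\mathbb{R}^{3})}^{2},
\end{align*}
so $\{\phi_n\}$ is Cauchy in $\dot{H}^{1}_{0}(\mathbb{R}^{2}_{+};r^{-1})$ with some limit $\phi$. Passing to the limit in the identities $\partial_z\phi_n=-rb_{n,r}$ and $\partial_r\phi_n=rb_{n,z}$ tested against $C^{\infty}_{c}(\mathbb{R}^{2}_{+})$ delivers (3.1) a.e. For uniqueness, if $b=0$ then $b_\theta=0$ forces $G=0$ a.e., while $\nabla\phi=0$ together with the isomorphism (3.10), (3.11) makes $\varphi=\phi/r^{2}\in\dot{H}^{1}_{\textrm{axi}}(\mathbb{R}^{5})$ a constant in $L^{10/3}(\mathbb{R}^{5})$ by Sobolev embedding, hence $\varphi\equiv 0$ and $\phi\equiv 0$.

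The hardest step is the density used in the approximation, namely the existence of $b_n\in C^{\infty}_{c,\sigma,\textrm{axi}}(\mathbb{R}^{3})$ with $b_n\to b$ in $L^{2}(\mathbb{R}^{3})$ preserving both axisymmetry and the divergence-free constraint, and the verification that the limiting $\phi$ has the correct zero trace at $\{r=0\}$. This is most cleanly handled by transferring an axisymmetric vector potential of $b$ to $\mathbb{R}^{5}$ via (3.10), applying standard mollification and radial truncation there, and transporting the result back to $\mathbb{R}^{3}$; the weighted Sobolev inequality (3.8) and the compact embedding (3.9) then control $\phi$ in $\dot{H}^{1}_{0}(\mathbb{R}^{2}_{+};r^{-1})$ in the limit.
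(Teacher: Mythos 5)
Your proposal is correct and follows essentially the same route as the paper: extract $G=r^{2}b\cdot\nabla\theta$ from the swirl, realize $\phi$ as a stream function of the poloidal part via smooth axisymmetric solenoidal approximations, obtain the zero trace at $r=0$ in the limit using the isometry (3.2), and deduce uniqueness from the trace condition. The only inessential difference is that you insist on compactly supported approximants (and flag this as the hard step), whereas mollification by a kernel symmetric about the $z$-axis already yields smooth axisymmetric divergence-free approximations in $L^{2}$, which is all the argument needs.
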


\vspace{5pt}

\begin{proof}
For $b\in L^{2}_{\sigma,\textrm{axi}}(\mathbb{R}^{3})$, we set $G(z,r)=r^{2} b\cdot \nabla \theta$. Then $b-G\nabla \theta$ is axisymmetric without a swirl, and there exists an axisymmetric stream function $\phi(z,r)$ such that $b-G\nabla \theta=\nabla \times (\phi\nabla \theta)$. The functions $\nabla \phi$ and $G$ belong to $L^{2}(\mathbb{R}^{2}_{+};r^{-1})$ by (3.2). For smooth $b$, we may assume that $\phi(z,0)=0$ since $\nabla \phi$ vanishes on $\{r=0\}$ by

\begin{align*}
rb=\nabla \phi\times r\nabla \theta+G r\nabla \theta.
\end{align*}\\
For general $b\in L^{2}_{\sigma,\textrm{axi}}(\mathbb{R}^{3})$, we take a sequence $\{b_m\} \subset C^{\infty}\cap L^{2}_{\sigma,\textrm{axi}}(\mathbb{R}^{3})$ such that $b_m\to b$ in $L^{2}(\mathbb{R}^{3})$. The potentials $\phi_m\in \dot{H}^{1}_{0}(\mathbb{R}^{2}_{+};r^{-1}) $ and $G_m\in L^{2}(\mathbb{R}^{2}_{+};r^{-1})$ of $b_m$ satisfy

\begin{align*}
0=\lim_{m\to\infty}\int_{\mathbb{R}^{3}}|b-b_m|^{2}\dd x=\lim_{m\to\infty}\int_{\mathbb{R}^{2}_{+}}\left(|\nabla (\phi-\phi_m) |^{2}+|G-G_m|^{2}\right)\frac{2\pi}{r}\dd z\dd r.
\end{align*}\\
Thus $\phi(z,0)=0$. The potentials $\phi\in \dot{H}^{1}_{0}(\mathbb{R}^{2}_{+};r^{-1})$ and $G \in L^{2}(\mathbb{R}^{2}_{+};r^{-1})$ of (3.1) are unique by the trace condition at $r=0$. 
\end{proof}

\vspace{5pt}

\begin{prop}
For $G\in L^{2}(\mathbb{R}^{2}_{+};r^{-1})$, $\eta=(-L)^{-1}G$ satisfies 

\begin{align}
||\nabla_{z,r}\eta||_{L^{p}(\mathbb{R}) }(r)\lesssim r^{1/p+1/2}||G||_{L^{2}(\mathbb{R}^{2}_{+};r^{-1}) },\quad r>0,\ 2\leq p<\infty.
\end{align}
\end{prop}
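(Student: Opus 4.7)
My approach combines the explicit Green function representation (3.5)--(3.6) with the convolution structure inherited from translation invariance in $z$, and proceeds via a scaling reduction followed by Young's convolution inequality.

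\textbf{Scaling reduction.} The operator $L$ is homogeneous of degree $-2$ under the dilation $(z,r) \mapsto (\lambda z, \lambda r)$: if $-L\eta = G$ then $\tilde\eta(z,r) := \lambda^{-2}\eta(\lambda z, \lambda r)$ satisfies $-L\tilde\eta = \tilde G$ with $\tilde G(z,r) = G(\lambda z, \lambda r)$. A direct change of variables yields $\|\nabla_{z,r}\tilde\eta\|_{L^p_z}(r) = \lambda^{-1-1/p}\|\nabla_{z,r}\eta\|_{L^p_z}(\lambda r)$ and $\|\tilde G\|_{L^2(\mathbb{R}^2_+; r^{-1})} = \lambda^{-1/2}\|G\|_{L^2(\mathbb{R}^2_+; r^{-1})}$, so the claimed estimate (3.15) is scale invariant and it suffices to establish it at $r = 1$.

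\textbf{Convolution and duality.} Since $\mathcal{G}(z,1,z',r')$ depends on $z,z'$ only through $z-z'$, the representation (3.5) expresses $\nabla_{z,r}\eta(\cdot,1)$ as a superposition of convolutions in $z$:
$$\nabla_{z,r}\eta(z,1) = \int_0^\infty \bigl(K(\cdot, r') *_z G(\cdot, r')\bigr)(z)\,\frac{\dd r'}{r'},\qquad K(\zeta,r') := \nabla_{z,r}\mathcal{G}(\zeta,1,0,r').$$
I apply Minkowski's inequality in $L^p_z$ to push the norm inside the $r'$-integral, followed by Young's convolution inequality $L^a_z *_z L^2_z \hookrightarrow L^p_z$ with $a = 2p/(p+2) \in [1,2)$, and finally Cauchy--Schwarz in $r'$ against the weight $r'^{-1}$:
$$\|\nabla_{z,r}\eta(\cdot,1)\|_{L^p_z} \leq \Bigl(\int_0^\infty \|K(\cdot,r')\|_{L^a_z}^2 \frac{\dd r'}{r'}\Bigr)^{1/2} \|G\|_{L^2(\mathbb{R}^2_+; r^{-1})}.$$

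\textbf{Kernel estimate.} It remains to bound the first factor by an absolute constant. Using the explicit form of $\nabla_{z,r}\mathcal{G}$ obtained by differentiating (3.6), the substitution $u = z/\sqrt{r'}$ shows $\|K(\cdot,r')\|_{L^a_z}^a = \sqrt{r'}\,\Psi_a(\alpha_0)$ with $\alpha_0 := |1-r'|/\sqrt{r'}$, where $\Psi_a(\alpha)$ is a dimensionless integral in $u \in \mathbb{R}$ built from $F$, $F'$, and $F - 2sF'$ evaluated at $u^2 + \alpha^2$. Invoking (3.7) separately in the two regimes $s \leq 1$ (choosing $\tau$ small to accommodate the logarithmic singularity $|F'(s)| \lesssim s^{-1}$) and $s \geq 1$ (choosing $\tau$ close to $3/2$ for the polynomial decay) produces pointwise bounds on $\Psi_a(\alpha_0)$ which, upon integration against $r'^{1/a - 1}\dd r'$, give a finite constant. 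The main obstacle lies in balancing the choice of $\tau$ across the two regimes: the kernel is singular on the diagonal $r' = 1$ (where $\alpha_0 \to 0$) and degenerates near $r' \to 0$ and $r' \to \infty$ (where $\alpha_0 \to \infty$), and one must verify that the weighted integral in $r'$ remains finite simultaneously at both ends. Combining the resulting bound with the scale reduction of Step 1 yields (3.15) with the exact power $r^{1/p + 1/2}$.
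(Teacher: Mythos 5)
Your proposal follows essentially the same route as the paper's proof: the Green-function representation as a $z$-convolution, Young's inequality with $1/a=1/p+1/2$ followed by Cauchy--Schwarz in $r'$ against the weight $r'^{-1}$, and kernel bounds drawn from the asymptotics (3.7) with the exponents chosen differently near and away from the diagonal $r'=r$. The only differences are cosmetic --- your scaling reduction to $r=1$ replaces the paper's direct tracking of powers of $r$, and the final integrability check in $r'$ that you assert but do not carry out is precisely the paper's explicit computation $\|s^{-\alpha}\|_{L^{q}(\mathbb{R})}=C(rr')^{\alpha}|r-r'|^{1/q-2\alpha}$ together with the split into $|r-r'|<r/2$ and $|r-r'|\geq r/2$, which confirms that the balancing of $\tau$ you identify as the main obstacle does go through.
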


\vspace{5pt}

\begin{proof}
By the 1-dimensional convolution $*_1$ for $z$-variable,

\begin{align*}
\nabla_{z,r}\eta(z,r)=\int_{0}^{\infty}\nabla_{z,r}{\mathcal{G}}*_1G(\cdot,r')\frac{1}{r'}\dd r'.
\end{align*}\\
By Young's inequality for $1/p=1/q-1/2$, $1\leq q<2$, 

\begin{align*}
||\nabla_{z,r}\eta||_{L^{p}(\mathbb{R}) }(r)
&\leq \int_{0}^{\infty}||\nabla {\mathcal{G}}||_{L^{q}(\mathbb{R}) }(r,r') ||G ||_{L^{2}(\mathbb{R}) }\frac{1}{r'}\dd r' \\
&\leq \left(\int_{0}^{\infty}||\nabla {\mathcal{G}}||_{L^{q}(\mathbb{R}) }^{2}(r,r') \frac{1}{r'}\dd r'\right)^{1/2}||G||_{L^{2}(\mathbb{R}^{2}_{+};r^{-1} ) }.
\end{align*}\\
By the pointwise estimates (3.7), 

\begin{align*}
\left|\nabla_{z,r} {\mathcal{G}}(z,r,z',r')\right|\lesssim \frac{1}{s^{1/2+\tau}}+\sqrt{\frac{r'}{r}}\frac{1}{s^{\tilde{\tau}}},\quad 0<\tau\leq \frac{3}{2},\ 0\leq \tilde{\tau}\leq \frac{3}{2}.
\end{align*}\\
By using  

\begin{align*}
\left\|\frac{1}{s^{\alpha}}\right\|_{L^{q}(\mathbb{R}) }=C\frac{(rr')^{\alpha}}{|r-r' |^{2\alpha-1/q}},\quad \alpha>\frac{1}{2q},
\end{align*}\\
we estimate 

\begin{align*}
\left\|\nabla_{z,r} {\mathcal{G}}\right\|_{L^{q}(\mathbb{R})}^{2}(r,r')
\lesssim \left( \frac{(rr')^{2\tau+1}}{|r-r' |^{4\tau +2-2/q}}+\frac{r' (rr')^{2\tilde{\tau}}}{r |r-r' |^{4\tilde{\tau} -2/q}}  \right),\quad \frac{1}{2q}- \frac{1}{2}<\tau\leq \frac{3}{2},\  \frac{1}{2q}<\tilde{\tau}\leq \frac{3}{2}.
\end{align*}\\
For $1/(2q)-1/2<\tau\leq 3/2$, 

\begin{align*}
\int_{\{|r-r'|<r/2\}}\frac{(rr')^{2\tau+1}}{|r-r' |^{4\tau +2-2/q}r'}\dd r'=Cr^{2/q}.
\end{align*}\\
For $1/q-1/2<\tau\leq 3/2$, 

\begin{align*}
\int_{\{|r-r'|\geq r/2\}}\frac{(rr')^{2\tau+1}}{|r-r' |^{4\tau +2-2/q}r'}\dd r'=Cr^{2/q}.
\end{align*}\\
Thus 

\begin{align*}
\int_{\mathbb{R}}\frac{(rr')^{2\tau+1}}{|r-r' |^{4\tau +2-2/q}r'}\dd r'=Cr^{2/q}.
\end{align*}\\
By choosing different $\tilde{\tau}$ in $|r-r'|<r/2$ and in $|r-r'|\geq r/2$ in a similar way,
 
\begin{align*}
\int_{\mathbb{R}}\frac{r'(rr')^{2\tilde{\tau}+1}}{|r-r' |^{4\tilde{\tau} -2/q}r'}\dd r'=Cr^{2/q}.
\end{align*}\\
Thus (3.15) holds.
\end{proof}

\vspace{5pt}

\begin{lem}
The vector potential $a\in L^{6}(\mathbb{R}^{3})$ in (3.3) is a unique potential of $b\in L^{2}_{\sigma,\textrm{axi}}(\mathbb{R}^{3})$ such that $\nabla \times a=b$ and $\nabla \cdot a=0$.
\end{lem}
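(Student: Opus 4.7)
The plan is to check the differential identities by direct cylindrical computation, bound the $L^{6}$ norms of the two Clebsch terms separately, and settle uniqueness via Liouville.

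First, I verify $\nabla\cdot a=0$ and $\nabla\times a=b$ from (3.3). Trivially $\nabla\cdot(\nabla\times(\eta\nabla\theta))=0$, and $\phi\nabla\theta=(\phi/r)e_{\theta}$ is divergence-free by axisymmetry, so $\nabla\cdot a=0$. A cylindrical computation gives
\begin{equation*}
\nabla\times(\eta\nabla\theta)=-\frac{\partial_{z}\eta}{r}e_{r}+\frac{\partial_{r}\eta}{r}e_{z},
\end{equation*}
and a second curl combined with (3.4) yields $\nabla\times\nabla\times(\eta\nabla\theta)=-(L\eta/r)e_{\theta}=G\nabla\theta$. Together with $\nabla\times(\phi\nabla\theta)=\nabla\phi\times\nabla\theta$, this recovers $\nabla\times a=b$.

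For $a\in L^{6}(\mathbb{R}^{3})$, I treat the two Clebsch terms separately. The azimuthal piece satisfies
\begin{equation*}
\|\phi\nabla\theta\|_{L^{6}(\mathbb{R}^{3})}^{6}=2\pi\int_{\mathbb{R}^{2}_{+}}|\phi|^{6}r^{-5}\dd z\dd r\leq C\|\nabla\phi\|_{L^{2}(\mathbb{R}^{2}_{+};r^{-1})}^{6}\leq C\|b\|_{L^{2}(\mathbb{R}^{3})}^{6}
\end{equation*}
by the weighted Sobolev inequality (3.8) with $p=6$. For the poloidal piece $w=\nabla\times(\eta\nabla\theta)$, the key identity is $-\Delta(\eta\nabla\theta)=G\nabla\theta$ in $\mathbb{R}^{3}$, which follows from applying the cylindrical vector Laplacian to the axisymmetric azimuthal field $(\eta/r)e_{\theta}$ and invoking $-L\eta=G$. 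Since the Green's function representation (3.5) singles out the decaying solution, $\eta\nabla\theta$ coincides with the Newton potential $(-\Delta)^{-1}(G\nabla\theta)$. Because $G\nabla\theta\in L^{2}(\mathbb{R}^{3})$, the Hardy--Littlewood--Sobolev bound $\nabla(-\Delta)^{-1}\colon L^{2}(\mathbb{R}^{3})\to L^{6}(\mathbb{R}^{3})$ yields $\nabla(\eta\nabla\theta)\in L^{6}(\mathbb{R}^{3})$, hence $w\in L^{6}(\mathbb{R}^{3})$. Summing, $a\in L^{6}(\mathbb{R}^{3})$.

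For uniqueness, suppose $a'\in L^{6}(\mathbb{R}^{3})$ also satisfies $\nabla\times a'=b$ and $\nabla\cdot a'=0$. Then $h=a-a'\in L^{6}(\mathbb{R}^{3})$ is both divergence-free and curl-free, so each Cartesian component is harmonic. The mean value property and H\"older's inequality give $|h(x)|\leq |B_{R}(x)|^{-1/6}\|h\|_{L^{6}(\mathbb{R}^{3})}=CR^{-1/2}\|h\|_{L^{6}(\mathbb{R}^{3})}\to 0$ as $R\to\infty$, forcing $h\equiv 0$. The main obstacle is the identification of $\eta\nabla\theta$ with $(-\Delta)^{-1}(G\nabla\theta)$: once the Green's function estimates (3.7) confirm $\eta\to 0$ at infinity, the difference between the two is a decaying harmonic vector field and hence vanishes, after which the remaining steps reduce to standard Riesz potential and Sobolev tools.
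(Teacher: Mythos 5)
Your verification of the differential identities, your bound for the toroidal piece $\phi\nabla\theta$ via (3.8) with $p=6$, and your Liouville argument for uniqueness are all correct and match the paper in spirit. The one place where your argument does not close is the poloidal piece: you identify $\eta\nabla\theta$ with ``the Newton potential $(-\Delta)^{-1}(G\nabla\theta)$,'' but for data that is merely in $L^{2}(\mathbb{R}^{3})$ the Newton potential is not an absolutely convergent integral (the kernel $|x|^{-1}$ fails to be square integrable at infinity in $\mathbb{R}^{3}$, and HLS gives no Lebesgue target for $I_{2}$ on $L^{2}(\mathbb{R}^{3})$), so ``the decaying solution'' of $-\Delta V=G\nabla\theta$ is not well defined as stated and the comparison ``difference of two decaying solutions'' cannot be run at the level of the potentials themselves. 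What is well defined is $W=\nabla(-\Delta)^{-1}(G\nabla\theta)\in L^{6}$, and the honest version of your step is to show that $\nabla(\eta\nabla\theta)-W$ is a harmonic tempered distribution, hence a polynomial, hence zero by the decay encoded in (3.7) and (3.15); you gesture at this but do not carry it out, and it is the only genuinely missing piece.

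The paper sidesteps all of this with a shorter argument: since $\nabla\cdot a=0$ and $a$ is locally integrable with decay (from (3.8) and Proposition 3.4), integration by parts gives $\|\nabla a\|_{L^{2}}=\|\nabla\times a\|_{L^{2}}=\|b\|_{L^{2}}$, and then the Sobolev embedding $\dot{H}^{1}(\mathbb{R}^{3})\subset L^{6}(\mathbb{R}^{3})$ yields $a\in L^{6}$ in one line. The same trick repairs your poloidal estimate without any Riesz potentials: the field $w=\nabla\times(\eta\nabla\theta)$ is divergence free, so
\begin{align*}
\|\nabla w\|_{L^{2}(\mathbb{R}^{3})}=\|\nabla\times w\|_{L^{2}(\mathbb{R}^{3})}=\|G\nabla\theta\|_{L^{2}(\mathbb{R}^{3})}\leq \|b\|_{L^{2}(\mathbb{R}^{3})},
\end{align*}
and Sobolev embedding gives $w\in L^{6}(\mathbb{R}^{3})$ directly, with no need to identify $\eta\nabla\theta$ with any global inverse Laplacian. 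I recommend replacing the Newton-potential identification with this energy identity; everything else in your write-up can stand.
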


\vspace{5pt}

\begin{proof}
By (3.8) and (3.15), $a$ is locally integrable in $\mathbb{R}^{3}$ and $\nabla \times a=b$ and $\nabla \cdot a=0$. By $||\nabla a||_{L^{2}}=||\nabla \times a||_{L^{2}}=||b||_{L^{2}}$ and Sobolev embedding, $a$ belongs to $L^{6}(\mathbb{R}^{3})$. The uniqueness follows from the Liouville theorem.
\end{proof}

\begin{rem}
The condition $a\in L^{6}(\mathbb{R}^{3})$ implies that $\nabla_{z,r}\eta$ and $\phi$ belong to $L^{6}(\mathbb{R}^{2}_{+}; r^{-5})$. The weighted Sobolev inequality (3.8) implies more general properties $\phi\in L^{p}(\mathbb{R}^{2}_{+}; r^{-2-p/2})$ for $2\leq p<\infty$, whereas (3.15) yields the weaker integrability for $\nabla_{z,r}\eta$ at $r=0$ and $r=\infty$, i.e., 

\begin{align*}
||\nabla_{z,r} \eta||_{L^{p}(\mathbb{R})}^{p}\frac{1}{r^{2+p/2}}\leq \frac{C}{r}||G||_{L^{2}(\mathbb{R}^{2}_{+};r^{-1}) }^{p}\quad r>0.
\end{align*}\\
The trace of $\nabla_{z,r} \eta$ at $r=0$ vanishes on $L^{p}(\mathbb{R})$ by (3.15). 
\end{rem}

\begin{rem}
The constant $B_{\infty}=-2e_z$ can be expressed as $B_{\infty}=-\nabla \times (\phi_{\infty}\nabla \theta)$ by the function $\phi_{\infty}=r^{2}+\gamma$ for any $\gamma\geq 0$. By the Clebsch representation (3.1) and (3.3), $B=b+B_{\infty}$ and one of its vector potentials $A$ are expressed as   

\begin{equation}
\begin{aligned}
B&=\nabla \times( \Phi\nabla \theta)+G\nabla \theta,\\
A&=\nabla \times (\eta\nabla \theta)+\Phi\nabla \theta, \\
\Phi&=\phi-\phi_{\infty}.  
\end{aligned}
\end{equation}\\
The pseudo-scalar $A\cdot B$ is expressed as 

\begin{align*}
A\cdot B=2\Phi \frac{G}{r^{2}}+\nabla \cdot ( \Phi\nabla \theta\times A ).
\end{align*}
\end{rem}

\begin{prop}
Let $1<q<3$ and $1/p=1/q-1/3$. For $b\in L^{q}_{\sigma}(\mathbb{R}^{3})$, there exists a unique $a\in L^{p}_{\sigma}(\mathbb{R}^{3})$ such that $\nabla a\in L^{q}(\mathbb{R}^{3})$ and $\nabla \times a=b$.
\end{prop}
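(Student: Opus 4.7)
The plan is to construct $a$ explicitly by a Biot--Savart type formula and then read off integrability from classical singular integral estimates in $\mathbb R^3$. Set
\begin{align*}
a(x)=\nabla\times\bigl((-\Delta)^{-1}b\bigr)(x)=\nabla_x\times\int_{\mathbb R^{3}}\frac{b(y)}{4\pi|x-y|}\,\dd y,
\end{align*}
so that, schematically, $a=R\times b$ where $R$ denotes the vector of Riesz transforms of order one composed with $(-\Delta)^{-1/2}$; component-wise $a_i=\varepsilon_{ijk}\partial_j(-\Delta)^{-1}b_k$. Because the operator $\partial_j(-\Delta)^{-1}$ is a first-order Riesz potential of type $I_1$, the Hardy--Littlewood--Sobolev inequality gives $a\in L^{p}(\mathbb R^{3})$ with the stated exponent $1/p=1/q-1/3$, and because $\partial_i\partial_j(-\Delta)^{-1}$ is a Calderón--Zygmund operator, bounded on $L^{q}(\mathbb R^{3})$ for $1<q<\infty$, one obtains $\nabla a\in L^{q}(\mathbb R^{3})$.

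The identities $\nabla\cdot a=0$ and $\nabla\times a=b$ I would first verify for $b\in C_{c,\sigma}^{\infty}(\mathbb R^{3})$ using the vector calculus identity
\begin{align*}
\nabla\times(\nabla\times(-\Delta)^{-1}b)=-\Delta(-\Delta)^{-1}b+\nabla\bigl(\nabla\cdot(-\Delta)^{-1}b\bigr)=b+\nabla(-\Delta)^{-1}(\nabla\cdot b)=b,
\end{align*}
and $\nabla\cdot a=\nabla\cdot(\nabla\times\,\cdot\,)=0$. A density argument then extends these identities to $b\in L^{q}_{\sigma}(\mathbb R^{3})$: take $\{b_n\}\subset C_{c,\sigma}^{\infty}(\mathbb R^{3})$ with $b_n\to b$ in $L^{q}$, define $a_n$ by the same formula, and pass to the limit using the $L^{p}$ and $L^{q}$ bounds on $a_n$ and $\nabla a_n$ respectively; the relations $\nabla\times a_n=b_n$ and $\nabla\cdot a_n=0$ survive in the sense of distributions.

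For uniqueness, suppose $a_1,a_2\in L^{p}_{\sigma}(\mathbb R^{3})$ with $\nabla a_i\in L^{q}$ and $\nabla\times a_i=b$, $i=1,2$. The difference $w=a_1-a_2\in L^{p}_{\sigma}(\mathbb R^{3})$ satisfies $\nabla\times w=0$ and $\nabla\cdot w=0$, hence
\begin{align*}
-\Delta w=\nabla\times(\nabla\times w)-\nabla(\nabla\cdot w)=0
\end{align*}
componentwise in $\mathcal D'(\mathbb R^{3})$. Each component of $w$ is a tempered harmonic function belonging to $L^{p}(\mathbb R^{3})$ with $1<p<\infty$, so by Liouville's theorem $w\equiv 0$.

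The main technical point I expect to be the need to do the density approximation carefully, since the formula for $a$ involves a non-local kernel of Calderón--Zygmund type: the Hardy--Littlewood--Sobolev bound for the $L^{p}$-norm of $a$ and the Calderón--Zygmund bound for the $L^{q}$-norm of $\nabla a$ must both be invoked to pass to the limit in $\nabla\times a_n=b_n$ and to secure that the identity $\nabla\cdot a=0$ persists in the limit. Everything else is routine vector calculus and Liouville's theorem.
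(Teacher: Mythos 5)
Your proposal is correct and follows essentially the same route as the paper: the paper defines $a=E*(\nabla\times b)$ with the Newton potential $E$ (which coincides with your $\nabla\times(-\Delta)^{-1}b$ for smooth compactly supported $b$), obtains the $L^{p}$ bound on $a$ and the $L^{q}$ bound on $\nabla a$ from the Sobolev and Calder\'on--Zygmund inequalities, extends by density from $C^{\infty}_{c,\sigma}(\mathbb{R}^{3})$, and concludes uniqueness by the Liouville theorem. No substantive differences.
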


\begin{proof}
For $b\in C^{\infty}_{c,\sigma}(\mathbb{R}^{3})$, we set $a=E*(\nabla \times b)$ by the fundamental solution of the Laplace equation $E(x)=(4\pi)^{-1}|x|^{-1}$ and the convolution $*$ in $\mathbb{R}^{3}$. Then, $\nabla \times a=b$ and $\nabla \cdot a=0$. By the Calderon-Zygmund inequality for the Newton potential \cite[Theorem 9.9]{GT} and the Sobolev inequality, 

\begin{align*}
||\nabla a||_{L^{q}(\mathbb{R}^{3}) }+||a||_{L^{p}(\mathbb{R}^{3}) }\leq C||b||_{L^{q}(\mathbb{R}^{3}) }.
\end{align*}\\
For $b\in L^{q}_{\sigma}(\mathbb{R}^{3})$, we take a sequence $\{b_n\}\subset  C^{\infty}_{c,\sigma}(\mathbb{R}^{3})$ such that $b_n\to b$ in $L^{q}(\mathbb{R}^{3})$. Then, $a_n=E*(\nabla \times b_n)\to a$  in $L^{p}(\mathbb{R}^{3})$ and $\nabla a_n\to \nabla a$ in $L^{q}(\mathbb{R}^{3})$ for some $a$ by the above inequality. The limit $a$ satisfies the desired properties. The uniqueness follows from the Liouville theorem.  
\end{proof}

\subsection{Generalized magnetic helicity}

We define generalized magnetic helicity (1.10) and generalized mean-square potential (1.14) for any axisymmetric solenoidal vector fields in $L^{2}_{\sigma,\textrm{axi}}(\mathbb{R}^{3})$.

\begin{prop}
Let $1\leq q<\infty$. Let $\phi_{\infty}=r^{2}+\gamma$ and $\gamma\geq 0$. For $\phi\in \dot{H}^{1}_{0}(\mathbb{R}^{2}_{+}; r^{-1})$, 

\begin{align}
|\{x\in \mathbb{R}^{3}\ |\ \phi(x)>\phi_{\infty} \}|
&\lesssim  ||\nabla \phi||_{L^{2}(\mathbb{R}^{2}_{+}; r^{-1}) }^{2},  \\
||(\phi-\phi_{\infty})_{+}||_{L^{q} (\mathbb{R}^{2}_{+};r^{-1}) }
&\lesssim  ||\nabla \phi||^{4/3+2/(3q) }_{L^{2}(\mathbb{R}^{2}_{+}; r^{-1}) },    \\
||(\phi-\phi_{\infty})_{+}||_{L^{q}(\mathbb{R}^{3}) }
&\lesssim  ||\nabla \phi||_{L^{2}(\mathbb{R}^{2}_{+}; r^{-1}) }^{4/3+2/q},   
\end{align}\\
where $|\cdot |$ denotes the Lebesgue measure in $\mathbb{R}^{3}$. 
\end{prop}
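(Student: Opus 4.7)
The three estimates all rest on the same observation: on the set $\{\phi>\phi_{\infty}\}$ one has $\phi>r^{2}$, and hence $\phi/r^{2}>1$. This lets us \emph{insert} any positive power of $\phi/r^{2}$ (to raise the integrand) or any negative power of $\phi$ together with a matching positive power of $r^{2}$ (using $\phi^{-s}\le r^{-2s}$ for $s\ge 0$) into the integrals. Combined with the weighted Sobolev inequality (3.8), this yields all three bounds by choosing the exponent $p$ in (3.8) so that the resulting power of $r$ collapses to zero.

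For (3.17), since $\phi>r^{2}$ on $\{\phi>\phi_{\infty}\}$, we have $1<(\phi/r^{2})^{2}$ there, so
\[
|\{\phi>\phi_{\infty}\}|_{\mathbb{R}^{3}}=2\pi\int_{\{\phi>\phi_{\infty}\}}r\,\dd z\dd r\le 2\pi\int_{\mathbb{R}^{2}_{+}}\frac{|\phi|^{2}}{r^{3}}\dd z\dd r\lesssim \|\nabla\phi\|_{L^{2}(\mathbb{R}^{2}_{+};r^{-1})}^{2}
\]
by (3.8) with $p=2$. This is the only step needed for (3.17).

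For (3.18) and (3.19) I first bound $(\phi-\phi_{\infty})_{+}\le \phi\cdot 1_{\{\phi>\phi_{\infty}\}}$ (valid since $\phi_{\infty}\ge 0$). On $\{\phi>\phi_{\infty}\}\subset\{\phi>r^{2}\}$, for any $p>q$ one has $\phi^{q-p}\le r^{2(q-p)}$, hence $\phi^{q}\le \phi^{p}\,r^{2(q-p)}$. Inserting this and choosing $p$ to cancel the leftover power of $r$ against the weight $r^{-2-p/2}$ of (3.8) gives the result. Concretely, for (3.18) one writes
\[
\int_{\{\phi>\phi_{\infty}\}}\frac{\phi^{q}}{r}\dd z\dd r\le \int_{\mathbb{R}^{2}_{+}}\frac{\phi^{p}}{r^{2+p/2}}\,r^{\,2q-3p/2+1}\dd z\dd r,
\]
and choosing $p=(4q+2)/3$ (which satisfies $p\ge 2$ exactly for $q\ge 1$, so (3.8) applies) kills the extra $r$-factor; taking $q$th roots yields the exponent $p/q=4/3+2/(3q)$. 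For (3.19) the only change is the extra weight $r$ in the $L^{q}(\mathbb{R}^{3})$-measure, so the free $r$-power becomes $r^{\,2q-3p/2+3}$; the choice $p=(4q+6)/3\ge 2$ cancels it, yielding $p/q=4/3+2/q$.

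The whole argument is essentially bookkeeping of exponents; the only conceptual point, and the one mild obstacle, is to recognise that $\phi_{\infty}\ge r^{2}$ together with $\phi_{\infty}\ge 0$ allow us both to trade $(\phi-\phi_{\infty})_{+}$ for $\phi$ and to exchange powers of $\phi$ for powers of $r^{2}$ on the relevant superlevel set. One should also verify at the end that the chosen $p$ lies in the admissible range $p\ge 2$ of the weighted Sobolev inequality (3.8), which holds throughout because $p=(4q+2)/3\ge 2$ for $q\ge 1$ in (3.18) and $p=(4q+6)/3\ge 2$ for all $q\ge 1$ in (3.19).
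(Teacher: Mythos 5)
Your proposal is correct and follows essentially the same route as the paper: on $\{\phi>\phi_{\infty}\}\subset\{\phi>r^{2}\}$ one inserts powers of $\phi/r^{2}>1$ to trade the given integrand for $\phi^{p}r^{-2-p/2}$ and then applies the weighted Sobolev inequality (3.8), with exactly the same exponent choices $p=(4q+2)/3$ for (3.18) and $p=(4q+6)/3$ for (3.19). Your explicit check that these $p$ lie in the admissible range $p\ge 2$ is a small but welcome addition the paper leaves implicit.
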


\begin{proof}
By the weighted Sobolev inequality (3.8), 

\begin{align*}
|\{x\in \mathbb{R}^{3}\ |\ \phi(x)>\phi_{\infty} \}|
=\int_{\{\phi>\phi_{\infty}\}}\dd x
=2\pi \int_{\{\phi>\phi_{\infty}\}}\frac{1}{r}\dd z\dd r
\leq 2\pi \int_{\{\phi>r^{2}\}}\phi^{2}\frac{1}{r^{3}}\dd z\dd r
\lesssim ||\nabla \phi||_{L^{2}(\mathbb{R}^{2}_{+};r^{-1}) }^{2}.
\end{align*}\\
Thus (3.17) holds. For $1\leq q<\infty$,  we set $p=l+q$ and $l=(q+2)/3$ to estimate 

\begin{align*}
\int_{\mathbb{R}^{2}_{+}}(\phi-\phi_{\infty})_{+}^{q}\frac{1}{r}\dd z\dd r
\lesssim \int_{\mathbb{R}^{2}_{+}}\phi^{l+q}\frac{1}{r^{2l+1}}\dd z\dd r= \int_{\mathbb{R}^{2}_{+}}\phi^{p}\frac{1}{r^{p/2+2}}\dd z\dd r\lesssim  ||\nabla \phi||_{L^{2}(\mathbb{R}^{2}_{+};r^{-1}) }^{p}.
\end{align*}\\
Thus (3.18) holds. Similarly, we set $p=l+q$ and $l=q/3+2$ to estimate 

\begin{align*}
\int_{\mathbb{R}^{3}}(\phi-\phi_{\infty})_{+}^{q}\dd x\lesssim \int_{\mathbb{R}^{2}_{+}}\phi^{q+l}\frac{1}{r^{2l-1}}\dd z\dd r
= \int_{\mathbb{R}^{2}_{+}}\phi^{p}\frac{1}{r^{p/2+2}}\dd z\dd r
\lesssim ||\nabla \phi||_{L^{2}(\mathbb{R}^{2}_{+};r^{-1}) }^{p}.
\end{align*}\\
Thus (3.19) holds.
\end{proof}

\begin{lem}[Arnold-type inequality]
\begin{align}
\left|\int_{\mathbb{R}^{3}}(\phi-\phi_{\infty})_{+}G\frac{1}{r^{2}} \dd x\right|
&\lesssim \left(\int_{\mathbb{R}^{3}}|b|^{2}\dd x\right)^{4/3},  \\   
\left|\int_{\mathbb{R}^{3}}(\phi-\phi_{\infty})_{+}^{2} \dd x\right|  
&\lesssim  \left(\int_{\mathbb{R}^{3}}|b|^{2}\dd x\right)^{7/3},   
\end{align}\\
for $b=\nabla \times (\phi\nabla \theta)+G\nabla \theta\in L^{2}_{\sigma,\textrm{axi}}(\mathbb{R}^{3})$.  
\end{lem}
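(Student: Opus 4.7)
The plan is to reduce both estimates to the weighted half-plane $L^2(\mathbb{R}^2_+;r^{-1})$ and then invoke the weighted Sobolev bounds already proved in Proposition 3.8. The key observation is that by (3.2), both $\nabla\phi$ and $G$ have their $L^2(\mathbb{R}^2_+;r^{-1})$ norms controlled by $\|b\|_{L^2(\mathbb{R}^3)}$, so the task reduces to tracking the exponents of $\|\nabla\phi\|_{L^2(\mathbb{R}^2_+;r^{-1})}$ produced by (3.18) and (3.19).

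For (3.20), I would first rewrite the integral in cylindrical form,
\[
\int_{\mathbb{R}^{3}}(\phi-\phi_{\infty})_{+}\,G\,\frac{1}{r^{2}}\,\dd x
= 2\pi\int_{\mathbb{R}^{2}_{+}}(\phi-\phi_{\infty})_{+}\,G\,\frac{1}{r}\,\dd z\,\dd r,
\]
and then apply Cauchy--Schwarz in $L^{2}(\mathbb{R}^{2}_{+};r^{-1})$ to obtain
\[
\Bigl|\int_{\mathbb{R}^{3}}(\phi-\phi_{\infty})_{+}G\frac{1}{r^{2}}\dd x\Bigr|
\lesssim \|(\phi-\phi_{\infty})_{+}\|_{L^{2}(\mathbb{R}^{2}_{+};r^{-1})}\,\|G\|_{L^{2}(\mathbb{R}^{2}_{+};r^{-1})}.
\]
Then (3.18) with $q=2$ gives $\|(\phi-\phi_{\infty})_{+}\|_{L^{2}(\mathbb{R}^{2}_{+};r^{-1})}\lesssim \|\nabla\phi\|_{L^{2}(\mathbb{R}^{2}_{+};r^{-1})}^{5/3}$, since the exponent is $4/3+2/(3\cdot 2)=5/3$. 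Together with $\|G\|_{L^{2}(\mathbb{R}^{2}_{+};r^{-1})}\lesssim \|b\|_{L^{2}(\mathbb{R}^{3})}$ and $\|\nabla\phi\|_{L^{2}(\mathbb{R}^{2}_{+};r^{-1})}\lesssim \|b\|_{L^{2}(\mathbb{R}^{3})}$ from (3.2), this yields the bound $\|b\|_{L^{2}(\mathbb{R}^{3})}^{5/3+1}=\|b\|_{L^{2}(\mathbb{R}^{3})}^{8/3}=(\|b\|_{L^{2}(\mathbb{R}^{3})}^{2})^{4/3}$, which is (3.20).

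For (3.21), no Cauchy--Schwarz is needed: I would apply (3.19) directly with $q=2$, whose exponent is $4/3+2/2=7/3$, to get
\[
\int_{\mathbb{R}^{3}}(\phi-\phi_{\infty})_{+}^{2}\,\dd x
= \|(\phi-\phi_{\infty})_{+}\|_{L^{2}(\mathbb{R}^{3})}^{2}
\lesssim \|\nabla\phi\|_{L^{2}(\mathbb{R}^{2}_{+};r^{-1})}^{14/3}
\lesssim \|b\|_{L^{2}(\mathbb{R}^{3})}^{14/3}
= \bigl(\|b\|_{L^{2}(\mathbb{R}^{3})}^{2}\bigr)^{7/3},
\]
which is exactly (3.21).

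There is no real obstacle here, since the hard analytic work has been absorbed into Proposition 3.8; the only point requiring care is verifying that the specific exponents produced by the weighted Sobolev inequalities (3.18) and (3.19) at $q=2$ exactly match the target powers $4/3$ and $7/3$ appearing on the right-hand side of the lemma. The minor subtlety worth flagging is that for (3.20) one must pair the weaker anisotropic bound (3.18) (rather than (3.19)) with $G$, because $G$ naturally lives in $L^{2}(\mathbb{R}^{2}_{+};r^{-1})$, not in $L^{2}(\mathbb{R}^{3})$; otherwise the exponents would not close correctly.
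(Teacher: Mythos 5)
Your proposal is correct and follows essentially the same route as the paper: cylindrical reduction plus Cauchy--Schwarz in $L^{2}(\mathbb{R}^{2}_{+};r^{-1})$ combined with (3.18) at $q=2$ for the first inequality, and a direct application of (3.19) at $q=2$ for the second, with the same exponent bookkeeping ($5/3+1=8/3$ and $2\cdot 7/3=14/3$). No gaps.
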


\begin{proof}
By H\"older's inequality, (3.18) and (3.2), 

\begin{align*}
\left|\int_{\mathbb{R}^{3}}(\phi-\phi)_{+}G\frac{1}{r^{2}}\dd x\right|=2\pi \left|\int_{\mathbb{R}^{2}_{+}}(\phi-\phi)_{+}G\frac{1}{r}\dd z\dd r\right|
&\leq 2\pi ||(\phi-\phi)_{+}||_{L^{2}(\mathbb{R}^{2}_{+};r^{-1}) }||G||_{L^{2}(\mathbb{R}^{2}_{+};r^{-1}) } \\
&\lesssim ||b||_{L^{2}(\mathbb{R}^{3}) }^{8/3}.
\end{align*}\\
Thus (3.20) holds. The inequality (3.21) follows from (3.19) and (3.2).  
\end{proof}

\begin{defn}
Let $\phi_{\infty}=r^{2}+\gamma$ and $\gamma\geq 0$. For $v, b=\nabla \times (\phi\nabla \theta)+G\nabla \theta\in L^{2}_{\sigma,\textrm{axi}}(\mathbb{R}^{3})$, we set

\begin{align}
E[b]&=\frac{1}{2}\int_{\mathbb{R}^{3}}|b|^{2} \dd x \hspace{50pt} \textrm{(magnetic energy),} \\
H_{\gamma}[b]&=2\int_{\mathbb{R}^{3}}(\phi-\phi_{\infty})_{+}\frac{G}{r^{2}} \dd x\hspace{13pt} \textrm{(generalized magnetic helicity),} \\
M_{\gamma}[b]&=\int_{\mathbb{R}^{3}}(\phi-\phi_{\infty})_{+}^{2} \dd x \hspace{32pt} \textrm{(generalized magnetic mean-square potential),}\\
{\mathcal{E}}[v,b]&=E[v]+E[b] \hspace{51pt} \textrm{(total energy)}. 
\end{align}\\
We denote by $H[\cdot]=H_\gamma[\cdot]$ and $M[\cdot]=M_\gamma[\cdot]$ by suppressing $\gamma$. 
\end{defn}

Magnetic helicity reflects direction of vector fields since $\textrm{curl}^{-1}B\cdot B$ is a pseudo-scalar, i.e., $\textrm{curl}^{-1}\tilde{B}\cdot \tilde{B}(x)=-(\textrm{curl}^{-1}B\cdot B)(-x)$ for $\tilde{B}(x)=B(-x)$. A special property of generalized magnetic helicity is symmetry concerning the $\theta$-component of axisymmetric solenoidal vector fields.\\ 

\begin{prop}[Symmetry of generalized magnetic helicity]
\begin{align}
H[\nabla \times (\phi\nabla \theta)+G\nabla \theta ]=-H[\nabla \times (\phi\nabla \theta)-G\nabla \theta ]
\end{align}\\
for $b=\nabla \times (\phi\nabla \theta)+G\nabla \theta\in L^{2}_{\sigma,\textrm{axi}}(\mathbb{R}^{3})$.
\end{prop}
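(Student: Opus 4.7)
The identity follows almost directly from Lemma 3.3 (uniqueness of Clebsch representation) together with the observation that the formula (3.23) defining $H_\gamma$ is linear in the second Clebsch potential $G$ when the first potential $\phi$ is held fixed. The plan is simply to read off the Clebsch potentials of the two vector fields and apply the definition (3.23).

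Concretely, write $b_{+} = \nabla\times(\phi\nabla\theta) + G\nabla\theta$ and $b_{-} = \nabla\times(\phi\nabla\theta) - G\nabla\theta$. Both $b_{\pm}$ lie in $L^{2}_{\sigma,\mathrm{axi}}(\mathbb{R}^{3})$ since $\phi\in \dot{H}^{1}_{0}(\mathbb{R}^{2}_{+};r^{-1})$ and $\pm G\in L^{2}(\mathbb{R}^{2}_{+};r^{-1})$. By Lemma 3.3, each of $b_{\pm}$ admits a unique pair of Clebsch potentials in the required function spaces; by inspection these pairs are $(\phi, G)$ and $(\phi, -G)$, respectively. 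Substituting into the defining formula (3.23),
\begin{equation*}
H[b_{-}] \;=\; 2\int_{\mathbb{R}^{3}}(\phi-\phi_{\infty})_{+}\,\frac{-G}{r^{2}}\,\dd x \;=\; -\,2\int_{\mathbb{R}^{3}}(\phi-\phi_{\infty})_{+}\,\frac{G}{r^{2}}\,\dd x \;=\; -\,H[b_{+}],
\end{equation*}
which is exactly (3.26). Note that $(\phi-\phi_{\infty})_{+}G/r^{2}$ is integrable by the Arnold-type bound (3.20), so the manipulation above is justified.

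There is no genuine obstacle here: the whole content is the uniqueness half of Lemma 3.3, which guarantees that the Clebsch potentials are truly invariants of the vector field (and in particular depend linearly and unambiguously on $G$). The only thing worth emphasising in the write-up is that the function $(\phi-\phi_{\infty})_{+}$, being built only from the scalar potential $\phi$ and the fixed background $\phi_{\infty}=r^{2}+\gamma$, is identical for $b_{+}$ and $b_{-}$, so the sign flip in $G$ transfers verbatim to $H$.
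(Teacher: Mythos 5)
Your proof is correct and matches the paper's (implicit) reasoning: the paper states this proposition without proof precisely because it is immediate from the definition (3.23), which is linear in $G$ while $(\phi-\phi_{\infty})_{+}$ depends only on $\phi$. Your invocation of the uniqueness of Clebsch potentials (Lemma 3.3) and the integrability bound (3.20) just makes explicit what the paper leaves tacit.
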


\begin{rem}[Gauge dependence]
We will see in Section 6 that (3.23) and (3.24) are conserved for smooth solutions of ideal MHD (1.1) under the condition (1.7). The quantities (3.23) and (3.24) depend on the gauge $\gamma\geq 0$ since the potential $\phi_\infty=r^{2}+\gamma$ of $B_{\infty}=-e_z$ has freedom on the choice of $\gamma$. 
\end{rem}

\begin{rem}
Generalized magnetic helicity (3.23) and generalized magnetic mean-square potential (3.24) are also well-defined for $\phi_{\infty}=Wr^{2}/2+\gamma$, $W>0$, $\gamma\geq 0$, and $ b\in L^{2}_{\sigma,\textrm{axi}}(\mathbb{R}^{3})$.
\end{rem}

\section{The variational principle}

We develop the variational problem (1.13). We aim to demonstrate that the minimum (1.13) is symmetric and lower semi-continuous for $h\in \mathbb{R}$ and increasing and strictly subadditive for $|h|$ as stated in Lemma 4.10. Using Steiner symmetrization, we derive these properties from the existence of symmetric minimizers for $z$-variable. In the next section, we apply these minimum properties to demonstrate the compactness of (non-symmetric) minimizing sequences.

\subsection{Axisymmetric nonlinear force-free fields}

We set a variational problem and demonstrate that minimizers provide axisymmetric nonlinear force-free fields with discontinuous factors in $\mathbb{R}^{3}$. We also show that flux functions of minimizers are non-negative solutions to the Grad--Shafranov equation.

\begin{defn}
Let $h\in \mathbb{R}$. Let $\phi_{\infty}=r^{2}+\gamma$ and $\gamma\geq 0$. Let $E[\cdot]$ and $H[\cdot]$ be as in (3.22) and (3.23). We set 

\begin{align}
I_{h,\gamma}&=\inf \left\{E[b] \ \Bigg|\ b \in L^{2}_{\sigma,\textrm{axi}}(\mathbb{R}^{3}),\  H[b]=h   \right\},\\
S_{h,\gamma}&=\left\{b\in L^{2}_{\sigma,\textrm{axi}}(\mathbb{R}^{3})\ \middle|\ E[b]=I_{h,\gamma},\ H[b]=h \right\}.
\end{align}\\
We denote by $I_{h}=I_{h,\gamma}$ and $S_{h}=S_{h,\gamma}$ by suppressing $\gamma$. 
\end{defn}

\begin{prop}[Lagrange multiplier]
Let $h\in \mathbb{R}$ and $\gamma\geq 0$. Let $\phi_{\infty}=r^{2}+\gamma$. Let $b=\nabla\times( \phi \nabla \theta) + G\nabla \theta\in L^{2}_{\sigma,\textrm{axi}}(\mathbb{R}^{3})$ satisfy $E[b]=I_{h}$ and $H[b]=h$. There exists a constant $\kappa\in \mathbb{R}$ such that $G=\kappa (\phi-\phi_\infty)_{+}$ and $\phi\in \dot{H}^{1}_{0}(\mathbb{R}^{2}_{+};r^{-1})$ is a weak solution of  

\begin{align}
-L\phi =\kappa^{2} (\phi -\phi_\infty)_{+}\quad \textrm{in}\ \mathbb{R}^{2}_{+},\quad
\phi =0\quad \textrm{on}\ \partial \mathbb{R}^{2}_{+}, 
\end{align}\\
in the sense that 

\begin{align}
\int_{\mathbb{R}^{2}_{+}}\nabla \phi \cdot \nabla \tilde{\phi} \frac{1}{r}\dd z\dd r =\kappa^{2} \int_{\mathbb{R}^{2}_{+}} (\phi-\phi_{\infty})_{+}\tilde{\phi} \frac{1}{r}\dd z\dd r,
\end{align}\\
for all $\tilde{\phi}\in \dot{H}^{1}_{0}(\mathbb{R}^{2}_{+};r^{-1})$. The constant $\kappa$ satisfies  

\begin{align}
h=2\kappa \int_{\mathbb{R}^{3}}(\phi-\phi_\infty)_{+}^{2}\frac{1}{r^{2}}\dd x. 
\end{align}\\
For $h=0$, $\phi=0$ and $G=0$. For $h\neq 0$, $\int_{\mathbb{R}^{3}}(\phi-\phi_\infty)_{+}^{2}r^{-2}\dd x$ is positive.
\end{prop}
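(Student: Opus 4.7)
The argument proceeds in two stages, first eliminating the swirl potential $G$ in favor of $\phi$, then deriving the Grad--Shafranov equation for $\phi$. Using (3.2) and (3.23), rewrite $E[b] = \pi \int_{\mathbb{R}^{2}_{+}} (|\nabla \phi|^{2} + G^{2}) r^{-1}\,\dd z\,\dd r$ and $H[b] = 4\pi \int_{\mathbb{R}^{2}_{+}} (\phi-\phi_{\infty})_{+} G\, r^{-1}\,\dd z\,\dd r$. The case $h=0$ is immediate: $I_{0}=0$ is attained by $b=0$, so $E[b]=0$ forces $\phi\equiv 0$ and $G\equiv 0$; since $\phi_{\infty}\geq 0$ this also makes $(\phi-\phi_{\infty})_{+}\equiv 0$, and both conclusions hold trivially for any $\mu$. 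Assume henceforth $h\neq 0$; then $b\not\equiv 0$, $(\phi-\phi_{\infty})_{+}\not\equiv 0$, and $G\not\equiv 0$.

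\textbf{Stage 1 (Cauchy--Schwarz in $G$).} With $\phi$ fixed, $E$ is quadratic in $G$ while $H$ is linear in $G$, so $b$ must also minimize $\int_{\mathbb{R}^{2}_{+}} |G'|^{2} r^{-1}\,\dd z\,\dd r$ among $G' \in L^{2}(\mathbb{R}^{2}_{+}; r^{-1})$ subject to $\int (\phi-\phi_{\infty})_{+} G' r^{-1}\,\dd z\,\dd r = h/(4\pi)$. The Cauchy--Schwarz inequality in $L^{2}(\mathbb{R}^{2}_{+}; r^{-1})$ identifies the unique minimizer as $G = \mu(\phi-\phi_{\infty})_{+}$ with $\mu = h/(4\pi N[\phi])$, where $N[\phi] := \int_{\mathbb{R}^{2}_{+}} (\phi-\phi_{\infty})_{+}^{2}\, r^{-1}\,\dd z\,\dd r$. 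Converting back through $\int_{\mathbb{R}^{3}}(\phi-\phi_{\infty})_{+}^{2}\, r^{-2}\,\dd x = 2\pi N[\phi]$ yields (4.5).

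\textbf{Stage 2 (Euler--Lagrange for a reduced functional in $\phi$).} Substituting the optimal $G$ into $E$ produces the reduced energy
\[ F[\phi] := \pi \int_{\mathbb{R}^{2}_{+}} |\nabla \phi|^{2} r^{-1}\,\dd z\,\dd r + \frac{h^{2}}{16\pi\, N[\phi]}. \]
For any $\tilde{\phi} \in \dot{H}^{1}_{0}(\mathbb{R}^{2}_{+}; r^{-1})$ with $N[\tilde{\phi}]>0$, the pair $(\tilde{\phi},\, (h/4\pi N[\tilde{\phi}])(\tilde{\phi}-\phi_{\infty})_{+})$ is admissible for the constrained problem and has energy $F[\tilde{\phi}]$; hence $\phi$ minimizes $F$, and in particular $DF[\phi]=0$. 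The Dirichlet term contributes $2\int_{\mathbb{R}^{2}_{+}} \nabla \phi \cdot \nabla \tilde{\phi}\, r^{-1}\,\dd z\,\dd r$, and $DN[\phi](\tilde{\phi}) = 2\int_{\mathbb{R}^{2}_{+}} (\phi-\phi_{\infty})_{+} \tilde{\phi}\, r^{-1}\,\dd z\,\dd r$. Combined with the identity $\mu^{2} = h^{2}/(16\pi^{2} N[\phi]^{2})$ from Stage 1, setting $DF[\phi](\tilde{\phi})=0$ delivers precisely (4.4).

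The only delicate step is the G\^ateaux differentiation of $N$ through the non-smooth map $s\mapsto s_{+}$. At almost every point, the pointwise derivative at $\tau=0$ of $(\phi+\tau\tilde{\phi}-\phi_{\infty})_{+}^{2}$ equals $2(\phi-\phi_{\infty})_{+}\tilde{\phi}$ (the level set $\{\phi=\phi_{\infty}\}$ contributes nothing because $(\phi-\phi_{\infty})_{+}$ vanishes there), and for $|\tau|\leq 1$ the difference quotient is dominated by $(2|(\phi-\phi_{\infty})_{+}|+|\tilde{\phi}|)|\tilde{\phi}|$, which is integrable against $r^{-1}\,\dd z\,\dd r$ by (3.18) applied with $q=2$ together with Cauchy--Schwarz. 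Dominated convergence then justifies the derivative used in Stage 2 and closes the argument.
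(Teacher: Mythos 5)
Your argument is correct in substance but takes a genuinely different route from the paper's. The paper runs a two-parameter Lagrange-multiplier argument: it perturbs $b$ in two independent directions $\tilde b$ and $b_0$, applies the implicit function theorem to $j(\tau,s)=H[b+\tau\tilde b+sb_0]$ to restore the constraint, and differentiates the energy along the resulting curve; this produces the multiplier $\mu=2(\int_{\mathbb{R}^3}b\cdot b_0\,\dd x)/\partial_sj(0,0)$ and yields $G=\mu(\phi-\phi_\infty)_+$ and (4.4) simultaneously, with the degenerate case $\partial_sj(0,0)\equiv0$ handled separately (it forces $h=0$, $b=0$). You instead exploit that $E$ is quadratic and $H$ linear in the swirl potential: orthogonal projection in $L^{2}(\mathbb{R}^{2}_{+};r^{-1})$ pins down $G=\mu(\phi-\phi_\infty)_+$ with the explicit value $\mu=h/(4\pi N[\phi])$ (which gives (4.5) immediately), and (4.4) then appears as the unconstrained Euler--Lagrange equation of the reduced functional $F[\phi]$. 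Your route buys an explicit multiplier and a transparent treatment of the degenerate case; the paper's route treats $\phi$ and $G$ symmetrically and avoids introducing the reduced functional. Both differentiate through the nonsmooth map $s\mapsto s_+$ at some point, and your discussion of that step is if anything more careful than the paper's.

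One justification does need repair. You dominate the difference quotients of $N$ by $(2(\phi-\phi_{\infty})_{+}+|\tilde{\phi}|)|\tilde{\phi}|$ and invoke ``(3.18) with $q=2$ together with Cauchy--Schwarz'' for integrability against $r^{-1}\dd z\dd r$. Cauchy--Schwarz in $L^{2}(\mathbb{R}^{2}_{+};r^{-1})$ would require $\tilde{\phi}\in L^{2}(\mathbb{R}^{2}_{+};r^{-1})$, but the embedding (3.8) only places $\dot{H}^{1}_{0}(\mathbb{R}^{2}_{+};r^{-1})$ into $L^{p}(\mathbb{R}^{2}_{+};r^{-2-p/2})$, and the weight $r^{-1}$ is not reachable for large $r$; a general $\tilde{\phi}$ need not lie in $L^{2}(\mathbb{R}^{2}_{+};r^{-1})$. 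The fix is exactly the device the paper uses in the proof of (5.8): the difference quotient is supported in $\{|\phi|+|\tilde{\phi}|>r^{2}\}$, where one may insert the factor $((|\phi|+|\tilde{\phi}|)/r^{2})^{4/3}\geq 1$ and apply H\"older with the $L^{10/3}(\mathbb{R}^{2}_{+};r^{-11/3})$ norms controlled by (3.8), producing an integrable majorant bounded by $\|\nabla\tilde{\phi}\|_{L^{2}(r^{-1})}(\|\nabla\phi\|_{L^{2}(r^{-1})}+\|\nabla\tilde{\phi}\|_{L^{2}(r^{-1})})^{4/3}$ up to constants. The same remark is needed even to see that the right-hand side of (4.4) is finite for every $\tilde{\phi}\in\dot{H}^{1}_{0}(\mathbb{R}^{2}_{+};r^{-1})$. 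With this replacement your proof is complete.
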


\begin{proof}
For $b_0, \tilde{b}\in L^{2}_{\sigma,\textrm{axi}}(\mathbb{R}^{3})$, we set 

\begin{align*}
j(\tau,s)=H[b+\tau \tilde{b}+sb_0],\quad \tau,s\in \mathbb{R}.
\end{align*}\\
By differentiating this by $s$, for $b_0=\nabla \times (\phi_0\nabla \theta)+G_0\nabla \theta$,  

\begin{align*}
\frac{\partial j}{\partial s}(0,0)=2\int_{\mathbb{R}^{3}}(\phi_0 1_{(0,\infty)}(\phi-\phi_{\infty})G+(\phi-\phi_{\infty})_{+}G_0   )\frac{1}{r^{2}}\dd x.
\end{align*}\\
Suppose that $\partial_s j(0,0)=0$ for all $b_0\in L^{2}_{\sigma,\textrm{axi}}(\mathbb{R}^{3})$. Then,  $(\phi-\phi_{\infty})_{+}=0$ and 

\begin{align*}
h=H[b]=2\int_{\mathbb{R}^{3}}(\phi-\phi_{\infty})_{+}\frac{G}{r^{2}}\dd x=0.
\end{align*}\\
By $0=I_0=E[b]$, we have $b= 0$. Thus, the assertion holds for $\kappa=0$.  

We may assume that $\partial_s j(0,0)\neq 0$ for some $b_0\in L^{2}_{\sigma,\textrm{axi}}(\mathbb{R}^{3})$. By the implicit function theorem, there exists a function $s=s(\tau)$ such that for small $|\tau|$, 

\begin{align*}
j(\tau,s(\tau))=h.
\end{align*}\\
By differentiating this by $\tau$, 

\begin{align*}
\dot{s}(0)=-\frac{\partial_{\tau}j(0,0)}{\partial_{s} j(0,0)}.
\end{align*}\\
We set 

\begin{align*}
\kappa=\frac{2}{\partial_{s} j(0,0)}\left(\int_{\mathbb{R}^{3}}b\cdot b_0\dd x\right).
\end{align*}\\
Since $b$ is a minimizer,

\begin{align*}
0=\frac{\dd}{\dd \tau} E[b+\tau \tilde{b}+s(\tau)b_0]\Bigg|_{\tau=0}
&=\int_{\mathbb{R}^{3}}b\cdot (\tilde{b}+\dot{s}(0) b_0 ) \dd x \\
&=\int_{\mathbb{R}^{3}}b\cdot \tilde{b}\dd x-\frac{1}{\partial_{s} j(0,0)}\left(\int_{\mathbb{R}^{3}}b\cdot b_0\dd x\right)\partial_{\tau}j(0,0), \\
&=\int_{\mathbb{R}^{3}}b\cdot \tilde{b}\dd x-\frac{\kappa}{2} \partial_{\tau}j(0,0).
\end{align*}\\
Thus   

\begin{align*}
\int_{\mathbb{R}^{3}}\left(\nabla \phi \cdot \nabla \tilde{\phi}+G\tilde{G}\right)\frac{1}{r^{2}}\dd x =\kappa \int_{\mathbb{R}^{3}}\left( \tilde{\phi} 1_{(0,\infty)}(\phi-\phi_{\infty})G+(\phi-\phi_{\infty})_{+}\tilde{G} \right)\frac{1}{r^{2}}\dd x.
\end{align*}\\
Since $\tilde{b}\in L^{2}_{\sigma,\textrm{axi}}(\mathbb{R}^{3})$ is arbitrary, $G=\kappa (\phi-\phi_{\infty})_{+}$ and (4.4) holds. The identity (4.5) follows from $G=\kappa (\phi-\phi_{\infty})_{+}$.

For $h=0$, $G=\kappa(\phi-\phi_{\infty})_{+}=0$  by (4.5). By (4.4) and (4.3$)_2$, $\phi=0$ follows. For $h\neq 0$, the identity (4.5) implies that $\int_{\mathbb{R}^{3}}(\phi-\phi_\infty)_{+}^{2}r^{-2}\dd x$ is positive.
\end{proof}

\begin{lem}
Let $B_{\infty}=-2e_z$. Let $b\in L^{2}_{\sigma,\textrm{axi}}(\mathbb{R}^{3})$ be a minimizer of (4.1). Let $\phi$ and $\kappa$ be as in Proposition 4.2. Then, $U=b+B_{\infty}$ is a nonliner force-free field (1.8) for $f=\kappa 1_{(0,\infty)}(\phi-\phi_\infty)$.
\end{lem}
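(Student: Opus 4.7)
The plan is to verify the three conditions of (1.8) by directly unpacking the Clebsch representation of $U$. Since $B_{\infty}=-\nabla\times(\phi_{\infty}\nabla\theta)$ for $\phi_{\infty}=r^{2}+\gamma$, the decomposition (3.16) gives
\[
U=b+B_{\infty}=\nabla\times(\Phi\nabla\theta)+G\nabla\theta,\qquad \Phi=\phi-\phi_{\infty},
\]
which is automatically solenoidal. So the content lies in (i) verifying $\nabla\times U=fU$ with $f=\mu\,1_{(0,\infty)}(\Phi)$, and (ii) the boundary condition $U\to B_{\infty}$ as $|x|\to\infty$.

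For (i) I would use two standard cylindrical identities valid for axisymmetric fields: first $\nabla\times(G\nabla\theta)=\nabla G\times\nabla\theta$, and second
\[
\nabla\times\nabla\times(\Phi\nabla\theta)=-\Delta(\Phi\nabla\theta)=-L\Phi\cdot\nabla\theta,
\]
where the last equality comes from computing $\Delta[(\Phi/r)e_{\theta}]$ in cylindrical coordinates (the $1/r^{3}$ term from the vector Laplacian cancels the $1/r^{3}$ term in $\Delta(\Phi/r)$). Since $L\phi_{\infty}=\partial_{r}^{2}(r^{2})-r^{-1}\partial_{r}(r^{2})=0$, we have $L\Phi=L\phi$. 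Substituting the two conclusions of Proposition 4.2, namely $G=\mu\Phi_{+}$ and $-L\phi=\mu^{2}\Phi_{+}$, I obtain
\[
\nabla\times U=\mu^{2}\Phi_{+}\nabla\theta+\mu\nabla\Phi_{+}\times\nabla\theta.
\]
Using $\nabla\Phi_{+}=1_{(0,\infty)}(\Phi)\nabla\Phi$ and the identity $fG=\mu\,1_{(0,\infty)}(\Phi)\cdot\mu\Phi_{+}=\mu^{2}\Phi_{+}$ (both sides vanish off $\{\Phi>0\}$), this rewrites as
\[
\nabla\times U=f\bigl(G\nabla\theta+\nabla\Phi\times\nabla\theta\bigr)=f\bigl(G\nabla\theta+\nabla\times(\Phi\nabla\theta)\bigr)=fU,
\]
which is the desired force-free identity.

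For (ii), estimate (3.17) of Proposition 3.21 shows that $\{x\in\mathbb{R}^{3}:\phi(x)>\phi_{\infty}(x)\}$ has finite Lebesgue measure, so the source term $\mu^{2}\Phi_{+}$ of the Grad--Shafranov equation (4.3) has finite measure support. Transferring to the 5D axisymmetric picture via the isomorphism (3.10)--(3.11), the equation becomes $-\Delta_{y}\varphi=F$ on $\mathbb{R}^{5}$ with $F=\mu^{2}\Phi_{+}/r^{2}$ of compact-like support, so $\varphi=(-\Delta_{y})^{-1}F$ has Newtonian decay, yielding $\Phi\to 0$ (hence $\phi\to\phi_{\infty}$) and $G\to 0$ at infinity. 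This gives $b\to 0$, i.e., $U\to B_{\infty}$.

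The algebraic identity in (i) is the heart of the lemma and goes through cleanly once the Grad--Shafranov equation is in hand. The main obstacle I anticipate is in (ii): converting the finite-measure conclusion (3.17) into genuine pointwise decay of $b$. The cleanest path is the 5D reformulation, where axisymmetric elliptic regularity and the compactly supported nature of the nonlinearity $\Phi_{+}$ combine via the Newton kernel to produce the required decay; handling the transition from finite-measure to bounded support of $\{\Phi>0\}$ may require a short bootstrap using the positivity of $\mu^{2}\Phi_{+}$ together with the maximum principle for $L$.
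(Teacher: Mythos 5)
Your proposal is correct and follows essentially the same route as the paper: the paper's proof of Lemma 4.3 is exactly your step (i), computing $\nabla\times U=\nabla\times(G\nabla\theta)-L\phi\,\nabla\theta$ and substituting $G=\mu(\phi-\phi_{\infty})_{+}$ together with the Grad--Shafranov equation (4.3). The decay at infinity in your step (ii) is not argued in the paper's proof of this lemma but is established by the same 5D Newton-potential/elliptic-regularity argument in the proof of Proposition 4.4, so your treatment is consistent with the paper.
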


\begin{proof}
By $-L\phi_{\infty}=0$ and $B_{\infty}=-2e_{z}=-\nabla \times (\phi_{\infty}\nabla \theta )$, 

\begin{align*}
U&=b+B_{\infty}
=\nabla \times ((\phi-\phi_{\infty})\nabla \theta) +G\nabla \theta,\\
\nabla \times U&=\nabla \times (G\nabla \theta)-L\phi \nabla \theta.
\end{align*}\\
By $G=\kappa(\phi-\phi_{\infty})_{+}$ and (4.3), 

\begin{align*}
\nabla \times U=\kappa 1_{(0,\infty)}(\phi-\phi_{\infty} ) U.
\end{align*}\\
Thus $U$ satisfies (1.8) for $f=\kappa1_{(0,\infty)}(\phi-\phi_{\infty})$.
\end{proof}

To investigate the properties of the minimum in (4.1) for $h\in \mathbb{R}$, we show that the minimum $I_{h}$ ($h\neq 0$) is invariant under the restriction of admissible functions $b=\nabla \times (\phi\nabla \theta)+G\nabla \theta\in L^{2}_{\sigma,\textrm{axi}}(\mathbb{R}^{3})$ to those with non-negative $\phi\geq 0$ and $G=\kappa(\phi-\phi_{\infty})_{+}$. The constant $\kappa$ is chosen so that $H[b]=h$, i.e., $h=2\kappa \int_{\mathbb{R}^{3}}(\phi-\phi_{\infty})_{+}^{2}r^{-2}\dd x$.

\begin{prop}
The function $\phi$ in Proposition 4.2 is non-negative.
\end{prop}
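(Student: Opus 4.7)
The plan is to test the weak equation (4.4) against the negative part of $\phi$, exploiting that $\phi_{\infty} = r^{2} + \gamma \geq 0$ forces the forcing $\mu^{2}(\phi - \phi_{\infty})_{+}$ to vanish wherever $\phi < 0$. Define $\phi_{-} := \min(\phi, 0)$. The first task is to verify that $\phi_{-}$ lies in $\dot{H}^{1}_{0}(\mathbb{R}^{2}_{+}; r^{-1})$ and that the chain-rule identity $\nabla \phi \cdot \nabla \phi_{-} = |\nabla \phi_{-}|^{2}$ holds almost everywhere. This I would handle via the isometry (3.10)--(3.11): transporting $\phi$ to $\varphi = \phi/r^{2} \in \dot{H}^{1}_{\textrm{axi}}(\mathbb{R}^{5})$, the truncation $\varphi_{-}$ belongs to $\dot{H}^{1}_{\textrm{axi}}(\mathbb{R}^{5})$ by the standard Stampacchia / Lipschitz-composition lemma, and transporting back yields $\phi_{-} = r^{2}\varphi_{-}$ in the weighted trace-zero space together with the required chain rule.

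With $\tilde{\phi} = \phi_{-}$ substituted into (4.4), the right-hand side vanishes identically: on $\{\phi \geq 0\}$ one has $\phi_{-} = 0$, and on $\{\phi < 0\}$ the bound $\phi - \phi_{\infty} < 0$ gives $(\phi - \phi_{\infty})_{+} = 0$. The left-hand side collapses via the chain rule to
\[
\int_{\mathbb{R}^{2}_{+}} |\nabla \phi_{-}|^{2}\, \frac{1}{r}\, dz\, dr = 0,
\]
so $\nabla \phi_{-} \equiv 0$ in $\mathbb{R}^{2}_{+}$. Combined with the zero trace on $\partial \mathbb{R}^{2}_{+}$ inherited from $\phi$, the weighted Poincar\'e-type estimate of Proposition 3.1 applied on a sequence of dyadic annuli $D(0,2R) \setminus D(0,R)$ (letting $R \to 0$ and $R \to \infty$) forces $\phi_{-} \equiv 0$. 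Equivalently, on the $\mathbb{R}^{5}$ side, $\varphi_{-} \in \dot{H}^{1}(\mathbb{R}^{5})$ with vanishing gradient must itself vanish. Hence $\phi \geq 0$.

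The only genuine (but minor) obstacle is the truncation step in the weighted space; routing through the isometry to $\dot{H}^{1}(\mathbb{R}^{5})$ renders it entirely routine. As an alternative, one could invoke the Green-function representation $\phi = \mu^{2}(-L)^{-1}(\phi - \phi_{\infty})_{+}$ using the kernel (3.5)--(3.6), which is pointwise positive, and conclude $\phi \geq 0$ at once; however, the truncation argument is preferable since it avoids having to justify the integral representation in the weighted Sobolev setting.
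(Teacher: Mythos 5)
Your argument is correct, and it takes a genuinely different route from the paper. The paper transports $\varphi=\phi/r^{2}$ to $\dot{H}^{1}_{\textrm{axi}}(\mathbb{R}^{5})$, extends the weak formulation to all (not necessarily axisymmetric) test functions, runs an elliptic-regularity bootstrap (Sobolev embedding plus the Calder\'on--Zygmund inequality) to show that $\varphi$ is continuous and decays at infinity, and then invokes the maximum principle for superharmonic functions to conclude $\varphi\geq 0$. You instead test (4.4) directly with $\phi_{-}=\min(\phi,0)$: since $\phi_{\infty}=r^{2}+\gamma\geq 0$, the set $\{\phi>\phi_{\infty}\}$ is contained in $\{\phi\geq 0\}$, so the right-hand side vanishes, the chain rule gives $\int_{\mathbb{R}^{2}_{+}}|\nabla\phi_{-}|^{2}r^{-1}\,\dd z\dd r=0$, and the weighted Sobolev inequality (3.8) (or your annulus Poincar\'e argument) kills $\phi_{-}$. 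Your use of the isometry (3.10)--(3.11) to justify the truncation $\varphi\mapsto\varphi_{-}$ in $\dot{H}^{1}(\mathbb{R}^{5})$ and transport it back is the right way to handle the only delicate point, and $\phi_{-}$ is an admissible test function in (4.4) because, by Lemma 3.3, the admissible $\tilde{\phi}$ range over all of $\dot{H}^{1}_{0}(\mathbb{R}^{2}_{+};r^{-1})$. What each approach buys: yours is shorter and entirely variational, needing no pointwise regularity of $\phi$; the paper's heavier route is not wasted, however, since the continuity, $W^{2,10/3}_{\textrm{loc}}$ regularity, and decay of $\varphi$ established there are reused in the uniqueness argument for the Grad--Shafranov equation (Theorem 8.3, via the moving plane method), so the author gets Proposition 4.4 essentially for free along the way. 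Your remark that the Green-function alternative would require justifying the integral representation is also apt.
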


\begin{proof}
By (4.4) and the transform (3.10), $\varphi=\phi/r^{2}\in \dot{H}^{1}_{\textrm{axi}}(\mathbb{R}^{5})$ satisfies 

\begin{align*}
\int_{\mathbb{R}^{5}}\nabla_y \varphi \cdot \nabla_y \tilde{\varphi}\dd y=\kappa^{2}\int_{\mathbb{R}^{5}}\left(\varphi-1-\frac{\gamma}{r^{2}}\right)_{+}\tilde{\varphi}\dd y,  
\end{align*}\\
for $\tilde{\varphi}\in \dot{H}^{1}_{\textrm{axi}}(\mathbb{R}^{5})$. This equality is extendable for all $\tilde{\varphi}\in \dot{H}^{1}(\mathbb{R}^{5})$ since 

\begin{align*}
\tilde{\varphi}=\fint_{\mathbb{S}^{3}}\tilde{\varphi}\dd H+\left(\tilde{\varphi}-\fint_{\mathbb{S}^{3}}\tilde{\varphi}\dd H\right)=\tilde{\varphi}_1+\tilde{\varphi}_2\in \dot{H}^{1}_{\textrm{axi}}(\mathbb{R}^{5})\oplus \dot{H}^{1}_{\textrm{axi}}(\mathbb{R}^{5})^{\perp},
\end{align*}
and 

\begin{align*}
&\int_{\mathbb{R}^{5}}\nabla_y \varphi \cdot \nabla_y \tilde{\varphi}_2\dd y
=\int_{\mathbb{R}^{5}}\nabla_y \varphi \cdot \nabla_y \left(\fint_{\mathbb{S}^{3}} \tilde{\varphi}_2\dd H\right)\dd y=0,\\
&\int_{\mathbb{R}^{5}}\left(\varphi-1-\frac{\gamma}{r^{2}}\right) \tilde{\varphi}_2\dd y
=\int_{\mathbb{R}^{5}}\left(\varphi-1-\frac{\gamma}{r^{2}}\right) \left(\fint_{\mathbb{S}^{3}} \tilde{\varphi}_2\dd H\right)\dd y=0.
\end{align*}\\
Here, $\mathbb{S}^{3}$ is the unit sphere in $\mathbb{R}^{4}$. Thus, $\varphi$ is a weak solution of 

\begin{align*}
-\Delta_y \varphi=\kappa^{2}\left(\varphi-1-\frac{\gamma}{r^{2}}\right)_{+} \quad \textrm{in}\ \mathbb{R}^{5}.
\end{align*}\\
By Sobolev embedding, $\varphi\in L^{10/3} (\mathbb{R}^{5})$ and $(\varphi-1-\gamma/r^{2} )_{+}\in L^{10/3} (\mathbb{R}^{5})$. By differentiability of weak solutions to the Poisson equation \cite[Theorem 8.8]{GT}, $\varphi\in H^{2}_{\textrm{loc}}(\mathbb{R}^{5})$ and $-\Delta \varphi=\kappa^{2}(\varphi-1-\gamma/r^{2})_{+}\in L^{10/3}(\mathbb{R}^{5})$. By the Calderon--Zygmund inequality \cite[Corollary 9.10]{GT}, 

\begin{align*}
||\nabla^{2}\varphi ||_{L^{10/3}(\mathbb{R}^{5}) }\lesssim ||\Delta \varphi ||_{L^{10/3}(\mathbb{R}^{5}) }.
\end{align*}\\
Thus $\varphi$ is continuous and $\lim_{R\to \infty}\sup_{|x|\geq R}|\varphi(x)|=0$. Since $\varphi$ is super-harmonic, by the maximum principle \cite[9.4 THEOREM]{LL01},

\begin{align*}
\varphi(y)\geq \inf\left\{\varphi(y)\ | \ |y|=R\right\},\quad |y|\leq R. 
\end{align*}\\
By letting $R\to\infty$, $\varphi(y)=\phi(z,r)/r^{2}\geq 0$ follows.
\end{proof}

\begin{prop}
Let $h\in \mathbb{R}\backslash\{0\}$. Let $T_h$ be a set of all $b=\nabla \times (\phi\nabla \theta)+G\nabla \theta\in L^{2}_{\sigma,\textrm{axi}}(\mathbb{R}^{3})$ such that 

\begin{equation}
\phi\geq 0,\quad \int_{\mathbb{R}^{3}}(\phi-\phi_{\infty})_{+}^{2}\frac{1}{r^{2}}\dd x>0,\quad G=\kappa(\phi-\phi_{\infty})_{+},\quad \kappa=\frac{h}{2\int_{\mathbb{R}^{3}}(\phi-\phi_{\infty})_{+}^{2}\frac{1}{r^{2}}\dd x}.    
\end{equation}\\
Let $T_0=\{0\}$. Then, $H[b]=h$ for $b\in T_h$ and 

\begin{equation}
\begin{aligned}
&S_h\subset T_h\subset \left\{b\in L^{2}_{\sigma,\textrm{axi}}(\mathbb{R}^{3})\ \middle|\ H[b]=h \right\},\\
&I_h=\inf\left\{E[b]\ \middle|\ b\in T_h \right\}.
\end{aligned}
\end{equation}\\
\end{prop}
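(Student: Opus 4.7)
My plan is to verify the three assertions of the proposition in turn. For the first claim, that $H[b] = h$ whenever $b \in T_h$, I would simply substitute $G = \mu(\phi - \phi_\infty)_+$ into the definition (3.23) of $H$; the integrand becomes $2\mu (\phi - \phi_\infty)_+^2 / r^2$, and the second defining property of $T_h$ identifies the resulting integral with $h$. For the inclusion $S_h \subset T_h$, I would quote the two earlier results of this subsection: Proposition 4.2 furnishes a Lagrange multiplier $\mu$ together with the relations $G = \mu(\phi - \phi_\infty)_+$ and $h = 2\mu \int (\phi - \phi_\infty)_+^2 r^{-2}\,dx$, while Proposition 4.4 supplies $\phi \geq 0$; these three facts are exactly the conditions defining $T_h$.

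The substantive claim is $I_h = \inf_{T_h} E$. The direction $\inf_{T_h} E \geq I_h$ is automatic from the just-established inclusion $T_h \subset \{b : H[b] = h\}$. For the reverse, I plan to show that every admissible competitor can be replaced by an element of $T_h$ without increasing its energy; this will yield $\inf_{T_h} E \leq E[b]$ for every $b$ with $H[b] = h$, hence $\leq I_h$. Given such a $b = \nabla \times (\phi \nabla \theta) + G \nabla \theta$, my candidate $b_*$ will have potentials
\[
\phi_* = \phi_+ = \max(\phi, 0), \qquad G_* = \mu (\phi_* - \phi_\infty)_+,
\]
with $\mu = h / \bigl( 2 \int (\phi - \phi_\infty)_+^2 r^{-2}\,dx \bigr)$, which is well-defined whenever $h \neq 0$ since then $(\phi - \phi_\infty)_+ \not\equiv 0$ (otherwise $H[b] = 0$); the degenerate case $h = 0$ is handled by $b_* = 0 \in T_0$. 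The crucial pointwise identity $(\phi_+ - \phi_\infty)_+ = (\phi - \phi_\infty)_+$, which uses the standing hypothesis $\gamma \geq 0$ (hence $\phi_\infty \geq 0$), guarantees that $\mu$ is the correct normalization for $b_*$ as well, so $b_* \in T_h$.

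The energy inequality $E[b_*] \leq E[b]$ then splits, via the Clebsch formula (3.2), into two independent estimates. The flux part uses Stampacchia's truncation theorem: $\phi_+ \in \dot H^1_0(\mathbb{R}^2_+; r^{-1})$ with $|\nabla \phi_+| \leq |\nabla \phi|$ almost everywhere. The swirl part uses Cauchy--Schwarz applied to the identity $h = 4\pi \int (\phi - \phi_\infty)_+ G\, r^{-1}\,dz\,dr$, which gives
\[
\int \frac{G^2}{r}\,dz\,dr \;\geq\; \frac{h^2}{16\pi^2 \int (\phi - \phi_\infty)_+^2 r^{-1}\,dz\,dr} \;=\; \int \frac{G_*^2}{r}\,dz\,dr
\]
by the choice of $\mu$. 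The step I expect to be the main obstacle is confirming the pointwise identity $(\phi_+ - \phi_\infty)_+ = (\phi - \phi_\infty)_+$ together with the membership $\phi_+ \in \dot H^1_0(\mathbb{R}^2_+; r^{-1})$ — these are the places where $\gamma \geq 0$ is essential. Once they are in hand, the two displayed inequalities combine to give $E[b_*] \leq E[b]$, and claim (iii) follows.
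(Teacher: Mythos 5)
Your proposal is correct, and for the substantive claim $(4.7)_2$ it takes a genuinely different route from the paper. The paper's proof is a two-line deduction: $(4.7)_1$ follows from Propositions 4.2 and 4.4 (exactly as you argue for $S_h\subset T_h$), and then $(4.7)_2$ is read off from the chain of inclusions $S_h\subset T_h\subset\{H[b]=h\}$, the inequality $\inf_{T_h}E\leq I_h$ coming from $S_h\subset T_h$ — which implicitly leans on minimizers existing (something only established later, in Theorem 4.9). You instead prove $\inf_{T_h}E\leq I_h$ by a direct replacement argument: given \emph{any} competitor $b$ with $H[b]=h$, you pass to $b_*$ with $\phi_*=\phi_+$ and $G_*=\mu(\phi_*-\phi_\infty)_+$, check $b_*\in T_h$ via the pointwise identity $(\phi_+-\phi_\infty)_+=(\phi-\phi_\infty)_+$ (valid since $\phi_\infty=r^2+\gamma\geq 0$), and compare energies using the truncation bound $|\nabla\phi_+|\leq|\nabla\phi|$ for the flux part and Cauchy--Schwarz applied to $h=4\pi\int(\phi-\phi_\infty)_+G\,r^{-1}\dd z\dd r$ for the swirl part; the constants work out exactly as you display. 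What your approach buys is self-containedness: it yields $(4.7)_2$ for arbitrary minimizing sequences without presupposing $S_h\neq\emptyset$, and in fact the same replacement map is the mechanism that underlies the paper's later reduction to $\tilde T_h$ (Proposition 4.6). The only points to write out carefully are the ones you already flagged — membership $\phi_+\in\dot H^{1}_{0}(\mathbb{R}^{2}_{+};r^{-1})$ and $G_*\in L^{2}(\mathbb{R}^{2}_{+};r^{-1})$ (the latter from (3.18)) so that $b_*$ genuinely lies in $L^{2}_{\sigma,\textrm{axi}}(\mathbb{R}^{3})$ via Lemma 3.3 — and the non-vanishing of the denominator of $\mu$ when $h\neq 0$, which you correctly dispose of.
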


\begin{proof}
By Propositions 4.2 and 4.4, $(4.7)_1$ follows. The property $(4.7)_2$ follows from $(4.7)_1$. 
\end{proof}

\subsection{Symmetric minimizers}

To prove the existence of symmetric minimizers for $z$-variable, we recall Steiner symmetrization (symmetric decreasing rearrangement in the variable  $z$)\cite[Appendix I]{FB74}, \cite[p.293]{Friedman82}, \cite[Chapter 3]{LL01}. For a non-negative measurable function $\phi(z,r)$, the Steiner symmetrization $\phi^{*}$ satisfies 

\begin{equation}
\begin{aligned}
\phi^{*}(z,r)&\geq 0,\quad z\in \mathbb{R},\ r>0,\\
\phi^{*}(z,r)&=\phi^{*}(-z,r),\quad z\in \mathbb{R},\ r>0,\\
\phi^{*}(z_1,r)&\geq \phi^{*}(z_2,r),\quad 0\leq z_1\leq z_2,\ r>0, \\
\int_{\mathbb{R}^{2}_{+}}g(\phi(z,r),r)\dd z\dd r&=\int_{\mathbb{R}^{2}_{+}}g(\phi^{*}(z,r),r)\dd z\dd r,\\
\int_{\mathbb{R}^{2}_{+}}|\nabla \phi^{*}(z,r)|^{2}\frac{1}{r}\dd z\dd r
&\leq \int_{\mathbb{R}^{2}_{+}}|\nabla \phi(z,r)|^{2}\frac{1}{r}\dd z\dd r,
\end{aligned}
\end{equation}
for increasing functions $g(s,r)$ for $s\geq 0$ satisfying $g(0,r)=0$. The property $(4.8)_4$ follows from the layer cake representation of $g(\phi,r)$. The property $(4.8)_5$ follows from the isometry (3.11) and Riesz's rearrangement inequality for the heat kernel \cite[THEOREM 1.13, 3.7]{LL01}.

\begin{prop}
Let $\tilde{T}_h$ denote the space of all $b=\nabla \times (\phi\nabla \theta)+G\nabla \theta\in T_h$ such that $\phi(z,r)$ is non-negative, symmetric and non-increasing in the sense of $(4.8)_1-(4.8)_3$. Then, 

\begin{align}
I_h=\inf \left\{E[b]\ \middle|\ b\in \tilde{T}_h \right\}.  
\end{align}
\end{prop}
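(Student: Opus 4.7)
The plan is to symmetrize: given $b = \nabla \times (\phi\nabla\theta) + G\nabla\theta \in T_{h}$ with $G = \mu(\phi - \phi_{\infty})_{+}$, I replace $\phi$ by its Steiner symmetrization $\phi^{*}$ in the $z$-variable, keep $\mu$, and set
\[
G^{*} := \mu(\phi^{*} - \phi_{\infty})_{+}, \qquad b^{*} := \nabla \times (\phi^{*}\nabla\theta) + G^{*}\nabla\theta.
\]
Once I verify $b^{*} \in \tilde{T}_{h}$ and $E[b^{*}] \leq E[b]$, Proposition 4.5 and the trivial inclusion $\tilde{T}_{h} \subset T_{h}$ give (4.9). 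Membership $\phi^{*} \in \dot{H}^{1}_{0}(\mathbb{R}^{2}_{+}; r^{-1})$ is obtained via the isometry (3.10)--(3.11): $\varphi := \phi/r^{2}$ lies in $\dot{H}^{1}_{\mathrm{axi}}(\mathbb{R}^{5})$, its Steiner symmetrization $\varphi^{*}$ along the $y_{1}$-axis is still axisymmetric and lies in $\dot{H}^{1}(\mathbb{R}^{5})$, and $\phi^{*} = r^{2}\varphi^{*}$ by uniqueness of the rearrangement.

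The key observation is that Steiner symmetrization in $z$ commutes with $\phi \mapsto (\phi - \phi_{\infty})_{+}$, which uses crucially that $\phi_{\infty}(r) = r^{2} + \gamma$ depends only on $r$. Indeed, for each fixed $r$ the function $(\phi^{*}(\cdot,r) - \phi_{\infty}(r))_{+}$ is already even and nonincreasing in $|z|$, and its superlevel sets
\[
\{z : (\phi^{*} - \phi_{\infty})_{+}(z,r) > t\} = \{z : \phi^{*}(z,r) > \phi_{\infty}(r) + t\}, \qquad t \geq 0,
\]
are equimeasurable with the corresponding superlevel sets of $\phi(\cdot,r)$, hence with those of $(\phi - \phi_{\infty})_{+}(\cdot,r)$. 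Applying $(4.8)_{4}$ with the increasing function $g(s,r) = (s - \phi_{\infty}(r))_{+}^{2}/r$, which vanishes at $s=0$ because $\phi_{\infty}(r) \geq 0$, yields
\[
\int_{\mathbb{R}^{3}} (\phi^{*} - \phi_{\infty})_{+}^{2} \frac{dx}{r^{2}} = \int_{\mathbb{R}^{3}} (\phi - \phi_{\infty})_{+}^{2} \frac{dx}{r^{2}}.
\]
In particular $H[b^{*}] = 2\mu \int (\phi^{*} - \phi_{\infty})_{+}^{2}\, r^{-2}\, dx = h$ and $b^{*} \in \tilde{T}_{h}$.

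For the energy, the same identity (applied with $r^{-1}$ in place of $r^{-2}$ in the measure) gives $\|G^{*}\|_{L^{2}(\mathbb{R}^{2}_{+}; r^{-1})} = \|G\|_{L^{2}(\mathbb{R}^{2}_{+}; r^{-1})}$, while the Pólya--Szegő bound $(4.8)_{5}$ produces
\[
\int_{\mathbb{R}^{2}_{+}} |\nabla \phi^{*}|^{2} \frac{dz\, dr}{r} \leq \int_{\mathbb{R}^{2}_{+}} |\nabla \phi|^{2} \frac{dz\, dr}{r}.
\]
Together with the norm identity (3.2) this proves $E[b^{*}] \leq E[b]$, completing the argument. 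The only delicate step is the commutation just verified, which fails for $z$-dependent shifts; everything else is a direct application of the Steiner properties (4.8) already catalogued in the excerpt.
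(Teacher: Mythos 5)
Your proof is correct and follows essentially the same route as the paper: Steiner symmetrization of $\phi$ in $z$, preservation of the helicity constraint via the equimeasurability property $(4.8)_4$ applied to $g(s,r)=(s-\phi_{\infty}(r))_{+}^{2}/r$, and the energy decrease from the P\'olya--Szeg\H{o} inequality $(4.8)_5$ combined with $(4.7)_2$. The extra care you take in verifying $\phi^{*}\in \dot{H}^{1}_{0}(\mathbb{R}^{2}_{+};r^{-1})$ and the commutation of symmetrization with the truncation $(\cdot-\phi_{\infty})_{+}$ is a welcome elaboration of steps the paper leaves implicit, but it is not a different argument.
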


\vspace{5pt}

\begin{proof}
The left-hand side is smaller than the right-hand side by $\tilde{T}_h\subset T_h$ and $(4.7)_2$. For $b=\nabla \times (\phi\nabla \theta)+G\nabla \theta\in T_h$, we take the Steiner symmetrization $\phi^{*}$. By $(4.8)_4$, 

\begin{align*}
\int_{\mathbb{R}^{2}_{+}}(\phi-\phi_{\infty})_{+}^{2}\frac{1}{r}\dd z\dd r
=\int_{\mathbb{R}^{2}_{+}}(\phi^{*}-\phi_{\infty})_{+}^{2}\frac{1}{r}\dd z\dd r.
\end{align*}\\
With the constant $\kappa$ in (4.6), we set $\tilde{G}=\kappa(\phi^{*}-\phi_{\infty})_{+}$ and $\tilde{b}=\nabla \times (\phi^{*}\nabla \theta)+\tilde{G}\nabla \theta$. Then, $b\in \tilde{T}_{h}$ and by $(4.8)_5$, 

\begin{align*}
E[\tilde{b}]=\pi \int_{\mathbb{R}^{2}_{+}}\left(|\nabla \phi^{*}|^{2}+\kappa^{2}(\phi^{*}-\phi)_{+}^{2} \right)\frac{1}{r}\dd z\dd r
\leq \pi \int_{\mathbb{R}^{2}_{+}}\left(|\nabla \phi|^{2}+\kappa^{2}(\phi-\phi)_{+}^{2} \right)\frac{1}{r}\dd z\dd r=E[b].
\end{align*}\\
By $(4.7)_2$, 

\begin{align*}
I_h \leq \inf\left\{E[b]\ \middle|\ b\in \tilde{T}_h \right\}  
\leq \inf\left\{E[b]\ \middle|\ b\in T_h \right\}
=I_h,
\end{align*}\\
and (4.9) holds.
\end{proof}

We show the existence of symmetric minimizers to (4.9). The magnetic helicity of $b\in \tilde{T}_h$ concentrates near the origin $Q=\{{}^{t}(z,r)\in \mathbb{R}^{2}_{+}\ |\ |z|<Z,\ r<R \}$ because of the symmetry and nonincreasing properties for $z$-variable. We use the fact that the set $\{{}^{t}(z,r)\in \mathbb{R}^{2}_{+}\ |\  \phi(z,r)>\phi_{\infty}\}$ is below the graph $r=C|z|^{-1/2}$ \cite[Lemma 4.6]{FT81}, and estimate magnetic helicity in $\mathbb{R}^{2}_{+}\backslash Q$ to reduce the compactness of minimizing sequences in $\tilde{T}_h$ to that in the bounded domain $Q$.

\begin{prop}
For $\phi\in \dot{H}^{1}_{0}(\mathbb{R}^{2}_{+};r^{-1})$ satisfying $(4.8)_1-(4.8)_3$,  

\begin{align}
2r|z|^{1/2}1_{(0,\infty)}(\phi-\phi_{\infty}) \leq ||\nabla \phi||_{L^{2}(\mathbb{R}^{2}_{+};r^{-1}) }, \quad {}^{t}(z,r)\in \mathbb{R}^{2}_{+}.   
\end{align}
\end{prop}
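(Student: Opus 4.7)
The plan is to combine the symmetry and monotonicity in $z$ with the trace condition $\phi|_{r=0}=0$ and a one-dimensional Cauchy--Schwarz argument on radial slices. First I would fix a point ${}^{t}(z_0,r_0)\in\mathbb{R}^{2}_{+}$ at which the indicator function on the left-hand side is nonzero, so that $\phi(z_0,r_0)>\phi_{\infty}(r_0)=r_0^{2}+\gamma\geq r_0^{2}$. By the evenness $(4.8)_{2}$ and the monotonicity $(4.8)_{3}$, the same strict inequality $\phi(z,r_0)>r_0^{2}$ then holds for every $z$ with $|z|\leq |z_0|$.

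Next, for each such $z$ I would apply the trace condition $\phi(z,0)=0$ and the fundamental theorem of calculus in $r$ together with Cauchy--Schwarz using the complementary weights $r$ and $r^{-1}$,
\begin{align*}
r_0^{2}<\phi(z,r_0)=\int_{0}^{r_0}\partial_{r}\phi(z,r)\,\dd r
\leq \left(\int_{0}^{r_0} r\,\dd r\right)^{1/2}\!\left(\int_{0}^{r_0}\frac{|\partial_{r}\phi|^{2}}{r}\,\dd r\right)^{1/2}=\frac{r_0}{\sqrt{2}}\left(\int_{0}^{r_0}\frac{|\partial_{r}\phi|^{2}}{r}\,\dd r\right)^{1/2}.
\end{align*}
Squaring yields $2r_0^{2}\leq \int_{0}^{r_0}|\partial_{r}\phi|^{2}r^{-1}\,\dd r$ for every $|z|\leq |z_0|$. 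Integrating this slice estimate in $z$ over $|z|\leq |z_0|$ produces
\begin{align*}
4r_0^{2}|z_0|\leq \int_{-|z_0|}^{|z_0|}\!\!\int_{0}^{r_0}\frac{|\partial_{r}\phi|^{2}}{r}\,\dd r\,\dd z \leq \int_{\mathbb{R}^{2}_{+}}|\nabla\phi|^{2}\frac{1}{r}\,\dd z\,\dd r=\|\nabla\phi\|_{L^{2}(\mathbb{R}^{2}_{+};r^{-1})}^{2},
\end{align*}
and taking square roots gives exactly (4.10).

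The only delicate point is justifying the radial representation $\phi(z,r_0)=\int_{0}^{r_0}\partial_{r}\phi(z,r)\,\dd r$ and the vanishing trace $\phi(z,0)=0$ for a general $\phi\in\dot{H}^{1}_{0}(\mathbb{R}^{2}_{+};r^{-1})$. For this I would invoke the isometry $\phi\mapsto\varphi=\phi/r^{2}$ from $\dot{H}^{1}_{0}(\mathbb{R}^{2}_{+};r^{-1})$ onto $\dot{H}^{1}_{\textrm{axi}}(\mathbb{R}^{5})$ recorded in (3.10)--(3.11): approximating $\varphi$ by smooth axisymmetric functions and using absolute continuity on a.e.\ radial line (Fubini) makes both the radial identity and the boundary value at $r=0$ valid for a.e.\ $(z,r_0)$, so (4.10) is understood in the a.e.\ sense on $\mathbb{R}^{2}_{+}$.
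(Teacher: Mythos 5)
Your proof is correct and follows essentially the same route as the paper: monotonicity and evenness in $z$, the vanishing trace at $r=0$ with the fundamental theorem of calculus in $r$, and Cauchy--Schwarz against the weights $r$ and $r^{-1}$. The only (cosmetic) difference is that you apply Cauchy--Schwarz slice-by-slice and then integrate over $|z|\leq|z_0|$, whereas the paper integrates in $z'$ first and applies a two-dimensional Cauchy--Schwarz; both yield the constant $2$, and your closing remark on justifying the radial representation via the transform (3.10) is a reasonable way to handle the regularity issue the paper leaves implicit.
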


\begin{proof}
We take a point ${}^{t}(z,r)\in \mathbb{R}^{2}_{+}$ such that $\phi(z,r)>\phi_{\infty}$. Since $\phi$ is non-increasing for $0\leq z'\leq z$ and vanishes on $\{r=0\}$,

\begin{align*}
r^{2}<\phi(z,r)\leq \phi(z',r)-\phi(z',0)=\int_{0}^{r}\partial_{r'}\phi(z',r')\dd r.
\end{align*}\\
By integrating both sides in $[0,z]$ and applying H\"older's inequality,

\begin{align*}
r^{2}z\leq \int_{0}^{z}\int_{0}^{r}\partial_{r'}\phi(z',r')\dd r'\dd z'
&\leq \left(\int_{0}^{z}\int_{0}^{r}r'\dd r'\dd z'\right)^{1/2}\left(\int_{0}^{z}\int_{0}^{r}|\partial_{r'}\phi(z',r')|^{2}\frac{1}{r'}\dd r'\dd z'\right)^{1/2} \\
&\leq \frac{r\sqrt{z}}{2}||\nabla \phi||_{L^{2}(\mathbb{R}^{2}_{+};r^{-1}) }.
\end{align*}\\
Thus (4.10) holds.
\end{proof}

\begin{prop}
Let $Z,R\geq 1$. Let $Q=\{{}^{t}(z,r)\in \mathbb{R}^{2}_{+}\ |\ |z|<Z,\ r<R \}$. There exists $C>0$ such that 

\begin{align}
\int_{\mathbb{R}^{2}_{+}\backslash Q}(\phi-\phi_{\infty})_{+}^{2}\frac{1}{r}\dd z\dd r
\leq \frac{C}{\min\{Z,A\}} || \nabla\phi||^{4}_{L^{2}(\mathbb{R}^{2}_{+};r^{-1}) } ,
\end{align}\\
for $\phi \in \dot{H}^{1}_{0}(\mathbb{R}^{2}_{+};r^{-1})$ satisfying $(4.8)_1-(4.8)_3$.
\end{prop}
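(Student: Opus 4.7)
The plan is to split $\mathbb{R}^{2}_{+}\setminus Q$ into two strips $\Omega_{1}=\{|z|\geq Z\}$ and $\Omega_{2}=\{|z|<Z,\ r\geq R\}$, intersect each with the support $E=\{\phi>\phi_{\infty}\}$, and estimate the two pieces separately by combining (4.10) with the weighted Sobolev inequality (3.8). Throughout, set $N=\|\nabla\phi\|_{L^{2}(\mathbb{R}^{2}_{+};r^{-1})}$. Since $\phi\geq 0$ and $\phi_{\infty}\geq 0$, we have $(\phi-\phi_{\infty})_{+}^{2}\leq \phi^{2}\,1_{E}$; since $\gamma\geq 0$, the pointwise inequality $\phi\geq r^{2}+\gamma\geq r^{2}$, equivalently $\phi/r^{2}\geq 1$, holds on $E$.

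On $\Omega_{1}\cap E$, Proposition 4.7 gives $r|z|^{1/2}\leq N/2$, hence $r^{2}\leq N^{2}/(4Z)$. Writing $\phi^{2}/r=(\phi^{2}/r^{3})\cdot r^{2}$ and applying (3.8) with $p=2$,
\begin{align*}
\int_{\Omega_{1}\cap E}(\phi-\phi_{\infty})_{+}^{2}\frac{1}{r}\,dz\,dr\leq \frac{N^{2}}{4Z}\int_{\mathbb{R}^{2}_{+}}\phi^{2}\frac{1}{r^{3}}\,dz\,dr\leq \frac{CN^{4}}{Z}.
\end{align*}
On $\Omega_{2}\cap E$, use $r\geq R$ to bound $1/r\leq 1/R$, and use $\phi\geq r^{2}$ on $E$ to trade $\phi^{2}$ for $\phi^{4}/r^{4}$; then (3.8) with $p=4$ yields
\begin{align*}
\int_{\Omega_{2}\cap E}(\phi-\phi_{\infty})_{+}^{2}\frac{1}{r}\,dz\,dr\leq \frac{1}{R}\int_{\Omega_{2}\cap E}\frac{\phi^{4}}{r^{4}}\,dz\,dr\leq \frac{CN^{4}}{R}.
\end{align*}
Adding the two estimates gives $CN^{4}(Z^{-1}+R^{-1})\leq 2CN^{4}/\min\{Z,R\}$, which is (4.11).

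The main obstacle is the $\Omega_{2}$-piece. The constraint (4.10) only forces $|z|\leq N^{2}/(4r^{2})$ on $E$, which still permits arbitrarily large $r$ inside the strip $|z|<Z$, so the natural H\"older splitting $\phi^{2}/r\leq (\phi^{4}/r^{4})^{1/2}(r^{2})^{1/2}$ produces the divergent tail $\int_{\Omega_{2}\cap E}r^{2}\,dz\,dr$. The device that circumvents this divergence is to absorb one copy of $1/r$ against $1/R$ first, and only then use the defining pointwise inequality $\phi\geq r^{2}$ on $E$ to upgrade the remaining $\phi^{2}$ to $\phi^{4}/r^{4}$, so that (3.8) with $p=4$ applies directly and produces the clean factor $R^{-1}$.
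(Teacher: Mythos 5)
Your proof is correct and follows essentially the same route as the paper: both arguments rest on the pointwise bound (4.10) (forcing $r^{2}\leq N^{2}/(4|z|)$ on the support of $(\phi-\phi_{\infty})_{+}$) for the large-$|z|$ region and on the weighted Sobolev inequality (3.8) with $p=2$ and $p=4$, using $\phi\geq r^{2}$ on $\{\phi>\phi_{\infty}\}$, for the large-$r$ region. The only difference is the cosmetic one of how $\mathbb{R}^{2}_{+}\setminus Q$ is partitioned (the paper uses $\{r\geq R\}$ and $\{|z|\geq Z,\,r<R\}$), which does not change the substance of either estimate.
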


\begin{proof}
We estimate 

\begin{align*}
\int_{\mathbb{R}^{2}_{+}\backslash Q}(\phi-\phi_{\infty})_{+}^{2}\frac{1}{r}\dd z\dd r
=\int_{\{r\geq R\}}(\phi-\phi_{\infty})_{+}^{2}\frac{1}{r}\dd z\dd r+2\int_{\{z\geq Z, r<R\}}(\phi-\phi_{\infty})_{+}^{2}\frac{1}{r}\dd z\dd r.
\end{align*}\\
By the weighted Sobolev inequality (3.8), 

\begin{align*}
\int_{\{r\geq R\}}(\phi-\phi_{\infty})_{+}^{2}\frac{1}{r}\dd z\dd r
\leq \int_{\{r\geq R\}}(\phi-\phi_{\infty})_{+}^{2}\frac{\phi^{2}}{r^{5}}\dd z\dd r
\leq \frac{1}{R}\int_{\mathbb{R}^{2}_{+}}\phi^{4}\frac{1}{r^{4}}\dd z\dd r 
\lesssim \frac{1}{R}||\nabla \phi||^{4}_{L^{2}(\mathbb{R}^{2}_{+};r^{-1} ) }.
\end{align*}\\
By the pointwise estimate (4.10) and (3.8),

\begin{align*}
\int_{\{z>Z, r<R\}}(\phi-\phi_{\infty})_{+}^{2}\frac{1}{r}\dd z\dd r
&\lesssim ||\nabla \phi||^{2}_{L^{2}(\mathbb{R}^{2}_{+};r^{-1} ) }\int_{\{z>Z, r<R\}}(\phi-\phi_{\infty})_{+}^{2}\frac{1}{zr^{3}}\dd r \dd z \\
&\leq \frac{1}{Z}||\nabla \phi||^{2}_{L^{2}(\mathbb{R}^{2}_{+};r^{-1} ) }\int_{\mathbb{R}^{2}_{+}}\phi^{2}\frac{1}{r^{3}}\dd z\dd r \\
& \lesssim \frac{1}{Z}||\nabla \phi||^{4}_{L^{2}(\mathbb{R}^{2}_{+};r^{-1} ) }.
\end{align*}\\
Thus (4.11) holds.
\end{proof}

\begin{thm}[Existence of symmetric minimizers]
Let $h\in \mathbb{R}$. Let $\{b_n\}\subset  \tilde{T}_h$ be a sequence such that $E[b_n]\to I_h$. Then, there exist $\{n_k\}$ and $b\in \tilde{T}_{h}\cap S_h$ such that $b_{n_{k}}\to b$ in $L^{2}(\mathbb{R}^{3})$.  
\end{thm}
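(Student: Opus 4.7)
My plan is to run a direct method: extract weakly convergent subsequences of the Clebsch potentials, identify the candidate limit, verify the helicity constraint passes in the limit thanks to the tail bound (4.11), and finally upgrade weak to strong convergence using norm convergence of the energy.

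\textit{Setup and boundedness.} Since $b_n\in \tilde T_h$, I would write $b_n=\nabla\times(\phi_n\nabla\theta)+G_n\nabla\theta$ with $\phi_n$ satisfying $(4.8)_1$--$(4.8)_3$, $G_n=\mu_n(\phi_n-\phi_\infty)_+$, and $h=4\pi\mu_n c_n$ where $c_n:=\int_{\mathbb{R}^2_+}(\phi_n-\phi_\infty)_+^2\,r^{-1}\,\dd z\dd r$. From (3.2),
\[
\frac{E[b_n]}{\pi}=a_n+\mu_n^2 c_n,\qquad a_n:=\int_{\mathbb{R}^2_+}|\nabla\phi_n|^2\,r^{-1}\,\dd z\dd r,
\]
so $a_n$ is bounded. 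The Arnold-type bound (3.18) with $q=2$ gives $c_n\lesssim a_n^{5/3}$, hence $c_n$ is bounded; together with $h=4\pi\mu_n c_n$ this forces $\mu_n$ to be bounded above and, when $h\ne 0$, also bounded away from zero. (The case $h=0$ is immediate: $I_0=0$ forces $b_n\to 0$ in $L^2$ and $b=0\in\tilde T_0\cap S_0$, so assume $h\ne 0$ henceforth.)

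\textit{Weak compactness and candidate limit.} Passing to a subsequence, I would extract $\phi_n\rightharpoonup\phi$ in $\dot H^1_0(\mathbb{R}^2_+;r^{-1})$ and $\mu_n\to\mu\ne 0$. The Rellich--Kondrakov embedding (3.9) yields $\phi_n\to\phi$ strongly in $L^p_{\rm loc}(\overline{\mathbb{R}^2_+};r^{-1})$ for every $1\le p<\infty$, and up to a further subsequence pointwise a.e. The limit $\phi$ inherits the properties $(4.8)_1$--$(4.8)_3$. Set $G:=\mu(\phi-\phi_\infty)_+$ and $b:=\nabla\times(\phi\nabla\theta)+G\nabla\theta$.

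\textit{Constraint in the limit.} The main obstacle is the passage $c_n\to c:=\int(\phi-\phi_\infty)_+^2 r^{-1}\dd z\dd r$ on the unbounded domain $\mathbb{R}^2_+$. This is exactly where the restriction to $\tilde T_h$ pays off: since each $\phi_n$ satisfies $(4.8)_1$--$(4.8)_3$, Proposition 4.8 gives
\[
\int_{\mathbb{R}^2_+\setminus Q}(\phi_n-\phi_\infty)_+^2\,\frac{1}{r}\,\dd z\dd r\le \frac{C}{\min\{Z,R\}}\,a_n^{2},
\]
uniformly in $n$, so the tail is absolutely equi-continuous as $Z,R\to\infty$. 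Rellich--Kondrakov on the bounded rectangle $Q$ gives $(\phi_n-\phi_\infty)_+\to(\phi-\phi_\infty)_+$ strongly in $L^2(Q;r^{-1})$, and combining these yields $c_n\to c$ globally. With $\mu_n\to\mu$ this gives $H[b]=4\pi\mu c=\lim 4\pi\mu_n c_n=h$, so $b\in T_h$ and, by $(4.8)$ for $\phi$, $b\in\tilde T_h$.

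\textit{Minimality and strong convergence.} Weak lower semicontinuity of the quadratic form $\int|\nabla\phi|^2 r^{-1}$ combined with strong convergence of $(\phi_n-\phi_\infty)_+$ in $L^2(r^{-1})$ gives
\[
E[b]=\pi(a+\mu^2 c)\le \liminf_{n\to\infty}\pi(a_n+\mu_n^2 c_n)=I_h,
\]
while $b\in T_h$ forces $E[b]\ge I_h$. Hence $b\in S_h\cap\tilde T_h$ and $a_n\to a$. Thus $\|\nabla\phi_n\|_{L^2(r^{-1})}\to\|\nabla\phi\|_{L^2(r^{-1})}$, which together with the weak convergence upgrades to $\nabla\phi_n\to\nabla\phi$ strongly in $L^2(\mathbb{R}^2_+;r^{-1})$; combined with $G_n=\mu_n(\phi_n-\phi_\infty)_+\to\mu(\phi-\phi_\infty)_+=G$ strongly in $L^2(\mathbb{R}^2_+;r^{-1})$, the identity (3.2) yields $b_n\to b$ in $L^2(\mathbb{R}^3)$. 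I expect the delicate point to be precisely the tail control in the third step, where the Steiner structure of $\tilde T_h$ is indispensable; without $(4.8)_2$--$(4.8)_3$ one could lose mass to the non-compact directions in $z$ and $r$, and the constraint would fail to pass.
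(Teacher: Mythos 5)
Your argument is correct and follows essentially the same route as the paper: weighted Rellich--Kondrakov for local compactness, the tail estimate (4.11) of Proposition 4.8 to control the non-compact directions (indeed the one place where the Steiner structure of $\tilde{T}_h$ is used), and weak lower semicontinuity plus norm convergence to upgrade to strong $L^{2}$ convergence --- the paper merely treats $G_n$ as a weakly convergent sequence and splits $H[b_n]$ over $Q$ and its complement by H\"older, rather than parametrizing by $\mu_n$ as you do. One small repair in your boundedness step: $c_n$ bounded together with $\mu_n c_n=h/(4\pi)$ gives $\mu_n$ bounded \emph{away from zero}, not bounded above; boundedness above follows instead from the energy bound, since $|\mu_n|=\mu_n^{2}c_n/(|\mu_n|c_n)=4\pi\mu_n^{2}c_n/|h|\leq 4E[b_n]/|h|$.
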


\begin{proof}
By Rellich--Kondrakov theorem in the weighted space (3.9), there exists a subsequence of $b_n=\nabla \times (\phi_n\nabla \theta)+G_n\nabla \theta$ (still denoted by $\{b_n\}$) and $b=\nabla\times (\phi\nabla \theta)+G\nabla \theta$ such that 

\begin{align*}
b_n &\rightharpoonup  b \quad \textrm{in}\ L^{2}(\mathbb{R}^{3}),\\
\phi_n &\to  \phi \quad \textrm{in}\ L^{2}_{\textrm{loc}}(\overline{\mathbb{R}^{2}_{+}};r^{-1}).
\end{align*}\\
For $Z,R\geq 1$ and $Q=\{ |z|<Z, r<R \}$, 

\begin{align*}
H[b_n]
&=4\pi \int_{\mathbb{R}^{2}_{+}}(\phi_n-\phi_\infty)_{+}G_n\frac{1}{r}\dd z\dd r\\
&=4\pi \int_{Q}(\phi_n-\phi_\infty)_{+}G_n\frac{1}{r}\dd z\dd r
+4\pi \int_{\mathbb{R}^{2}_{+}\backslash Q}(\phi_n-\phi_\infty)_{+}G_n\frac{1}{r}\dd z\dd r.
\end{align*}\\
By H\"older's inequality, (4.11) and (3.2), 

\begin{align*}
\left|\int_{\mathbb{R}^{2}_{+}\backslash Q}(\phi_n-\phi_\infty)_{+}G_n\frac{1}{r}\dd z\dd r \right| 
&\leq \left|\int_{\mathbb{R}^{2}_{+}\backslash Q}(\phi_n-\phi_\infty)_{+}^{2}\frac{1}{r}\dd z\dd r \right|^{1/2}||G_n||_{L^{2}(\mathbb{R}\backslash Q;r^{-1} ) } \\
&\leq \frac{C}{\min\{Z,R\}^{1/2}}\sup_{n}||b_n||^{3}_{L^{2}(\mathbb{R}^{3}) }.  
\end{align*}\\
By $|\tau_+-s_{+}|\leq |\tau-s|$, H\"older's inequality and (3.2), 

\begin{align*}
\left|\int_{Q}(\phi_n-\phi_\infty)_{+}G_n\frac{1}{r}\dd z\dd r-\int_{Q}(\phi-\phi_\infty)_{+}G_n\frac{1}{r}\dd z\dd r \right|\leq ||\phi_n-\phi||_{L^{2}(Q;r^{-1})}\left(\sup_{n}||b_n||_{L^{2}(\mathbb{R}^{3}) }\right). 
\end{align*}\\
Thus 

\begin{align*}
|H[b_n]-H[b]|
&\leq 4\pi ||\phi_n-\phi||_{L^{2}(Q;r^{-1})}\left(\sup_{n}||b_n||_{L^{2}(\mathbb{R}^{3}) }\right) \\
&+4\pi \left|\int_{Q}(\phi-\phi_\infty)_{+}G_n\frac{1}{r}\dd z\dd r-\int_{Q}(\phi-\phi_\infty)_{+}G\frac{1}{r}\dd z\dd r \right| \\
&+ \frac{C}{\min\{Z,R\}^{1/2}}\left( \sup_{n}||b_n||^{3}_{L^{2}(\mathbb{R}^{3}) }+||b||^{3}_{L^{2}(\mathbb{R}^{3}) } \right).
\end{align*}\\
Since $G_n$ weakly converges in $L^{2}(Q;r^{-1})$ and $H[b_n]=h$, letting $n\to\infty$ and then $Z\to\infty$, $R\to\infty$ imply $h=H[b]$. By 

\begin{align*}
I_h\leq E[b]\leq \liminf_{n\to\infty}E[b_n]=I_h,
\end{align*}\\
the limit $b\in L^{2}_{\sigma,\textrm{axi}}(\mathbb{R}^{3})$ is a minimizer of $I_h$. The convergence $\lim_{n\to\infty}||b_n||_{L^{2}(\mathbb{R}^{3})}=||b||_{L^{2}(\mathbb{R}^{3})}$ implies that $b_n\to b$ in $L^{2}(\mathbb{R}^{3})$. The limit $b$ belongs to $\tilde{T}_{h}$ since $\phi$ satisfies $(4.8)_1$-- $(4.8)_3$ and $b\in S_h\subset T_h$. 
\end{proof}

\subsection{Properties of minimum}

We derive properties of the minimum $I_h$ for $h\in \mathbb{R}$ from (4.9) and the existence of symmetric minimizers $b\in \tilde{T}_{h}$ in Theorem 4.9.

\begin{lem}
\begin{align}
I_h&=I_{-h}\geq I_{0}=0,\quad h\in \mathbb{R},\qquad (\textrm{symmetry}), \\
0&<I_{h_1}<I_{h_2},\quad 0<h_1<h_2,   \hspace{12pt} (\textrm{monotonicity}),\\
I_{\theta h}&<\theta I_{h},\quad \theta>1, h>0,  \\
I_{h_1+h_2}&<I_{h_1}+I_{h_2},\quad h_1,h_2>0, \hspace{22pt} (\textrm{strict subadditivity}), \\
I_{h_1}&= \lim_{h\to h_1}I_h,\quad h_1>  0,\\
I_{h_1}&\leq \liminf_{h\to h_1}I_h,\quad h_1\in \mathbb{R} \hspace{37pt} (\textrm{lower semi-continuity}),
\end{align}
\end{lem}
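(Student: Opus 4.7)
The plan is to dispatch the easier items (4.12) and (4.13) directly, reserve the main effort for the strict scaling (4.14), and derive (4.15)--(4.17) from it.

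For (4.12), the involution $b=\nabla\times(\phi\nabla\theta)+G\nabla\theta\mapsto\nabla\times(\phi\nabla\theta)-G\nabla\theta$ on $L^{2}_{\sigma,\textrm{axi}}(\mathbb{R}^{3})$ preserves $E$ by the orthogonal splitting (3.2) and flips $H$ by Proposition 3.26, so $I_{h}=I_{-h}$; the equalities $I_{0}=0$ (take $b=0$) and $I_{h}\geq 0$ are immediate, while Arnold's inequality (3.20) forces $|h|\leq C(2E[b])^{4/3}$, giving $I_{h}>0$ for $h\neq 0$. For (4.13), given $0<h_{1}<h_{2}$ I would take a symmetric minimizer $b_{2}\in \tilde{T}_{h_{2}}\cap S_{h_{2}}$ from Theorem 4.9 with $G_{2}=\mu_{2}(\phi_{2}-\phi_{\infty})_{+}$, and test at the field with potentials $\phi_{2}$ and $(h_{1}/h_{2})G_{2}$; it lies in $T_{h_{1}}$ and has energy $A(\phi_{2})+(h_{1}/h_{2})^{2}\mu_{2}^{2}B(\phi_{2})<I_{h_{2}}$, proving $I_{h_{1}}<I_{h_{2}}$.

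The crux is (4.14), and the main obstacle is that the constant $\gamma$ in $\phi_{\infty}=r^{2}+\gamma$ breaks pure scale invariance, so the naive dilation $\phi\mapsto c\phi$ fails. The plan is to use the anisotropic zoom $\phi_{\lambda}(z,r):=\lambda^{2}\phi(z/\lambda,r/\lambda)$, which preserves both the boundary trace $\phi_{\lambda}(z,0)=0$ and the non-negativity $\phi_{\lambda}\geq 0$. A direct change of variables in the weighted integrals gives $A(\phi_{\lambda})=\lambda^{3}A(\phi)$, and the pointwise identity $(\phi_{\lambda}-\phi_{\infty})(z,r)=\lambda^{2}(\phi-\phi_{\infty})(z/\lambda,r/\lambda)+(\lambda^{2}-1)\gamma$ combined with the non-negativity of $(\lambda^{2}-1)\gamma$ for $\lambda\geq 1$, $\gamma\geq 0$ produces the pointwise bound $(\phi_{\lambda}-\phi_{\infty})_{+}(z,r)\geq \lambda^{2}(\phi-\phi_{\infty})_{+}(z/\lambda,r/\lambda)$, hence the one-sided scaling $B(\phi_{\lambda})\geq \lambda^{5}B(\phi)$. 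Starting from a symmetric minimizer $\phi\in\tilde{T}_h\cap S_h$ of $I_{h}$ with $\mu=h/(4B(\phi))$, and choosing $\lambda=\theta^{1/5}$ and $\mu_{\lambda}=\theta h/(4B(\phi_{\lambda}))$, the competitor $b_{\lambda}:=\nabla\times(\phi_{\lambda}\nabla\theta)+\mu_{\lambda}(\phi_{\lambda}-\phi_{\infty})_{+}\nabla\theta$ lies in $T_{\theta h}$ and its energy is controlled by
\[
E[b_{\lambda}]=\lambda^{3}A(\phi)+\frac{(\theta h)^{2}}{16\,B(\phi_{\lambda})}\leq \theta^{3/5}A(\phi)+\theta\mu^{2}B(\phi)<\theta\bigl(A(\phi)+\mu^{2}B(\phi)\bigr)=\theta I_{h},
\]
the strict last inequality using $\theta^{3/5}<\theta$ for $\theta>1$ together with $A(\phi)>0$, which holds since $h\neq 0$ forces $\phi\not\equiv 0$.

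Strict subadditivity (4.15) then follows from (4.14): assuming $h_{1}\leq h_{2}$, in the case $h_{1}<h_{2}$ I would combine $I_{h_{1}+h_{2}}=I_{\theta h_{2}}<\theta I_{h_{2}}$ with $\theta=(h_{1}+h_{2})/h_{2}$ and $(h_{1}/h_{2})I_{h_{2}}<I_{h_{1}}$ (the latter being (4.14) applied with $\theta'=h_{2}/h_{1}$, rewritten) to get $I_{h_{1}+h_{2}}<I_{h_{1}}+I_{h_{2}}$; the case $h_{1}=h_{2}$ is (4.14) with $\theta=2$. For the lower semi-continuity (4.17), along a subsequence realizing $L:=\liminf I_{h_{n}}<\infty$ I would take exact symmetric minimizers $b_{n}\in\tilde{T}_{h_{n}}\cap S_{h_{n}}$ (Theorem 4.9); they are uniformly bounded in $L^{2}$, and the argument of Theorem 4.9 (weak $L^{2}$ convergence, strong local $L^{2}(\mathbb{R}^{2}_{+};r^{-1})$ convergence of the fluxes via the weighted Rellich--Kondrakov embedding (3.9), and the uniform tail estimate of Proposition 4.8) delivers a strong subsequential limit $b$ with $H[b]=h_{1}$ and $E[b]\leq L$, hence $I_{h_{1}}\leq L$. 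For (4.16) at $h_{1}>0$, upper semi-continuity follows by plugging the fixed minimizer $\phi_{1}$ of $I_{h_{1}}$ into $F_{h}(\phi_{1})=A(\phi_{1})+h^{2}/(16\,B(\phi_{1}))$, which is continuous in $h$; combining with (4.17) yields (4.16).
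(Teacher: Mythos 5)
Your proposal is correct, and it reaches (4.14) and (4.16)--(4.17) by genuinely different routes than the paper. For (4.14) the paper keeps the minimizer $b$ fixed and multiplies it by a scalar $t>1$: since $(t\phi-\phi_\infty)_+=t(\phi-\phi_\infty/t)_+$, one gets the strict superquadratic growth $H[tb]>t^2h$, and the intermediate value theorem produces $t(\theta)$ with $H[t(\theta)b]=\theta h$ and $t(\theta)^2<\theta$, whence $I_{\theta h}\le t(\theta)^2I_h<\theta I_h$. You instead deform the flux by the anisotropic dilation $\phi_\lambda(z,r)=\lambda^2\phi(z/\lambda,r/\lambda)$ and exploit the scaling mismatch $A(\phi_\lambda)=\lambda^3A(\phi)$ versus $B(\phi_\lambda)\ge\lambda^5B(\phi)$; both arguments hinge on the failure of exact quadratic scaling of $H$, the paper's via the inhomogeneity of the positive-part cutoff, yours via the anisotropy of the weighted dilation. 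The paper's version is slightly leaner (it never needs the sign of $(\lambda^2-1)\gamma$, whereas your one-sided bound on $B(\phi_\lambda)$ uses $\gamma\ge0$ and $\lambda\ge1$), but both require the existence of a minimizer from Theorem 4.9 and both are sound; your computation $E[b_\lambda]\le\theta^{3/5}A(\phi)+\theta\mu^2B(\phi)<\theta I_h$ with $\lambda=\theta^{1/5}$ checks out, modulo the purely cosmetic normalization of the constants in $\mu=h/(4B)$, which depends on how you normalize $B$ against $H[b]=2\mu\int(\phi-\phi_\infty)_+^2r^{-2}\,\dd x$. For (4.16)--(4.17) the paper simply sandwiches $I_{h\pm\varepsilon}$ between $I_h$ and $\frac{h\pm\varepsilon}{h}I_h$ using (4.13) and (4.14) and lets $\varepsilon\to0$, which is essentially free; your route through a rerun of the concentration argument of Theorem 4.9 with varying helicities $h_n\to h_1$ (for the lower bound) plus the fixed-flux competitor $F_h(\phi_1)$ (for the upper bound) is heavier machinery but valid, since the tail estimate of Proposition 4.8 and the weighted Rellich--Kondrakov embedding only need uniform energy bounds and the symmetric-decreasing structure, and weak lower semicontinuity of the norm suffices because you do not need strong convergence of $b_n$ there. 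Your derivation of $I_h>0$ for $h\ne0$ from the Arnold-type inequality (3.20) is also a small simplification over the paper, which deduces it from the existence of a minimizer.
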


\begin{proof}
The symmetry (4.12) follows from (3.26). By Theorem 4.9, for $h>0$, there exists $b=\nabla \times (\phi\nabla \theta)+G\nabla \theta\in S_h$ such that $I_h=E[b]$ and $h=H[b]$. If $I_h=0$, $E[b]=0$ and $h=H[b]=0$. Thus $I_h>0$. For $\tau>1$, 

\begin{align*}
\tilde{b}=\nabla \times (\phi \nabla \theta)+\frac{1}{\tau}G\nabla \theta, 
\end{align*}\\
satisfies $H[\tilde{b}]=H[b]/\tau=h/\tau$ and 

\begin{align*}
I_{h/\tau}\leq E[\tilde{b}]=\frac{1}{2}\int_{\mathbb{R}^{3}}|\nabla \phi|^{2}\frac{1}{r^{2}}\dd x+\frac{1}{2\tau^{2}}\int_{\mathbb{R}^{3}}|G|^{2}\frac{1}{r^{2}}\dd x
=E[b]-\frac{1}{2}\left(1-\frac{1}{\tau^{2}}\right)\int_{\mathbb{R}^{3}}|G|^{2}\frac{1}{r^{2}}\dd x<I_h.
\end{align*}\\
Thus, the monotonicity (4.13) holds. 

By Proposition 4.2, for some $\kappa\in \mathbb{R}$,  

\begin{align*}
H[b]=2\kappa\int_{\mathbb{R}^{3}}(\phi-\phi_{\infty})_{+}^{2}\frac{1}{r^{2}}\dd x.
\end{align*}\\
By $h>0$, $\kappa>0$ and $G=\kappa (\phi-\phi_{\infty})_{+}\geq 0$. For $t>1$, $H[tb]$ is continuous and 

\begin{align*}
H[tb]=2\int_{\mathbb{R}^{3}} (t\phi-\phi_{\infty})_{+}(tG)\frac{1}{r^{2}}\dd x
>2t^{2} \int_{\mathbb{R}^{3}}(\phi-\phi_{\infty})_{+}G\frac{1}{r^{2}}\dd x=t^{2}H[b]=t^{2}h.
\end{align*}\\
For $\theta>1$, we take $t_1>1$ such that $H[t_1b]>t_1^{2}h>\theta h$. By the intermediate value theorem, there exists $1<t=t(\theta)<t_1$ such that $\theta h=H[t(\theta)b]$ and $\theta>t(\theta)^{2}$. Thus 

\begin{align*}
I_{\theta h}\leq E[t(\theta)b]=t(\theta)^{2}E[b]=t(\theta)^{2}I_{h}<\theta I_h,
\end{align*}\\
and (4.14) holds. 

We take $h_1,h_2>0$. We may assume that $h_1<h_2$. By (4.14), $I_{h_2}<(h_2/h_1)I_{h_1}$. For $\vartheta=(h_1+h_2)/h_2$, 

\begin{align*}
I_{h_1+h_2}=I_{\vartheta h_2}<\vartheta I_{h_2}=\left(\frac{h_1}{h_2}+1  \right)I_{h_2}<I_{h_1}+I_{h_2}.
\end{align*}\\
Thus, the strict subadditivity (4.15) holds. 

By (4.13) and (4.14), for $\varepsilon>0$, 

\begin{align*}
&I_{h}<I_{h+\varepsilon},\quad
I_{h}<\frac{h}{h-\varepsilon}I_{h-\varepsilon},\\
&I_{h-\varepsilon}<I_{h},\quad
I_{h+\varepsilon}<\frac{h+\varepsilon}{h}I_{h}.
\end{align*}\\
Letting $\varepsilon\to 0$ implies the continuity (4.16) for $h_1>0$. By (4.12) and (4.13), $I_h$ is lower semi-continuous at $h_1=0$ and (4.17) holds. 
\end{proof}

\section{Minimizing sequences}

We demonstrate the compactness of (non-symmetric) minimizing sequences to the variational problem (4.1) in $L^{2}_{\sigma,\textrm{axi}}(\mathbb{R}^{3})$ up to translation in $z$. We apply the concentration--compactness principle in $\mathbb{R}^{2}_{+}$ \cite{Lions84a}, \cite{Lions84b} and exclude possibilities of dichotomy and vanishing of minimizing sequences to obtain the desired compactness. The crucial part of the proof is the exclusion of dichotomy by application of the strict subadditivity, monotonicity, and lower semi-continuity of the minimum shown in Lemma 4.10.

\subsection{Concentration--compactness lemma}

We derive a concentration--compactness lemma for sequences in $L^{2}_{\sigma,\textrm{axi}}(\mathbb{R}^{3})$ from the concentration--compactness lemma in $L^{1}(\mathbb{R}^{2}_{+})$ \cite{Lions84a}, \cite{Lions84b}. We then modify the derived lemma in the case of dichotomy so that two sequences have disjoint supports by a cut-off function argument.

\begin{prop}
Let $\{\rho_n\} \subset  L^{1}(\mathbb{R}^{2}_{+})$ be a sequence such that $\rho_n\geq 0$ and 

\begin{align*}
\int_{\mathbb{R}^{2}_{+}}\rho_n\dd z\dd r \to  l>0\quad \textrm{as}\ n\to\infty.     
\end{align*}\\
Then, there exists a subsequence $\{\rho_{n_k}\}$ such that one of the following holds:

\noindent
(i) Compactness: there exists $z_k \in \mathbb{R}$ such that for $\varepsilon>0$ there exists $R_{\varepsilon}>0$ such that for $D(z_k,R_\varepsilon)=\{{}^{t}(z,r)\in \mathbb{R}^{2}_{+}\ |\ |z-z_k|^{2}+r^{2}<R^{2}_\varepsilon \}$, 

\begin{align*}
\liminf_{k\to\infty}\int_{D(z_k,R_\varepsilon)} \rho_{n_{k}}\dd z\dd r \geq l-\varepsilon. 
\end{align*}\\
(ii) Vanishing: for each $R>0$,

\begin{align*}
\lim_{k\to\infty}\sup_{z_0\in \mathbb{R}}\int_{D(z_0,R) }\rho_{n_{k}}\dd z\dd r=0.  
\end{align*}\\
(iii) Dichotomy: there exists $\alpha \in (0,l)$ such that for $\varepsilon>0$ there exist $z_k \in \mathbb{R}$ and $ R_k\geq R_0$ such that $R_k\to\infty$ and, 

\begin{equation*}
\begin{aligned}
\limsup_{k\to\infty}\left\{ \left| \int_{D(z_k,R_0)}\rho_{n_k}\dd z \dd r -\alpha \right|
+\left|\int_{\mathbb{R}^{2}_{+}\backslash D(z_k,R_k)} \rho_{k}\dd z \dd r -(l-\alpha)\right|+\int_{D(z_k,R_k) \backslash D(z_k,R_0)} \rho_{k}\dd z \dd r\right\} 
\leq \varepsilon.  
\end{aligned}
\end{equation*}
\end{prop}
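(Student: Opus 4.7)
The plan is to follow the classical argument of P.-L. Lions based on the L\'evy concentration function, specialized here to the half-plane $\mathbb{R}^{2}_{+}$ where translations act only in the $z$-variable. After discarding finitely many terms I may assume $\int_{\mathbb{R}^{2}_{+}} \rho_n \dd z \dd r \leq l+1$ for every $n$.

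I introduce the concentration function
\[
Q_n(R) = \sup_{z_0 \in \mathbb{R}} \int_{D(z_0, R)} \rho_n \dd z \dd r, \qquad R \geq 0,
\]
which is non-decreasing in $R$, uniformly bounded by $l+1$, and satisfies $Q_n(R) \to \int_{\mathbb{R}^{2}_{+}} \rho_n \dd z \dd r$ as $R \to \infty$. Applying Helly's selection theorem to $\{Q_n\}$ on a countable dense set of radii together with a standard diagonal extraction, I pass to a subsequence (still denoted $\{n_k\}$) such that $Q_{n_k}(R) \to Q(R)$ at every continuity point of a non-decreasing limit $Q : [0,\infty) \to [0,l]$. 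Set $\alpha = \lim_{R \to \infty} Q(R) \in [0, l]$; the three alternatives correspond exactly to $\alpha = 0$, $0 < \alpha < l$, and $\alpha = l$.

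The case $\alpha = 0$ gives vanishing (ii) at once since $Q_{n_k}(R) \to 0$ for every fixed continuity point $R$. If $0 < \alpha < l$, I fix $\varepsilon > 0$, choose a continuity point $R_0$ of $Q$ with $\alpha - \varepsilon/4 < Q(R_0) \leq \alpha$, and pick $z_k \in \mathbb{R}$ realizing $\int_{D(z_k, R_0)} \rho_{n_k} > Q_{n_k}(R_0) - 1/k$. Because $Q_{n_k}(R) \to Q(R) \leq \alpha$ at every continuity point $R \geq R_0$, a slow diagonal choice of continuity points $R_k \to \infty$ yields $Q_{n_k}(R_k) \leq \alpha + \varepsilon/4$, so
\[
\int_{D(z_k, R_k) \backslash D(z_k, R_0)} \rho_{n_k} \dd z \dd r \leq Q_{n_k}(R_k) - \int_{D(z_k, R_0)} \rho_{n_k} \dd z \dd r \leq \varepsilon/2
\]
for $k$ large, while the exterior mass tends to $l - \alpha$ up to error $\varepsilon$; this is (iii).

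The case $\alpha = l$ gives compactness (i) and is the only genuine technical point, namely producing a \emph{single} sequence of centers $z_k$ that certifies (i) for every $\varepsilon > 0$. I fix once and for all a continuity point $R_0$ with $Q(R_0) > 3l/4$ and take $z_k$ with $\int_{D(z_k, R_0)} \rho_{n_k} > 2l/3$ for $k$ large. Given $0 < \varepsilon < l/3$, I then select a continuity point $R_\varepsilon > R_0$ with $Q(R_\varepsilon) > l - \varepsilon/2$ and $z_k^{\varepsilon}$ with $\int_{D(z_k^{\varepsilon}, R_\varepsilon)} \rho_{n_k} > l - 2\varepsilon$. For $k$ large the two half-discs cannot be disjoint, as otherwise their combined mass would exceed $2l/3 + l - 2\varepsilon > l \geq \int \rho_{n_k} - o(1)$; hence $|z_k - z_k^{\varepsilon}| \leq R_0 + R_\varepsilon$, so $D(z_k^{\varepsilon}, R_\varepsilon) \subset D(z_k, R_0 + 2R_\varepsilon)$, and (i) holds with radius $R_0 + 2R_\varepsilon$; values $\varepsilon \geq l/3$ follow a fortiori. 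The main obstacle is precisely this center-synchronization step in the compactness case; the Helly extraction and the monotone bookkeeping in the dichotomy case are otherwise routine.
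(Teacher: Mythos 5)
Your proof is correct and follows essentially the same route as the paper, which likewise invokes the L\'evy concentration function $Q_n(t)=\sup_{z\in\mathbb{R}}\int_{D(z,t)}\rho_n\,\dd z\dd r$ and Lions' trichotomy adapted to $z$-translations on $\mathbb{R}^{2}_{+}$; the only cosmetic difference is that the paper first normalizes $\tilde{\rho}_n=\rho_n l/l_n$ to reduce to fixed mass, whereas you carry the $o(1)$ mass error through directly. Your center-synchronization argument in the compactness case and the diagonal choice of $R_k$ in the dichotomy case are the standard details behind the citation and are carried out correctly.
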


\begin{proof}
For the case of fixed mass, $l_n=l$ for $l_n=\int_{\mathbb{R}^{2}_{+}}\rho_n\dd z\dd r$, we apply a similar argument as $\mathbb{R}^{2}$ for the L\'evy's (partial) concentration function $Q_n(t)=\sup_{z\in \mathbb{R}}\int_{D(z,t)}\rho_n \dd z\dd r$ and conclude \cite[Lemma I.1]{Lions84a}, \cite[p.279]{Lions84b}. The case of varying mass $l_n\to l$ is reduced to the case of fixed mass by the normalization $\tilde{\rho}_n=\rho_n l/l_n$.
\end{proof}

\begin{lem}
Let $\{b_n\}\subset L^{2}_{\sigma,\textrm{axi}}(\mathbb{R}^{3})$ be a sequence such that 

\begin{align}
\int_{\mathbb{R}^{3}}|b_n|^{2}\dd x \to  l>0\quad \textrm{as}\ n\to\infty.     
\end{align}\\
There exists a subsequence $\{b_{n_k}\}$ such that one of the following holds: 

\noindent
(i) There exists $z_k \in \mathbb{R}$ such that for $\varepsilon>0$ there exists $R_{\varepsilon}>0$ such that for $B(z_ke_z,R_\varepsilon)=\{x\in \mathbb{R}^{3}\ |\ |x-z_ke_z|<R_\varepsilon \}$, 

\begin{align}
\liminf_{k\to\infty}\int_{B(z_ke_z,R_\varepsilon)} |b_{n_{k}  }|^{2}\dd x \geq l-\varepsilon. 
\end{align}\\
(ii) For each $R>0$,

\begin{align}
\lim_{k\to\infty}\sup_{z_0\in \mathbb{R}}\int_{B(z_0e_z,R) }|b_{n_{k}}|^{2} \dd x=0.  
\end{align}\\
(iii) There exists $\alpha \in (0,l)$ such that for $\varepsilon>0$, there exists $z_k\in \mathbb{R}$  and $R_k\geq R_0$ such that $R_k\to\infty$ and, 

\begin{equation}
\begin{aligned}
\limsup_{k\to\infty}\Bigg\{ &\left| ||b_{n_k}||_{L^{2}(B(z_ke_z,R_0)) }^{2}  -\alpha \right|
+\left| ||b_{n_k}||_{L^{2}(\mathbb{R}^{3}\backslash B(z_ke_z,R_k)) }^{2} -(l-\alpha)\right| \\
&+||b_{n_k}||_{L^{2}( B(z_ke_z,R_k)\backslash B(z_ke_z,R_0))}^{2} \Bigg\} 
\leq \varepsilon.  
\end{aligned}
\end{equation}
\end{lem}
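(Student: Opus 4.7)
The plan is to reduce Lemma 5.2 directly to Proposition 5.1 via the Clebsch representation together with the $L^{2}$ isometry (3.2). For each $b_{n}\in L^{2}_{\sigma,\textrm{axi}}(\mathbb{R}^{3})$, Lemma 3.4 provides unique Clebsch potentials $\phi_{n}\in \dot{H}^{1}_{0}(\mathbb{R}^{2}_{+};r^{-1})$ and $G_{n}\in L^{2}(\mathbb{R}^{2}_{+};r^{-1})$ such that $b_{n}=\nabla\times(\phi_{n}\nabla\theta)+G_{n}\nabla\theta$. Set
\begin{equation*}
\rho_{n}(z,r)=\frac{2\pi}{r}\bigl(|\nabla\phi_{n}(z,r)|^{2}+|G_{n}(z,r)|^{2}\bigr)\in L^{1}(\mathbb{R}^{2}_{+}),\qquad \rho_{n}\geq 0.
\end{equation*}
By the identity (3.2), $\int_{\mathbb{R}^{2}_{+}}\rho_{n}\,\dd z\dd r=\int_{\mathbb{R}^{3}}|b_{n}|^{2}\dd x\to l>0$, so the sequence $\{\rho_{n}\}$ is admissible for Proposition 5.1.

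The bridge between the two statements is the following elementary geometric identity, valid for any axisymmetric $b\in L^{2}_{\sigma,\textrm{axi}}(\mathbb{R}^{3})$, any $z_{0}\in\mathbb{R}$, and any $R>0$: since a point $x\in\mathbb{R}^{3}$ with cylindrical coordinates $(z,r,\theta)$ satisfies $|x-z_{0}e_{z}|^{2}=(z-z_{0})^{2}+r^{2}$, integration in $\theta$ gives
\begin{equation*}
\int_{B(z_{0}e_{z},R)}|b|^{2}\,\dd x=\int_{D(z_{0},R)}\rho\,\dd z\dd r,
\end{equation*}
where $\rho=(2\pi/r)(|\nabla\phi|^{2}+|G|^{2})$ and $D(z_{0},R)=\{{}^{t}(z,r)\in\mathbb{R}^{2}_{+}\,|\,(z-z_{0})^{2}+r^{2}<R^{2}\}$ is the planar disk centered on the axis. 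This identity translates every integral over a disk $D(z_{0},R)$ in Proposition 5.1 into an integral over the corresponding ball $B(z_{0}e_{z},R)$ in $\mathbb{R}^{3}$.

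Now extract a subsequence $\{\rho_{n_{k}}\}$ satisfying one of (i)--(iii) in Proposition 5.1. In case (i) (compactness) the sequence $\{z_{k}\}\subset\mathbb{R}$ and the radii $R_{\varepsilon}$ produced by the proposition, combined with the identity above, yield (5.2). In case (ii) (vanishing), since every ball $B(z_{0}e_{z},R)$ corresponds under the identity to $D(z_{0},R)$, the supremum in (5.3) over $z_{0}\in\mathbb{R}$ is exactly the partial concentration supremum in Proposition 5.1(ii), and therefore vanishes along $\{n_{k}\}$. In case (iii) (dichotomy), applying the identity to each of the three regions $D(z_{k},R_{0})$, $\mathbb{R}^{2}_{+}\setminus D(z_{k},R_{k})$, and $D(z_{k},R_{k})\setminus D(z_{k},R_{0})$ converts the three quantities appearing in Proposition 5.1(iii) into exactly the three $L^{2}$-norms appearing in (5.4).

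The only point requiring a brief remark rather than hard work is that we are restricted to centers on the $z$-axis. This is not an obstacle but a necessity: any shift of an axisymmetric field in a direction transverse to $e_{z}$ destroys axisymmetry, so the correct notion of L\'evy concentration function here is precisely the partial one with $z_{0}\in\mathbb{R}$, which is what Proposition 5.1 already uses. No cut-off or further modification is needed at this stage; the sharpening of dichotomy to disjoint supports via cut-offs is postponed to the subsequent step where dichotomy is actually excluded using the strict subadditivity of Lemma 4.10.
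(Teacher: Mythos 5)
Your proposal is correct and is exactly the paper's argument: the paper's proof of Lemma 5.2 consists precisely of applying Proposition 5.1 to $\rho_n=\left(|\nabla \phi_n|^{2}+|G_n|^{2}\right)2\pi r^{-1}$, and your geometric identity relating $\int_{B(z_0e_z,R)}|b|^{2}\dd x$ to $\int_{D(z_0,R)}\rho\,\dd z\dd r$ is the (correct, pointwise version of (3.2)) bridge that the paper leaves implicit. Nothing is missing.
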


\begin{proof}
We apply Proposition 5.1 for $b_n=\nabla \times (\phi_n\nabla \theta)+G_n\nabla \theta\in L^{2}_{\sigma,\textrm{axi}}(\mathbb{R}^{3})$ and 

\begin{align*}
\rho_n=\left(|\nabla \phi_n|^{2}+|G_n|^{2}\right)\frac{2\pi}{r}. 
\end{align*}
\end{proof}

\begin{prop}
In the case of the dichotomy (iii) in Lemma 5.2, there exists $b_{i,n_k}=\nabla \times (\phi_{i,n_k}\nabla \theta)+G_{i,n_k}\nabla \theta \in L^{2}_{\sigma,\textrm{axi}}(\mathbb{R}^{3})$, $i=1,2$, such that 

\begin{equation}
\begin{aligned}
&\textrm{spt}\ \phi_{1,n_k},\textrm{spt}\ G_{1,n_k}\subset D(z_k,2R_0),   \\
&\textrm{spt}\ \phi_{2,n_k},\textrm{spt}\ G_{2,n_k}\subset \mathbb{R}^{2}_{+}\backslash \overline{D(z_k,R_k/2)},   \\
&\limsup_{k\to\infty}\left\{ \left| \|b_{1,n_k}\|^{2}_{L^{2}(\mathbb{R}^{3}) } -\alpha \right|
+\left| \|b_{2,n_k}\|^{2}_{L^{2}(\mathbb{R}^{3}) } -(l-\alpha) \right|
+\|b_n-b_{1,n_k}-b_{2,n_k}\|^{2}_{L^{2}(\mathbb{R}^{3}) }   \right\} \\
&\leq C\varepsilon.  
\end{aligned}
\end{equation}
\end{prop}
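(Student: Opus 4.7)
The plan is to construct $b_{1,n_k}$ and $b_{2,n_k}$ by cutting off the Clebsch potentials of $b_{n_k}$ with smooth multipliers whose gradients are supported in annuli where the $L^{2}$-mass of $b_{n_k}$ is small by the dichotomy condition (5.4). The only nontrivial point is controlling the error arising from differentiating the cut-off, which I would handle via the weighted Poincar\'e inequality of Proposition 3.1.

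I would first fix smooth cut-offs $\chi_1, \chi_2 \colon \mathbb{R}^{2}_{+} \to [0,1]$ satisfying
\begin{align*}
&\chi_1 \equiv 1\ \text{on}\ D(z_k, R_0), \quad \textrm{spt}\ \chi_1 \subset \overline{D(z_k, 2R_0)}, \quad |\nabla \chi_1| \lesssim 1/R_0, \\
&\chi_2 \equiv 0\ \text{on}\ \overline{D(z_k, R_k/2)}, \quad \chi_2 \equiv 1\ \text{on}\ \mathbb{R}^{2}_{+} \setminus D(z_k, R_k), \quad |\nabla \chi_2| \lesssim 1/R_k,
\end{align*}
which is possible because $R_k/2 > 2R_0$ for all large $k$ (since $R_k\to\infty$), so that the supports of $\nabla \chi_1$ and $\nabla \chi_2$ are disjoint annuli contained in $D(z_k, R_k) \setminus D(z_k, R_0)$. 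Writing $b_{n_k} = \nabla \times (\phi_{n_k} \nabla \theta) + G_{n_k} \nabla \theta$ via Lemma 3.3, I would set $\phi_{i,n_k} = \chi_i \phi_{n_k}$, $G_{i,n_k} = \chi_i G_{n_k}$ and $b_{i,n_k} = \nabla \times (\phi_{i,n_k} \nabla \theta) + G_{i,n_k} \nabla \theta$. Since $\phi_{n_k}$ vanishes on $\partial \mathbb{R}^{2}_{+}$, so does $\chi_i \phi_{n_k}$, hence $\phi_{i,n_k} \in \dot{H}^{1}_{0}(\mathbb{R}^{2}_{+}; r^{-1})$ and $b_{i,n_k} \in L^{2}_{\sigma,\textrm{axi}}(\mathbb{R}^{3})$; the support conditions in the first two lines of (5.5) are then immediate.

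For the norm identities, I would expand $\nabla(\chi_i \phi_{n_k}) = \chi_i \nabla \phi_{n_k} + \phi_{n_k} \nabla \chi_i$ in formula (3.2), so that $\|b_{i,n_k}\|_{L^{2}(\mathbb{R}^{3})}^{2}$ equals $2\pi \int \chi_i^{2} (|\nabla \phi_{n_k}|^{2} + G_{n_k}^{2}) r^{-1}$ plus cross and remainder terms involving $\phi_{n_k}\nabla\chi_i$. A translated version of the weighted Poincar\'e inequality (3.13) applied to the annulus where $\nabla \chi_1$ is supported gives
\begin{align*}
\int |\phi_{n_k} \nabla \chi_1|^{2} \frac{1}{r} \dd z\,\dd r \lesssim \frac{1}{R_0^{2}} \int_{D(z_k, 2R_0) \setminus D(z_k, R_0)} |\phi_{n_k}|^{2} \frac{1}{r} \dd z\,\dd r \lesssim \|\nabla \phi_{n_k}\|_{L^{2}(D(z_k, 2R_0) \setminus D(z_k, R_0); r^{-1})}^{2},
\end{align*}
and this last quantity is bounded by $\|b_{n_k}\|_{L^{2}(B(z_k e_z, R_k) \setminus B(z_k e_z, R_0))}^{2}$; an analogous bound holds for $\chi_2$ on $D(z_k, R_k) \setminus D(z_k, R_k/2)$. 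Both right-hand sides are $\leq C\varepsilon$ in the limit by the annular estimate in (5.4). Combining these with $\chi_1^{2} \equiv 1$ on $D(z_k, R_0)$ and $\chi_2^{2} \equiv 1$ outside $D(z_k, R_k)$, and invoking (5.4) once more to discard the intermediate annular contribution, yields $|\|b_{1,n_k}\|_{L^{2}}^{2} - \alpha| + |\|b_{2,n_k}\|_{L^{2}}^{2} - (l - \alpha)| \leq C\varepsilon$ in the limit.

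Finally, the remainder $b_{n_k} - b_{1,n_k} - b_{2,n_k}$ corresponds to the Clebsch potentials $(1 - \chi_1 - \chi_2)\phi_{n_k}$ and $(1 - \chi_1 - \chi_2)G_{n_k}$, whose common support lies in $D(z_k, R_k) \setminus D(z_k, R_0)$ and whose gradient picks up the derivatives $\nabla(\chi_1 + \chi_2)$ supported on the same two annuli. The identical Poincar\'e argument gives $\|b_{n_k} - b_{1,n_k} - b_{2,n_k}\|_{L^{2}}^{2} \lesssim \|b_{n_k}\|_{L^{2}(B(z_k e_z, R_k) \setminus B(z_k e_z, R_0))}^{2} \leq C\varepsilon$ in the limit by (5.4), proving (5.5). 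The only mild technical point I foresee is checking that the constant in Proposition 3.1 is independent of the translation centre $z_k$; this follows from the $z$-translation invariance of both the weight $r^{-1}$ and the measure $\dd z\, \dd r$, so Proposition 3.1 applies verbatim after a shift in the $z$-variable.
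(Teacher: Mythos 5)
Your proposal is correct and follows essentially the same route as the paper: the paper also cuts off the Clebsch potentials with radial multipliers $\chi_{R_0}$ and $1-\chi_{R_k/2}$ (identical to your $\chi_1$, $\chi_2$), controls the commutator terms $\phi_{n_k}\nabla\chi_i$ via the weighted Poincar\'e inequality of Proposition 3.1 on the transition annuli, and invokes the annular smallness in (5.4) to conclude. The translation issue you flag at the end is handled in the paper by simply normalizing $z_k=0$ at the outset, which is equivalent to your observation.
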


\begin{proof}
We construct $b_{i,n_k}\in L^{2}_{\sigma,\textrm{axi}}(\mathbb{R}^{3}) $ satisfying (5.5) by a cut-off function argument. We may assume $z_k=0$ by translation. For simplicity of notation, we denote $b_{n_k}$ by $b_n$. We take a function $\chi\in C^{\infty}_{c}[0,\infty)$ such that $\chi= 1$ in $[0,1]$ and $\chi= 0$ in $[2,\infty)$ and set $\chi_{R_0}(z,r)=\chi(R_0^{-1}\sqrt{|z|^{2}+|r|^{2}})$ so that $\chi_{R_0}\in C^{\infty}_c(\mathbb{R}^{2})$ satisfies $\chi_{R_0}= 1$ in $D(0,R_0)$ and $\chi_{R_0}= 0$ in $\mathbb{R}^{2}_{+}\backslash \overline{D(0,2R_0)}$. We set $b_{1,n}=\nabla \times (\phi_{1,n}\nabla \theta)+G_{1,n}\nabla \theta$ by

\begin{align*}
\phi_{1,n}=\phi_{n}\chi_{R_0},\quad G_{1,n}=G_{n}\chi_{R_0},
\end{align*}\\
so that $(5.5)_1$ holds. By the Poincar\'e inequality (3.13), 

\begin{align}
\int_{D(0,2R_0)\backslash D(0,R_0)}|\phi_{n}|^{2}\frac{1}{r}\dd z\dd r\leq CR_0^{2} \int_{D(0,2R_0)\backslash D(0,R_0)}|\nabla \phi_n|^{2}\frac{1}{r}\dd z\dd r,
\end{align}\\
and we estimate 

\begin{align*}
\int_{\mathbb{R}^{2}_{+}}|\nabla \phi_{1,n}|^{2}\frac{1}{r}\dd z\dd r-\int_{D(0,R_0)}|\nabla \phi_{n}|^{2}\frac{1}{r}\dd z\dd r
&=\int_{D(0,2R_0)\backslash D(0,R_0)}|\nabla \phi_{n}\chi_{R_0}+\phi_{n}\nabla \chi_{R_0} |^{2}\frac{1}{r}\dd z\dd r \\
&\leq C\int_{D(0,2R_0)\backslash D(0,R_0)}|\nabla \phi_{n}|^{2}\frac{1}{r}\dd z\dd r.
\end{align*}\\
For $\rho_{1,n}=\left(|\nabla \phi_{1,n}|^{2}+|G_{1,n}|^{2}\right)2\pi r^{-1}$ and $\rho_{n}=\left(|\nabla \phi_{n}|^{2}+|G_{n}|^{2}\right)2\pi r^{-1}$,

\begin{align*}
\int_{\mathbb{R}^{2}_{+}}\rho_{1,n}\dd z\dd r
-\int_{D(0,R_0)}\rho_{n}\dd z\dd r
\leq C\int_{D(0,2R_0)\backslash D(0,R_0)}\rho_{n}\dd z\dd r.
\end{align*}\\
In terms of $b_{1,n}$ and $b_{n}$, 

\begin{align*}
||b_{1,n}||^{2}_{L^{2}( \mathbb{R}^{3}) }-||b_{n}||^{2}_{L^{2}( B(0,R_0)) }\leq C ||b_{n}||^{2}_{L^{2}( B(0,2R_0)\backslash B(0,R_0)) }.
\end{align*}\\
By applying (5.4) to 

\begin{align*}
\left| \|b_{1,n}\|^{2}_{L^{2}(\mathbb{R}^{3}) }   -\alpha\right|
&\leq  \left|\|b_{1,n}\|^{2}_{L^{2}(B(0,2R_0)) }
- ||b_{n}||^{2}_{L^{2}( B(0,R_0)) } \right|+\left| ||b_{n}||^{2}_{L^{2}( B(0,R_0)) }-\alpha\right|  \\
&\leq C ||b_{n}||^{2}_{L^{2}( B(0,2R_0)\backslash B(0,R_0)) }+\left| ||b_{n}||^{2}_{L^{2}( B(0,R_0)) }-\alpha\right|, 
\end{align*}\\
and we have 

\begin{align*}
\limsup_{n\to\infty}\left| \|b_{1,n}\|^{2}_{L^{2}(\mathbb{R}^{3}) }   -\alpha\right|\leq C\varepsilon.
\end{align*}\\
Similarly, we set $b_{2,n}=\nabla \times (\phi_{2,n}\nabla \theta)+G_{2,n}\nabla \theta$ by 

\begin{align*}
\phi_{2,n}=\phi_{n}(1-\chi_{R_n/2}),\quad G_{2,n}=G_{n}(1-\chi_{R_n/2}), 
\end{align*}\\
so that $(5.5)_2$ holds. By (3.13),

\begin{align}
\int_{D(0,R_n)\backslash D(0,R_n/2)}|\phi_n|^{2}\frac{1}{r}\dd z\dd r\leq CR_n^{2} \int_{D(0,R_n)\backslash D(0,R_n/2)}|\nabla \phi_n|^{2}\frac{1}{r}\dd z\dd r.
\end{align}\\
Similarly as we estimated $b_{1,n}$, by (5.4) and

\begin{align*}
\left| \|b_{2,n}\|^{2}_{L^{2}(\mathbb{R}^{3}) }   -(l-\alpha)\right|
&\leq  \left|\|b_{2,n}\|^{2}_{L^{2}(\mathbb{R}^{3}\backslash B(0,R_n/2)) }
- ||b_{n}||^{2}_{L^{2}(\mathbb{R}^{3}\backslash B(0,R_n)) } \right|+\left| ||b_{n}||^{2}_{L^{2}(\mathbb{R}^{3}\backslash B(0,R_n)) }-(l-\alpha)\right|  \\
&\leq C ||b_{n}||^{2}_{L^{2}( B(0,R_n)\backslash B(0,R_n/2)) }+\left| ||b_{n}||^{2}_{L^{2}(\mathbb{R}^{3}\backslash B(0,R_n)) }-(l-\alpha)\right|,
\end{align*}\\
we obtain 

\begin{align*}
\limsup_{n\to\infty}\left| \|b_{2,n}\|^{2}_{L^{2}(\mathbb{R}^{3}) }   -(l-\alpha)\right|\leq C\varepsilon.
\end{align*}\\
By using (5.6) and (5.7) for 

\begin{align*}
b_n-b_{1,n}-b_{2,n}=\nabla \times (\phi_n(\chi_{R_n/2}-\chi_{R_0})\nabla \theta )+G_n(\chi_{R_n/2}-\chi_{R_0})\nabla \theta, 
\end{align*}\\
we estimate 

\begin{align*}
||b_n-b_{1,n}-b_{2,n}||_{L^{2}(\mathbb{R}^{3}) }^{2}
\leq C\int_{D(0,R_n)\backslash D(0,R_0) }(|\nabla \phi_n |^{2}+|G_n|^{2})\frac{1}{r}\dd z\dd r=C||b_n||_{L^{2}(B(0,R_n)\backslash B(0,R_0)) }^{2}.
\end{align*}\\
By (5.4), 

\begin{align*}
\limsup_{n\to\infty} \left\{
\left| \|b_{1,n}\|^{2}_{L^{2}(\mathbb{R}^{3}) }   -\alpha\right|
+\left| \|b_{2,n}\|^{2}_{L^{2}(\mathbb{R}^{3}) }   -(l-\alpha)\right|
+\|b_n-b_{1,n}-b_{2,n}\|^{2}_{L^{2}(\mathbb{R}^{3}) }   \right\} \leq C\varepsilon.
\end{align*}\\
Thus $(5.5)_3$ holds. 
\end{proof}

\subsection{Lipschitz estimates}

Proposition 5.3 states that $||b_{n_k}||_{L^{2}(\mathbb{R}^{3})}^{2}\approx l$ can be divided into two parts 

\begin{align*}
||b_{1,n_k}||_{L^{2}(\mathbb{R}^{3})}^{2}\approx \alpha,\quad ||b_{2,n_k}||_{L^{2}(\mathbb{R}^{3})}^{2}\approx l-\alpha.
\end{align*}\\
We further show that $H[b_{n_k}]$ can be divided into two parts $H[b_{1,n_k}]$ and $H[b_{2,n_k}]$. Recall that $H[\cdot ]: L^{2}_{\sigma,\textrm{axi}}(\mathbb{R}^{3})\to \mathbb{R}$ is locally bounded by the Arnold-type inequality (3.20), 

\begin{align*}
|H[b]|\lesssim ||b||_{L^{2}(\mathbb{R}^{3})}^{8/3}.
\end{align*}\\
We extend this estimate to the following Lipschitz estimate and derive the desired decomposition property for magnetic helicity.

\begin{prop}
\begin{align}
\left|H[b_1]
-H[b_2]
\right|&\lesssim  \left(\max_{i=1,2} ||b_i||_{L^{2}(\mathbb{R}^{3}) } ^{5/3}\right) ||b_1-b_2||_{L^{2}(\mathbb{R}^{3}) },  \quad \textrm{for}\ b_1,b_2\in L^{2}_{\sigma,\textrm{axi}}(\mathbb{R}^{3}),
\end{align}

\begin{align}
\begin{aligned}
\left| H[b_0]-H[b_1]-H[b_2]\right| 
&\lesssim  \left(\max_{0\leq i\leq 2}||b_i||_{L^{2}(\mathbb{R}^{3})}^{5/3}\right) ||b_0-b_1-b_2||_{L^{2}(\mathbb{R}^{3}) } ,  
\end{aligned}
\end{align}\\
for $b_i=\nabla \times (\phi_i\nabla \theta)+G_i\nabla \theta\in L^{2}_{\sigma,\textrm{axi}}(\mathbb{R}^{3})$, $i=0, 1,2$, such that $\textrm{spt}\ \phi_i\cap \textrm{spt}\ G_j=\emptyset$, $i\neq j$, $i,j=1,2$. 
\end{prop}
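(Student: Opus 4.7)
The plan is to establish (5.8) by a two-term decomposition and then derive (5.9) from (5.8) using an exact additivity identity for $H$ under the disjoint-support hypothesis. Split
\begin{equation*}
H[b_1] - H[b_2] = 2\int_{\mathbb{R}^{3}}\left[(\phi_1 - \phi_\infty)_+ - (\phi_2 - \phi_\infty)_+\right] \frac{G_1}{r^{2}} \dd x
+ 2\int_{\mathbb{R}^{3}}(\phi_2 - \phi_\infty)_+ \frac{G_1 - G_2}{r^{2}} \dd x,
\end{equation*}
and call the two integrals $I$ and $II$. The piece $II$ is handled as in the Arnold-type inequality (3.20): H\"older in $L^{2}(\mathbb{R}^{2}_+; r^{-1})$, the estimate (3.18) at $q=2$ (which produces the exponent $5/3$), and (3.2) give $|II| \lesssim ||b_2||_{L^{2}}^{5/3}||b_1 - b_2||_{L^{2}}$.

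For $I$, set $E = \{\phi_1 > \phi_\infty\} \cup \{\phi_2 > \phi_\infty\}$ and exploit the $1$-Lipschitz property of $s \mapsto s_+$: the integrand of $I$ vanishes off $E$ and is pointwise bounded there by $|\phi_1 - \phi_2||G_1|/r^{2}$. H\"older then gives $|I| \lesssim ||\phi_1 - \phi_2||_{L^{2}(E; r^{-1})}||G_1||_{L^{2}(\mathbb{R}^{2}_+; r^{-1})}$, and a further Cauchy--Schwarz splits
\begin{equation*}
\int_{E}\frac{|\phi_1 - \phi_2|^{2}}{r}\dd z\dd r \leq \left(\int_{\mathbb{R}^{2}_+}\frac{|\phi_1 - \phi_2|^{4}}{r^{4}}\dd z\dd r\right)^{1/2}\left(\int_{E}r^{2}\,\dd z\dd r\right)^{1/2}.
\end{equation*}
The first factor is $\lesssim ||b_1 - b_2||_{L^{2}}^{2}$ by the weighted Sobolev inequality (3.8) at $p = 4$. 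For the second, since $\phi_\infty \geq r^{2}$, on each $\{\phi_i > \phi_\infty\}$ one has $(\phi_i/r^{2})^{8/3} \geq 1$, so $r^{2}$ is pointwise bounded on $E$ by $\phi_1^{8/3}/r^{10/3}+\phi_2^{8/3}/r^{10/3}$, and (3.8) at $p = 8/3$ (matched to the weight $r^{-10/3}$) yields $\int_E r^{2}\,\dd z\dd r \lesssim (||b_1||_{L^{2}} + ||b_2||_{L^{2}})^{8/3}$. Setting $M = \max_{i=1,2}||b_i||_{L^{2}}$ and assembling gives $|I| \lesssim ||b_1-b_2||_{L^{2}} \cdot M^{2/3} \cdot M = M^{5/3}||b_1-b_2||_{L^{2}}$, which together with the bound on $II$ proves (5.8).

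For (5.9), the disjoint-support hypothesis forces the exact identity $H[b_1 + b_2] = H[b_1] + H[b_2]$. Indeed, by the uniqueness of Clebsch potentials (Lemma 3.3), $b_1 + b_2$ has potentials $\phi_1+\phi_2$ and $G_1+G_2$; on $\textrm{spt}\ G_1$ the hypothesis gives $\phi_2 = 0$, so $((\phi_1+\phi_2)-\phi_\infty)_+ G_1 = (\phi_1-\phi_\infty)_+G_1$, and the symmetric statement holds on $\textrm{spt}\ G_2$. Thus $H[b_0] - H[b_1] - H[b_2] = H[b_0] - H[b_1+b_2]$, and applying (5.8) together with $||b_1 + b_2||_{L^{2}} \leq 2\max_{i=1,2}||b_i||_{L^{2}}$ yields (5.9). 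The main technical obstacle in the whole scheme is the bound on $\int_E r^{2}\,\dd z\dd r$: the natural weighted measure $\int_E r^{-1}\,\dd z\dd r$ is generically infinite (for instance, when $E$ contains a neighborhood of the axis $\{r = 0\}$, as it does for the Chandrasekhar state), so one cannot use a crude finite-measure H\"older split, and the Chebyshev inequality at the specific exponent $8/3$ is what ultimately produces the sharp power $M^{5/3}$ in (5.8).
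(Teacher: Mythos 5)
Your proposal is correct and follows essentially the same route as the paper's proof: the same two-term decomposition of $H[b_1]-H[b_2]$, the same treatment of the $G$-difference term via H\"older, (3.18) and (3.2), a Chebyshev-type absorption of the superlevel sets $\{\phi_i>\phi_{\infty}\}$ through the weighted Sobolev inequality (3.8), and the same reduction of (5.9) to the exact additivity $H[b_1+b_2]=H[b_1]+H[b_2]$ under the disjoint-support hypothesis followed by an application of (5.8). The only difference is cosmetic: the paper bounds the indicator $1_{(0,\infty)}(|\phi_1|+|\phi_2|-\phi_{\infty})$ by $\bigl((|\phi_1|+|\phi_2|)/\phi_{\infty}\bigr)^{4/3}$ and applies H\"older with exponents $(5/3,5/2)$ and the $L^{10/3}$ case of (3.8), whereas you peel off $\int_E r^{2}\,\mathrm{d}z\,\mathrm{d}r$ by Cauchy--Schwarz and control it by the $L^{8/3}$ case (writing $|\phi_i|^{8/3}$ rather than $\phi_i^{8/3}$, since $\phi_i$ need not be non-negative); both give the power $5/3$.
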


\begin{proof}
By the monotonicity of the indicator function, 

\begin{equation}
\begin{aligned}
|(\phi_1-\phi_{\infty})_{+}-(\phi_2-\phi_{\infty})_{+}|
&=\left|\int_{0}^{1}\frac{\dd}{\dd \tau}(\tau \phi_1+(1-\tau)\phi_2-\phi_{\infty})_{+}\dd \tau\right|  \\
&=\left|\int_{0}^{1}(\phi_1-\phi_2)1_{(0,\infty)}(\tau \phi_1+(1-\tau)\phi_2-\phi_{\infty})\dd \tau\right| \\
&\leq |\phi_1-\phi_2| 1_{(0,\infty)}(|\phi_1|+|\phi_2|-\phi_{\infty}).
\end{aligned}
\end{equation}\\
By H\"older's inequality, (5.10), (3.2) and (3.18), 

\begin{align*}
|H[b_1]-H[b_2]|
&\leq 2\int_{\mathbb{R}^{3}} |((\phi_1-\phi_{\infty})_{+}-(\phi_2-\phi_{\infty})_{+})G_1|\frac{1}{r^{2}}\dd x+\int_{\mathbb{R}^{3}} (\phi_2-\phi_{\infty})_{+}|G_1-G_2|\frac{1}{r^{2}}\dd x \\
&\lesssim ||(\phi_1-\phi_{\infty})_{+}-(\phi_2-\phi_{\infty})_{+}||_{L^{2}(\mathbb{R}^{2}_{+};r^{-1}) }  ||G_1||_{L^{2}(\mathbb{R}^{2}_{+};r^{-1}) } \\
&+||(\phi_2-\phi_{\infty})_{+}||_{L^{2}(\mathbb{R}^{2}_{+};r^{-1}) }\  ||G_1-G_2||_{L^{2}(\mathbb{R}^{2}_{+};r^{-1}) } \\
&\lesssim ||(\phi_1-\phi_2) 1_{(0,\infty)}(|\phi_1|+|\phi_2|-\phi_{\infty})||_{L^{2}(\mathbb{R}^{2}_{+};r^{-1}) }\  ||b_1||_{L^{2}(\mathbb{R}^{3}) } \\
&+ ||b_2||^{5/3}_{L^{2}(\mathbb{R}^{3}) }\ ||b_1-b_2||_{L^{2}(\mathbb{R}^{3}) }.
\end{align*}\\
By H\"older's inequality, (3.8) and (3.2),

\begin{align*}
&||(\phi_1-\phi_2) 1_{(0,\infty)}(|\phi_1|+|\phi_2|-\phi_{\infty})||_{L^{2}(\mathbb{R}^{2}_{+};r^{-1}) }^{2} \\
&\leq \int_{\mathbb{R}^{2}_{+}} |\phi_1-\phi_2|^{2}\left(\frac{|\phi_1|+|\phi_2|}{\phi_{\infty}}\right)^{4/3}\frac{1}{r}\dd z\dd r \\
& \leq \int_{\mathbb{R}^{2}_{+}} |\phi_1-\phi_2|^{2}\frac{1}{r^{11/5}} (|\phi_1|+|\phi_2|)^{4/3}\frac{1}{r^{22/15}}\dd z\dd r \\
&\leq \left(\int_{\mathbb{R}^{2}_{+}} |\phi_1-\phi_2|^{10/3}\frac{1}{r^{2+5/3}}\dd z\dd r \right)^{3/5}
\left(\int_{\mathbb{R}^{2}_{+}}(|\phi_1|+|\phi_2|)^{10/3}\frac{1}{r^{2+5/3}}\dd z\dd r \right)^{2/5} \\
&\lesssim  ||b_1-b_2||^{2}_{L^{2}(\mathbb{R}^{3}) } \left(\max_{i=1,2} ||b_i||_{L^{2}(\mathbb{R}^{3})} \right)^{4/3}.
\end{align*}\\
Thus (5.8) holds.

In a similar way as (5.10),

\begin{align*}
|(\phi_1+\phi_2-\phi_{\infty})_{+}-(\phi_1-\phi_{\infty})_{+}|
\leq |\phi_2| 1_{(0,\infty)}(|\phi_1+\phi_2|+|\phi_1|-\phi_{\infty}).
\end{align*}\\
Since $\textrm{spt}\ \phi_2\cap \textrm{spt}\ G_1=\emptyset $, 

\begin{align*}
&\left|\int_{\mathbb{R}^{3}}( \phi_1+\phi_2-\phi_\infty)_{+}G_1\frac{1}{r^{2}}\dd x
-\int_{\mathbb{R}^{3}}( \phi_1-\phi_\infty)_{+}G_1\frac{1}{r^{2}}\dd x\right|\\
&\leq \int_{\mathbb{R}^{3}}|\phi_2| 1_{(0,\infty)}(|\phi_1+\phi_2|+|\phi_1|-\phi_{\infty})|G_1|\frac{1}{r^{2}}\dd x=0.
\end{align*}\\
Thus 

\begin{align*}
\int_{\mathbb{R}^{3}}( \phi_1+\phi_2-\phi_\infty)_{+}G_1\frac{1}{r^{2}}\dd x
=\int_{\mathbb{R}^{3}}( \phi_1-\phi_\infty)_{+}G_1\frac{1}{r^{2}}\dd x.
\end{align*}\\
In a similar way, by $\textrm{spt}\ \phi_1\cap \textrm{spt}\ G_2=\emptyset $, 

\begin{align*}
\int_{\mathbb{R}^{3}}( \phi_1+\phi_2-\phi_\infty)_{+}G_2\frac{1}{r^{2}}\dd x
=\int_{\mathbb{R}^{3}}( \phi_2-\phi_\infty)_{+}G_2\frac{1}{r^{2}}\dd x.
\end{align*}\\
Thus $H[b_1+b_2]=H[b_1]+H[b_2]$. By applying (5.8) to $H[b_0]-H[b_1]-H[b_2]=H[b_0]-H[b_1+b_2]$, (5.9) follows.
\end{proof}

\subsection{Compactness}

We now demonstrate the compactness of minimizing sequences for the variational problem (4.1) using Lemma 4.10.

\begin{thm}
Let $h\in \mathbb{R}$. Let $\{b_n\}\subset L^{2}_{\sigma,\textrm{axi}}(\mathbb{R}^{3})$ be a sequence such that $E[b_n]\to I_h$ and $H[b_n]\to h$. There exists $\{n_k\}\subset \mathbb{N}$, $\{z_k\}\subset \mathbb{R}$ and $b \in S_h$ such that $b_{n_k}(\cdot+z_{k}e_z)\to b$ in $L^{2}(\mathbb{R}^{3})$. 
\end{thm}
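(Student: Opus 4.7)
The proof follows the classical three-alternative concentration--compactness scheme, relying on Lemmas 4.10, 5.2 and Propositions 5.3, 5.4. By the symmetry $I_h=I_{-h}$ from (4.12), it suffices to treat $h\geq 0$. If $h=0$, then $\|b_n\|_{L^2(\mathbb{R}^3)}^{2}=2E[b_n]\to 2I_0=0$, so $b_n\to 0$ in $L^2(\mathbb{R}^{3})$ and the conclusion holds with $z_k=0$, $b=0$. From now on I assume $h>0$ and apply Lemma 5.2 with $l=2I_h>0$; after passing to a subsequence, $\{b_n\}$ satisfies one of compactness (i), vanishing (ii), or dichotomy (iii).

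To exclude vanishing, cover $\mathbb{R}^{2}_{+}$ by half-disks $\{D(z_j,R)\}$ of a fixed radius with bounded overlap. On each $D(z_j,R)$ the weighted Sobolev embedding (3.14) together with a trace-zero Poincar\'e inequality of the type used in Proposition 3.1 controls $\|\phi_n\|_{L^{10/3}(D(z_j,R);\,r^{-1})}$ by $\|\nabla\phi_n\|_{L^{2}(D(z_j,R);\,r^{-1})}$, which vanishes uniformly in $j$ under hypothesis (5.3). The $\ell^{10/3}$--$\ell^{2}$ interpolation
\[
\|\phi_n\|_{L^{10/3}(\mathbb{R}^{2}_{+};\,r^{-1})}^{10/3}
\leq \Bigl(\sup_{j}\|\phi_n\|_{L^{10/3}(D_j;\,r^{-1})}\Bigr)^{4/3}
\sum_{j}\|\phi_n\|_{L^{2}(D_j;\,r^{-1})}^{2},
\]
combined with the global bound on the right-hand sum coming from $\{b_n\}\subset L^{2}_{\sigma,\textrm{axi}}(\mathbb{R}^{3})$, yields $\|(\phi_n-\phi_{\infty})_{+}\|_{L^{10/3}(\mathbb{R}^{2}_{+};\,r^{-1})}\to 0$. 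H\"older's inequality in the expression (3.23) for $H[b_n]$ then forces $H[b_n]\to 0$, contradicting $H[b_n]\to h>0$.

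To exclude dichotomy, apply Proposition 5.3 with $\varepsilon_m\to 0$ and diagonalise to produce disjointly supported $b_{1,n_k},b_{2,n_k}$ with $\|b_{i,n_k}\|_{L^2}^{2}\to \alpha_i$ ($\alpha_1=\alpha\in(0,l)$, $\alpha_2=l-\alpha$) and $\|b_{n_k}-b_{1,n_k}-b_{2,n_k}\|_{L^2}\to 0$. The additivity part of (5.9) together with a further subsequence gives $H[b_{i,n_k}]\to h_i$ with $h_1+h_2=h$, while disjoint supports and $(5.5)_3$ yield $E[b_{n_k}]=E[b_{1,n_k}]+E[b_{2,n_k}]+o(1)$. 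Lower semi-continuity (4.17) then forces $I_h\geq I_{h_1}+I_{h_2}$. If $h_1,h_2>0$ this contradicts strict subadditivity (4.15); if one $h_i$ is negative, symmetry (4.12) and strict monotonicity (4.13) give $I_{h_1}+I_{h_2}>I_h$; in the borderline case $h_1=0$, $h_2=h$, one has $E[b_{1,n_k}]\to \alpha/2>0$, so $E[b_{2,n_k}]\to I_h-\alpha/2<I_h$, contradicting $\liminf E[b_{2,n_k}]\geq I_{h_2}=I_h$. Hence only compactness (i) survives.

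Translating by the concentration points $z_k$ from Lemma 5.2(i), set $\tilde{b}_n(x):=b_{n_k}(x+z_k e_z)$. A further subsequence satisfies $\tilde{b}_n\rightharpoonup b$ weakly in $L^{2}_{\sigma,\textrm{axi}}(\mathbb{R}^{3})$ and $\tilde{\phi}_n\to \phi$ in $L^{2}_{\textrm{loc}}(\overline{\mathbb{R}^{2}_{+}};\,r^{-1})$ by the weighted Rellich--Kondrakov embedding (3.9). The concentration (5.2) makes the tail $\|\tilde{b}_n\|_{L^{2}(\mathbb{R}^{3}\setminus B(0,R_\varepsilon))}^{2}\leq \varepsilon$ for large $n$; combining this with the Lipschitz bound (5.8) and a cutoff/truncation as in the last step of the proof of Theorem 4.9 shows $H[b]=h$. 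Weak lower semi-continuity of $E$ gives $E[b]\leq I_h$, so $b\in S_{h}$ and $\|\tilde{b}_n\|_{L^2}^{2}\to 2I_h=\|b\|_{L^2}^{2}$; combined with weak convergence in the Hilbert space $L^{2}(\mathbb{R}^{3})$, this upgrades to strong convergence $\tilde{b}_n\to b$ in $L^{2}(\mathbb{R}^{3})$. The main obstacle is the exclusion of vanishing, since $\phi_n$ is recovered from $b_n$ by a nonlocal anti-curl and its smallness on large scales is not directly controlled by local $L^2$-smallness of $b_n$; the covering plus interpolation step above bridges this gap through the weighted Sobolev inequality (3.14).
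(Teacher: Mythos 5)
Your overall architecture — the three-alternative concentration--compactness scheme, the exclusion of dichotomy via Propositions 5.3--5.4 together with strict subadditivity (4.15), monotonicity (4.13) and lower semi-continuity (4.17), and the final upgrade from weak to strong convergence via norm convergence — is exactly the paper's proof, and your dichotomy and compactness steps are correct (your handling of the sign cases for $h_1,h_2$ is a slight variant of the paper's, but both work).

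The vanishing step, however, has a genuine gap, and it sits precisely where you flag "the main obstacle." First, the right-hand side of your interpolation inequality, $\sum_{j}\|\phi_n\|_{L^{2}(D_j;\,r^{-1})}^{2}\approx\|\phi_n\|_{L^{2}(\mathbb{R}^{2}_{+};\,r^{-1})}^{2}$, is \emph{not} controlled by $\|b_n\|_{L^{2}(\mathbb{R}^{3})}$: the identity (3.2) only bounds $\|\nabla\phi_n\|_{L^{2}(\mathbb{R}^{2}_{+};r^{-1})}$ and $\|G_n\|_{L^{2}(\mathbb{R}^{2}_{+};r^{-1})}$, and the weighted Sobolev inequality (3.8) with $p=2$ yields $\phi_n\in L^{2}(\mathbb{R}^{2}_{+};r^{-3})$, two powers of $r$ short of $L^{2}(\mathbb{R}^{2}_{+};r^{-1})$. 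The summable quantity must be $\sum_j\|\nabla\phi_n\|^{2}_{L^{2}(D_j;r^{-1})}\lesssim\|b_n\|^{2}_{L^{2}(\mathbb{R}^{3})}$, with the local inequality (3.14) converting each $\|\phi_n\|_{L^{10/3}(D_j;\,r^{-11/3})}$ into $\|\nabla\phi_n\|_{L^{2}(D_j;r^{-1})}$. Second, half-disks $D(z_j,R)$ centered on the axis cover only the strip $\{r<R\}$, and hypothesis (5.3) gives smallness only for balls centered on the axis; the far region $\{r\geq R\}$ is left untreated. The paper handles it by a separate direct estimate: on $\{\phi_n>r^2\}$ one has $(\phi_n-r^{2})_{+}^{2}r^{-1}\leq \phi_n^{4}r^{-5}\leq R^{-1}\phi_n^{4}r^{-4}$, and (3.8) with $p=4$ gives a bound of order $R^{-1}$ which is killed by letting $R\to\infty$ at the end. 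Third, the weight $r^{-1}$ in your $L^{10/3}$ norm does not feed into the helicity bound: to control $\|(\phi_n-\phi_{\infty})_{+}\|_{L^{2}(\mathbb{R}^{2}_{+};r^{-1})}$ one uses the support condition $\phi_n>r^{2}$ to write $(\phi_n-r^{2})_{+}^{2}r^{-1}\leq\phi_n^{10/3}r^{-11/3}$, so the $r^{-11/3}$ weight of (3.14) is the one that matters. All three defects are repaired by the paper's version of the argument, but as written your vanishing exclusion does not close.
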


\begin{proof}
For $h=0$, the assertion holds for $b=0$ since $I_0=0$. We may assume that $h>0$ by the symmetry (4.12). For a sequence $\{b_n\}$ satisfying $E[b_n]\to I_h$ and $H[b_n]\to h$, we apply Lemma 5.2 for $l=2I_h$. Then, by choosing a subsequence (still denoted by $\{b_n\}$), one of the 3 cases should occur: compactness, vanishing, dichotomy.\\

\noindent
\textit{Case 1}: dichotomy. \\

By Proposition 5.3, there exist $b_{1,n}, b_{2,n}\in L^{2}_{\sigma,\textrm{axi}}(\mathbb{R}^{3})$ such that (5.5) holds. By $(5.5)_3$ and (3.20), $E[b_{i,n}]$ and $H[b_{i,n}]$ are uniformly bounded for $n$ and $\varepsilon$. By choosing a subsequence, we may assume that $h_{i,n}=H[b_{i,n}]\to \bar{h}_i$ for some $\bar{h}_i\in \mathbb{R}$ as $n\to\infty$ and $\varepsilon\to0$. By $(5.5)_1$ and $(5.5)_2$, we apply Proposition 5.4 to estimate

\begin{align*}
&|H[b_n]-H[b_{1,n}]-H[b_{2,n}]|\\
&\lesssim \max\left\{||b_{1,n}||_{L^{2}(\mathbb{R}^{3})},||b_{2,n}||_{L^{2}(\mathbb{R}^{3})},||b_{n}||_{L^{2}(\mathbb{R}^{3})}\right\}^{5/3}||b_{n}-b_{1,n}-b_{2,n}||_{L^{2}(\mathbb{R}^{3})}.
\end{align*}\\
By $(5.5)_3$, 

\begin{align*}
\limsup_{n\to \infty}|H[b_n]-H[b_{1,n}]-H[b_{2,n}]| \leq C\varepsilon^{1/2}. 
\end{align*}\\
Letting $\varepsilon\to 0$ implies that $h=\bar{h}_1+\bar{h}_2>0$. Since $\textrm{spt}\ b_{1,n}\cap \textrm{spt}\ b_{2,n}=\emptyset$,  

\begin{align*}
E[b_n]
&=E[b_{1,n}]+E[b_{2,n}]+E[b_{n}-b_{1,n}-b_{2,n}]+\int_{\mathbb{R}^{3}} (b_{1,n}+b_{2,n})\cdot (b_{n}-b_{1,n}-b_{2,n} )\dd x \\
&\geq E[b_{1,n}]+E[b_{2,n}]-\left(\sup_{i,n} ||b_{i,n}||_{L^{2}(\mathbb{R}^{3}) }\right)  ||b_{n}-b_{1,n}-b_{2,n} ||_{L^{2}(\mathbb{R}^{3}) }  \\
&\geq I_{h_{1,n}}+I_{h_{2,n}}-\left(\sup_{i,n} ||b_{i,n}||_{L^{2}(\mathbb{R}^{3}) } \right)||b_{n}-b_{1,n}-b_{2,n} ||_{L^{2}(\mathbb{R}^{3}) }. 
\end{align*}\\
By the lower semi-continuity of the minimum (4.17) and (5.5), letting $n\to\infty$ and $\varepsilon\to 0$ imply 

\begin{align*}
I_h\geq I_{\bar{h}_1}+I_{\bar{h}_2}.
\end{align*}\\
If $\bar{h}_1>0$ and $\bar{h}_2>0$, this contradicts the strict subadditivity (4.15). Thus $\bar{h}_i\leq 0$ for $i=1$ or $i=2$. We may assume $\bar{h}_1\leq 0$. Since 

\begin{align*}
E[b_n]
\geq E[b_{1,n}]+I_{h_{2,n}}-\left(\sup_{i,n}||b_{i,n}||_{L^{2}(\mathbb{R}^{3}) }\right)  ||b_{n}-b_{1,n}-b_{2,n} ||_{L^{2}(\mathbb{R}^{3}) },
\end{align*}\\
and $\limsup_{n\to\infty}|2E[b_{1,n}]-\alpha|\leq C \varepsilon$ by (5.5), letting $n\to\infty$ and $\varepsilon\to0$ imply 

\begin{align*}
I_{h}\geq \frac{\alpha}{2}+I_{\bar{h}_2}>I_{\bar{h}_2}.
\end{align*}\\
Since $0<h=\bar{h}_1+\bar{h}_2\leq \bar{h}_2$, this contradicts the monotonicity (4.13). We conclude that dichotomy does not occur.\\

\noindent
\textit{Case 2}: vanishing. \\

By H\"older's inequality and (3.2), 

\begin{align*}
|H[b_n]|=4\pi\left|\int_{\mathbb{R}^{2}_{+}}(\phi_n-\phi_\infty)_{+}G_n\frac{1}{r}\dd z\dd r \right|
&\leq 4\pi ||(\phi_n-\phi_\infty)_{+}||_{L^{2}(\mathbb{R}^{2}_{+};r^{-1}) } ||G_n||_{L^{2}(\mathbb{R}^{2}_{+};r^{-1}) } \\
&\leq 4\pi ||(\phi_n- r^{2} )_{+}||_{L^{2}(\mathbb{R}^{2}_{+};r^{-1}) } \left(\sup_{n} ||b_n||_{L^{2}(\mathbb{R}^{3}) }\right).
\end{align*}\\
For arbitrary $R>0$, we estimate 

\begin{align*}
\int_{\mathbb{R}^{2}_{+}}\left(\phi_n-r^{2}\right)_{+}^{2}\frac{1}{r}\dd z\dd r
=\int_{\{r<R\}}\left(\phi_n-r^{2}\right)_{+}^{2}\frac{1}{r}\dd z\dd r+\int_{\{r\geq R\}}\left(\phi_n-r^{2}\right)_{+}^{2}\frac{1}{r}\dd z\dd r.
\end{align*}\\
By the weighted Sobolev inequality (3.8), 

\begin{align*}
&\int_{\{r\geq R\}}\left(\phi_n-r^{2}\right)_{+}^{2}\frac{1}{r}\dd z\dd r
\leq \frac{1}{R} \int_{\{r\geq R\}}\phi_n^{4}\frac{1}{r^{4}}\dd z\dd r
\leq \frac{C}{R}\sup_{n}||\nabla \phi_n||_{L^{2}(\mathbb{R}^{2}_{+};r^{-1}) }^{4},\\
&\int_{\{r<R\}}\left(\phi_n-r^{2}\right)_{+}^{2}\frac{1}{r}\dd z\dd r
\leq \int_{\{r<R\}}\phi_n^{10/3}\frac{1}{r^{11/3}}\dd z\dd r.
\end{align*}\\
For $z_0\in \mathbb{R}$ and $R'>R$, we apply the weighted Sobolev inequality in $D(z_0,R')$ (3.14) to estimate 

\begin{align*}
\int_{D(z_0,R')}|\phi_n|^{10/3}\frac{1}{r^{11/3}}\dd z\dd r
\leq C \left(\int_{D(z_0,R')}|\nabla \phi_n|^{2}\frac{1}{r}\dd z\dd r \right)^{5/3}
=C ||\nabla \phi_n||_{L^{2}(D(z_0,R');r^{-1}) }^{10/3}.
\end{align*}\\
By summing up $D(z_0,R')$ for countable points $z_0\in \mathbb{R}$, 

\begin{align*}
\int_{\{r<R\}}|\phi_n|^{10/3}\frac{1}{r^{11/3}}\dd z\dd r
\leq C\left(\sup_n||\nabla \phi_n||^{2}_{L^{2}(\mathbb{R}^{2}_{+};r^{-1}) } \right)\left(\sup_{z_0\in \mathbb{R}}||\nabla \phi_n||^{4/3}_{L^{2}(D(z_0,R')  ;r^{-1}) } \right).
\end{align*}\\
By (3.2), 

\begin{align*}
\int_{\{r<R\}}|\phi_n|^{10/3}\frac{1}{r^{11/3}}\dd z\dd r
\leq C\left(\sup_n||b_n||^{2}_{L^{2}(\mathbb{R}^{3}) } \right)\left(\sup_{z_0\in \mathbb{R}}||b_n||^{4/3}_{L^{2}(B(z_0e_z,R'))} \right).
\end{align*}\\
The right-hand side vanishes as $n\to\infty$ by (5.3). Thus

\begin{align*}
\limsup_{n\to\infty}\int_{\mathbb{R}^{2}_{+}}\left(\phi_n-r^{2}\right)_{+}^{2}\frac{1}{r}\dd z\dd r\leq \frac{C}{R} \left(\sup_{n}||b_n||_{L^{2}(\mathbb{R}^{3} ) }^{4}\right).
\end{align*}\\
Letting $R\to\infty$ implies that $\lim_{n\to\infty}H[b_n]=0$. This contradicts $H[b_n]\to h>0$. We conclude that vanishing does not occur.\\

\noindent
\textit{Case 3}: compactness. \\

It remains to show the compactness of the sequence $\{b_n\}$ satisfying (5.2). By translation, we may assume that (5.2) holds for $z_n=0$. We may assume that for all $n$,

\begin{align}
2\pi \int_{\mathbb{R}^{2}_{+}\backslash D(0,R_{\varepsilon})}\left(|\nabla \phi_n|^{2}+|G_n|^{2}\right)\frac{1}{r}\dd z\dd r\leq \varepsilon.
\end{align}\\
By Rellich--Kondrakov theorem in the weighted space (3.9), there exists a subsequence and $b=\nabla \times ( \phi\nabla \theta)+G\nabla \theta$ such that $b_n\rightharpoonup b$ in $L^{2}(\mathbb{R}^{3})$ and $\phi_n\to \phi$ in $L^{2}_{\textrm{loc}}(\overline{\mathbb{R}^{2}_{+}}; r^{-1})$. For $D=D(0, R_{\varepsilon} )$,

\begin{align*}
H[b_n]=4\pi \int_{D}(\phi_n-\phi_\infty)_+G_n\frac{1}{r}\dd z\dd r+4\pi \int_{\mathbb{R}^{2}_{+}\backslash D}(\phi_n-\phi_\infty)_+G_n\frac{1}{r}\dd z\dd r.
\end{align*}\\
In a similar way as we proved Theorem 4.9, 

\begin{align*}
\lim_{n\to\infty}\int_{D}(\phi_n-\phi_\infty)_+G_n\frac{1}{r}\dd z\dd r= \int_{D}(\phi-\phi_\infty)_+G\frac{1}{r}\dd z\dd r.
\end{align*}\\
By H\"older's inequality, (3.18), (3.2) and (5.11), 

\begin{align*}
\left|\int_{\mathbb{R}^{2}_{+}\backslash \overline{D}}  (\phi_n-\phi_\infty)_{+}G_n\frac{1}{r}\dd z\dd r\right| 
\leq ||(\phi_n-\phi_\infty)_{+}||_{L^{2}(\mathbb{R}^{2}_{+};r^{-1}) }  ||G_n||_{L^{2}(\mathbb{R}^{2}_{+}\backslash \overline{D}; r^{-1}) }    
 \lesssim \left(\sup_{n }||b_n||^{5/3}_{L^{2}(\mathbb{R}^{3}) }\right)\varepsilon^{1/2}.
\end{align*}\\
Thus 

\begin{align*}
|H[b]-h|=\lim_{n\to\infty}\left|H[b]-H[b_n] \right|\leq \left(\sup_{n }||b_n||^{5/3}_{L^{2}(\mathbb{R}^{3}) }+||b||^{5/3}_{L^{2}(\mathbb{R}^{3}) }   \right)\varepsilon^{1/2}.
\end{align*}\\
By letting $\varepsilon\to 0$, $H[b]=h$ and 

\begin{align*}
I_h \leq E[b]\leq \liminf_{n\to\infty}E[b_n]=I_h.
\end{align*}\\
Thus $b$ is a minimizer of $I_h$. By $\lim_{n\to \infty}||b_n||_{L^{2}(\mathbb{R}^{3}) }= ||b||_{L^{2}(\mathbb{R}^{3}) }$, $b_n\to b$ in $L^{2}(\mathbb{R}^{3})$. The proof is now complete.
\end{proof}

\vspace{5pt}

\begin{thm}
Let $h\in \mathbb{R}$. Let $\{(v_n,b_n)\}\subset L^{2}_{\sigma,\textrm{axi}}(\mathbb{R}^{3})$ be a sequence such that ${\mathcal{E}}[v_n,b_n]\to I_h$ and $H[b_n]\to h$. There exist $\{n_k\}\subset \mathbb{N}$, $\{z_k\}\subset \mathbb{R}$ and $b \in S_h$ such that $(v_{n_k},  b_{n_k}(\cdot+z_ke_z))\to (0,b)$ in $L^{2}(\mathbb{R}^{3})$.
\end{thm}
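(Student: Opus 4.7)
The plan is to reduce Theorem 5.6 to Theorem 5.5 by showing that the kinetic part $v_n$ decouples in the limit and dies out, so that $\{b_n\}$ becomes itself a minimizing sequence for the constrained problem (4.1). The argument runs in parallel with the proof of Theorem 2.7 from Lemma 2.6, which is the Taylor--state analogue.

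Set $h_n = H[b_n]$, so $h_n \to h$ by hypothesis. Since $b_n$ is a candidate for the minimization (4.1) at level $h_n$, we have
\[
I_{h_n} \leq E[b_n] \leq E[b_n] + E[v_n] = \mathcal{E}[v_n, b_n].
\]
Combining the lower semi-continuity of $h \mapsto I_h$ (Lemma 4.10, (4.17)) with the hypothesis $\mathcal{E}[v_n,b_n]\to I_h$, I would chain
\[
I_h \leq \liminf_{n\to\infty} I_{h_n} \leq \liminf_{n\to\infty} E[b_n] \leq \limsup_{n\to\infty} E[b_n] \leq \lim_{n\to\infty} \mathcal{E}[v_n, b_n] = I_h.
\]
This forces $E[b_n] \to I_h$, and hence $E[v_n] = \mathcal{E}[v_n, b_n] - E[b_n] \to 0$; in particular $v_n \to 0$ in $L^2(\mathbb{R}^3)$.

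With this in hand, $\{b_n\}$ satisfies $E[b_n] \to I_h$ and $H[b_n] \to h$, so Theorem 5.5 supplies a subsequence $\{n_k\}$, shifts $\{z_k\}\subset \mathbb{R}$, and a minimizer $b \in S_h$ with $b_{n_k}(\cdot + z_k e_z) \to b$ in $L^2(\mathbb{R}^3)$. Since $v_{n_k}\to 0$ in $L^2(\mathbb{R}^3)$ already holds without any shift, the pair $(v_{n_k}, b_{n_k}(\cdot + z_k e_z)) \to (0, b)$ in $L^2(\mathbb{R}^3)$, which is the conclusion sought.

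No real obstacle is expected, as all the heavy lifting---concentration--compactness, exclusion of vanishing and dichotomy via strict subadditivity---is concentrated in Theorem 5.5. The one subtle point worth flagging is that Lemma 4.10 gives continuity of $I_h$ only away from $h = 0$; at $h = 0$ lower semi-continuity is all that is available, but the chain of inequalities above only uses lower semi-continuity, so the argument is uniform in $h \in \mathbb{R}$. (At $h = 0$ the conclusion is trivial anyway, since $E[b_n] \to 0$ gives $b_n \to 0$ in $L^2$ with $z_k = 0$ and $b = 0 \in S_0$.)
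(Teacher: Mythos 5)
Your argument is correct and is essentially identical to the paper's proof: the paper likewise sets $h_n=H[b_n]$, uses $I_{h_n}\leq E[b_n]\leq \mathcal{E}[v_n,b_n]$ together with the lower semi-continuity (4.17) to conclude $E[b_n]\to I_h$ and $E[v_n]\to 0$, and then invokes Theorem 5.5. Your extra remark about the $h=0$ case is a fine (if unneeded) clarification.
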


\vspace{5pt}

\begin{proof}
For $h_n=H[b_n]$, 

\begin{align*}
I_{h_n}\leq E[b_n]\leq {\mathcal{E}}[v_n,b_n].
\end{align*}\\
By the lower semi-continuity of the minimum (4.17), letting $n\to\infty$ implies that $E[b_n]\to I_h$ and $E[v_n]\to 0$. We apply Theorem 5.5 and conclude. 
\end{proof}

\vspace{5pt}

\begin{rem}
The variational problem (1.13) is also well-defined for the function $\phi_\infty=Wr^{2}/2+\gamma$ for $W>0$ and $\gamma\geq 0$ as noted in Remark 3.14. The same compactness theorem as Theorem 5.6 holds for minimizing sequences to (1.13). Namely, for given $h\in \mathbb{R}$, $W>0$, $\gamma\geq 0$, and a sequence $\{(v_n,b_n)\}\subset L^{2}_{\sigma,\textrm{axi}}(\mathbb{R}^{3})$ satisfying 

\begin{align*}
&\frac{1}{2}\int_{\mathbb{R}^{3}}\left(|v_n|^{2}+|b_n|^{2} \right)\dd x\to  I_{h,W,\gamma},\\
&2\int_{\mathbb{R}^{3}}(\phi_n-\phi_\infty )_+\frac{G_n}{r^{2}}\dd x\to  h,
\end{align*}\\
there exist $\{n_k\}$, $\{z_k\}$ and a minimizer $b\in S_{h,W,\gamma}$ of $I_{h,W,\gamma}$ such that $(v_{n_k},b_{n_k}(\cdot+z_ke_z) )\to (0,b)$ in $L^{2}(\mathbb{R}^{3})$. In fact, for $\tilde{h}=(W/2)^{-2}h$ and $\tilde{\gamma}=(W/2)^{-1}\gamma$, $b\in L^{2}_{\sigma,\textrm{axi}}(\mathbb{R}^{3})$ and $\tilde{b}=(W/2)^{-1}b$ satisfy

\begin{align*}
\int_{\mathbb{R}^{3}}|\tilde{b}|^{2}\dd x
&=\left(\frac{W}{2}\right)^{-2}\int_{\mathbb{R}^{3}}|b|^{2}\dd x,\\
\int_{\mathbb{R}^{3}}\left(\tilde{\phi}-r^{2}-\tilde{\gamma} \right)_+\frac{\tilde{G}}{r^{2}}\dd x
&=\left(\frac{W}{2}\right)^{-2}\int_{\mathbb{R}^{3}}\left(\phi-\frac{W}{2}r^{2}-\gamma \right)_+\frac{G}{r^{2}}\dd x.
\end{align*}\\
Minimizers of $I_{h,W,\gamma}$ are those of $I_{\tilde{h},2,\tilde{\gamma}}$ and vice versa. The scaling of the minimum is 

\begin{align*}
I_{h,W,\gamma}=\left(\frac{W}{2}\right)^{-2}I_{\tilde{h},2,\tilde{\gamma}}.
\end{align*}\\
The compactness of minimizing sequences to $I_{h,W,\gamma}$ is derived from that for $I_{\tilde{h},2,\tilde{\gamma}}$ by Theorem 5.5. 
\end{rem}

\section{Leray--Hopf solutions}

We show that axisymmetric Leray--Hopf solutions to (1.9) satisfy the equalities of generalized magnetic helicity (1.17) and generalized magnetic mean-square potential (1.18) by using the vector potential equations. We provide proof of the existence of axisymmetric Leray--Hopf solutions in Appendix A.

\subsection{Existence}
Leray--Hopf solutions to viscous and resistive MHD in $\mathbb{R}^{3}$ can be defined similarly as those for the Navier--Stokes equations, e.g., \cite[p.718]{SSS96}, cf. \cite{Masuda1984}, \cite{Sohr}, \cite{Lemarie}. We define Leray--Hopf solutions to (1.9) with $u_{\infty}, B_{\infty}\in \mathbb{R}^{3}$ and their weak ideal limits by adapting the definitions for the case of bounded domains in Definitions 2.7 and 2.9.

\begin{defn}[Leray--Hopf solutions]
Let $u_{\infty}, B_{\infty}\in \mathbb{R}^{3}$. Let $v_0,b_0\in L^{2}_{\sigma}(\mathbb{R}^{3})$. Let

\begin{align*}
(v, b)\in C_{w}([0,T]; L^{2}_{\sigma}(\mathbb{R}^{3}) )\cap L^{2}(0,T; H^{1}(\mathbb{R}^{3}) ).
\end{align*}\\
Suppose that $(v_t, b_t) \in L^{1}(0,T; (L^{2}_{\sigma}\cap H^{1})(\mathbb{R}^{3})^{*}) )$ and 

\begin{align*}
&<v_t,\xi>+\int_{\mathbb{R}^{3}}((v+u_{\infty})\cdot \nabla v-(b+B_{\infty})\cdot \nabla b)\cdot \xi\dd x+\nu \int_{\mathbb{R}^{3}}\nabla v:\nabla \xi\dd x=0,\\
&<b_t,\zeta>+\int_{\Omega}(b\times v+B_{\infty}\times v+b\times u_{\infty} )\cdot \nabla \times \zeta\dd x +\mu \int_{\Omega}\nabla \times b\cdot \nabla \times \zeta \dd x=0,
\end{align*}\\
for a.e. $t\in [0,T]$ and every $\xi, \zeta\in L^{2}_{\sigma}\cap H^{1} (\mathbb{R}^{3})$. Suppose furthermore that $(v(\cdot,0),b(\cdot,0))=(v_0,b_0)$ and 

\begin{align}
\frac{1}{2}\int_{\mathbb{R}^{3}}\left(|v|^{2}+|b|^{2}\right) \dd x
+\int_{0}^{t}\int_{\mathbb{R}^{3}}\left(\nu |\nabla v|^{2}+\mu |\nabla b|^{2}\right) \dd x\dd s
\leq \frac{1}{2}\int_{\mathbb{R}^{3}}\left( |v_0|^{2}+|b_0|^{2} \right) \dd x, 
\end{align}\\
for all $t\in [0,T]$. Then, we call $(v,b)$ Leray--Hopf solutions to (1.9).
\end{defn}

\begin{defn}[Weak ideal limits]
Let $(v_j,b_j)$ be a Leray--Hopf solution to (1.9) for $\nu_j,\mu_j>0$ and $(v_{0,j},b_{0,j})$ such that $(v_{0,j},b_{0,j})\rightharpoonup (v_{0},b_{0})$ in $L^{2}(\mathbb{R}^{3})$ as $(v_j,b_j)\to (0,0)$. Assume that

\begin{align}
(v_j,b_j)\overset{\ast}{\rightharpoonup}  (v,b) \quad \textrm{in}\ L^{\infty}(0,T; L^{2}(\mathbb{R}^{3}) ). 
\end{align}\\
Then, we call $(v,b)$ a weak ideal limit of $(v_j,b_j)$. If instead $\nu_j=\nu>0$ for every $j$ and $\mu_j\to 0$, we call $(v,b)$ a weak nonresistive limit of $(v_j,b_j)$.
\end{defn}

Using Leray's method in Appendix A, we construct Leray--Hopf solutions to (1.9). They are axisymmetric for constants $u_{\infty},B_{\infty}$ parallel to $e_z$ and axisymmetric data $v_0,b_0\in L^{2}_{\sigma,\textrm{axi}}(\mathbb{R}^{3})$.

\begin{thm}
Let $B_{\infty}=-2e_{z}$. Let $u_{\infty}$ be a constant parallel to $e_z$. For $v_0,b_0\in L^{2}_{\sigma,\textrm{axi}}(\mathbb{R}^{3})$, there exists an axisymmetric Leray--Hopf solution to (1.9). 
\end{thm}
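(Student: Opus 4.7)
The plan is to follow Leray's construction on $\mathbb{R}^{3}$ adapted to (1.9), propagating axisymmetry through the approximation. For $\varepsilon>0$ I would consider a regularized system obtained by inserting a standard Friedrichs mollifier $J_\varepsilon$ into the nonlinear terms and keeping the linear drifts by $u_\infty, B_\infty$ unmollified. Parabolic theory together with a contraction argument in $C([0,T]; L^{2}_\sigma(\mathbb{R}^{3}))\cap L^{2}(0,T; H^{1}(\mathbb{R}^{3}))$ then yields a unique global smooth solution $(v^\varepsilon,b^\varepsilon)$ to the regularized system. Since $u_\infty$ and $B_\infty$ are parallel to $e_z$ and $J_\varepsilon$ commutes with rotations about the $z$-axis, the regularized system is invariant under such rotations; uniqueness then forces $(v^\varepsilon,b^\varepsilon)$ to remain axisymmetric for all $t\geq 0$ whenever the data are axisymmetric.

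Next I would derive a uniform energy identity by testing the approximate equations with $v^\varepsilon$ and $b^\varepsilon$ respectively. The transport terms $\int_{\mathbb{R}^{3}}(v^\varepsilon+u_\infty)\cdot \nabla v^\varepsilon\cdot v^\varepsilon\,\dd x$ and $\int_{\mathbb{R}^{3}}(v^\varepsilon+u_\infty)\cdot \nabla b^\varepsilon\cdot b^\varepsilon\,\dd x$ vanish by the divergence-free constraints (the contributions of the constant $u_\infty$ reduce to $\int_{\mathbb{R}^{3}} u_\infty\cdot \nabla(|v^\varepsilon|^{2}/2)\,\dd x = 0$ after a cutoff approximation, and similarly for $b^\varepsilon$). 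The cross-coupling terms collapse to $\int_{\mathbb{R}^{3}}(b^\varepsilon+B_\infty)\cdot \nabla(b^\varepsilon\cdot v^\varepsilon)\,\dd x = 0$ using $\nabla\cdot(b^\varepsilon+B_\infty)=0$. This yields, uniformly in $\varepsilon$,
\begin{equation*}
\frac{1}{2}\int_{\mathbb{R}^{3}}\left(|v^\varepsilon|^{2}+|b^\varepsilon|^{2}\right)\dd x + \int_{0}^{t}\int_{\mathbb{R}^{3}}\left(\nu|\nabla v^\varepsilon|^{2}+\mu|\nabla b^\varepsilon|^{2}\right)\dd x\dd s = \frac{1}{2}\int_{\mathbb{R}^{3}}\left(|v_0|^{2}+|b_0|^{2}\right)\dd x.
\end{equation*}

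With this bound in hand I would pass to the limit by standard compactness. The weak-$*$ limit in $L^{\infty}_t L^{2}_x$ combined with the weak limit in $L^{2}_t H^{1}_x$ produces a candidate pair $(v,b)$. To identify the nonlinear terms I estimate $(\partial_t v^\varepsilon,\partial_t b^\varepsilon)$ in $L^{4/3}(0,T; (L^{2}_\sigma\cap H^{1})(B_R)^{*})$ on each ball $B_R\subset \mathbb{R}^{3}$, rewriting the constant-drift contributions in divergence form, e.g. $u_\infty\cdot \nabla v^\varepsilon = \nabla\cdot(u_\infty\otimes v^\varepsilon)$, so that they remain distributionally well defined. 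The Aubin--Lions lemma (Lemma 2.16) then gives strong convergence of $(v^\varepsilon,b^\varepsilon)$ in $L^{2}(0,T; L^{2}(B_R))$ for each $R$, enough to pass to the limit in the weak formulation against test fields of compact support and, by density, against all $\xi,\zeta\in L^{2}_\sigma\cap H^{1}(\mathbb{R}^{3})$. The energy inequality (6.1) follows by lower semicontinuity, while axisymmetry of $(v,b)$ is preserved since it is a closed condition under weak convergence.

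The main obstacle is the interaction between the unboundedness of $\mathbb{R}^{3}$ and the non-decay of the full fields $(u,B)=(v+u_\infty,b+B_\infty)\notin L^{2}(\mathbb{R}^{3})$: the weak formulation involves terms like $(b+B_\infty)\cdot \nabla b$ and $(v+u_\infty)\cdot \nabla v$, which must be given a sensible meaning for $L^{2}$ perturbations $(v,b)$. The key observation is that the constant components contribute only linear terms, which, when transferred onto test functions in divergence form against constant vectors, are unambiguous against $H^{1}(\mathbb{R}^{3})$ test fields; moreover their contribution to the energy balance cancels exactly. This is what makes the $L^{2}$-framework for the perturbations $(v,b)$ consistent with the presence of a uniform field at infinity and allows the construction to go through as on a bounded domain.
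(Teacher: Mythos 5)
Your proposal is correct and follows essentially the same route as the paper's Appendix A: mollify the drift velocities in the nonlinear terms, solve the regularized system by a fixed-point argument in the energy space, obtain axisymmetry from rotational invariance plus uniqueness, derive the uniform energy equality, bound the time derivatives in $L^{4/3}(0,T;H^{1}(\mathbb{R}^{3})^{*})$, and pass to the limit via the Aubin--Lions lemma on balls. The only cosmetic difference is that you claim smoothness of the approximate solutions where the paper only establishes the $L^{2}$-in-time second-derivative regularity needed for the energy identity; this does not affect the argument.
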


\subsection{Vector potential equations}

For a smooth solution $(v,b)$ to (1.9), $(u,B)=(v+u_\infty,b+B_{\infty})$ is a solution to (1.1) and (1.7). We apply the Clebsch representation (3.16) for $B$ and its vector potential $A$ with the function $\phi_{\infty}=r^{2}+\gamma$ and $\gamma\geq 0$. By the equation $(1.1)_2$ and some potential $Q$, the vector potential equations can be expressed as 

\begin{equation}
\begin{aligned}
A_{t}+B\times u+\nabla Q&=-\mu \nabla \times B.
\end{aligned}
\end{equation}\\
The $\theta$-component of this equation is the drift-diffusion equation for $\Phi=\phi-\phi_{\infty}$ and the flux function $\phi$ of $b$, 

\begin{align}
\Phi_{t}+u\cdot \nabla \Phi&=\mu\left(\Delta -\frac{2}{r}\partial_r\right)\Phi.
\end{align}\\
The quantities 

\begin{align}
\int_{\mathbb{R}^{3}}g(\Phi)\frac{G}{r^{2}}\dd x,\quad \int_{\mathbb{R}^{3}}g(\Phi)\dd x,
\end{align}\\
are conserved at the zero resistivity limit $\mu=0$ for arbitrary regular functions $g=g(s)$ by the vector potential equation (6.3). We apply the identity for axisymmetric magnetic fields 

\begin{align}
A_t\cdot B\dot{g}(\Phi)=\partial_t \left(g(\Phi)\frac{G}{r^{2}}\right)+\nabla \cdot (g(\Phi)\nabla \theta\times  A_t).
\end{align}\\
This identity follows from (3.16) and $B\dot{g}(\Phi)=\nabla \times (g(\Phi)\nabla \theta)+G\dot{g}(\Phi)\nabla \theta$. We can observe generalized magnetic helicity conservation from (6.6) by multiplying the solenoidal $B\dot{g}(\Phi)$ by (6.3) and integration by parts. We take $g(s)=2s_+$ for generalized magnetic helicity and $g(s)=s_{+}^{2}$ for generalized magnetic mean-square potential in (6.5). Their equalities with resistivity $\mu>0$ are of the form

\begin{align}
\int_{\mathbb{R}^{3}}\Phi_{+}\frac{G}{r^{2}}\dd x
+\mu \int_{0}^{t}\int_{\mathbb{R}^{3}}\nabla \times B\cdot B1_{(0,\infty)}(\Phi) \dd x\dd s
&=\int_{\mathbb{R}^{3}}\Phi_{0,+}\frac{G_0}{r^{2}}\dd x, \\
\int_{\mathbb{R}^{3}}\Phi^{2}_{+}\dd x
+2\mu \int_{0}^{t}\int_{\mathbb{R}^{3}} |\nabla \Phi_{+}|^{2} \dd x\dd s
&= \int_{\mathbb{R}^{3}}\Phi^{2}_{0,+} \dd x. 
\end{align}\\
We demonstrate (6.7) and (6.8) for axisymmetric Leray--Hopf solutions by showing that the equations (6.3) hold on $L^{2}(\mathbb{R}^{3})$ for a.e. $t\in [0,T]$. In terms of the projection operator $\mathbb{P}$, associated with (2.1) in $\Omega=\mathbb{R}^{3}$, the equations (6.3) can be expressed as 

\begin{align*}
A_t+\mathbb{P}(B\times u)=-\mu\nabla \times B\quad \textrm{on}\ L^{2}_{\sigma}(\mathbb{R}^{3}).
\end{align*}\\
At the heuristic level, the conditions $B\times u, \nabla \times B=\nabla \times b\in L^{2}(\mathbb{R}^{3})$ imply that $\nabla Q=-(1-\mathbb{P})(B\times u)\in L^{2}(\mathbb{R}^{3})$. Hence a distributional solution $A$ of (6.3) satisfies $A_t\in L^{2}(\mathbb{R}^{3})$ and (6.3) holds on $L^{2}(\mathbb{R}^{3})$. We approximate $A$ for the time variable and derive (6.3) from the definition of Leray--Hopf solutions.

\begin{prop}
For axisymmetric Leray--Hopf solutions $(v,b)$ to (1.9) for $B_{\infty}=-2e_z$ and $u_{\infty}$ parallel to $e_z$, $(u,B)=(v+u_{\infty},b+B_{\infty})$ satisfies 

\begin{align}
\begin{aligned}
B\times u\in L^{4/3}(0,T; L^{2}(\mathbb{R}^{3})),  \\
\nabla \times B\in L^{2}(0,T; L^{2}(\mathbb{R}^{3})).
\end{aligned}
\end{align}\\
Moreover, for $A$ defined by (3.16), there exists some $Q$ such that 

\begin{align}
A_t, \nabla Q\in L^{4/3}(0,T; L^{2}(\mathbb{R}^{3})),  
\end{align}\\
and (6.3) holds on $L^{2}(\mathbb{R}^{3})$ for a.e. $t\in [0,T]$.
\end{prop}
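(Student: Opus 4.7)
The plan is to establish (6.9) by Sobolev interpolation and then derive (6.10) and the identity (6.3) by applying the $\textrm{curl}^{-1}$ operator to a time-mollified form of the weak $b$-equation from Definition 6.1.

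For (6.9), the bound $\nabla \times B = \nabla \times b \in L^{2}(0,T; L^{2}(\mathbb{R}^{3}))$ is immediate from the Leray--Hopf regularity $b \in L^{2}(0,T; H^{1})$. For $B \times u$, I would expand
\[
B \times u = b \times v + b \times u_{\infty} + B_{\infty} \times v + B_{\infty} \times u_{\infty},
\]
where the last term vanishes since $u_{\infty}$ and $B_{\infty}$ are both parallel to $e_z$. The two mixed terms lie in $L^{\infty}(0,T; L^{2}(\mathbb{R}^{3}))$ by the energy inequality (6.1), while the three-dimensional Gagliardo--Nirenberg interpolation $\|f\|_{L^{4}} \lesssim \|f\|_{L^{2}}^{1/4}\|\nabla f\|_{L^{2}}^{3/4}$ combined with (6.1) gives $v, b \in L^{8/3}(0,T; L^{4}(\mathbb{R}^{3}))$; hence $b \times v \in L^{4/3}(0,T; L^{2})$ by H\"older's inequality.

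For (6.10) and (6.3), let $a$ denote the vector potential of $b$ from Lemma 3.6, so that $A = a - \phi_{\infty}\nabla\theta$ and $A_{t} = a_{t}$. Using the Friedrichs mollification $f^{\varepsilon} = f * \chi^{\varepsilon}$ in the time variable (Lemma 2.15), the weak formulation becomes, for $t \in (\delta, T-\delta)$ and all axisymmetric $\zeta \in L^{2}_{\sigma} \cap H^{1}(\mathbb{R}^{3})$,
\[
\int_{\mathbb{R}^{3}} b^{\varepsilon}_{t} \cdot \zeta \dd x + \int_{\mathbb{R}^{3}} (B \times u)^{\varepsilon} \cdot \nabla \times \zeta \dd x + \mu \int_{\mathbb{R}^{3}} \nabla \times b^{\varepsilon} \cdot \nabla \times \zeta \dd x = 0.
\]
Since $b^{\varepsilon}$ is smooth in time with values in axisymmetric $L^{2}_{\sigma}(\mathbb{R}^{3})$, $\textrm{curl}^{-1}$ commutes with $\partial_{t}$, and using $\textrm{curl}^{-1} \circ (\nabla \times \cdot) = \mathbb{P}$ on $L^{2}$, the identity above is equivalent to
\[
a^{\varepsilon}_{t} + \mathbb{P}\bigl((B \times u)^{\varepsilon}\bigr) + \mu \nabla \times b^{\varepsilon} = 0 \quad \textrm{in } L^{2}_{\sigma}(\mathbb{R}^{3}).
\]
By (6.9) and Lemma 2.15, $(B \times u)^{\varepsilon} \to B \times u$ in $L^{4/3}(0,T; L^{2})$ and $\nabla \times b^{\varepsilon} \to \nabla \times b$ in $L^{2}(0,T; L^{2})$, so $a^{\varepsilon}_{t}$ converges in $L^{4/3}(\delta, T-\delta; L^{2})$ to $-\mathbb{P}(B \times u) - \mu \nabla \times B$. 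Setting $\nabla Q := \mathbb{P}(B \times u) - (B \times u)$, which lies in $L^{4/3}(0,T; L^{2})$ by the Helmholtz decomposition (2.1) on $\mathbb{R}^{3}$, this yields (6.3) on $L^{2}(\mathbb{R}^{3})$ for a.e.\ $t \in [0,T]$, together with (6.10) after letting $\delta \to 0$.

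The main obstacle is justifying that $\textrm{curl}^{-1}$ commutes with the time derivative at Leray--Hopf regularity and that the resulting equation holds pointwise in $t$ in $L^{2}$ rather than only in duality pairing against test fields. I would address this by exploiting the explicit Clebsch expression (3.3) for $a$ in terms of $\eta = (-L)^{-1}G$ and $\phi$ together with the anisotropic estimates of Proposition 3.5 and Lemma 3.6, so that $a^{\varepsilon}$ can be computed directly from the mollified Clebsch potentials of $b$ and the mollified identity is seen to hold in $L^{2}_{\sigma}(\mathbb{R}^{3})$ before passing to the limit $\varepsilon \to 0$.
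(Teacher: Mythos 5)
Your proposal is correct and follows essentially the same route as the paper: (6.9) by the embedding $L^{\infty}_tL^{2}_x\cap L^{2}_t\dot{H}^{1}_x\subset L^{8/3}_tL^{4}_x$, then time-mollification of the weak $b$-equation, identification of the gradient part via the Helmholtz projection, and passage to the limit $\varepsilon\to 0$ using Lemma 2.15. The one step you flag as the "main obstacle" is exactly where the paper works hardest, but it is resolved not through the Clebsch formulas: the paper uses $a\in L^{\infty}(0,T;L^{6}(\mathbb{R}^{3}))$ (Lemma 3.6) so that the mollified curl-free identity can be tested against $\nabla\times\zeta$ with $\nabla\times\zeta\in L^{6/5}$, and then Proposition 3.8 to realize $\nabla\times\zeta=\mathbb{P}\xi$ for arbitrary $\xi\in L^{6/5}\cap L^{2}(\mathbb{R}^{3})$, which upgrades the duality statement to the pointwise-in-$t$ equation on $L^{2}(\mathbb{R}^{3})$ and simultaneously yields $a^{\varepsilon}_{t},\nabla Q^{\varepsilon}\in L^{2}(\mathbb{R}^{3})$.
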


\begin{proof}
For the vector potential $a$ of $b$ defined by (3.3), 

\begin{align*}
A_t&=a_t,\\
B\times u&=b\times v+b\times u_{\infty}+B_{\infty}\times v, \\
\nabla \times B&=\nabla \times b.
\end{align*}\\
By $L^{\infty}(0,T; L^{2}(\mathbb{R}^{3}))\cap L^{2}(0,T; H^{1}(\mathbb{R}^{3}) )\subset L^{8/3}(0,T; L^{4} (\mathbb{R}^{3}))$, (6.9) holds. By the definition of Leray--Hopf solutions, 

\begin{align*}
b_t+\nabla \times (B\times u)+\mu \nabla \times (\nabla \times b)=0\quad \textrm{on}\ (L^{2}_{\sigma}\cap H^{1}) (\mathbb{R}^{3})^{*},
\end{align*}\\
for a.e. $t\in [0,T]$. Thus 

\begin{align*}
\nabla \times (a_{t}+(B\times u)+\mu \nabla \times b)=0\quad \textrm{on}\ (L^{2}_{\sigma}\cap H^{1}) (\mathbb{R}^{3})^{*}.
\end{align*}\\
We take arbitrary $0<\delta<T/2$ and $\varepsilon<\delta$. By the mollifier in Lemma 2.13, 

\begin{align*}
\nabla \times (a_{t}^{\varepsilon}+(B\times u)^{\varepsilon}+\mu \nabla \times b^{\varepsilon})=0\quad \textrm{on}\ (L^{2}_{\sigma}\cap H^{1}) (\mathbb{R}^{3})^{*},
\end{align*}\\
for $t\in (\delta, T-\delta)$. By Lemma 3.5, $a\in L^{\infty}(0,T; L^{6}(\mathbb{R}^{3}))$ and $a_{t}^{\varepsilon}\in C^{\infty}(\delta,T-\delta; L^{6}(\mathbb{R}^{3}))$. For an arbitrary $\zeta\in L^{2}_{\sigma} \cap H^{1}(\mathbb{R}^{3})$ satisfying $\nabla \times \zeta \in L^{6/5}(\mathbb{R}^{3})$, by integration by parts,  

\begin{align*}
\int_{\mathbb{R}^{3}}(a_{t}^{\varepsilon}+(B\times u)^{\varepsilon}+\mu \nabla \times b^{\varepsilon} ) \cdot \nabla \times \zeta \dd x=0.
\end{align*}\\
We take an arbitrary $\xi \in L^{6/5}\cap L^{2}(\mathbb{R}^{3})$ and apply Proposition 3.8 to take $\zeta\in L^{2}_{\sigma}\cap L^{6}(\mathbb{R}^{3})$ such that $\nabla \zeta\in L^{6/5}\cap L^{2}(\mathbb{R}^{3})$ and $\nabla \times \zeta=\mathbb{P}\xi$. Then, for $\nabla Q^{\varepsilon}=-(I-\mathbb{P})(B\times u)^{\varepsilon}$, 

\begin{align*}
0=\int_{\mathbb{R}^{3}}(a_{t}^{\varepsilon}+(B\times u)^{\varepsilon}+\mu \nabla \times b^{\varepsilon} ) \cdot \mathbb{P}\xi \dd x 
&=\int_{\mathbb{R}^{3}}(a_{t}^{\varepsilon}+\mathbb{P}(B\times u)^{\varepsilon}+\mu \nabla \times b^{\varepsilon} ) \cdot \xi \dd x \\
&=\int_{\mathbb{R}^{3}}(a_{t}^{\varepsilon}+(B\times u)^{\varepsilon}+\nabla Q^{\varepsilon}+\mu \nabla \times b^{\varepsilon} ) \cdot \xi \dd x.
\end{align*}\\
Since $(B\times u)^{\varepsilon}\in L^{2}(\mathbb{R}^{3})$ for $t\in (\delta,T-\delta)$ and $\xi \in L^{6/5}\cap L^{2}(\mathbb{R}^{3})$ is arbitrary, $\nabla Q^{\varepsilon}, a^{\varepsilon}_{t}\in L^{2}(\mathbb{R}^{3})$ and 

\begin{align*}
a^{\varepsilon}_{t}+(B\times u)^{\varepsilon}+\nabla Q^{\varepsilon}=-\mu\nabla \times b^{\varepsilon}\quad \textrm{on}\ L^{2}(\mathbb{R}^{3}).
\end{align*} \\
By Lemma 2.13, letting $\varepsilon\to0$ implies that 

\begin{align*}
(B\times u)^{\varepsilon}&\to B\times u\quad \textrm{in}\ L^{4/3}(\delta,T-\delta; L^{2}(\mathbb{R}^{3}) ),\\
\nabla \times b^{\varepsilon}&\to \nabla \times b \quad \textrm{in}\ L^{2}(\delta,T-\delta; L^{2}(\mathbb{R}^{3}) ).
\end{align*}\\
Thus $\nabla Q=-(1-\mathbb{P})(B\times u)$ and $a_{t}=A_t$ satisfy (6.10). Since $0<\delta<T/2$ is arbitrary, (6.3) holds for a.e. $t\in [0,T]$.
\end{proof}

\begin{prop}
The equation (6.4) holds on $L^{2}_{\textrm{loc}}(\mathbb{R}^{3})$ for a.e. $t\in [0,T]$ for axisymmetric Leray--Hopf solutions to (1.9) for $B_{\infty}=-2e_z$ and $u_\infty$ parallel to $e_z$. Moreover, 

\begin{align}
\partial_t\Phi_{+}^{2}+u\cdot \nabla \Phi_{+}^{2}=\mu\left(\Delta-\frac{2}{r}\partial_r \right)\Phi_{+}^{2}-2\mu |\nabla \Phi_{+}|^{2} \quad \textrm{on}\ L^{1}_{\textrm{loc}}(\mathbb{R}^{3}).  
\end{align}
\end{prop}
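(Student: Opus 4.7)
The plan is to obtain (6.4) as the azimuthal component of the vector potential equation (6.3), whose validity on $L^{2}(\mathbb{R}^{3})$ for a.e.\ $t$ was secured in Proposition 6.4, and then to deduce (6.11) from (6.4) by the chain rule for $s\mapsto s_{+}^{2}$. By the Clebsch representation (3.16) of axisymmetric vector potentials, $A\cdot e_{\theta}=\Phi/r$, and $\nabla Q$ has vanishing azimuthal component since $Q$ is axisymmetric. From $B_{r}=-r^{-1}\partial_{z}\Phi$ and $B_{z}=r^{-1}\partial_{r}\Phi$ one computes $(B\times u)_{\theta}=B_{z}u_{r}-B_{r}u_{z}=r^{-1}u\cdot\nabla\Phi$, the swirl component of $u$ being annihilated by axisymmetry of $\Phi$, and $(\nabla\times B)_{\theta}=\partial_{z}B_{r}-\partial_{r}B_{z}=-r^{-1}L\Phi$ with $L=\partial_{z}^{2}+\partial_{r}^{2}-r^{-1}\partial_{r}=\Delta-(2/r)\partial_{r}$. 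Dotting (6.3) with $e_{\theta}$ and multiplying by $r$ yields (6.4) pointwise. Since $A_{t},B\times u\in L^{4/3}(0,T;L^{2})$ and $\nabla\times B\in L^{2}(0,T;L^{2})$ by Proposition 6.4, and each term in (6.4) equals $r$ times a globally $L^{2}_{x}$ quantity, boundedness of $r$ on compact subsets of $\mathbb{R}^{3}$ gives (6.4) in $L^{2}_{\textrm{loc}}(\mathbb{R}^{3})$ for a.e.\ $t\in[0,T]$.

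For (6.11), the strategy is to multiply (6.4) by $2\Phi_{+}$, using the formal manipulations $2\Phi_{+}\partial_{t}\Phi=\partial_{t}\Phi_{+}^{2}$, $2\Phi_{+}\,u\cdot\nabla\Phi=u\cdot\nabla\Phi_{+}^{2}$, and the key identity
\begin{equation*}
L(\Phi_{+}^{2})=2\Phi_{+}L\Phi+2|\nabla\Phi_{+}|^{2},
\end{equation*}
which follows from the Stampacchia chain rule $\nabla\Phi_{+}=\mathbf{1}_{\{\Phi>0\}}\nabla\Phi$, valid because $\Phi\in\dot{H}^{1}_{0}(\mathbb{R}^{2}_{+};r^{-1})$ at a.e.\ $t$ by Lemma 3.4. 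Indeed, expanding $L(f_{+}^{2})=\textrm{div}(2f_{+}\nabla f)-(2/r)f_{+}\partial_{r}f=2|\nabla f_{+}|^{2}+2f_{+}(\Delta-(2/r)\partial_{r})f$ and noting that $f_{+}Lf$ is concentrated on $\{f>0\}$, where $f_{+}=f$ and $\nabla f_{+}=\nabla f$ hold distributionally, delivers the identity. Substitution produces (6.11) in $L^{1}_{\textrm{loc}}(\mathbb{R}^{3})$.

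The rigorous justification proceeds via approximation: choose convex $\beta_{\varepsilon}\in C^{2}(\mathbb{R})$ with $\beta_{\varepsilon}(s)\to s_{+}^{2}$, $\beta_{\varepsilon}'(s)\to 2s_{+}$, and $\beta_{\varepsilon}''(s)\to 2\cdot\mathbf{1}_{\{s>0\}}$ pointwise, linearly bounded on compacta. Testing (6.4) against $\beta_{\varepsilon}'(\Phi)\zeta$ for axisymmetric $\zeta\in C^{\infty}_{c}(\mathbb{R}^{3})$ and integrating by parts on the right, where the boundary contribution at $r=0$ vanishes by the trace condition embedded in $\dot{H}^{1}_{0}(\mathbb{R}^{2}_{+};r^{-1})$, produces
\begin{equation*}
\int_{\mathbb{R}^{3}}\bigl(\partial_{t}\beta_{\varepsilon}(\Phi)+u\cdot\nabla\beta_{\varepsilon}(\Phi)\bigr)\zeta\,\dd x=\mu\int_{\mathbb{R}^{3}}L\beta_{\varepsilon}(\Phi)\,\zeta\,\dd x-\mu\int_{\mathbb{R}^{3}}\beta_{\varepsilon}''(\Phi)|\nabla\Phi|^{2}\zeta\,\dd x,
\end{equation*}
and dominated convergence with $|\nabla\Phi|^{2}\in L^{1}_{\textrm{loc}}$ and $|\beta_{\varepsilon}''|\leq C$ lets $\varepsilon\to 0$ to recover (6.11). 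The main obstacle is the low regularity of the drift $u\in L^{\infty}_{t}L^{2}_{x,\textrm{loc}}$, which carries no Sobolev control; however, $u$ is never differentiated and appears only paired against $\nabla\beta_{\varepsilon}(\Phi)=\beta_{\varepsilon}'(\Phi)\nabla\Phi\in L^{2}_{\textrm{loc}}$ through the integrable product $\Phi_{+}u\cdot\nabla\Phi$, so the renormalized identity is obtained without invoking any DiPerna--Lions theory for the velocity.
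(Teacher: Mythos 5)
Your argument is correct and follows essentially the same route as the paper: take the $\theta$-component of the vector potential equation (6.3), identify each term as $r^{-1}$ times the corresponding scalar quantity to get (6.4) on $L^{2}_{\textrm{loc}}$, and then multiply by $2\Phi_{+}$ using the chain-rule identity $(\Delta-\tfrac{2}{r}\partial_r)\Phi_{+}^{2}=2\Phi_{+}(\Delta-\tfrac{2}{r}\partial_r)\Phi+2|\nabla\Phi_{+}|^{2}$. The paper dispatches the second step in one sentence, so your $\beta_{\varepsilon}$-approximation merely fills in details the paper leaves implicit rather than taking a different approach.
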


\begin{proof}
By (3.16), 

\begin{align*}
A_t\cdot r\nabla \theta&=\frac{1}{r} \Phi_t,\\
(B\times u)\cdot r\nabla \theta&=\frac{1}{r} u\cdot \nabla \Phi, \\
\nabla \times B\cdot r\nabla \theta&=-\frac{1}{r}\left(\Delta-\frac{2}{r}\partial_r \right)\Phi.
\end{align*}\\
Each term belongs to $L^{2}(\mathbb{R}^{3})$ by (6.9) and (6.10). By multiplying $r\nabla \theta$ by (6.3), 

\begin{align*}
\frac{1}{r}\left(\Phi_t+u\cdot \nabla \Phi-\mu \left(\Delta -\frac{2}{r}\partial_r\right)\Phi\right)=0\quad \textrm{on}\ L^{2}(\mathbb{R}^{3}). 
\end{align*}\\
Thus (6.4) holds on $L^{2}_{\textrm{loc}}(\mathbb{R}^{3})$. Since $\Phi_{+}\in L^{\infty}(0,T; L^{2}(\mathbb{R}^{3}))$ by (3.19) and (3.2), by multiplying $2\Phi_{+}$ by the above equation, (6.11) follows. 
\end{proof}

\subsection{Equalities}

We show the equalities (6.7) and (6.8) for axisymmetric Leray--Hopf solutions to (1.9). 

\begin{prop}
For axisymmetric Leray--Hopf solutions to (1.9) for $B_{\infty}=-2e_z$ and $u_\infty$ parallel to $e_z$,  

\begin{align}
\frac{\dd }{\dd t}\int_{\mathbb{R}^{3}}\Phi_+\frac{G}{r^{2}}\dd x
=-\mu \int_{\mathbb{R}^{3}} \nabla \times B\cdot B 1_{(0,\infty)}(\Phi)\dd x,
\end{align}\\
in the sense of distribution.
\end{prop}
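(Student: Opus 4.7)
The plan is to prove the distributional identity (6.12) by testing the vector potential equation (6.3), which by Proposition 6.4 holds on $L^{2}(\mathbb{R}^{3})$ for a.e. $t\in [0,T]$, against a smooth approximation of $B\cdot 1_{(0,\infty)}(\Phi)$ and invoking the pointwise identity (6.6). First I choose $\chi_{\varepsilon}\in C^{1}(\mathbb{R})$ with $\chi_{\varepsilon}\equiv 0$ on $(-\infty,0]$, $\chi_{\varepsilon}\equiv 1$ on $[\varepsilon,\infty)$, and $0\leq \chi_{\varepsilon}'\leq C/\varepsilon$, and set $g_{\varepsilon}(s)=\int_{0}^{s}\chi_{\varepsilon}(\tau)\dd \tau$, so that $g_{\varepsilon}(s)\uparrow s_{+}$ as $\varepsilon\downarrow 0$ with $0\leq g_{\varepsilon}(s)\leq s_{+}$.

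The key structural observation is that $B\chi_{\varepsilon}(\Phi)\in L^{2}_{\sigma}(\mathbb{R}^{3})$. Indeed $\chi_{\varepsilon}(\Phi)$ is bounded and supported in $\{\Phi>0\}$, which has finite Lebesgue measure by (3.17), so $B\chi_{\varepsilon}(\Phi)=b\chi_{\varepsilon}(\Phi)+B_{\infty}\chi_{\varepsilon}(\Phi)\in L^{2}$. Moreover, from the Clebsch representation $B=\nabla \Phi\times \nabla \theta+G\nabla \theta$ in (3.16) one has $B\cdot \nabla \Phi=0$ pointwise, and the Sobolev chain rule yields $\nabla \cdot (B\chi_{\varepsilon}(\Phi))=\chi_{\varepsilon}'(\Phi)B\cdot \nabla \Phi=0$ in the distributional sense. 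I then take the $L^{2}$ inner product of (6.3) with $B\chi_{\varepsilon}(\Phi)$ and integrate over $\mathbb{R}^{3}$. The Lorentz-type term $(B\times u)\cdot B\chi_{\varepsilon}(\Phi)=0$ pointwise; the pressure term vanishes by the Helmholtz orthogonality $L^{2}_{\sigma}\perp G^{2}$ of (2.1); and identity (6.6) applied with $g=g_{\varepsilon}$ gives $\int_{\mathbb{R}^{3}}A_{t}\cdot B\chi_{\varepsilon}(\Phi)\dd x=\frac{\dd}{\dd t}\int_{\mathbb{R}^{3}}g_{\varepsilon}(\Phi)G/r^{2}\dd x$ distributionally in $t$, after disposing of the divergence term $\int_{\mathbb{R}^{3}}\nabla \cdot (g_{\varepsilon}(\Phi)\nabla \theta\times A_{t})\dd x=0$. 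This produces the $\varepsilon$-level identity
\begin{equation*}
\frac{\dd}{\dd t}\int_{\mathbb{R}^{3}}g_{\varepsilon}(\Phi)\frac{G}{r^{2}}\dd x=-\mu\int_{\mathbb{R}^{3}}\nabla \times B\cdot B\,\chi_{\varepsilon}(\Phi)\dd x
\end{equation*}
in the sense of distributions in $t$.

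Passing to the limit $\varepsilon\to 0$, monotone convergence yields $g_{\varepsilon}(\Phi)G/r^{2}\to \Phi_{+}G/r^{2}$ in $L^{1}(\mathbb{R}^{3})$ (with $L^{1}$-majorant furnished by Lemma 3.12), while $\chi_{\varepsilon}(\Phi)\to 1_{(0,\infty)}(\Phi)$ pointwise. For the right-hand side I verify $\nabla \times B\cdot B\in L^{1}(\{\Phi>0\})$: writing $B=b+B_{\infty}$ and $\nabla \times B=\nabla \times b$, Cauchy--Schwarz together with $\nabla b, b\in L^{2}(\mathbb{R}^{3})$ and the finite measure of $\{\Phi>0\}$ closes this step. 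Testing the $\varepsilon$-level identity against an arbitrary $\psi\in C_{c}^{\infty}((0,T))$ and exchanging limit with integration delivers (6.12).

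The main obstacle will be the rigorous justification of the divergence disposal $\int_{\mathbb{R}^{3}}\nabla \cdot (g_{\varepsilon}(\Phi)\nabla \theta\times A_{t})\dd x=0$, since $A_{t}$ is only in $L^{4/3}_{t}L^{2}_{x}$ by (6.10) and the factor $\nabla \theta$ is singular at the axis while $g_{\varepsilon}(\Phi)$ lacks \emph{a priori} pointwise decay at spatial infinity. I expect to handle this with a spatial cutoff argument combined with the integrability $g_{\varepsilon}(\Phi)\in L^{q}(\mathbb{R}^{3})$ for all $1\leq q<\infty$ from (3.19), the anisotropic estimates of Proposition 3.4 and Lemma 3.5 for the potential terms, and the pointwise localization (4.10) which forces the support of $g_{\varepsilon}(\Phi)$ to avoid neighborhoods of the axis $\{r=0\}$ and of spatial infinity in a quantitative way. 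A subsidiary point is the Sobolev chain rule for $\chi_{\varepsilon}(\Phi)$, which follows from standard renormalization results using the $H^{1}_{\textrm{loc}}$-regularity of $\Phi$ inherent in the Leray--Hopf setting.
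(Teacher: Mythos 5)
Your proposal is correct and follows the same basic route as the paper: test the vector potential equation (6.3) against the solenoidal field $B$ restricted to $\{\Phi>0\}$, invoke the identity (6.6), and remove the divergence term by a spatial cutoff using $\Phi_{+}\nabla\theta\in L^{\infty}(0,T;L^{2}(\mathbb{R}^{3}))$ together with $A_{t}\in L^{4/3}(0,T;L^{2}(\mathbb{R}^{3}))$. The one genuine difference is that you regularize the indicator by $\chi_{\varepsilon}$ and justify $\nabla\cdot(B\chi_{\varepsilon}(\Phi))=0$ via the Sobolev chain rule and $B\cdot\nabla\Phi=0$, then pass $\varepsilon\to0$. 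This works, but it is an unnecessary layer: the Clebsch structure gives directly that
\begin{align*}
B\,1_{(0,\infty)}(\Phi)=\nabla\times(\Phi_{+}\nabla\theta)+G\,1_{(0,\infty)}(\Phi)\nabla\theta
\end{align*}
is a curl plus an axisymmetric swirl field, hence manifestly divergence-free and in $L^{\infty}(0,T;L^{2}_{\sigma}(\mathbb{R}^{3}))$ by (3.17)--(3.18) and (3.2), so the paper tests (6.3) against $B1_{(0,\infty)}(\Phi)$ itself with no smoothing and no limit in $\varepsilon$. Two small corrections to your execution: the convergence $g_{\varepsilon}(\Phi)G/r^{2}\to\Phi_{+}G/r^{2}$ is by dominated (not monotone) convergence since $G$ changes sign, with majorant $\Phi_{+}|G|/r^{2}\in L^{1}$ from (3.20); and the pointwise bound (4.10) you cite for localizing the support of $g_{\varepsilon}(\Phi)$ is not available here, since it holds only for Steiner-symmetrized flux functions satisfying $(4.8)_1$--$(4.8)_3$, not for a general Leray--Hopf solution. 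Fortunately it is also not needed: the cutoff error $\int(\Phi_{+}\nabla\theta\times A_{t})\cdot\nabla\chi_{R}\,\rho$ is controlled in $L^{4/3}(0,T;L^{1}(\mathbb{R}^{3}))$ by (3.18) with $q=2$ and (6.10) alone, exactly as you set it up.
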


\begin{proof}
The field $B1_{(0,\infty)}(\Phi)=\nabla \times (\Phi_+\nabla \theta)+G1_{(0,\infty)}(\Phi)\nabla \theta$ is solenoidal. By (3.17) and (3.2), 

\begin{align}
||B1_{(0,\infty)}(\Phi)||_{L^{2}(\mathbb{R}^{3})}=||(b+B_\infty)1_{(0,\infty)}(\Phi)||_{L^{2}(\mathbb{R}^{3})}\lesssim ||b||_{L^{2}(\mathbb{R}^{3})}.
\end{align}\\
By multiplying $B1_{(0,\infty)}(\Phi)\in L^{\infty}(0,T; L^{2}_{\sigma}(\mathbb{R}^{3}))$ by (6.3), 

\begin{align*}
A_{t}\cdot B1_{(0,\infty)}(\Phi)+\nabla Q\cdot B1_{(0,\infty)}(\Phi)=-\mu \nabla \times B\cdot B1_{(0,\infty)}(\Phi)\quad \textrm{on}\ L^{1}(\mathbb{R}^{3}),
\end{align*}\\
for a.e. $t\in [0,T]$. Thus

\begin{align*}
\int_{\mathbb{R}^{3}}A_{t}\cdot B1_{(0,\infty)}(\Phi)\dd x
=-\mu \int_{\mathbb{R}^{3}}\nabla \times B\cdot B1_{(0,\infty)}(\Phi) \dd x. 
\end{align*}\\
For $\chi\in C^{\infty}_{c}[0,\infty)$ satisfying $\chi=1$ in $[0,1]$ and $\chi=0$ in $[2,\infty)$, we set $\chi_R(x)=\chi(R^{-1}|x|)$. For arbitrary $\rho\in C^{\infty}_{c}(0,T)$, by multiplying $\chi_R\rho$ by (6.6) and integration by parts,

\begin{align*}
\int_{0}^{T}\int_{\mathbb{R}^{3}}A_t \cdot B1_{(0,\infty)}(\Phi)\chi_R\rho\dd x\dd t
=-\int_{0}^{T}\int_{\mathbb{R}^{3}}\Phi_+ \frac{G}{r^{2}}\chi_R\dot{\rho} \dd x\dd t-\int_{0}^{T}\int_{\mathbb{R}^{3}}( \Phi_+\nabla \theta \times A_t)\cdot \nabla \chi_R\rho \dd x\dd t.
\end{align*}\\
Since $\Phi_+\nabla \theta\in L^{\infty}(0,T; L^{2}(\mathbb{R}^{3}))$ and $A_t\in L^{4/3}(0,T; L^{2}(\mathbb{R}^{3}) )$ by (3.18), (3.2) and (6.10), $\Phi_+\nabla\theta \times  A_t \in L^{4/3}(0,T; L^{1}(\mathbb{R}^{3}) )$. The last term vanishes as $R\to\infty$. Since $\rho\in C^{\infty}_{c}(0,T)$ is arbitrary, 

\begin{align*}
\frac{\dd }{\dd t}\int_{\mathbb{R}^{3}}\Phi_+\frac{G}{r^{2}}\dd x
=-\mu \int_{\mathbb{R}^{3}}\nabla \times B\cdot B1_{(0,\infty)}(\Phi) \dd x,
\end{align*}\\
in the sense of distribution. Thus (6.12) holds. 
\end{proof}

\begin{lem}
The equality (6.7) holds for axisymmetric Leray--Hopf solutions to (1.9) for $B_\infty=-2e_z$ and $u_\infty$ parallel to $e_z$ and all $t\in [0,T]$. Moreover, 

\begin{equation}
\begin{aligned}
\left\|\frac{\dd }{\dd t}\int_{\mathbb{R}^{3}}\Phi_+\frac{G}{r^{2}}\dd x\right\|_{L^{2}(0,T)}
\leq C\mu^{1/2}\left(||v_0||_{L^{2}(\mathbb{R}^{3}) }^{2}+||b_0||_{L^{2}(\mathbb{R}^{3}) }^{2}  \right). 
\end{aligned}
\end{equation}
\end{lem}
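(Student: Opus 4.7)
The plan is to derive both claims by integrating the distributional identity (6.12) from Proposition~6.5 in time, after establishing $L^{2}(0,T)$ control on its right-hand side. The main obstacle will be passing from the distributional identity to a pointwise integrated equality with the correct value at $t=0$; this will be handled by extracting an absolutely continuous representative and by upgrading weak continuity of the Leray--Hopf solution at $t=0$ to strong $L^{2}$-convergence.

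First I would set $g(t) := -\mu\int_{\mathbb{R}^{3}} \nabla\times B\cdot B\,1_{(0,\infty)}(\Phi)\,\dd x$, which by (6.12) is the distributional derivative of $f(t):=\int_{\mathbb{R}^{3}}\Phi_{+}G/r^{2}\,\dd x$. Cauchy--Schwarz combined with (6.13) yields
\[
|g(t)| \leq \mu \|\nabla\times B\|_{L^{2}}\,\|B\,1_{(0,\infty)}(\Phi)\|_{L^{2}} \lesssim \mu\,\|\nabla b\|_{L^{2}(\mathbb{R}^{3})}\,\|b\|_{L^{2}(\mathbb{R}^{3})}.
\]
The energy inequality (6.1) then gives both $\mu\,\|\nabla b\|^{2}_{L^{2}((0,T);L^{2})}\leq \tfrac{1}{2}(\|v_{0}\|_{L^{2}}^{2}+\|b_{0}\|_{L^{2}}^{2})$ and $\|b\|_{L^{\infty}((0,T);L^{2})} \leq (\|v_{0}\|_{L^{2}}^{2}+\|b_{0}\|_{L^{2}}^{2})^{1/2}$. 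Multiplying these two bounds produces
\[
\|g\|_{L^{2}(0,T)} \leq \mu^{1/2}\,\bigl(\mu^{1/2}\|\nabla b\|_{L^{2}_{t}L^{2}_{x}}\bigr)\,\|b\|_{L^{\infty}_{t}L^{2}_{x}} \leq C\mu^{1/2}\bigl(\|v_{0}\|_{L^{2}}^{2}+\|b_{0}\|_{L^{2}}^{2}\bigr),
\]
which is precisely (6.14).

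Next, with $g\in L^{2}(0,T)\subset L^{1}(0,T)$, the distributional identity $f'=g$ forces $f$ to agree almost everywhere on $[0,T]$ with an absolutely continuous representative $\tilde{f}(t) = c + \int_{0}^{t} g(s)\,\dd s \in H^{1}(0,T) \hookrightarrow C^{1/2}([0,T])$. To identify the constant $c$ I would exploit the initial condition $(v(0),b(0))=(v_{0},b_{0})$: the energy inequality yields $\limsup_{t\to 0^{+}}(\|v(t)\|_{L^{2}}^{2}+\|b(t)\|_{L^{2}}^{2}) \leq \|v_{0}\|_{L^{2}}^{2}+\|b_{0}\|_{L^{2}}^{2}$, which together with weak lower semicontinuity of the norm and $b(t)\rightharpoonup b_{0}$ upgrades to strong convergence $b(t)\to b_{0}$ in $L^{2}(\mathbb{R}^{3})$ as $t\to 0^{+}$. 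The Lipschitz estimate (5.8) for $H[\cdot]$ then gives $f(t)\to f(0)=\int_{\mathbb{R}^{3}}\Phi_{0,+}G_{0}/r^{2}\,\dd x$ as $t\to 0^{+}$, forcing $c = f(0)$. After passing to the continuous representative, the identity $f(t)=f(0)+\int_{0}^{t}g(s)\,\dd s$ is precisely (6.7), for every $t\in[0,T]$.

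The hard part will be this transition from the distributional identity on $(0,T)$ to the pointwise identity on the closed interval with the correct initial datum: the $L^{2}$-bound on $g$, which also delivers (6.14) for free, provides the regularity needed to extract an absolutely continuous representative, while the strong $L^{2}$-convergence at $t=0^{+}$---derived from the energy inequality plus weak continuity---together with the Lipschitz continuity (5.8) of the generalized helicity functional pins down the initial value of that primitive.
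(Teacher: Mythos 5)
Your proposal is correct and follows essentially the same route as the paper: bound the right-hand side of (6.12) via (6.13) and Cauchy--Schwarz, use the energy inequality (6.1) to convert this into the $L^{2}(0,T)$ bound (6.14), and then integrate (6.12) in time to obtain (6.7). The only difference is that you spell out the identification of the constant of integration at $t=0$ (via strong $L^{2}$-continuity of Leray--Hopf solutions at the initial time and the Lipschitz estimate (5.8) for $H[\cdot]$), a step the paper compresses into ``the equality (6.7) follows by integrating (6.12) in time.''
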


\begin{proof}
By (6.13), 

\begin{align*}
\left|\int_{\mathbb{R}^{3}} \nabla \times B\cdot B 1_{(0,\infty)}(\Phi)\dd x\right|
=\left|\int_{\mathbb{R}^{3}} \nabla \times b\cdot B 1_{(0,\infty)}(\Phi)\dd x\right| 
\lesssim ||\nabla \times b||_{L^{2}} || b||_{L^{2}}. 
\end{align*}\\
By (6.12) and (6.1), 

\begin{align*}
\left\|\frac{\dd }{\dd t}\int_{\mathbb{R}^{3}}\Phi_{+}\frac{G}{r^{2}}\dd x\right\|_{L^{2}(0,T) }
\lesssim 
\mu ||\nabla \times b||_{L^{2}(0,T; L^{2}(\mathbb{R}^{3}))} || b||_{L^{\infty}(0,T; L^{2}(\mathbb{R}^{3}))} 
\lesssim \mu^{1/2}\left(||v_0||_{L^{2}(\mathbb{R}^{3}) }^{2}+||b_0||_{L^{2}(\mathbb{R}^{3}) }^{2} \right).
\end{align*}\\
Thus (6.14) holds. The equality (6.7) follows by integrating (6.12) in time.
\end{proof}

\begin{lem}
The equality (6.8) holds for axisymmetric Leray--Hopf solutions to (1.9) for $B_\infty=-2e_z$ and $u_\infty$ parallel to $e_z$ and all $t\in [0,T]$. 
\end{lem}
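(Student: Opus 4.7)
The plan is to integrate the pointwise identity (6.11) from Proposition 6.5 against a spatial cutoff and pass to the limit, in the spirit of Remark 2.16 but adapted to the axisymmetric three-dimensional setting. Fix $\chi\in C_c^\infty([0,\infty))$ with $\chi=1$ on $[0,1]$ and $\chi=0$ on $[2,\infty)$, and set $\chi_R(x)=\chi(|x|/R)$, which is a smooth axisymmetric cutoff on $\mathbb{R}^3$. Testing (6.11) against $\chi_R$ and moving the derivatives off $\Phi_+^2$ via integration by parts produces
\begin{align*}
\frac{\dd}{\dd t}\int_{\mathbb{R}^3}\Phi_+^2\chi_R\dd x+2\mu\int_{\mathbb{R}^3}|\nabla\Phi_+|^2\chi_R\dd x
=\mu\int_{\mathbb{R}^3}\Phi_+^2 L\chi_R\dd x+\int_{\mathbb{R}^3}\Phi_+^2(u\cdot\nabla\chi_R)\dd x,
\end{align*}
as an absolutely continuous identity in $t$. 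The integration by parts is justified by the $L^1_{\mathrm{loc}}$-identity (6.11), the divergence-free condition $\nabla\cdot u=0$, and the vanishing of $\Phi_+$ on the symmetry axis $r=0$: since $\phi\in\dot H^1_0(\mathbb{R}^2_+;r^{-1})$ satisfies $\phi|_{r=0}=0$ and $\phi_\infty|_{r=0}=\gamma\geq 0$, we have $\Phi|_{r=0}=-\gamma\leq 0$ and hence $\Phi_+|_{r=0}=0$.

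Next I would integrate over $[0,t]$ and let $R\to\infty$. A direct computation yields $|\nabla\chi_R|=O(R^{-1})$ and $|L\chi_R|=O(R^{-2})$, both supported on the annulus $\{R\leq|x|\leq 2R\}$. Combining the anisotropic estimates of Proposition 3.11 (with $q=2,4$), which give $\Phi_+\in L^\infty(0,T;L^2\cap L^4(\mathbb{R}^3))$, with $u\in L^\infty(0,T;L^2(\mathbb{R}^3))$ from the energy inequality (6.1), H\"older's inequality yields
\begin{align*}
\left|\int_0^t\int_{\mathbb{R}^3}\Phi_+^2(u\cdot\nabla\chi_R)\dd x\dd s\right|
&\lesssim R^{-1}t\,\|\Phi_+\|_{L^\infty L^4}^2\|u\|_{L^\infty L^2}\to 0,\\
\left|\int_0^t\int_{\mathbb{R}^3}\mu\Phi_+^2 L\chi_R\dd x\dd s\right|
&\lesssim \mu R^{-2}t\,\|\Phi_+\|_{L^\infty L^2}^2\to 0,
\end{align*}
as $R\to\infty$. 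The dissipation term passes to the limit by monotone convergence, $\int_0^t\int|\nabla\Phi_+|^2\chi_R\dd x\dd s\uparrow\int_0^t\int|\nabla\Phi_+|^2\dd x\dd s$, which a posteriori shows this is finite, while $\int\Phi_+^2\chi_R\dd x\to\int\Phi_+^2\dd x$ by dominated convergence for each $t$. The initial trace $\lim_{t\to 0^+}\int\Phi_+^2(t)\dd x=\int\Phi_{0,+}^2\dd x$ is inherited from $b\in C_w([0,T];L^2_\sigma(\mathbb{R}^3))$ with $b(0)=b_0$ together with the non-increasing monotonicity $\partial_t\int\Phi_+^2\dd x\leq 0$ coming from the identity itself; these combine (as in the proof of Lemma 6.5 type arguments) to identify the initial value.

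The main obstacle is the spatial integration by parts for the $L$-operator across the axis $r=0$, which has the $r^{-1}$ singularity. Here one exploits the cylindrical identity $rLf=\partial_z(r\partial_z f)+\partial_r(r\partial_r f)-2\partial_r f$ together with $\Phi_+|_{r=0}=0$ to transfer $L$ from $\Phi_+^2$ onto the smooth cutoff $\chi_R$ with no boundary contribution; the resulting axisymmetric self-adjointness $\int_{\mathbb{R}^3}(L\Phi_+^2)\chi_R\dd x=\int_{\mathbb{R}^3}\Phi_+^2(L\chi_R)\dd x$ is the structural identity at the heart of the argument and is exactly what permits the cutoff remainders to vanish as $R\to\infty$.
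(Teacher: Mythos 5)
Your proof follows the paper's argument for Lemma 6.8 essentially verbatim: multiply (6.11) by the radial cutoff $\chi_R$, integrate by parts using $\Phi_+|_{r=0}=0$ to dispose of the boundary term from the singular drift $\tfrac{2}{r}\partial_r$, show the cutoff remainders vanish as $R\to\infty$ via (3.19) and the energy bound, and pass to the limit in the dissipation term by monotone convergence. Two small corrections are needed. First, the "self-adjointness" identity $\int_{\mathbb{R}^3}(L\Phi_+^2)\chi_R\,\dd x=\int_{\mathbb{R}^3}\Phi_+^2(L\chi_R)\,\dd x$ is false as stated: $L=\Delta-\tfrac{2}{r}\partial_r$ is self-adjoint in the weighted space $L^2(\mathbb{R}^2_+;r^{-1})$, not in $L^2(\mathbb{R}^3,\dd x)$, and its $\dd x$-adjoint is $\Delta+\tfrac{2}{r}\partial_r$ (the operator the paper's displayed identity puts on $\chi_R$); this is harmless for the limit because $\tfrac{2}{r}\partial_r\chi_R=2\chi'(|x|/R)/(R|x|)=O(R^{-2})$ on the annulus either way. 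Second, $u=v+u_\infty$ is not in $L^\infty(0,T;L^2(\mathbb{R}^3))$ when $u_\infty\neq0$; you must split off the constant part and bound $\bigl|\int u_\infty\cdot\nabla\chi_R\,\Phi_+^2\,\dd x\bigr|\lesssim R^{-1}\|\Phi_+\|_{L^\infty_tL^2_x}^2$ separately, exactly as the paper does by writing $u=v+u_\infty$ with $v\in L^\infty_tL^2_x$.
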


\begin{proof}
We take non-increasing $\chi\in C^{\infty}_{c}[0,\infty)$ such that $\chi=1$ in $[0,1]$ and $\chi=0$ in $[2,\infty)$ and set $\chi_R(x)=\chi(R^{-1}|x|)$. Since $\Phi_+=(-\gamma)_+=0$ on $\{r=0\}$, by multiplying $\chi_R$ by (6.11) and integration by parts, 

\begin{align*}
&\int_{\mathbb{R}^{3}}\Phi_{+}^{2}\chi_R\dd x-\int_{\mathbb{R}^{3}}\Phi_{0,+}^{2}\chi_R\dd x
+2\mu \int_{0}^{t}\int_{\mathbb{R}^{3}} |\nabla \Phi_{+}|^{2}\chi_R \dd x\dd s \\
&=\mu\int_{0}^{t}\int_{\mathbb{R}^{3}} \Phi_{+}^{2}\left(\Delta +\frac{2}{r}\partial_r\right)\chi_R\dd x\dd s+\int_{0}^{t}\int_{\mathbb{R}^{3}}u\cdot \nabla \chi_R \Phi_{+}^{2}\dd x\dd s. 
\end{align*}\\
Since $\Phi_{+}\in L^{\infty}(0,T; L^{q}(\mathbb{R}^{3}) )$ for $1\leq q<\infty$ by (3.19) and $u=v+u_{\infty}$ for $v\in L^{\infty}(0,T; L^{2}(\mathbb{R}^{3}) )$, the right-hand side vanishes as $R\to\infty$. The equality (6.8) follows from the monotone convergence theorem.
\end{proof}

\section{Weak ideal limits}

We demonstrate generalized magnetic helicity conservation at weak ideal limits of axisymmetric Leray--Hopf solutions to (1.9) for fixed initial data in Theorems 7.5. We show local flux convergence and the generalized magnetic mean-square potential conservation at weak ideal limits by the vector potential equations (6.3) and the Aubin--Lions lemma. The equality of generalized magnetic mean-square potential for axisymmetric Leray--Hopf solutions (6.8) strengthens the local flux convergence to the global. By taking a limit to the equality of generalized magnetic helicity for axisymmetric Leray--Hopf solutions (6.7), we show the desired generalized magnetic helicity conservation at weak ideal limits.

\subsection{Local convergence}

We estimate the time derivative of the vector potential by the equations (6.3) and apply the Aubin--Lions lemma. 

\begin{prop}
Let $(v_j,b_j)$ be an axisymmetric Leray--Hopf solution to (1.9) for fixed  $v_0,b_0\in L^{2}_{\sigma,\textrm{axi}}(\mathbb{R}^{3})$ with $(\nu_j,\mu_j)$, $B_{\infty}=-2e_z$, and $u_{\infty}$ parallel to $e_z$. Let $(v,b)$ be a weak ideal limit of $(v_j,b_j)$ as $(\nu_j,\mu_j)\to (0,0)$. Let $A_j$ and $A$ be vector potentials of $B_j=b_j+B_{\infty}$ and $B=b+B_{\infty}$ defined by (3.16). There exists a subsequence such that 

\begin{align}
A_j\to A\quad \textrm{in}\ L^{2}(0,T; L^{2}_{\textrm{loc}}(\mathbb{R}^{3}) ).
\end{align}\\
In particular, for flux functions $\phi_j$ of $b_j$ and $\phi$ of $b$,  

\begin{align}
\frac{1}{r}\phi_j\to \frac{1}{r}\phi\quad \textrm{in}\ L^{2}(0,T; L^{2}_{\textrm{loc}}(\mathbb{R}^{3}) ).  
\end{align}
\end{prop}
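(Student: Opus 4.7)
The plan is to apply the Aubin--Lions lemma (Lemma 2.16) to the unique $L^{6}$-vector potential $a_j$ of $b_j$ supplied by Lemma 3.5. Since $A_j=a_j-\phi_{\infty}\nabla\theta$ with the correction term independent of $j$ and time, convergence of $a_j\to a$ in $L^{2}(0,T;L^{2}_{\mathrm{loc}}(\mathbb{R}^{3}))$ is equivalent to (7.1), and (7.2) will follow by extracting the $e_\theta$-component via the identity $A\cdot r\nabla\theta=\Phi/r$ (together with the fact that $\phi_\infty/r$ is $j$-independent).

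First I would establish the uniform spatial bound. By Lemma 3.5 one has $\|\nabla a_j\|_{L^{2}(\mathbb{R}^{3})}=\|b_j\|_{L^{2}(\mathbb{R}^{3})}$ and $\|a_j\|_{L^{6}(\mathbb{R}^{3})}\lesssim\|b_j\|_{L^{2}(\mathbb{R}^{3})}$, so the energy inequality (6.1) gives $\{a_j\}$ bounded in $L^{\infty}(0,T;W^{1,2}(B_R))$ uniformly in $j$ for every $R>0$. The delicate step is the uniform time-derivative estimate. Testing the vector potential equation (6.3) against $\zeta\in L^{2}_{\sigma}\cap W^{1,4}_{0}(B_R)$ (extended by zero), the pressure term $\nabla Q_j$ drops out, and I would estimate
\[
|\langle\partial_t a_j,\zeta\rangle|\leq \|B_j\times u_j\|_{L^{1}(B_R)}\|\zeta\|_{L^{\infty}(B_R)}+\mu_j\|b_j\|_{L^{2}(B_R)}\|\nabla\zeta\|_{L^{2}(B_R)}.
\]
Using $\|B_j\|_{L^{2}(B_R)}\leq\|b_j\|_{L^{2}}+|B_\infty|\,|B_R|^{1/2}$, the energy inequality, and the three-dimensional Sobolev embedding $W^{1,4}_{0}(B_R)\hookrightarrow L^{\infty}(B_R)$, each factor is bounded uniformly in $j$, yielding $\{\partial_t a_j\}$ bounded in $L^{\infty}(0,T;(L^{2}_\sigma\cap W^{1,4}_{0}(B_R))^{*})$ uniformly.

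With $X=W^{1,2}(B_R)$, $Y=L^{2}(B_R)$, $Z=(L^{2}_\sigma\cap W^{1,4}_{0}(B_R))^{*}$, the embedding $X\hookrightarrow\hookrightarrow Y\hookrightarrow Z$ holds by the Rellich--Kondrakov theorem, and Lemma 2.16 (with $p=2$, $q=\infty$) delivers precompactness of $\{a_j\}$ in $L^{2}(0,T;L^{2}(B_R))$. A diagonal extraction over $R=m\in\mathbb{N}$ produces a subsequence with $a_{j_k}\to a_{*}$ in $L^{2}(0,T;L^{2}_{\mathrm{loc}}(\mathbb{R}^{3}))$. To identify $a_{*}=a$, I would pass to the limit in the distributional identities $\nabla\times a_{j_k}=b_{j_k}$ and $\nabla\cdot a_{j_k}=0$, combined with $b_{j_k}\overset{\ast}{\rightharpoonup}b$, to obtain $\nabla\times a_{*}=b$ and $\nabla\cdot a_{*}=0$; since $a_*\in L^{\infty}(0,T;L^{6}(\mathbb{R}^{3}))$ by the uniform $L^6$ bound, the uniqueness part of Lemma 3.5 (Liouville) forces $a_{*}=a$. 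Finally, (7.2) is read off from the $e_\theta$-component: $(A_j-A)\cdot r\nabla\theta=(\phi_j-\phi)/r$, so convergence of $a_{j_k}\to a$ in $L^{2}(0,T;L^{2}_{\mathrm{loc}})$ gives $\phi_{j_k}/r\to\phi/r$ in the same space.

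The main obstacle will be the uniform time-derivative bound: the $L^{4/3}(0,T;L^{2})$ bound on $\partial_t A_j$ from Proposition 6.4 is \emph{not} uniform in $j$, because $\|\nabla v_j\|_{L^{2}_t L^{2}_x}$ and $\|\nabla b_j\|_{L^{2}_t L^{2}_x}$ scale like $\nu_j^{-1/2}$ and $\mu_j^{-1/2}$ respectively, blowing up at the ideal limit. The fix is the Faraco--Lindberg trick of moving to the weaker dual space $(W^{1,4}_{0})^{*}$: the Sobolev embedding $W^{1,4}_{0}(B_R)\hookrightarrow L^{\infty}(B_R)$ handles the supercritical quadratic nonlinearity $B_j\times u_j$ by merely the $L^{1}_x$ bound coming from energy conservation, and the dissipative term $\mu_j\nabla\times b_j$ is absorbed by transferring the derivative onto the test function, producing a factor of $\mu_j$ that kills the loss of integrability.
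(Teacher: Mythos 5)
Your overall strategy is exactly the paper's: bound $a_j$ in $L^\infty(0,T;W^{1,2}(B(0,R)))$ via Lemma 3.5 and the energy inequality, bound $\partial_t a_j$ in a negative Sobolev space using the vector potential equation together with the embedding $W^{1,4}_0(B(0,R))\subset L^{\infty}$ to handle the merely‑$L^1$ nonlinearity $B_j\times u_j$ and a factor $\mu_j$ to kill the dissipative term, then apply Lemma 2.16, a diagonal argument over $R$, identify the limit, and read off the $\theta$-component. Your identification of the limit (passing to the limit in $\nabla\times a_j=b_j$, $\nabla\cdot a_j=0$ and invoking the Liouville-type uniqueness of Lemma 3.5) is in fact spelled out more carefully than in the paper.

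There is, however, one genuine flaw in the way you set up the Aubin--Lions lemma. You test the equation only against \emph{solenoidal} $\zeta\in L^{2}_{\sigma}\cap W^{1,4}_{0}(B(0,R))$ so that "the pressure term drops out," and accordingly take $Z=(L^{2}_{\sigma}\cap W^{1,4}_{0}(B(0,R)))^{*}$. But then the map $Y=L^{2}(B(0,R))\to Z$ is \emph{not} an embedding: every gradient field $\nabla q\in L^{2}(B(0,R))$ pairs to zero against compactly supported solenoidal test functions, so the map has a large kernel. Lemma 2.16 requires $Y\hookrightarrow Z$ to be injective (otherwise the Ehrling inequality $\|u\|_{Y}\le\varepsilon\|u\|_{X}+C_\varepsilon\|u\|_{Z}$ underlying its proof fails), and with your spaces the compactness conclusion is actually false in general: $a_j=\sin(jt)\nabla\psi(x)$ is bounded in $L^{\infty}(0,T;W^{1,2})$ with $\partial_t a_j=0$ in your $Z$, yet has no convergent subsequence in $L^{2}(0,T;L^{2})$. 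Restricting to divergence-free fields does not repair this, since harmonic gradients on $B(0,R)$ are divergence-free and still annihilated by the pairing. The paper's proof avoids exactly this: it tests against \emph{arbitrary} $\xi\in W^{1,4}_{0}(B(0,R))$, keeps $Z=W^{1,4}_{0}(B(0,R))^{*}$ (into which $L^{2}(B(0,R))$ does embed injectively), and pays for it by moving the Leray projection onto the test function, estimating $\|\mathbb{P}\xi\|_{L^{\infty}(\mathbb{R}^{3})}\lesssim\|\xi\|_{W^{1,4}(B(0,R))}$ via the boundedness of $\mathbb{P}$ on $L^{4}(\mathbb{R}^{3})$ and the Sobolev embedding, and pairing $\mathbb{P}\xi$ with $B_j\times u_j\in L^{\infty}(0,T;(L^{1}+L^{2})(\mathbb{R}^{3}))$. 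You should adopt this variant; the rest of your argument then goes through.
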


\begin{proof}
By Proposition 6.4, the vector potential $a_j$ of $b_j$ defined by (3.3) and $u_j=v_j+u_{\infty}$ satisfy

\begin{align*}
\partial_t a_j+\mathbb{P}(B_j\times u_j)=-\mu_j\nabla \times b_j\quad \textrm{on}\ L^{2}_{\sigma}(\mathbb{R}^{3}),
\end{align*}\\
for a.e. $t\in [0,T]$, where $\mathbb{P}$ is the projection operator associated with (2.1). We take an open  ball $B(0,R)\subset \mathbb{R}^{3}$ with radius $R>0$ and an arbitrary $\xi\in W^{1,4}_{0}(B(0,R))$. We extend $\xi$ to $\mathbb{R}^{3}$ by the zero extension and denote it by the same symbol. By multiplying $\xi$ by the above equation and integration by parts,

\begin{align*}
\int_{B(0,R)}\partial_t a_{j}\cdot \xi\dd x+\int_{B(0,R)}B_j\times u_j\cdot \mathbb{P} \xi\dd x
=-\mu_j\int_{B(0,R)}b_j\cdot \nabla \times \xi\dd x.
\end{align*}\\
By the Sobolev embedding and boundedness of $\mathbb{P}$ on $L^{4}(\mathbb{R}^{3})$,

\begin{align*}
||\mathbb{P} \xi||_{L^{\infty}(\mathbb{R}^{3})}\lesssim ||\mathbb{P} \xi||_{W^{1,4}(\mathbb{R}^{3})}\lesssim || \xi||_{W^{1,4}(\mathbb{R}^{3})}=|| \xi||_{W^{1,4}(B(0,R) )}.
\end{align*}\\
Thus 

\begin{align*}
\left|\int_{B(0,R)}\partial_t a_{j}\cdot  \xi\dd x\right|
&\lesssim ||B_j\times u_j||_{L^{1}(B(0,R)) }|| \xi||_{W^{1,4}(B(0,R))}
+\mu_j ||b_j||_{L^{2}(\mathbb{R}^{3}) }||\nabla  \xi||_{L^{2}(B(0,R))} \\
&\lesssim (||B_j\times u_j||_{L^{1}(B(0,R)) }
+\mu_j ||b_j||_{L^{2}(\mathbb{R}^{3}) })|| \xi||_{W^{1,4}(B(0,R))}.
\end{align*}\\
By taking the supremum for $ \xi$,

\begin{align*}
||\partial_t a_j||_{W^{1,4}_{0}(B(0,R))^{*} }\lesssim ||B_j\times u_j||_{L^{1}(B(0,R)) }
+\mu_j ||b_j||_{L^{2}(\mathbb{R}^{3}) }.
\end{align*}\\
Since $B_j\times u_j=b_j\times v_j+B_{\infty}\times v_j+b_j\times u_{\infty}$ is uniformly bounded in $L^{\infty}(0,T; L^{1}(B(0,R)) )$ by (6.1), so is $\partial_t a_j$ in $L^{\infty}(0,T; W^{1,4}_0(B(0,R))^{*} )$. By Lemma 2.14, there exists a subsequence such that for the vector potential $a$ of $b$ defined by (3.3),

\begin{align*}
a_j\to a\quad  \textrm{in}\ L^{2}(0,T; L^{2}(B(0,R))).
\end{align*}\\
Since $R>0$ is arbitrary, by a diagonal argument

\begin{align*}
a_j\to a\quad  \textrm{in}\ L^{2}(0,T; L^{2}_{\textrm{loc}}(\mathbb{R}^{3})).
\end{align*}\\
Thus (7.1) holds for $A_j=a_j-\phi_{\infty}\nabla \theta$, $\phi_{\infty}=r^{2}+\gamma$ and $\gamma\geq 0$. By taking the $\theta$-component $a_j\cdot r\nabla \theta=r^{-1} \phi_j$ and (7.2) follows.
\end{proof}

\subsection{Mean-square potential conservation}

We show that the convergence (7.2) implies that for $u=v+u_{\infty}$, $\Phi=\phi-\phi_{\infty}$ is a distributional solution to 

\begin{align}
\Phi_{t}+u\cdot \nabla \Phi&=0\quad \textrm{in}\ \mathbb{R}^{3}\times (0,T).
\end{align}\\ 
Furthermore, we show that $\Phi_{+}^{2}$ is a distributional solution to  

\begin{align}
\partial_t \Phi^{2}_{+}+u\cdot \nabla \Phi^{2}_{+}&=0\quad \textrm{in}\ \mathbb{R}^{3}\times (0,T),
\end{align}\\ 
and conserves generalized magnetic mean-square potential.

\begin{prop}
Let $\phi_j, \phi, v$ and $u_{\infty}$ be as in Proposition 7.1. Let $\phi_{\infty}=r^{2}+\gamma$ and $\gamma\geq 0$. For $\Phi_j=\phi_j-\phi_{\infty}$ and $\Phi=\phi-\phi_{\infty}$, 

\begin{align}
\Phi_j\to \Phi \quad \textrm{in}\ L^{2} (0,T; L^{2}_{\textrm{loc}}(\mathbb{R}^{3}) ).
\end{align}\\
For $u=v+u_{\infty}$, $\Phi$ is a distributional solution to (7.3). Moreover,  

\begin{align}
\Phi_{j,+}&\to \Phi_{+}\quad \textrm{in}\ L^{2}(0,T; L^{2}_{\textrm{loc}}(\mathbb{R}^{3})), \\
\Phi_{j,+}^{2}&\to \Phi_{+}^{2}\quad \textrm{in}\ L^{2}(0,T; L^{2}_{\textrm{loc}}(\mathbb{R}^{3})).
\end{align}\\
The function $\Phi_{+}^{2}$ is a distributional solution to (7.4) and satisfies  

\begin{align}
\int_{\mathbb{R}^{3}}\Phi_{+}^{2}\dd x=\int_{\mathbb{R}^{3}}\Phi_{0,+}^{2}\dd x,  
\end{align}\\
for a.e. $t\in [0,T]$.
\end{prop}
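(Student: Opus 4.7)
To prove the convergences (7.5)--(7.7), I would first upgrade the weighted local convergence (7.2) to the unweighted (7.5) via the pointwise bound $|\phi_j - \phi|^2 \leq R^2|r^{-1}(\phi_j - \phi)|^2$ on $K \subset B(0,R)$, where the cylindrical radius satisfies $r \leq R$, and subtract the fixed smooth $\phi_\infty$. Then (7.6) follows from $|a_+ - b_+| \leq |a - b|$. For (7.7) I would combine $|\Phi_{j,+}^2 - \Phi_+^2| \leq (|\Phi_j| + |\Phi|)|\Phi_j - \Phi|$ with a uniform $L^\infty_t L^p(K)$-bound on $\Phi_j$ for every finite $p$ --- this comes from the weighted Sobolev embedding (3.8) applied with the energy bound $\nabla\phi_j \in L^\infty_t L^2(\mathbb{R}^2_+; r^{-1})$ and the change of measure $r \leq R$ on $K$. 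Interpolating (7.5) against the uniform $L^\infty_t L^6(K)$-bound gives $\Phi_j - \Phi \to 0$ in $L^2(0,T; L^4(K))$, and H\"older's inequality closes (7.7).

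To show $\Phi$ satisfies (7.3) distributionally, I test (6.4) against $\psi \in C_c^\infty(\mathbb{R}^3 \times (0,T))$ and integrate by parts using $\nabla \cdot u_j = 0$, producing
\begin{equation*}
-\int_0^T\!\!\int \Phi_j\, \partial_t\psi\, dx\,dt - \int_0^T\!\!\int \Phi_j\, u_j\cdot\nabla\psi\, dx\,dt = \mu_j \int_0^T\!\!\int \Phi_j\, L^{\ast}\psi\, dx\,dt.
\end{equation*}
The temporal and diffusive terms converge by (7.5) and $\mu_j \to 0$ together with the uniform $L^\infty(L^2_{\mathrm{loc}})$-bound on $\Phi_j$. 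Splitting $u_j = v_j + u_\infty$, the $u_\infty$-piece of the convective term converges by (7.5) alone, and the $v_j$-piece converges as the weak-$\ast$ convergence of $v_j$ in $L^\infty(L^2)$ paired against the strong limit $\Phi_j\nabla\psi \to \Phi\nabla\psi$ in $L^2(\mathbb{R}^3\times(0,T))$, which is enabled by the compact support of $\psi$.

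For (7.4) and (7.8), I proceed by a chain-rule / renormalization argument rather than by passing directly to the limit in the $\Phi_{j,+}^2$-level identity (6.11); the latter fails because the sign-definite dissipation $-2\mu_j|\nabla\Phi_{j,+}|^2$ is only bounded in $L^1_{t,x}$ (via (6.8) at level $j$) and does \emph{not} vanish as $\mu_j \to 0$, which is the principal obstacle. From (7.3) and the uniform energy bound I obtain $\nabla\Phi \in L^\infty_t L^2_{\mathrm{loc}}(\mathbb{R}^3)$ and hence $\partial_t\Phi = -u\cdot\nabla\Phi \in L^\infty_t L^1_{\mathrm{loc}}$, so $\Phi \in W^{1,1}_{\mathrm{loc}}(\mathbb{R}^3 \times (0,T))$; the Sobolev chain rule (Stampacchia's lemma) gives $\partial_t\Phi_+^2 = 2\Phi_+\partial_t\Phi$ and $\nabla\Phi_+^2 = 2\Phi_+\nabla\Phi$ a.e., so multiplying (7.3) by $2\Phi_+$ yields (7.4) in the distributional sense. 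To extract (7.8) I test (7.4) against $\chi_R(x)\rho(t)$ for $\rho \in C_c^\infty(0,T)$ with a smooth cut-off $\chi_R$ satisfying $|\nabla\chi_R| \lesssim 1/R$; the boundary flux $\int\!\!\int \Phi_+^2\, u\cdot\nabla\chi_R\, \rho\, dx\, dt$ vanishes as $R \to \infty$ because $\Phi_+^2 u \in L^1(\mathbb{R}^3\times(0,T))$ (using (3.19) with $q=4$, $v \in L^\infty(L^2)$, and constant $u_\infty$). This yields $\tfrac{d}{dt}\int\Phi_+^2\, dx = 0$ distributionally, and continuity of $\Phi$ in $t$ into $L^1_{\mathrm{loc}}$ together with the identification $b(0) = b_0$ from the fixed initial data in the weak-$\ast$ limit pins the constant as $\int \Phi_{0,+}^2\, dx$.
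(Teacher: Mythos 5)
Your handling of (7.5)--(7.7) and of the limit equation (7.3) is essentially the paper's argument: the weight $r^{-1}$ is removed on compact sets where $r\leq R$, the bound $|a_+-b_+|\leq |a-b|$ gives (7.6), interpolating the strong $L^{2}(0,T;L^{2}_{\textrm{loc}})$ convergence against a uniform $L^{\infty}_{t}L^{p}_{\textrm{loc}}$ bound yields convergence in $L^{2}(0,T;L^{4}_{\textrm{loc}})$ and hence (7.7), and the convective term is split into a piece controlled by the strong convergence of $\Phi_j$ and a piece controlled by the weak-star convergence of $v_j$. All of this matches the paper up to cosmetic choices.

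The final step, however, rests on a misdiagnosis. You claim that passing to the limit in the $\Phi_{j,+}^{2}$-identity (6.11) fails because the dissipation $-2\mu_j|\nabla \Phi_{j,+}|^{2}$ is only bounded in $L^{1}_{t,x}$ via (6.8) and does not vanish. This overlooks that (3.2) together with the energy inequality (6.1) controls $r^{-1}\nabla \phi_j$ in $L^{\infty}(0,T;L^{2}(\mathbb{R}^{3}))$ \emph{uniformly in $j$}: the flux gradient is bounded by $\|b_j\|_{L^{2}}$, one derivative below $\|\nabla b_j\|_{L^{2}}$, so on any ball $B(0,R)$ the quantity $|\nabla \Phi_{j,+}|^{2}\leq |\nabla \Phi_j|^{2}$ is uniformly bounded in $L^{\infty}(0,T;L^{1}(B(0,R)))$ independently of $\mu_j$. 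Consequently $\mu_j\int_0^T\int |\nabla \Phi_{j,+}|^{2}\varphi\,\dd x\dd t=O(\mu_j)\to 0$, and similarly for the terms $\mu_j\Phi_{j,+}^{2}\Delta\varphi$ and $\mu_j r^{-1}\partial_r\Phi_{j,+}^{2}\,\varphi$; this is precisely how the paper proves (7.4), by passing to the limit in (6.11) using (7.7). This uniform $L^{\infty}_{t}L^{2}_{\textrm{loc}}$ control of $\nabla\Phi_j$ is the whole mechanism behind mean-square potential conservation, so the "principal obstacle" you identify is illusory. Your substitute --- renormalizing the limit equation (7.3) by the chain rule using $\Phi\in W^{1,1}_{\textrm{loc}}$ in $(x,t)$ --- is in the spirit of \cite[Lemma 5.7]{FL20} and could be completed, but as written it leaves a gap at $t=0$: the interior identity $\frac{\dd}{\dd t}\int_{\mathbb{R}^{3}}\Phi_{+}^{2}\dd x=0$ on $(0,T)$ only shows the integral is a.e.\ constant, and identifying that constant with $\int_{\mathbb{R}^{3}}\Phi_{0,+}^{2}\dd x$ requires a weak formulation that carries the initial data (test functions supported up to $t=0$), which your chain-rule argument does not produce. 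The paper gets this for free because the fixed initial datum already sits in the $j$-level identity (6.11) and survives the limit.
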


\begin{proof}
The convergence (7.2) implies (7.5). For an arbitrary $\varphi\in C^{\infty}_{c}(\mathbb{R}^{3}\times [0,T))$, we take $R>0$ such that $\textrm{spt}\ \varphi\subset B(0,R)\times [0,T]$. The function $\Phi_j$ satisfies (6.4) on $L^{2}_{\textrm{loc}}(\mathbb{R}^{3})$ for a.e. $t\in [0,T]$ by Proposition 6.5.  By multiplying $\varphi$ by (6.4) and integration by parts, 

\begin{align*}
\int_0^{T}\int_{\mathbb{R}^{3}}\Phi_j(\partial_t \varphi  +(v_j+u_{\infty})\cdot \nabla  \varphi) \dd x\dd s
+\int_{\mathbb{R}^{3}} \Phi_0\varphi_0\dd x
=-\mu_j\int_0^{T}\int_{\mathbb{R}^{3}} \left( \Phi_j \Delta \varphi-\frac{2}{r}\partial_r \Phi_j\varphi\right)\dd x\dd s.
\end{align*}\\
The function $\Phi_j$ is uniformly bounded in $L^{2}(0,T; L^{2}(B(0,R)) )$. By (3.2) and (6.1), $r^{-1}\nabla \Phi_j$ is uniformly bounded in $L^{\infty}(0,T; L^{2}(B(0,R)) )$. Thus, the right-hand side vanishes as $j\to\infty$. The field $v_j$ is uniformly bounded in $L^{\infty}(0,T; L^{2}(\mathbb{R}^{3}) )$ by (6.1). By 

\begin{align*}
\left|\int_{0}^{T}\int_{\mathbb{R}^{3}}(\Phi_j v_{j}-\Phi v)  \cdot \nabla \varphi \dd x\dd s\right|
&\leq \sup_j ||v_{j}||_{L^{\infty}(0,T; L^{2}(\mathbb{R}^{3}) ) }
||\Phi_j-\Phi||_{L^{1}(0,T; L^{2}(B) ) }||\nabla \varphi||_{L^{\infty}(\mathbb{R}^{3}\times (0,T)) } \\
&+\left|\int_{0}^{T}\int_{\mathbb{R}^{3}}( v_{j}- v) \Phi\cdot \nabla \varphi \dd x\dd s\right|\to 0,
\end{align*}\\
the limit $\Phi$ satisfies 

\begin{align*}
\int_0^{T}\int_{\mathbb{R}^{3}}\Phi(\partial_t \varphi  +(v+u_{\infty})\cdot \nabla  \varphi) \dd x\dd s
=-\int_{\mathbb{R}^{3}} \Phi_0\varphi_0\dd x,
\end{align*}\\
and (7.3) holds in the distributional sense.

The convergence (7.5) implies (7.6) since $|\tau_+-s_{+}|\leq |\tau-s|$ for $\tau,s\in \mathbb{R}$. The function $\Phi_{j,+}$ is uniformly bounded in $L^{\infty}(0,T; L^{q}(\mathbb{R}^{3}) )$ for $1\leq q<\infty$ by (3.19) and (6.1). By H\"older's inequality, for $l>4$ and $\theta\in (0,1)$ satisfying  $1/4=\theta/2+(1-\theta)/l$,

\begin{align*}
||\Phi_{j,+}-\Phi_{+}  ||_{L^{4}(B(0,R)) }\leq||\Phi_{j,+}-\Phi_{+}  ||_{L^{2}(B(0,R)) }^{\theta}||\Phi_{j,+}-\Phi_{+}  ||_{L^{l}(B(0,R)) }^{1-\theta}. 
\end{align*}\\
This implies that $\Phi_{j,+}\to \Phi_{+}$ in $L^{2} (0,T; L^{4}_{\textrm{loc}}(\mathbb{R}^{3}) )$ since $2\theta<1$. By applying H\"older's inequality for $\Phi_{j,+}^{2}-\Phi_{+}^{2}=(\Phi_{j,+} -\Phi_{+}  )( \Phi_{j,+} +\Phi_{+})$, 

\begin{align*}
||  \Phi_{j,+}^{2}-\Phi_{+}^{2}||_{L^{2}(B(0,R)) }
\leq ||  \Phi_{j,+}-\Phi_{+}||_{L^{4}(B(0,R)) }||  \Phi_{j,+}+\Phi_{+}||_{L^{4}(B(0,R)) }.
\end{align*}\\
Thus (7.7) holds. By Proposition 6.5, $\Phi_{j,+}^{2}$ satisfies (6.11). For arbitrary $\varphi\in C^{\infty}_{c}(\mathbb{R}^{3}\times [0,T))$, 

\begin{align*}
&\int_0^{T}\int_{\mathbb{R}^{3}}\Phi_{j,+}^{2}(\partial_t \varphi  +(v_j+u_{\infty})\cdot \nabla  \varphi) \dd x\dd s
+\int_{\mathbb{R}^{3}} \Phi_{0,+}^{2}\varphi_0\dd x \\
&=-\mu_j\int_0^{T}\int_{\mathbb{R}^{3}} \left( \Phi_{j,+}^{2}  \Delta \varphi+\frac{2}{r}\partial_r \Phi_{j,+}^{2}\varphi  -2|\nabla \Phi_{j,+}|^{2}  \varphi\right)\dd x\dd s.
\end{align*}\\
We take $R>0$ such that $\textrm{spt}\ \varphi\subset B(0,R)\times [0,T]$. The functions $\Phi_{j,+}$ and $r^{-1}\partial_r \Phi_j$ are uniformly bounded in $L^{\infty}(0,T; L^{2}(B(0,R)))$. Hence $r^{-1}\partial_r \Phi_{j,+}^{2}=2r^{-1}\partial_r \Phi_{j} \Phi_{j,+}$ is uniformly bounded in $L^{\infty}(0,T; L^{1}( B(0,R)))$. By $|\nabla \Phi_{j,+}|=\nabla \Phi_j1_{(0,\infty)}(\Phi_j)\leq |\nabla \Phi_j|$, $|\nabla \Phi_{j,+}|^{2}\in L^{\infty}(0,T; L^{1}(B(0,R)))$ is uniformly bounded. Thus, the right-hand side vanishes as $j\to\infty$. By (7.7) and (6.2), in a similar way as we showed (7.3), letting $j\to\infty$ implies that 

\begin{align*}
\int_0^{T}\int_{\mathbb{R}^{3}}\Phi_{+}^{2}(\partial_t \varphi  +(v+u_{\infty})\cdot \nabla  \varphi) \dd x\dd s=
-\int_{\mathbb{R}^{3}} \Phi_{0,+}^{2}\varphi_0\dd x. 
\end{align*}\\
Thus, $\Phi_{+}^{2}$ is a distributional solution to (7.4). 

We take $\chi\in C^{\infty}_{c}[0,\infty)$ such that $\chi=1$ in $[0,1]$ and $\chi=0$ on $[2,\infty)$ and set $\chi_R(x)=\chi(R^{-1}|x|)$. For arbitrary $\rho\in C^{\infty}_{c}[0,T)$, substituting $\varphi=\chi_{R} \rho$ into the above and letting $R\to\infty$ imply that 

\begin{align*}
\int_{0}^{T}\dot{\rho}(t)\left(\int_{\mathbb{R}^{3}}\Phi_{+}^{2}\dd x\right)\dd t&=-\rho(0) \int_{\mathbb{R}^{3}}\Phi_{0,+}^{2}\dd x.
\end{align*}\\
Thus (7.8) holds for a.e. $t\in [0,T]$.
\end{proof}

\subsection{Global convergence}

We strengthen the local convergence (7.6) to the following global convergence (7.9) by using the equalities (6.8) and (7.8). We then adjust it to the desired form by the isometries (3.12) to demonstrate generalized magnetic helicity conservation at weak ideal limits.

\begin{prop}
\begin{align}
\Phi_{j,+}\to \Phi_{+}\quad \textrm{in}\ L^{2}(0,T; L^{2}(\mathbb{R}^{3})). 
\end{align}
\end{prop}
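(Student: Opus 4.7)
The plan is to upgrade the local convergence (7.6) to the global statement (7.9) by combining the dissipation identity (6.8) satisfied pre-limit with the conservation law (7.8) satisfied at the weak ideal limit. The key observation is that (6.8) yields the uniform bound $\|\Phi_{j,+}(t)\|_{L^{2}(\mathbb{R}^{3})}^{2}\le\|\Phi_{0,+}\|_{L^{2}(\mathbb{R}^{3})}^{2}$, while (7.8) gives the exact equality $\|\Phi_{+}(t)\|_{L^{2}(\mathbb{R}^{3})}^{2}=\|\Phi_{0,+}\|_{L^{2}(\mathbb{R}^{3})}^{2}$ for a.e.\ $t$. Squeezing these together by a lower semicontinuity argument forces the spatial $L^{2}$-norms to converge pointwise in $t$, which is enough to upgrade weak convergence to strong.

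First, I would pass to a subsequence so that (7.6) gives $\Phi_{j,+}(t)\to\Phi_{+}(t)$ in $L^{2}_{\textrm{loc}}(\mathbb{R}^{3})$ for a.e.\ $t\in(0,T)$, and hence pointwise almost everywhere in $x$ along a further diagonal subsequence. Applying Fatou's lemma in the spatial variable yields
\begin{align*}
\int_{\mathbb{R}^{3}}\Phi_{+}^{2}(t)\,\dd x
\le\liminf_{j\to\infty}\int_{\mathbb{R}^{3}}\Phi_{j,+}^{2}(t)\,\dd x
\le\|\Phi_{0,+}\|_{L^{2}(\mathbb{R}^{3})}^{2}
=\|\Phi_{+}(t)\|_{L^{2}(\mathbb{R}^{3})}^{2},
\end{align*}
where the last equality is (7.8), so $\|\Phi_{j,+}(t)\|_{L^{2}(\mathbb{R}^{3})}\to\|\Phi_{+}(t)\|_{L^{2}(\mathbb{R}^{3})}$ for a.e.\ $t$.

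Next, the uniform $L^{\infty}(0,T;L^{2}(\mathbb{R}^{3}))$ bound together with the strong $L^{2}_{\textrm{loc}}$ convergence at time $t$ identifies every weak $L^{2}(\mathbb{R}^{3})$ subsequential limit of $\Phi_{j,+}(t)$ with $\Phi_{+}(t)$, so $\Phi_{j,+}(t)\rightharpoonup\Phi_{+}(t)$ in $L^{2}(\mathbb{R}^{3})$ for a.e.\ $t$. The Radon--Riesz property of $L^{2}$ then upgrades weak plus norm convergence to strong convergence $\Phi_{j,+}(t)\to\Phi_{+}(t)$ in $L^{2}(\mathbb{R}^{3})$ for a.e.\ $t$. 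Finally, since $\|\Phi_{j,+}(t)-\Phi_{+}(t)\|_{L^{2}(\mathbb{R}^{3})}^{2}\le 4\|\Phi_{0,+}\|_{L^{2}(\mathbb{R}^{3})}^{2}$ is dominated uniformly in $j$ and $t$ by a constant that is integrable over the finite interval $[0,T]$, the dominated convergence theorem delivers (7.9).

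The essentially nontrivial input is the conservation identity (7.8), whose proof in Proposition~7.2 rests on the transport equation for $\Phi_{+}^{2}$ at the weak ideal limit. Once (7.8) is in hand, the remainder of the argument is a clean ``weak $+$ norm $\Rightarrow$ strong'' passage applied pointwise in $t$ and integrated in time via dominated convergence; no further control of the dissipation term $\mu_{j}\int|\nabla\Phi_{j,+}|^{2}$ is required beyond what (6.8) already provides.
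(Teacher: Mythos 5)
Your proof is correct and rests on the same two key inputs as the paper's---the pre-limit equality (6.8), which gives $\|\Phi_{j,+}(t)\|_{L^{2}}^{2}\le\|\Phi_{0,+}\|_{L^{2}}^{2}$, and the limit conservation (7.8)---combined with the standard ``weak convergence plus convergence of norms implies strong convergence'' mechanism. The only difference is cosmetic: the paper runs the squeeze directly on the space-time norm $\|\cdot\|_{L^{2}(0,T;L^{2}(\mathbb{R}^{3}))}$ using weak lower semicontinuity, whereas you argue pointwise in $t$ via Fatou's lemma and then integrate in time by dominated convergence.
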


\begin{proof}
By (3.19), (3.2), and (6.1), $\Phi_{j,+}$ is uniformly bounded in $L^{\infty}(0,T; L^{2}(\mathbb{R}^{3}) )$. By choosing a subsequence, $\Phi_{j,+}\rightharpoonup \Phi_{+}$ in $L^{2}(0,T; L^{2}(\mathbb{R}^{3}) )$. By (6.8) and (7.8),  

\begin{align*}
T\int_{\mathbb{R}^{3}}\Phi_{0,+}^{2}\dd x=\int_{0}^{T}\int_{\mathbb{R}^{3}}\Phi_+^{2}\dd x\dd t \leq \liminf_{j\to\infty}\int_{0}^{T}\int_{\mathbb{R}^{3}}\Phi_{j,+}^{2}\dd x\dd t\leq T\int_{\mathbb{R}^{3}}\Phi_{0,+}^{2}\dd x.
\end{align*}\\
Thus $\lim_{j\to \infty}||\Phi_{j,+}||_{L^{2}(0,T; L^{2}(\mathbb{R}^{3}) )}=||\Phi_{+}||_{L^{2}(0,T; L^{2}(\mathbb{R}^{3}) )}$ and (7.9) holds.
\end{proof}

\begin{prop}
\begin{align}
\Phi_{j,+}\to \Phi_{+}\quad \textrm{in}\ L^{7}(0,T;  L^{2}(\mathbb{R}^{2}_{+};r^{-1})).  
\end{align}
\end{prop}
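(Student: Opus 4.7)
The plan is to upgrade the $L^{2}(0,T; L^{2}(\mathbb{R}^{3}))$ convergence from Proposition 7.3 by a spatial interpolation that exchanges the weight $r$ for the weight $r^{-1}$ at the cost of a single factor of $\|\nabla g\|_{L^{2}(\mathbb{R}^{2}_{+};r^{-1})}$, followed by H\"older in time. The interpolation is just Cauchy--Schwarz
\begin{equation*}
\int_{\mathbb{R}^{2}_{+}}|g|^{2}\frac{1}{r}\dd z\dd r
\leq \left(\int_{\mathbb{R}^{2}_{+}}|g|^{2} r\dd z\dd r\right)^{1/2}
\left(\int_{\mathbb{R}^{2}_{+}}|g|^{2}\frac{1}{r^{3}}\dd z\dd r\right)^{1/2},
\end{equation*}
combined with the weighted Sobolev inequality (3.8) for $p=2$, which bounds the second factor by $C\|\nabla g\|_{L^{2}(\mathbb{R}^{2}_{+};r^{-1})}$ for every $g\in \dot{H}^{1}_{0}(\mathbb{R}^{2}_{+}; r^{-1})$. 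Since for axisymmetric $g$ the first factor equals $(2\pi)^{-1/2}\|g\|_{L^{2}(\mathbb{R}^{3})}$, this gives the pointwise-in-time inequality
\begin{equation*}
\|g\|_{L^{2}(\mathbb{R}^{2}_{+};r^{-1})}^{2}\lesssim \|g\|_{L^{2}(\mathbb{R}^{3})}\,\|\nabla g\|_{L^{2}(\mathbb{R}^{2}_{+};r^{-1})}.
\end{equation*}

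Before applying this to $g=\Phi_{j,+}-\Phi_{+}$, I would verify that $\Phi_{j,+}$ and $\Phi_{+}$ belong to $\dot{H}^{1}_{0}(\mathbb{R}^{2}_{+}; r^{-1})$ with a uniform $H^{1}$-bound. The trace at $r=0$ vanishes because $\phi_{j}|_{r=0}=0$ and $\phi_{\infty}|_{r=0}=\gamma\geq 0$, so $(\phi_{j}-\phi_{\infty})_{+}|_{r=0}=(-\gamma)_{+}=0$. Stampacchia's chain rule gives $\nabla \Phi_{j,+}=(\nabla \phi_{j}-2r\,e_{r})1_{\{\phi_{j}>\phi_{\infty}\}}$, so
\begin{equation*}
\|\nabla \Phi_{j,+}\|_{L^{2}(\mathbb{R}^{2}_{+};r^{-1})}^{2}
\leq 2\|\nabla \phi_{j}\|_{L^{2}(\mathbb{R}^{2}_{+};r^{-1})}^{2}+8\int_{\{\phi_{j}>\phi_{\infty}\}}r\dd z\dd r \lesssim \|b_{j}\|_{L^{2}(\mathbb{R}^{3})}^{2},
\end{equation*}
where the second integral equals $(2\pi)^{-1}|\{\phi_{j}>\phi_{\infty}\}|_{\mathbb{R}^{3}}$ and is therefore controlled by $\|\nabla \phi_{j}\|_{L^{2}(\mathbb{R}^{2}_{+};r^{-1})}^{2}$ via the super-level-set estimate (3.17). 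The Leray--Hopf energy inequality (6.1) then yields a bound uniform in $j$ and $t\in[0,T]$, and weak-star lower semi-continuity transfers the same bound to $\Phi_{+}$.

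Feeding $g=\Phi_{j,+}(\cdot,t)-\Phi_{+}(\cdot,t)$ into the interpolation yields
\begin{equation*}
\|\Phi_{j,+}-\Phi_{+}\|_{L^{2}(\mathbb{R}^{2}_{+};r^{-1})}^{2}(t)
\leq C\,\|\Phi_{j,+}-\Phi_{+}\|_{L^{2}(\mathbb{R}^{3})}(t)
\end{equation*}
with $C$ independent of $j$ and $t$. Raising to the $7/2$-power and integrating in time,
\begin{equation*}
\int_{0}^{T}\|\Phi_{j,+}-\Phi_{+}\|_{L^{2}(\mathbb{R}^{2}_{+};r^{-1})}^{7}\dd t
\leq C^{7/2}\int_{0}^{T}\|\Phi_{j,+}-\Phi_{+}\|_{L^{2}(\mathbb{R}^{3})}^{7/2}\dd t,
\end{equation*}
and a final H\"older in time separates
\begin{equation*}
\int_{0}^{T}\|\Phi_{j,+}-\Phi_{+}\|_{L^{2}(\mathbb{R}^{3})}^{7/2}\dd t
\leq \|\Phi_{j,+}-\Phi_{+}\|_{L^{\infty}(0,T;L^{2}(\mathbb{R}^{3}))}^{3/2}\,\|\Phi_{j,+}-\Phi_{+}\|_{L^{2}(0,T;L^{2}(\mathbb{R}^{3}))}^{2}.
\end{equation*}
The first factor is uniformly bounded by (6.1) together with (3.19), while the second vanishes as $j\to\infty$ by Proposition 7.3. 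This proves (7.10).

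The main technical point is the verification that the positive part $\Phi_{j,+}$ lies in $\dot{H}^{1}_{0}(\mathbb{R}^{2}_{+};r^{-1})$ with the right uniform bound. The delicate issue is that $\nabla \phi_{\infty}=2r\,e_{r}$ is itself not in $L^{2}(\mathbb{R}^{2}_{+};r^{-1})$; only after restricting to the super-level set $\{\phi_{j}>\phi_{\infty}\}$ does the estimate (3.17) rescue the bound. Once this functional-analytic setup is in place, the rest is the Cauchy--Schwarz--H\"older chain displayed above, and the exponent $7$ arises simply because $7/2\geq 2$, so the power of $\|\Phi_{j,+}-\Phi_{+}\|_{L^{2}(\mathbb{R}^{3})}$ in the time integral splits cleanly into a uniformly bounded $L^{\infty}_{t}$-piece and an $L^{2}_{t}$-piece that tends to zero.
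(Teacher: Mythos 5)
Your proof is correct, but it takes a genuinely different route from the paper. The paper first upgrades the $L^{2}_{t}L^{2}_{x}(\mathbb{R}^{3})$ convergence of Proposition 7.3 to $L^{2}(0,T;L^{1}(\mathbb{R}^{3}))$ by writing $|\Phi_{j,+}-\Phi_{+}|\leq|\Phi_{j,+}-\Phi_{+}|(1_{(0,\infty)}(\Phi_j)+1_{(0,\infty)}(\Phi))$ and using the uniform $L^{2}$ bound on the indicators from (3.17); it then passes to $\mathbb{R}^{5}$ via the isometries (3.12), interpolates $L^{2}(\mathbb{R}^{5})$ between $L^{1}(\mathbb{R}^{5})$ (small, in $L^{2}_{t}$) and $L^{10/3}(\mathbb{R}^{5})$ (bounded, in $L^{\infty}_{t}$, from the $\dot H^{1}(\mathbb{R}^{5})$ Sobolev embedding), and returns by the isometry; the exponent $7$ is forced by the interpolation weight $\theta=2/7$ in $\tfrac12=\theta+\tfrac{3}{10}(1-\theta)$. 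You instead interpolate in the weight on the half-plane, $\|g\|_{L^{2}(r^{-1})}^{2}\leq\|g\|_{L^{2}(r)}\|g\|_{L^{2}(r^{-3})}$, control the second factor by the weighted Sobolev inequality (3.8) with $p=2$, and identify the first with the $L^{2}(\mathbb{R}^{3})$ norm. The price is that you must verify $\Phi_{j,+},\Phi_{+}\in\dot H^{1}_{0}(\mathbb{R}^{2}_{+};r^{-1})$ with a uniform gradient bound, which you do correctly: the trace vanishes since $(-\gamma)_{+}=0$, and the problematic term $\nabla\phi_{\infty}=2re_{r}$, which is not in $L^{2}(r^{-1})$ globally, is cut off to the super-level set and absorbed by (3.17). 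What your argument buys is that the $\mathbb{R}^{5}$ transfer and the indicator trick are avoided entirely, and the conclusion is actually stronger: since $\|\Phi_{j,+}-\Phi_{+}\|_{L^{2}(r^{-1})}\lesssim\|\Phi_{j,+}-\Phi_{+}\|_{L^{2}(\mathbb{R}^{3})}^{1/2}$ pointwise in $t$, with the right-hand side bounded in $L^{\infty}_{t}$ and tending to zero in $L^{4}_{t}$, you obtain convergence in $L^{p}(0,T;L^{2}(\mathbb{R}^{2}_{+};r^{-1}))$ for every finite $p$, of which $p=7$ (all that is needed for Theorem 7.5) is a special case. Both proofs rest on the same two inputs, the global $L^{2}_{t}L^{2}_{x}$ convergence (7.9) and a uniform higher-integrability bound coming from the energy inequality.
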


\begin{proof}
The function $\Phi_{j,+}-\Phi_{+}$ is supported in $\{x\in \mathbb{R}^{3}\ |\ \Phi_j(x)>0\}\cup \{x\in \mathbb{R}^{3}\ |\ \Phi(x)>0\}$, and  

\begin{align*}
|\Phi_{j,+}-\Phi_{+}|
&= |\Phi_{j,+}-\Phi_{+}|(1_{(0,\infty)}(\Phi_j)+1_{(0,\infty)}(\Phi)-1_{(0,\infty)}(\Phi_j)1_{(0,\infty)}(\Phi) ) \\
&\leq  |\Phi_{j,+}-\Phi_{+}|(1_{(0,\infty)}(\Phi_j)+1_{(0,\infty)}(\Phi) ).
\end{align*}\\
By (3.17), (3.2) and (6.1), $1_{(0,\infty)}(\Phi_j)$ is uniformly bounded in $L^{\infty}(0,T; L^{2}(\mathbb{R}^{3}) )$. The convergence (7.9) implies that 

\begin{align*}
\Phi_{j,+}\to \Phi_{+}\quad  \textrm{in}\ L^{2}(0,T; L^{1}(\mathbb{R}^{3})).
\end{align*}\\
By the isometry $(3.12)_1$ for $\varphi_j=\phi_j/r^{2}$ and $\varphi=\phi/r^{2}$,

\begin{align*}
\left(\varphi_j-1-\frac{\gamma}{r^2}\right)_{+}\to \left(\varphi-1-\frac{\gamma}{r^2}\right)_{+} \quad  \textrm{in}\ L^{2}(0,T; L^{1}(\mathbb{R}^{5})).
\end{align*}\\
By (3.11), (3.2), and (6.1), $\varphi_j$ is uniformly bounded in $L^{\infty}(0,T; \dot{H}^{1}(\mathbb{R}^{5}))$. By $(\varphi_j-1-\gamma/r^{2})_{+}\leq |\varphi_j|$ and $\dot{H}^{1}(\mathbb{R}^{5})\subset L^{10/3}(\mathbb{R}^{5})$, $(\varphi_j-1-\gamma/r^{2})_{+}$ is uniformly bounded in $L^{\infty}(0,T; L^{10/3}(\mathbb{R}^{5}) )$. By H\"older's inequality,  

\begin{align*}
||\varsigma||_{L^{7}(0,T; L^{2}(\mathbb{R}^{5}) ) } 
\leq ||\varsigma||_{L^{\infty}(0,T; L^{10/3}(\mathbb{R}^{5}) ) } ^{5/7}||\varsigma||_{L^{2}(0,T; L^{1}(\mathbb{R}^{5}) ) } ^{2/7}, 
\end{align*}\\
$\varsigma=(\varphi_j-1-\gamma/r^{2})_{+}-(\varphi-1-\gamma/r^{2})_{+}$ converges to zero in $L^{7}(0,T; L^{2}(\mathbb{R}^{5}) )$. By the isometry $(3.12)_2$, (7.10) holds. 
\end{proof}

\subsection{Helicity conservation}

We now demonstrate generalized magnetic helicity conservation at weak ideal limits for axisymmetric Leray--Hopf solutions.

\begin{thm}
Let $B_{\infty}=-2e_z$. Let $u_{\infty}$ be a constant parallel to $e_z$. Let $\phi_{\infty}=r^{2}+\gamma$ and $\gamma\geq 0$. Suppose that $(v,b)$ is a weak ideal limit of axisymmetric Leray--Hopf solutions to (1.9) for fixed $(v_0,b_0)$. Then, for the Clebsch potentials $\phi,G$ of $b$ and $\Phi=\phi-\phi_{\infty}$, 

\begin{align}
\int_{\mathbb{R}^{3}}\Phi_{+}\frac{G}{r^{2}}\dd x
=\int_{\mathbb{R}^{3}}\Phi_{0,+}\frac{G_0}{r^{2}}\dd x,     
\end{align}\\
for a.e. $t\in [0,T]$.
\end{thm}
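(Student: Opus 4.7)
The plan is to pass to the limit in the helicity equality (6.7) for the axisymmetric Leray--Hopf approximations $(v_j,b_j)$, combining the $L^2$ time-regularity bound (6.14) with the global flux convergence of Proposition 7.4. Throughout I write $h_j(t):=\int_{\mathbb R^3}\Phi_{j,+}G_j/r^2\,dx$, $h(t):=\int_{\mathbb R^3}\Phi_+G/r^2\,dx$, and $h_0:=\int_{\mathbb R^3}\Phi_{0,+}G_0/r^2\,dx$.

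First I would show that the resistive correction on the left-hand side of (6.7) vanishes uniformly on $[0,T]$ as $\mu_j\to 0$. Since $h_j(0)=h_0$ and the time derivative of $h_j$ is given by (6.12), H\"older's inequality in time together with (6.14) yields
\begin{equation*}
|h_j(t)-h_0|\leq t^{1/2}\left\|\frac{d}{ds}\int_{\mathbb R^3}\Phi_{j,+}\frac{G_j}{r^2}\,dx\right\|_{L^2(0,T)}\leq C\mu_j^{1/2}T^{1/2}\longrightarrow 0,
\end{equation*}
so $h_j\to h_0$ uniformly on $[0,T]$.

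Next I would identify the limit as $h(t)$ by a strong--weak pairing in the weighted space $L^2(\mathbb R^2_+;r^{-1})$. The weak-$\ast$ convergence $b_j\overset{\ast}{\rightharpoonup}b$ in $L^\infty(0,T;L^2(\mathbb R^3))$, combined with the isometric Clebsch decomposition (3.2), yields weak convergence $G_j\rightharpoonup G$ in $L^2(0,T;L^2(\mathbb R^2_+;r^{-1}))$; indeed, any test function $g\in L^2(\mathbb R^2_+;r^{-1})$ paired with $G_j$ corresponds (up to the constant $2\pi$) to testing $b_j$ against the axisymmetric solenoidal field $(g(z,r)/r)\,e_\theta\in L^2_\sigma(\mathbb R^3)$. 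Proposition 7.4 supplies the strong convergence $\Phi_{j,+}\to\Phi_+$ in $L^7(0,T;L^2(\mathbb R^2_+;r^{-1}))$, which embeds continuously into $L^2(0,T;L^2(\mathbb R^2_+;r^{-1}))$. For any $\rho\in C^\infty_c((0,T))$, the strong--weak pairing then gives
\begin{equation*}
\int_0^T\rho(t)h_j(t)\,dt=2\pi\int_0^T\!\!\int_{\mathbb R^2_+}\rho(t)\Phi_{j,+}G_j\,\frac{1}{r}\,dz\,dr\,dt\longrightarrow 2\pi\int_0^T\!\!\int_{\mathbb R^2_+}\rho(t)\Phi_+G\,\frac{1}{r}\,dz\,dr\,dt=\int_0^T\rho(t)h(t)\,dt.
\end{equation*}

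Comparing with the first step, which gives $\int_0^T\rho(t)h_j(t)\,dt\to h_0\int_0^T\rho(t)\,dt$, the arbitrariness of $\rho$ forces $h(t)=h_0$ for a.e.\ $t\in[0,T]$, which is exactly (7.11). The main technical hurdle is the legitimacy of the strong--weak pairing in the \emph{weighted} space $L^2(\mathbb R^2_+;r^{-1})$; this is precisely what necessitates the upgrade from the local convergence (7.6) to the global convergence (7.10) via generalized magnetic mean-square potential conservation (Proposition 7.3) at the ideal limit, since $\Phi_{j,+}$ carries no a priori decay in $z$ allowing the pairing to be localized.
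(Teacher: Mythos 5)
Your proposal is correct and follows essentially the same route as the paper's proof: the uniform convergence $h_j\to h_0$ from the bound (6.14), the weak convergence of $G_j$ in the weighted space, and the strong convergence of $\Phi_{j,+}$ from Proposition 7.4 combined in a strong--weak pairing. The only cosmetic difference is that you test against $\rho$ and use the uniform convergence directly, whereas the paper tests against $\dot\rho$ with $\rho\in C^{\infty}_c[0,T)$; both yield (7.11) for a.e. $t$.
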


\begin{proof}
By the uniform bound (6.14), 

\begin{align*}
&\sup_{0\leq t\leq T}\left|\int_{\mathbb{R}^{3}}\Phi_{j,+}\frac{G_j}{r^{2}}\dd x-\int_{\mathbb{R}^{3}}\Phi_{0,+}\frac{G_0}{r^{2}}\dd x  \right|
\leq \int_{0}^{T}\left|\frac{\dd}{\dd t} \left(\int_{\mathbb{R}^{3}}\Phi_{j,+} \frac{G_j}{r^{2}} \dd x\right)\right| \dd t \\
&\lesssim T^{1/2}\mu^{1/2}_{j}\left(||v_0||_{L^{2}(\mathbb{R}^{3}) }^{2}+||b_0||_{L^{2}(\mathbb{R}^{3}) }^{2}  \right).
\end{align*}\\
Thus 

\begin{align}
\lim_{j\to\infty}\int_{\mathbb{R}^{3}}\Phi_{j,+}\frac{G_j}{r^{2}}\dd x=\int_{\mathbb{R}^{3}}\Phi_{0,+}\frac{G_0}{r^{2}}\dd x\quad \textrm{uniformly in}\ [0,T].  
\end{align}\\
For an arbitrary $\rho\in C_{c}^{\infty}[0,T)$,

\begin{align*}
\int_{0}^{T}\dot{\rho}(t)\left(\int_{\mathbb{R}^{3}}\Phi_{j,+} \frac{G_j}{r^{2}} \dd x \right)\dd t-\int_{0}^{T}\dot{\rho}(t)\left(\int_{\mathbb{R}^{3}}\Phi_{+} \frac{G}{r^{2}} \dd x \right)\dd t
&=\int_{0}^{T}\dot{\rho}(t)\left(\int_{\mathbb{R}^{3}}( \Phi_{j,+}-\Phi_{+}) \frac{G_j}{r^{2}} \dd x \right)\dd t \\
&+\int_{0}^{T}\dot{\rho}(t)\left(\int_{\mathbb{R}^{3}}\Phi_{+}(G_j-G) \frac{1}{r^{2}} \dd x \right)\dd t.
\end{align*}\\
The last term converges to zero as $j\to\infty$ since $G_j\rightharpoonup G$ weakly-star in $L^{\infty}(0,T;L^{2}(\mathbb{R}^{2}_{+};r^{-1}))$ by (6.2) and $\Phi_{+}\in L^{\infty}(0,T; L^{2}(\mathbb{R}^{2}_{+};r^{-1}) )$ by (3.18). By (7.10), 

\begin{align*}
&\left| \int_{0}^{T}\dot{\rho}(t)\left(\int_{\mathbb{R}^{3}}( \Phi_{j,+}-\Phi_{+}) \frac{G_j}{r^{2}} \dd x \right)\dd t \right| \\
&=2\pi \left| \int_{0}^{T}\dot{\rho}(t)\left(\int_{\mathbb{R}^{2}_{+}}( \Phi_{j,+}-\Phi_{+}) \frac{G_j}{r} \dd z\dd r \right)\dd t \right| \\
&\leq 2\pi ||\dot{\rho}||_{L^{7/6}(0,T) }||\Phi_{j,+}-\Phi_{+}||_{L^{7}(0,T; L^{2}(\mathbb{R}^{2}_{+};r^{-1} ) ) }\left(\sup_{j}||G_{j}||_{L^{\infty}(0,T; L^{2}(\mathbb{R}^{2}_{+};r^{-1}) ) }\right)\to 0.
\end{align*}\\
By the uniform convergence (7.12), 

\begin{align*}
\int_{0}^{T}\dot{\rho}(t)\left(\int_{\mathbb{R}^{3}}\Phi_{+}\frac{G}{r^{2}}\dd x\right) \dd t=-\rho(0)\int_{\mathbb{R}^{3}}\Phi_{0,+}\frac{G_0}{r^{2}}\dd x.
\end{align*}\\
Thus (7.11) holds for a.e. $t\in [0,T]$.
\end{proof}

\begin{thm}
Let $B_{\infty}=-2e_z$. Let $u_{\infty}$ be a constant parallel to $e_z$. Let $\phi_{\infty}=r^{2}+\gamma$ and $\gamma\geq 0$. There exists a weak ideal limit $(v,b)$ of axisymmetric Leray--Hopf solutions to (1.9) for $v_0,b_0\in L^{2}_{\sigma,\textrm{axi}}(\mathbb{R}^{3})$ satisfying

\begin{align}
&\int_{\mathbb{R}^{3}}\left(|v|^{2}+|b|^{2}\right) \dd x
\leq \int_{\mathbb{R}^{3}}\left(|v_0|^{2}+|b_0|^{2}\right) \dd x,    \\
&\int_{\mathbb{R}^{3}}(\phi-\phi_{\infty})_{+}\frac{G}{r^{2}}\dd x
=\int_{\mathbb{R}^{3}}(\phi_0-\phi_{\infty})_{+}\frac{G_0}{r^{2}}\dd x,  
\end{align}\\ 
for a.e. $t\in [0,T]$, where $\phi$, $G$ are the Clebsch potentials of $b$.
\end{thm}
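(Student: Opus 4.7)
The plan is to combine the existence of axisymmetric Leray--Hopf solutions (Theorem 6.3) with the helicity conservation result for weak ideal limits (Theorem 7.5), plus a standard weak-star compactness argument.

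First, I would fix sequences $\nu_j, \mu_j > 0$ with $(\nu_j,\mu_j)\to (0,0)$ and apply Theorem 6.3 to obtain, for each $j$, an axisymmetric Leray--Hopf solution $(v_j,b_j)$ to (1.9) with the common initial data $(v_0,b_0)\in L^{2}_{\sigma,\textrm{axi}}(\mathbb{R}^{3})$. The energy inequality (6.1) in Definition 6.1 gives the uniform bound
\begin{equation*}
\sup_j \|(v_j,b_j)\|_{L^{\infty}(0,T; L^{2}(\mathbb{R}^{3}))}^{2} \leq \|v_0\|_{L^{2}}^{2}+\|b_0\|_{L^{2}}^{2}.
\end{equation*}

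Next, by the Banach--Alaoglu theorem, there exists a subsequence (still denoted $\{(v_j,b_j)\}$) and $(v,b)\in L^{\infty}(0,T; L^{2}(\mathbb{R}^{3}))$ such that $(v_j,b_j)\overset{\ast}{\rightharpoonup}(v,b)$ in $L^{\infty}(0,T; L^{2}(\mathbb{R}^{3}))$, so $(v,b)$ is a weak ideal limit in the sense of Definition 6.2 (with fixed initial data, trivially $(v_{0,j},b_{0,j})=(v_0,b_0)\rightharpoonup (v_0,b_0)$). The axisymmetry is preserved under weak-star limits since the class of axisymmetric fields is weakly closed in $L^{2}$. The energy inequality (7.13) then follows for a.e. $t\in [0,T]$ from the weak-star lower semi-continuity of the $L^{2}$-norm applied to (6.1): for every nonnegative $\rho\in L^{1}(0,T)$,
\begin{equation*}
\int_{0}^{T}\rho(t)\|(v,b)(t)\|_{L^{2}}^{2}\dd t
\leq \liminf_{j\to\infty}\int_{0}^{T}\rho(t)\|(v_j,b_j)(t)\|_{L^{2}}^{2}\dd t
\leq \|\rho\|_{L^{1}}\left(\|v_0\|_{L^{2}}^{2}+\|b_0\|_{L^{2}}^{2}\right),
\end{equation*}
and letting $\rho$ range over approximations of Dirac masses at Lebesgue points of $t\mapsto \|(v,b)(t)\|_{L^{2}}^{2}$ yields (7.13) almost everywhere.

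Finally, the helicity conservation (7.14) is an immediate application of Theorem 7.5 to the weak ideal limit $(v,b)$ just constructed: if $\phi$ and $G$ are the Clebsch potentials of $b$ supplied by Lemma 3.4, then (7.11) reads exactly as (7.14), since $\phi_{\infty}=r^{2}+\gamma$. There is no real obstacle here beyond noting that all the ingredients have already been established: Theorem 6.3 supplies the approximants, the uniform energy bound supplies compactness, lower semi-continuity supplies (7.13), and Theorem 7.5 supplies (7.14). The only mild subtlety is verifying that axisymmetry passes to the weak-star limit and that one can extract a single subsequence for which both the weak-star convergence and the helicity conservation (proved via a further subsequence in the course of Theorem 7.5) hold simultaneously; both points are handled by a standard diagonal extraction.
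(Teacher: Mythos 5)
Your proposal is correct and follows essentially the same route as the paper's proof of this theorem: construct axisymmetric Leray--Hopf solutions via Theorem 6.3 for $(\nu_j,\mu_j)\to(0,0)$, extract a weak-star convergent subsequence using the uniform energy bound (6.1), obtain (7.13) from lower semi-continuity of the norm, and invoke Theorem 7.5 for (7.14). The extra care you take with the a.e.\ pointwise form of the energy inequality and with the consistency of subsequences is sound but not beyond what the paper's short argument already implicitly covers.
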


\begin{proof}
We take $(\nu_j,\mu_j)$ such that $(\nu_j,\mu_j)\to (0,0)$. By Theorem 6.3, there exists an axisymmetric Leray--Hopf weak solution $(v_j,b_j)$ to (1.9) for $(v_0,b_0)$. By (6.1), there exists a subsequence and $(v,b)$ such that (6.2) holds. The limit $(v,b)$ satisfies (7.13) by the lower semicontinuity of the norm for the weak-star convergence (6.2). The conservation (7.14) follows from Theorem 7.5.
\end{proof}

\begin{rem}
The results in Sections 6 and 7 hold also for $B_{\infty}=-We_{z}$, $\phi_{\infty}=Wr^{2}/2+\gamma$, $W>0$, and $\gamma\geq 0$.
\end{rem}

\section{Orbital stability}
We complete the proof of Theorem 1.4. We first show the stability of a set of minimizers to the variational problem (4.1) with parameters $(h,2,\gamma)$ in weak ideal limits of axisymmetric Leray--Hopf solutions by the compactness result in Theorem 5.6 and the existence result for weak ideal limits in Theorem 7.6. We then extend the result for general parameters $(h, W,\gamma)$. We derive Theorem 1.4 from the uniqueness of the Grad--Shafranov equation (4.3) for $\gamma=0$ and the explicit form of the constant $h_C$ in (1.11).

\subsection{Stability of minimizers}

We denote the suppressed constant $\gamma\geq 0$ explicitly for $H[\cdot]=H_{\gamma}[\cdot]$, $I_{h}=I_{h,\gamma}$ and $S_{h}=S_{h,\gamma}$ in (3.26), (4.1), and (4.2).

\begin{prop} Let $h\in \mathbb{R}$ and $\gamma\geq 0$. Let $\phi_{\infty}=r^{2}+\gamma$ and $B_{\infty}=-2e_{z}$. Let $u_{\infty}$ be a constant parallel to $e_z$. The set $S_{h,\gamma}$ in (4.2) is orbitally stable in weak ideal limits of axisymmetric Leray--Hopf solutions to (1.9) in the sense that for arbitrary $\varepsilon >0$, there exists $\delta >0$ such that for $v_0,b_0\in L^{2}_{\sigma,\textrm{axi}} (\mathbb{R}^{3})$ satisfying 

\begin{align*}
||v_0||_{L^{2}(\mathbb{R}^{3})}
+\inf\left\{  ||b_0-\tilde{b}||_{L^{2}(\mathbb{R}^{3}) }\ \big|\   \tilde{b}\in S_{h,\gamma}\  \right\}+\left|H_{\gamma}[b_0]-h\right| \leq \delta, 
\end{align*}\\
there exists a weak ideal limit $(v,b)$ of axisymmetric Leray--Hopf solutions to (1.9) for $(v_0,b_0)$ such that 

\begin{align*}
\textrm{ess sup}_{t>0} \left(||v||_{L^{2}(\mathbb{R}^{3})}
+\inf\left\{  ||b-\tilde{b}||_{L^{2}(\mathbb{R}^{3}) }\ \big|\   \tilde{b}\in S_{h,\gamma}\  \right\} \right)\leq \varepsilon. 
\end{align*}
\end{prop}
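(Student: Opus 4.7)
The approach is a direct adaptation of the contradiction argument used in Proposition 2.18, now combining the compactness of axisymmetric minimizing sequences from Theorem 5.6 with the existence of weak ideal limits satisfying the energy inequality (7.13) and generalized magnetic helicity conservation (7.14) supplied by Theorem 7.6. Suppose the assertion fails. Then there exist $\varepsilon_0>0$ and a sequence $(v_{0,n},b_{0,n})\in L^{2}_{\sigma,\mathrm{axi}}(\mathbb{R}^{3})$ with
\begin{equation*}
\|v_{0,n}\|_{L^{2}} + \inf_{\tilde b\in S_{h,\gamma}}\|b_{0,n}-\tilde b\|_{L^{2}} + |H_{\gamma}[b_{0,n}]-h| \le \tfrac{1}{n},
\end{equation*}
and for each $n$ there is a weak ideal limit $(v_n,b_n)$ of axisymmetric Leray--Hopf solutions for which the essential supremum of $\|v_n\|_{L^{2}} + \inf_{\tilde b\in S_{h,\gamma}}\|b_n-\tilde b\|_{L^{2}}$ exceeds $\varepsilon_0$. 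Intersecting this ``bad'' set with the full-measure set on which (7.13) and (7.14) both hold produces times $t_n>0$ where the stability bound fails by at least $\varepsilon_0/2$ while the energy inequality and helicity conservation are in force.

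The next step is to upgrade the smallness of the initial data to convergence of total energy and generalized magnetic helicity at $t_n$. Choose $\tilde b_n\in S_{h,\gamma}$ with $\|b_{0,n}-\tilde b_n\|_{L^{2}}\to 0$, so that $\|b_{0,n}\|_{L^{2}}^{2}\to 2I_{h,\gamma}$ and $\mathcal{E}[v_{0,n},b_{0,n}]\to I_{h,\gamma}$. By (7.14), $h_n:=H_{\gamma}[b_n(t_n)] = H_{\gamma}[b_{0,n}]\to h$, and by the definition of the minimum $E[b_n(t_n)]\ge I_{h_n,\gamma}$. Combining this with the energy inequality (7.13) and the lower semi-continuity (4.17) of $I_{\cdot,\gamma}$ yields
\begin{equation*}
I_{h,\gamma}\ \le\ \liminf_{n\to\infty} I_{h_n,\gamma}\ \le\ \liminf_{n\to\infty}\mathcal{E}[v_n,b_n](t_n)\ \le\ \limsup_{n\to\infty}\mathcal{E}[v_{0,n},b_{0,n}]\ =\ I_{h,\gamma},
\end{equation*}
so $\mathcal{E}[v_n,b_n](t_n)\to I_{h,\gamma}$ and $H_{\gamma}[b_n(t_n)]\to h$.

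Theorem 5.6 then applies to $\{(v_n(t_n),b_n(t_n))\}$: a subsequence and translations $z_k\in\mathbb{R}$ exist so that $(v_{n_k}(t_{n_k}), b_{n_k}(\cdot+z_ke_z,t_{n_k}))\to(0,b_\star)$ in $L^{2}(\mathbb{R}^{3})$ for some $b_\star\in S_{h,\gamma}$. Because $\phi_{\infty}=r^{2}+\gamma$ is independent of $z$, both $E[\cdot]$ and $H_{\gamma}[\cdot]$ are invariant under $z$-translation, so $S_{h,\gamma}$ is itself $z$-translation invariant; in particular $b_\star(\cdot-z_ke_z)\in S_{h,\gamma}$, whence
\begin{equation*}
\inf_{\tilde b\in S_{h,\gamma}}\|b_{n_k}(t_{n_k})-\tilde b\|_{L^{2}} \ \le\ \|b_{n_k}(\cdot+z_ke_z,t_{n_k})-b_\star\|_{L^{2}}\ \longrightarrow\ 0.
\end{equation*}
Together with $\|v_{n_k}(t_{n_k})\|_{L^{2}}\to 0$ this contradicts the lower bound $\varepsilon_0/2$, finishing the proof. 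The only nontrivial point is the alignment between the $z$-translation provided by the compactness Theorem 5.6 and the infimum over the translation-invariant orbit $S_{h,\gamma}$; this is exactly the reason the variational principle (4.1) was designed with $\phi_{\infty}$ depending only on $r$, and no further obstacle appears.
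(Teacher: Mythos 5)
Your argument is correct and follows essentially the same route as the paper's proof of this proposition: a contradiction argument selecting times $t_n$ in the full-measure set where the energy inequality and helicity conservation from Theorem 7.6 hold, forcing $\mathcal{E}[v_n,b_n](t_n)\to I_{h,\gamma}$ and $H_{\gamma}[b_n(t_n)]\to h$ via the lower semi-continuity (4.17), and then invoking the compactness Theorem 5.6 together with the $z$-translation invariance of $S_{h,\gamma}$. The only cosmetic difference is that you obtain $\mathcal{E}[v_{0,n},b_{0,n}]\to I_{h,\gamma}$ by picking near-optimal $\tilde b_n\in S_{h,\gamma}$ with $\|\tilde b_n\|_{L^2}^2=2I_{h,\gamma}$, whereas the paper deduces it from the inequality $I_{h_n,\gamma}\le\frac12\|b_{0,n}\|_{L^2}^2\le\frac12\bigl(\inf_{\tilde b\in S_{h,\gamma}}\|b_{0,n}-\tilde b\|_{L^2}+\sqrt{2}\,I_{h,\gamma}^{1/2}\bigr)^2$; both are valid.
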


\vspace{5pt}

\begin{proof}
Suppose that the assertion was false. Then, there exists $\varepsilon_0>0$ such that for any $n\geq 1$, there exists $(v_{0,n},b_{0,n})$ satisfying

\begin{align*}
||v_{0,n}||_{L^{2}(\mathbb{R}^{3})}+\inf\left\{  ||b_{0,n}-\tilde{b}||_{L^{2}(\mathbb{R}^{3}) }\ \big|\   \tilde{b}\in S_{h,\gamma}\  \right\}+|H_{\gamma}[b_{0,n}]-h|
\leq \frac{1}{n},
\end{align*}\\
and the weak ideal limit $(v_n,b_n)$ in Theorem 7.6 satisfying 

\begin{align*}
\textrm{ess sup}_{t>0}\left( ||v_n||_{L^{2}(\mathbb{R}^{3})}
+\inf\left\{  ||b_n-\tilde{b}||_{L^{2}(\mathbb{R}^{3}) }\ \big|\   \tilde{b}\in S_{h,\gamma}\  \right\}\right)\geq \varepsilon_0.
\end{align*}\\
We denote by $F_n$ the set of all points $t\in (0,\infty)$ such that 

\begin{align*}
&{\mathcal{E}}[v_n,b_n]\leq {\mathcal{E}}[v_{0,n},b_{0,n}], \\
&H_{\gamma}[b_n]=H_{\gamma}[b_{0,n}].
\end{align*}\\
The set $F_n^{c}$ has measure zero. For $F=\cap_{n=1}^{\infty} F_n$, $F^{c}$ has measure zero, and the above inequality and equality hold for all  $t\in F$ and $n\geq 1$. We take a point $t_n\in F$ such that 

\begin{align*}
||v_n||_{L^{2}(\mathbb{R}^{3})}(t_n)
+\inf\left\{  ||b_n-\tilde{b}||_{L^{2}(\mathbb{R}^{3}) }\ \big|\   \tilde{b}\in S_{h,\gamma} \right\}(t_n)
\geq \frac{\varepsilon_0}{2}>0,
\end{align*}\\
and write $(v_n,b_n)=(v_n,b_n)(\cdot,t_n)$ by suppressing $t_n$. 

For $h_n=H_{\gamma}[b_{0,n}]$,

\begin{align*}
I_{h_n,\gamma}\leq \frac{1}{2}\int_{\mathbb{R}^{3}}|b_{0,n}|^{2}\dd x\leq \frac{1}{2}\left(\inf\left\{  ||b_{0,n}-\tilde{b}||_{L^{2}(\mathbb{R}^{3}) }\ \big|\   \tilde{b}\in S_{h,\gamma} \right\}+\sqrt{2}I_{h,\gamma}^{1/2}  \right)^{2}.
\end{align*}\\
By the lower semi-continuity (4.17), letting $n\to\infty$ implies that 

\begin{align*}
H_{\gamma}[b_{0,n}]\to h,\quad {\mathcal{E}}[v_{0,n},b_{0,n}]\to I_{h,\gamma}.
\end{align*}\\
By helicity conservation and non-increasing total energy,

\begin{align*}
&H_{\gamma}[b_n]=H_{\gamma}[b_{0,n}]=h_n,\\
&I_{h_n,\gamma}\leq {\mathcal{E}}[v_n,b_n]\leq {\mathcal{E}}[v_{0,n},b_{0,n}]. 
\end{align*}\\
Letting $n\to\infty$ implies that

\begin{align*}
H_{\gamma}[b_n]\to h,\quad {\mathcal{E}}[v_n,b_n]\to I_{h,\gamma}.
\end{align*}\\
By Theorem 5.6, there exists $\{n_k\}$, $\{z_{k}\}$ and some $b\in S_{h,\gamma}$ such that 

\begin{align*}
(v_{n_k},b_{n_k}(\cdot +z_{k}e_z))\to (0,b) \quad \textrm{in}\ L^{2}(\mathbb{R}^{3}).
\end{align*}\\
Thus 

\begin{align*}
0&=\lim_{k\to\infty}\left\{ ||v_{n_k}||_{L^{2}(\mathbb{R}^{3})}+||b_{n_k}(\cdot+z_{k}e_z)-b||_{L^{2}(\mathbb{R}^{3})}   \right\} \\
&\geq\liminf_{k\to\infty}\left( ||v_{n_k}||_{L^{2}(\mathbb{R}^{3})}+\inf\left\{\ ||b_{n_k}-\tilde{b}||_{L^{2}(\mathbb{R}^{3})}  \ \middle|\ \tilde{b}\in S_{h,\gamma}  \right\}\right)
\geq \frac{\varepsilon_0}{2}>0.
\end{align*}\\
We obtained a contradiction. The proof is complete.
\end{proof}

\begin{thm}[Stability of nonlinear force-free fields with discontinuous factors] Let $h\in \mathbb{R}$, $W>0$ and $\gamma\geq 0$. Let $\phi_{\infty}=Wr^{2}/2+\gamma$ and $B_{\infty}=-We_{z}$. Let $u_{\infty}$ be a constant parallel to $e_z$. The set of minimizers $S_{h,W,\gamma}$ to $I_{h,W,\gamma}$ in (1.13) is orbitally stable in weak ideal limits of axisymmetric Leray--Hopf solutions to (1.9) in the sense that for arbitrary $\varepsilon >0$ there exists $\delta >0$ such that for $v_0,b_0\in L^{2}_{\sigma,\textrm{axi}} (\mathbb{R}^{3})$ satisfying 

\begin{align*}
||v_0||_{L^{2}(\mathbb{R}^{3})}
+\inf\left\{  ||b_0-\tilde{b}||_{L^{2}(\mathbb{R}^{3}) }\ \big|\   \tilde{b}\in S_{h,W,\gamma}\  \right\}+\left|2\int_{\mathbb{R}^{3}}(\phi_0-\phi_{\infty})_+\frac{G_0}{r^{2}}\dd x -h\right| \leq \delta, 
\end{align*}\\
for the Clebsch potentials $\phi_0,G_0$ of $b_0$, there exists a weak ideal limit $(v,b)$ of axisymmetric Leray--Hopf solutions to (1.9) for $(v_0,b_0)$ such that 

\begin{align*}
\textrm{ess sup}_{t>0}\left( ||v||_{L^{2}(\mathbb{R}^{3})}
+\inf\left\{  ||b-\tilde{b}||_{L^{2}(\mathbb{R}^{3}) }\ \big|\   \tilde{b}\in S_{h,W,\gamma}\  \right\}\right) \leq \varepsilon. 
\end{align*}
\end{thm}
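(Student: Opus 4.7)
The plan is to reduce Theorem 8.2 to its special case Proposition 8.1 (where $W=2$) via the natural scaling symmetry of (1.9). Set $\kappa = W/2 > 0$ and introduce the transformation
\begin{equation*}
\tilde{v}(x,\tilde{t}) = \kappa^{-1} v(x, \kappa^{-1}\tilde{t}), \quad \tilde{b}(x,\tilde{t}) = \kappa^{-1} b(x, \kappa^{-1}\tilde{t}), \quad \tilde{u}_{\infty} = \kappa^{-1}u_{\infty}, \quad \tilde{B}_{\infty} = -2e_{z},
\end{equation*}
together with rescaled viscosities $(\tilde{\nu},\tilde{\mu}) = (\nu/\kappa,\mu/\kappa)$. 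The first step is to verify by a direct change of variables that this map sends axisymmetric Leray--Hopf solutions of (1.9) with parameters $(W,\nu,\mu)$ to axisymmetric Leray--Hopf solutions with parameters $(2,\tilde{\nu},\tilde{\mu})$: the weak formulation in Definition 6.1 transforms term-by-term once one substitutes $\xi(\cdot,\kappa^{-1}\tilde{t})$ for the test function, and the energy inequality (6.1) is preserved because both sides pick up the common prefactor $\kappa^{2}$. Since $\tilde{\nu},\tilde{\mu} \to 0$ iff $\nu,\mu \to 0$ and weak-star convergence in $L^{\infty}_{\tilde{t}} L^{2}_{x}$ matches weak-star convergence in $L^{\infty}_{t} L^{2}_{x}$, weak ideal limits are mapped to weak ideal limits.

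Second, I would check that every ingredient of the stability statement transforms consistently. The Clebsch potentials rescale as $(\tilde{\phi},\tilde{G}) = (\kappa^{-1}\phi,\kappa^{-1}G)$, and the far-field potential becomes $\tilde{\phi}_{\infty} = r^{2} + \tilde{\gamma}$ with $\tilde{\gamma} = \kappa^{-1}\gamma \geq 0$; Remark 5.7 then yields
\begin{equation*}
H_{\tilde{\gamma}}[\tilde{b}] = \kappa^{-2} H_{\gamma}[b], \qquad I_{\tilde{h}, 2, \tilde{\gamma}} = \kappa^{-2} I_{h,W,\gamma}, \qquad \tilde{h} = \kappa^{-2} h,
\end{equation*}
so that $b \mapsto \tilde{b}$ is a bijection between $S_{h,W,\gamma}$ and $S_{\tilde{h},2,\tilde{\gamma}}$. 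Note that $L^{2}$ norms (including the distance to the orbit) scale by the factor $\kappa^{-1}$, while the helicity discrepancy scales by $\kappa^{-2}$.

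Given $\varepsilon > 0$, I would apply Proposition 8.1 with tolerance $\tilde{\varepsilon} = \kappa^{-1}\varepsilon$ and parameter $\tilde{\gamma}$ to obtain some $\tilde{\delta} > 0$, and then choose $\delta = \tilde{\delta}/(2\kappa^{-1} + \kappa^{-2})$. Any data $(v_{0},b_{0})$ satisfying the hypothesis of Theorem 8.2 with this $\delta$ rescales, by the bounds in the preceding paragraph, to $(\tilde{v}_{0},\tilde{b}_{0})$ satisfying the hypothesis of Proposition 8.1. The weak ideal limit $(\tilde{v},\tilde{b})$ produced by Proposition 8.1 obeys $\|\tilde{v}\|_{L^{2}} + \inf_{\tilde{U} \in S_{\tilde{h},2,\tilde{\gamma}}}\|\tilde{b} - \tilde{U}\|_{L^{2}} \leq \tilde{\varepsilon}$ for a.e.\ $\tilde{t}\geq 0$, and undoing the scaling delivers the required weak ideal limit $(v,b)$ with bound $\kappa\tilde{\varepsilon} = \varepsilon$ for a.e.\ $t\geq 0$. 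The main (and essentially only) obstacle is the first step: faithfully tracking the Leray--Hopf class through the rescaling, verifying both the weak formulation and the energy inequality under the simultaneous rescaling of $x$-independent fields, time, and viscosities. Once this bookkeeping is settled, no new analytical content is required beyond Proposition 8.1.
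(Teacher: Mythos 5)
Your proposal is correct, but it takes a genuinely different route from the paper. The paper does not rescale the dynamics at all: its proof of Theorem 8.2 simply reruns the contradiction argument of Proposition 8.1, replacing its two inputs by their general-$W$ counterparts --- compactness of minimizing sequences for $I_{h,W,\gamma}$ (Remark 5.7, where the scaling $b\mapsto (W/2)^{-1}b$ is applied only at the level of the \emph{static} variational problem) and existence of weak ideal limits with nonincreasing energy and conserved generalized magnetic helicity for $\phi_{\infty}=Wr^{2}/2+\gamma$ (Remark 7.7, obtained by noting that the arguments of Sections 6 and 7 go through unchanged for general $W$). You instead push the scaling through the full time-dependent problem, mapping Leray--Hopf solutions and weak ideal limits for parameter $W$ to those for $W=2$ via $(v,b)\mapsto \kappa^{-1}(v,b)(\cdot,\kappa^{-1}\tilde{t})$ with $(\nu,\mu)\mapsto \kappa^{-1}(\nu,\mu)$, and then invoke Proposition 8.1 as a black box. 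Your route is more economical in that it never revisits Sections 6 and 7 for general $W$, at the price of the routine but genuinely necessary verification that the Leray--Hopf class --- the weak formulation against time-independent test fields in $L^{2}_{\sigma}\cap H^{1}$, the regularity class of $\varphi$ and $\varphi_{t}$, the energy inequality (6.1), axisymmetry, and the definition of weak ideal limit --- is preserved under the simultaneous rescaling of amplitude, time, and viscosities; your bookkeeping of how hypothesis and conclusion transform ($L^{2}$ distances and the orbit distance by $\kappa^{-1}$, the helicity discrepancy by $\kappa^{-2}$, and the ``a.e.\ $t$'' quantifier unaffected by the linear time change) is consistent with Remark 5.7, so both proofs are valid.
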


\vspace{5pt}

\begin{proof}
The assertion follows from the same argument as the proof of Proposition 8.1 for $W=2$ by the compactness of minimizing sequences to $I_{h, W,\gamma}$ (Remark 5.7) and the existence of weak ideal limits conserving generalized magnetic helicity (Remark 7.7).   
\end{proof}

\subsection{Uniqueness}

For given constants $(W,\lambda)$ and the constant $h_C$ in (1.11), we show that the set of minimizers $S_{h_C,W,0}$ is translations of $U_C-B_{\infty}$ for the explicit solution $U_C$ in (1.6). The following uniqueness result is due to Fraenkel \cite[Theorem 4]{Fra92}, \cite[Exercise 4.23]{Fra00}.

\begin{thm}
Let $W>0$, $\kappa\in \mathbb{R}$ and $\phi_{\infty}=Wr^{2}/2$. Let $\phi\in \dot{H}^{1}_{0}(\mathbb{R}^{2}_{+};r^{-1})$ be a weak solution to (4.3). There exists $z_0\in \mathbb{R}$ such that $\phi-\phi_{\infty}=\Phi_{C}(\cdot+ z_0e_{z})$ for $\Phi_C$ with $(W,\kappa^{2})$ in (1.6).\\ 
\end{thm}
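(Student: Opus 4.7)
The plan is to reduce the Grad--Shafranov equation (4.3) on $\mathbb{R}^2_+$ to a semilinear elliptic equation on $\mathbb{R}^5$ via the transform (3.10) and then to invoke the moving plane method of Fraenkel \cite{Fra92}, \cite{Fra00}. Writing $\varphi(y) = \phi(z,r)/r^2$ with $y_1 = z$ and $|y'| = r$, the argument in the proof of Proposition 4.4 (specialised to $\gamma = 0$, so that $\phi_\infty/r^2 = W/2$) shows that (4.4) is equivalent to
\begin{align*}
-\Delta_y \varphi = \mu^2 (\varphi - W/2)_+ \quad \textrm{in}\ \mathbb{R}^5.
\end{align*}
From the same proof $\varphi \geq 0$ and $\varphi \in \dot H^1(\mathbb{R}^5) \cap L^{10/3}(\mathbb{R}^5)$, and Calder\'on--Zygmund applied to $-\Delta_y\varphi \in L^{10/3}(\mathbb{R}^5)$ (with a standard bootstrap using that the nonlinearity is bounded once $\varphi$ is) yields $\varphi \in C^{1,\alpha}(\mathbb{R}^5)$ for some $\alpha \in (0,1)$, together with $\varphi(y) \to 0$ as $|y|\to\infty$.

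With positivity, $C^{1,\alpha}$ regularity, and vanishing at infinity in hand, I would apply the moving plane method for vortex rings developed in \cite{Fra92}, \cite{Fra00}. The nonlinearity $s \mapsto \mu^2(s - W/2)_+$ is Lipschitz and non-decreasing, which is enough to run the sliding--hyperplane comparison via the maximum principle on each half-space, and the decay at infinity permits one to start the hyperplanes from infinity. The conclusion is that $\varphi$ is symmetric and monotone decreasing with respect to reflection in every hyperplane through some common point, hence radially symmetric and radially decreasing about some centre $y^{*}\in \mathbb{R}^5$. Since $\varphi$ inherits $SO(4)$-invariance in $y'$ from the axisymmetry of $\phi$, the centre must lie on the axis $\{y' = 0\}$, say $y^{*} = -z_0 e_z$. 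After translating by $z_0$ in $z$ we obtain $\varphi(y) = \varphi(\rho)$ with $\rho = |y| = \sqrt{z^2 + r^2}$.

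It remains to identify this radial profile. By radial monotonicity, the positivity set $\{\varphi > W/2\}$ is a ball $\{\rho < R\}$; it is nonempty, for otherwise $\varphi$ would be superharmonic on $\mathbb{R}^5$ with zero limit, hence identically zero, contradicting the presence of a nontrivial helicity solution. Outside the ball, $\varphi$ is radial and harmonic on $\mathbb{R}^5$ with $\varphi\to 0$, so $\varphi(\rho) = c\rho^{-3}$, and matching $\varphi(R) = W/2$ gives $c = WR^3/2$. Inside, $\psi := \varphi - W/2$ satisfies the Helmholtz equation $-\Delta_y \psi = \mu^2 \psi$; the unique (up to constant) radial solution bounded at $\rho = 0$ is $\psi(\rho) = A \rho^{-3/2} J_{3/2}(\mu\rho)$. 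Continuity of $\psi$ at $\rho = R$ forces $J_{3/2}(\mu R) = 0$, and since $\psi > 0$ in $\{\rho < R\}$ only the first zero is admissible, i.e. $\mu R = c_{3/2}$; the $C^{1}$ matching of $\varphi'$ at $\rho = R$ then determines $A$ uniquely. Unwinding the transform reconstructs exactly $\phi - \phi_\infty = \Phi_C(\cdot + z_0 e_z)$ with parameters $(W,\mu^2)$ as in (1.6).

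The principal technical obstacle lies in the moving plane step: the nonlinearity is only Lipschitz, not $C^1$; the free boundary $\{\varphi = W/2\}$ is a priori only a level set of a $C^{1,\alpha}$ function, and the ``corner'' of the rearrangement has to be handled with care as the hyperplane is slid across it. Fraenkel's treatment of Grad--Shafranov-type vortex ring problems in \cite{Fra92}, \cite{Fra00} is designed for precisely this setting, and I would invoke his framework directly; once radial symmetry is secured, everything else reduces to the matched Bessel computation sketched above.
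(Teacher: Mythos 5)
Your proposal follows essentially the same route as the paper: the transform $\varphi=\phi/r^{2}$ to the five-dimensional semilinear equation $-\Delta_y\varphi=\mu^{2}(\varphi-W/2)_{+}$, the regularity and decay of $\varphi$ from the proof of Proposition 4.4, Fraenkel's moving plane method to obtain radial symmetry and monotonicity up to a translation in the $y_1$-direction, and the matched Bessel-function computation (harmonic $c\rho^{-3}$ outside, $\rho^{-3/2}J_{3/2}(\mu\rho)$ inside, with $\mu R=c_{3/2}$) identifying the profile with $\Phi_C$. The only point where the paper is more explicit is that it derives the admissible asymptotic behavior required by Fraenkel's theorem from the Newton-potential representation of $\varphi$ over the compact set $\{\varphi>W/2\}$, which is the precise substitute for your appeal to ``decay at infinity'' when starting the hyperplanes from infinity.
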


The proof of Theorem 8.3 is based on the moving plane method. As proved in the proof of Proposition 4.4, $\varphi=\phi/r^{2}$ is a continuous decaying solution to 

\begin{align}
-\Delta_{y} \varphi=\kappa^{2}\left(\varphi-\frac{W}{2} \right)_{+}\quad \textrm{in}\ \mathbb{R}^{5}.
\end{align}\\
The decay implies that the set $\{y\in \mathbb{R}^{5}\ |\ \varphi>W/2 \}$ is compact in $\mathbb{R}^{5}$. Therefore, $\varphi$ is expressed in terms of the Newton potential,  

\begin{align*}
\varphi(y)=\frac{1}{8\pi^{2}}\int_{\{\varphi>W/2\}}\frac{\kappa^{2}}{|y-w|^{3}}\left(\varphi-\frac{W}{2} \right)_{+}\dd w.
\end{align*}\\
This potential representation implies that $\varphi$ is a positive solution to (8.1) satisfying an admissible asymptotic behavior as $|y|\to\infty$ which enables one to apply the moving plane method \cite[Theorem 4.2]{Fra00} to deduce that a translation of $\varphi$ in $y_1$-direction is radially symmetric and decreasing.

The symmetry of $\varphi$ implies the explicit form (1.6) for $\lambda=\kappa^{2}$ \cite{Moffatt}, cf. \cite{Fra92}, \cite{Fra00}. The equation of $\varphi=\varphi(\rho)$ for $\rho=\sqrt{z^{2}+r^{2}}$ is 

\begin{align*}
-\left(\partial_{\rho}^{2}+\frac{4}{\rho}\partial_{\rho} \right)\varphi=\kappa^{2}\left(\varphi-\frac{W}{2}\right)_{+},\quad \rho>0. 
\end{align*}\\
Since $\varphi$ is decreasing, there exists a unique $R>0$ such that $\varphi(R)=W/2$. The function $\tilde{\varphi}=\varphi-W/2$ satisfies 

\begin{align*}
-\left(\partial_{\rho}^{2}+\frac{4}{\rho}\partial_{\rho} \right)\tilde{\varphi}=\kappa^{2}\tilde{\varphi}_{+},\quad \rho>0. 
\end{align*}\\
The scaled $\hat{\varphi}=\tilde{\varphi}(\rho/|\kappa|)$ satisfies $-(\partial_{\rho}^{2}+4\rho^{-1}\partial_{\rho} )\hat{\varphi}=\hat{\varphi}$ and $\hat{\varphi}>0$ for $0<\rho<R_0=|\kappa| R$ and $\hat{\varphi}(R_0)=0$. By the transform $\iota=\rho^{3/2}\hat{\varphi}$, this equation is reduced to the Bessel's differential equation, 

\begin{align*}
\iota''+\frac{1}{\rho}\iota'+\left(1-\frac{(3/2)^{2} }{\rho^{2}}\right)\iota=0,\quad \iota&>0, \quad 0<\rho<R_0,\\
\iota(R_0)&=0.
\end{align*}\\
By the boundedness of $\iota$ at $\rho=0$, $\iota$ is the $3/2$-th order Bessel function of the first kind with some constant $C_1$, i.e., 

\begin{align*}
\iota&=C_1J_{3/2}(\rho),\\
R_0&=c_{3/2}.
\end{align*}\\
The function $\tilde{\varphi}$ is harmonic for $\rho>R$ and expressed as $\tilde{\varphi}=C_2+C_3/\rho^{3}$ for $\rho>R$ with $C_2=-W/2$ and  $C_3=WR^{2}/2$ by the boundary conditions $\tilde{\varphi }\to -W/2$ as $\rho\to\infty$ and $\tilde{\varphi}(R)=0$. By continuity of $\partial_{\rho}\tilde{\varphi}$ at $\rho=R$ and $\dot{J}_{3/2}(c_{3/2})=-J_{5/2}(c_{3/2})$, 

\begin{align*}
C_1=\frac{3}{2}W\frac{c_{3/2}^{1/2} }{|\kappa|^{3}J_{5/2}(c_{3/2})   }.
\end{align*}\\
This implies the explicit form (1.6) with $(W,\kappa^{2})$, i.e., $\Phi=\Phi_{C}$, since $\tilde{\varphi}=\Phi/r^{2}$ for $\Phi=\phi-\phi_{\infty}$.\\

Moffatt \cite[p.128]{Moffatt} computed the constant $h_C$ in (1.11). We write it with our symbols.

\begin{prop}
The constant $h_C$ in (1.11) is  

\begin{align}
h_C=\left(\frac{W}{\lambda}\right)^{2}\frac{12\pi c_{3/2}}{J_{5/2}^{2}(c_{3/2}) }\int_{0}^{c_{3/2}}\rho J_{3/2}^{2}(\rho)\dd \rho.
\end{align}
\end{prop}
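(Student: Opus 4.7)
The plan is to evaluate the integral $h_C = 2\lambda^{1/2}\int_{\mathbb{R}^{3}}\Phi_{C,+}^{2}r^{-2}\,dx$ by direct computation from the explicit formula (1.6). The first step is to identify the support of $\Phi_{C,+}$. Outside the ball $\{\rho \ge R\}$, the factor $1-R^{3}/\rho^{3}$ is nonnegative and $W>0$, so $\Phi_{C} \le 0$ there. Inside the ball $\{\rho<R\}$, the sign of $\Phi_{C}$ is determined by $J_{3/2}(\lambda^{1/2}\rho)/(\lambda^{1/2}\rho)^{3/2}$, which is strictly positive on $(0,c_{3/2})$ since $c_{3/2}$ is the first positive zero of $J_{3/2}$. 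Hence $\Phi_{C,+}=\Phi_{C}\,\mathbf{1}_{\{\rho<R\}}$ and the integration reduces to the ball of radius $R$.

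Next, I would switch to spherical coordinates $(\rho,\vartheta,\theta)$ with $r=\rho\sin\vartheta$, $z=\rho\cos\vartheta$, $dx=\rho^{2}\sin\vartheta\,d\rho\,d\vartheta\,d\theta$. The integrand $\Phi_{C,+}^{2}/r^{2}$ separates neatly: the prefactor $r^{2}$ in $\Phi_{C}$ combines with the weight $r^{-2}$ to leave a factor $r^{2}=\rho^{2}\sin^{2}\vartheta$, so that
\begin{equation*}
\int_{\mathbb{R}^{3}}\Phi_{C,+}^{2}\frac{1}{r^{2}}\,dx
=\frac{9W^{2}c_{3/2}}{4 J_{5/2}^{2}(c_{3/2})}\left(\int_{0}^{2\pi}d\theta\right)\!\left(\int_{0}^{\pi}\sin^{3}\vartheta\,d\vartheta\right)\!\int_{0}^{R}\rho^{4}\,\frac{J_{3/2}^{2}(\lambda^{1/2}\rho)}{(\lambda^{1/2}\rho)^{3}}\,d\rho.
\end{equation*}
The angular integrals yield $2\pi$ and $4/3$ respectively, producing a factor $8\pi/3$.

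For the radial integral I would substitute $s=\lambda^{1/2}\rho$, so that $\rho^{4}\,d\rho=\lambda^{-5/2}s^{4}\,ds$ and the upper limit becomes $\lambda^{1/2}R=c_{3/2}$. This converts the radial integral into $\lambda^{-5/2}\int_{0}^{c_{3/2}}s\,J_{3/2}^{2}(s)\,ds$. Multiplying by the outer prefactor $2\lambda^{1/2}$ and collecting all the constants gives
\begin{equation*}
h_{C}=2\lambda^{1/2}\cdot\frac{9W^{2}c_{3/2}}{4 J_{5/2}^{2}(c_{3/2})}\cdot\frac{8\pi}{3}\cdot\frac{1}{\lambda^{5/2}}\int_{0}^{c_{3/2}}s\,J_{3/2}^{2}(s)\,ds,
\end{equation*}
which simplifies to the claimed identity (8.2) after reducing $2\cdot 9\cdot 8/(4\cdot 3)=12$.

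There is no real obstacle here: every step is a routine change of variables or constant-collection, and the only point requiring a moment's care is confirming that $\Phi_{C,+}$ is supported in the closed ball and vanishes on the boundary (which follows from $J_{3/2}(c_{3/2})=0$), so there are no boundary contributions and no delicate cancellation across $\rho=R$.
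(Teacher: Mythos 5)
Your computation is correct and yields exactly (8.2); the only difference from the paper is the route taken to evaluate the integral. The paper first passes to $\mathbb{R}^{5}$ via the isometry $(3.12)_2$, writing $h_C=\frac{2}{\pi\lambda^{2}}\int_{\mathbb{R}^{5}}(\varphi_1-\frac{W}{2})_{+}^{2}\dd w$ with $\varphi_1-\frac{W}{2}=C_1J_{3/2}(\rho)\rho^{-3/2}$ radial on $\{\rho<c_{3/2}\}$, and then uses $|S^{4}|=8\pi^{2}/3$ to reduce to the one--dimensional integral $\int_0^{c_{3/2}}\rho J_{3/2}^{2}(\rho)\dd\rho$; this reuses the five--dimensional machinery already set up for the Grad--Shafranov analysis and for the uniqueness argument preceding Proposition 8.4. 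You instead work directly in $\mathbb{R}^{3}$ with spherical coordinates, letting the $r^{2}$ prefactor of $\Phi_C$ cancel against the weight $r^{-2}$ and absorbing the resulting $\rho^{4}\sin^{3}\vartheta$ into the angular factor $\frac{8\pi}{3}$ and the radial substitution $s=\lambda^{1/2}\rho$. Your version is more elementary and self-contained (no appeal to the transform (3.10)--(3.12)), at the cost of redoing by hand the dimensional bookkeeping that the isometry packages automatically; the constants $2\cdot\frac{9}{4}\cdot\frac{8}{3}=12$ and $\lambda^{1/2}\cdot\lambda^{-5/2}=\lambda^{-2}$ come out the same either way, and your identification of the support of $\Phi_{C,+}$ with the closed ball $\{\rho\le R\}$ (using $J_{3/2}>0$ on $(0,c_{3/2})$ and $J_{3/2}(c_{3/2})=0$) is the same observation the paper relies on implicitly.
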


\begin{proof}
By (1.11), for $\phi=\Phi_C+Wr^{2}/2$ and $\varphi=\phi/r^{2}$, 

\begin{align*}
h_C
=2\lambda^{1/2}\int_{\mathbb{R}^{3}}\Phi_{C,+}^{2}\frac{1}{r^{2}}\dd x
=2\lambda^{1/2}\int_{\mathbb{R}^{3}}\left(\phi-\frac{W}{2}r^{2} \right)_{+}^{2}\frac{1}{r^{2}}\dd x
&=\frac{2\lambda^{1/2}}{\pi}\int_{\mathbb{R}^{5}}\left(\varphi-\frac{W}{2} \right)_{+}^{2}\dd y\\
&=\frac{2}{\pi\lambda^{2}}\int_{\mathbb{R}^{5}}\left(\varphi\left(\frac{w}{\lambda^{1/2}}\right)-\frac{W}{2} \right)_{+}^{2}\dd w.
\end{align*}\\
The function $\varphi_1(w)=\varphi(w/\lambda^{1/2})$ is a solution to (8.1) for $\kappa=1$. By $\varphi_1(w)-W/2=C_1J_{3/2}(\rho)\rho^{-3/2}$ for $\rho=|w|<c_{3/2}$ with $C_1=3Wc_{3/2}^{1/2}/ (2J_{5/2}(c_{3/2}))$ and 
 
\begin{align*}
\int_{\mathbb{R}^{5}}\left(\varphi_1(w)-\frac{W}{2} \right)_{+}^{2}\dd w
=\frac{8\pi^{2}}{3}C_1^{2}\int_{0}^{c_{3/2}}\rho J_{3/2}^{2}(\rho)\dd \rho,
\end{align*}\\
the constant $h_C$ is given by (8.2).
\end{proof}

\begin{thm}
Let $W>0$ and $\lambda>0$. Let $U_C$ be as in (1.6). Let $h_C>0$ be as in (1.11). Let $B_{\infty}=-We_z$. Let $S_{h_C,W,0}$ be as in Theorem 8.2. Then, 

\begin{align}
S_{h_C,W,0}=\left\{ U_{C}(\cdot +ze_z)-B_{\infty}\ \big|\ z\in \mathbb{R}\  \right\}.   
\end{align}
\end{thm}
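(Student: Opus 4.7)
The plan is to combine the Lagrange-multiplier form of Proposition 4.2 (in its $W$-generalization), the uniqueness Theorem 8.3, and the explicit helicity computation of Proposition 8.4 to pin down every element of $S_{h_C,W,0}$. Existence of a minimizer is first supplied by the $W$-rescaled compactness of Remark 5.7: one directly checks that $U_C - B_{\infty} \in L^{2}_{\sigma,\textrm{axi}}(\mathbb{R}^{3})$ has Clebsch potentials $\phi = \Phi_C + Wr^{2}/2$, $G = G_C$, and hence $H_{0}[U_C - B_{\infty}] = h_C$ by (1.11), so $I_{h_C, W, 0} \leq E[U_C - B_{\infty}] < \infty$ and any minimizing sequence admits a subsequence converging (up to $z$-translation) to some $b \in S_{h_C,W,0}$.

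Let $b = \nabla \times (\phi \nabla \theta) + G \nabla \theta \in S_{h_C,W,0}$ be arbitrary. The $W$-analogue of Proposition 4.2 produces $\mu \in \mathbb{R}$ with $G = \mu(\phi - \phi_{\infty})_{+}$, with $\phi \in \dot{H}^{1}_{0}(\mathbb{R}^{2}_{+};r^{-1})$ a weak solution of the Grad--Shafranov equation (4.3), and with the Lagrange identity
\[
h_C = 2\mu \int_{\mathbb{R}^{3}}(\phi-\phi_{\infty})_{+}^{2}\,\frac{\dd x}{r^{2}}.
\]
Since $h_C > 0$ and the integrand is non-negative, this forces $\mu > 0$ and $(\phi - \phi_{\infty})_{+} \not\equiv 0$. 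Theorem 8.3 then supplies $z_0 \in \mathbb{R}$ such that $\phi - \phi_{\infty} = \Phi_C^{(\mu^{2})}(\cdot + z_0 e_z)$, where $\Phi_C^{(\mu^{2})}$ denotes the Chandrasekhar flux function (1.6) with parameters $(W, \mu^{2})$. Writing $G_C^{(\mu^{2})} = (\mu^{2})^{1/2}\Phi_{C,+}^{(\mu^{2})} = \mu\,\Phi_{C,+}^{(\mu^{2})}$ (positive branch, since $\mu > 0$), we conclude $b + B_{\infty} = U_C^{(\mu^{2})}(\cdot + z_0 e_z)$.

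It remains to identify $\mu^{2} = \lambda$. Proposition 8.4 applied to the Chandrasekhar field with parameters $(W, \lambda')$ gives $(W/\lambda')^{2} K$ for its magnetic helicity, where $K = 12\pi c_{3/2} J_{5/2}^{2}(c_{3/2})^{-1} \int_{0}^{c_{3/2}} \rho J_{3/2}^{2}(\rho)\,\dd \rho > 0$ is universal. Thus $h_C = (W/\lambda)^{2} K$ by hypothesis and $H_{0}[b] = (W/\mu^{2})^{2} K$ by the construction above, and the constraint $H_{0}[b] = h_C$ forces $\mu^{2} = \lambda$; hence $b = U_C(\cdot + z_0 e_z) - B_{\infty}$, proving the inclusion $\subseteq$ in (8.3). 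The reverse inclusion is immediate from the manifest $z$-translation invariance of $E[\cdot]$ and $H_{0}[\cdot]$ (the $L^{2}$-norm is translation invariant and $\phi_{\infty} = Wr^{2}/2$ depends only on $r$). The main obstacle is the uniqueness Theorem 8.3 itself, reduced via the transform $\varphi = \phi/r^{2}$ to a moving-plane argument for a decaying positive solution of a semilinear elliptic equation on $\mathbb{R}^{5}$; once that is in hand, the parameter identification via the quadratic scaling of Proposition 8.4 together with the sign bookkeeping for the Lagrange multiplier $\mu$ is routine.
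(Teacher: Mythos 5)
Your proposal is correct and follows essentially the same route as the paper: Proposition 4.2 (in its $W$-scaled form) gives $G=\mu(\phi-\phi_\infty)_+$ and the Grad--Shafranov equation, Theorem 8.3 identifies $\phi-\phi_\infty$ as a translate of $\Phi_C$ with parameter $\mu^2$, Proposition 8.4's quadratic dependence $h=C(W/\lambda')^2$ pins down $\mu^2=\lambda$, and translation invariance plus nonemptiness of $S_{h_C,W,0}$ gives the reverse inclusion. Your explicit treatment of nonemptiness and of the sign of $\mu$ is slightly more careful than the paper's, but the argument is the same.
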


\vspace{5pt}

\begin{proof}
We take an arbitrary $b=\nabla \times (\phi\nabla \theta)+G\nabla \theta\in S_{h_C,W,0}$. By Proposition 4.2, $G=\kappa(\phi-\phi_{\infty})_{+}$ for $\phi_{\infty}=Wr^{2}/2$ and some $\kappa>0$, and $\phi$ is a weak solution to (4.3). By Theorem 8.3, $\phi-\phi_{\infty}$ is a translation of the explicit solution (1.6) with parameters $W>0$ and $\tilde{\lambda}=\kappa^{2}$. By (8.2), generalized magnetic helicity of $b$ is $C(W/\tilde{\lambda})^{2}$ for some exact constant $C$. Since $b\in S_{h_C,W,0}$, generalized magnetic helicity of $b$ is $h_C=C(W/\lambda)^{2}$. Thus $\tilde{\lambda}=\lambda$ and $\phi-\phi_{\infty}=\Phi_C(\cdot+z_0e_z)$, $G=G_{C}(\cdot +z_0e_z)$ for $\Phi_C$ and $G_C$ in (1.11) and some $z_0\in \mathbb{R}$. By $B_{\infty}=-\nabla \times (\phi_{\infty}\nabla \theta)$, 

\begin{align*}
U_C=\nabla \times (\Phi_C\nabla \theta)+G_C\nabla \theta
=b(\cdot -z_0e_z)+B_{\infty}.
\end{align*}\\
We proved that $S_{h_C,W,0}\subset \left\{ U_{C}(\cdot +ze_z)-B_{\infty}\ \big|\ z\in \mathbb{R}\  \right\}$. The translations of $b=U_C(\cdot+z_0e_z)-B_{\infty}$ are also elements of $S_{h_C,W,0}$. Thus, the equality (8.3) holds.
\end{proof}

\begin{proof}[Proof of Theorem 1.4]
By Theorem 8.5 for $h=h_{C}$ and $\gamma=0$, 

\begin{align*}
\inf\left\{||b_0-\tilde{b}||_{L^{2}(\mathbb{R}^{3})}\ |\ \tilde{b}\in S_{h_C,W,0}\  \right\}
&=\inf\left\{||b_0-\tilde{b}||_{L^{2}(\mathbb{R}^{3})}\ \middle|\ \tilde{b}=U_C(\cdot +ze_z)-B_{\infty},\  z\in \mathbb{R} \right\} \\
&=\inf\left\{||b_0+B_{\infty}-U_C(\cdot +ze_z)||_{L^{2}(\mathbb{R}^{3})}\ \middle|\   z\in \mathbb{R} \right\}.
\end{align*}\\
The assertion follows from Theorem 8.2.
\end{proof}

\begin{rem}
The existence and the stability of axisymmetric nonlinear force-free fields with continuous factors $f\in C(\mathbb{R}^{3})$ may be studied by replacing $g(s)=2s_{+}$ of generalized magnetic helicity in (6.5) with sufficiently regular $g(s)$, e.g., $g(s)=2s_+^{\alpha}$, $\alpha>1$, cf. \cite{A8}.
\end{rem}

\appendix

\section{Existence of axisymmetric Leray--Hopf solutions}

We demonstrate the existence of axisymmetric Leray--Hopf solutions to (1.9) stated in Theorem 6.3. The existence of weak solutions to viscous and resistive MHD in $\mathbb{R}^{3}$ constructed by Leray's method can be found in \cite{FJ21}. We construct Leray--Hopf solutions to the system (1.9) with constants $u_{\infty}, B_{\infty}\in \mathbb{R}^{3}$.  

\begin{thm}
Let $u_{\infty}, B_{\infty}\in \mathbb{R}^{3}$. For $v_0,b_0\in L^{2}_{\sigma}(\mathbb{R}^{3})$, there exists a Leray--Hopf solution to (1.9). 
\end{thm}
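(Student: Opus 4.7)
The plan is to follow Leray's construction adapted to the whole space $\mathbb{R}^3$ and to the shifted system, the only genuine novelty compared to the classical case being the linear transport/stretching terms involving the constants $u_\infty$ and $B_\infty$. I observe first that for smooth solutions the energy balance decouples from $u_\infty$ and $B_\infty$: integration by parts gives $\int (v+u_\infty)\cdot\nabla v\cdot v\,\dd x=0$ (since $\nabla\cdot(v+u_\infty)=0$) and, similarly, the cross-terms satisfy
\begin{equation*}
\int_{\mathbb{R}^3}(b+B_\infty)\cdot\nabla b\cdot v\,\dd x+\int_{\mathbb{R}^3}(b+B_\infty)\cdot\nabla v\cdot b\,\dd x
=\int_{\mathbb{R}^3}(b+B_\infty)\cdot\nabla(v\cdot b)\,\dd x=0.
\end{equation*}
Hence the basic identity $\tfrac{1}{2}\tfrac{\dd}{\dd t}(\|v\|_{L^2}^2+\|b\|_{L^2}^2)+\nu\|\nabla v\|_{L^2}^2+\mu\|\nabla b\|_{L^2}^2=0$ holds independently of $(u_\infty,B_\infty)$. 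This is the engine that drives every subsequent estimate.

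First I would set up the Leray-mollified approximation on $\mathbb{R}^3$. Fix a standard mollifier $J_\varepsilon$ acting in the $x$-variable and consider, for each $\varepsilon>0$,
\begin{equation*}
\begin{aligned}
\partial_tv^\varepsilon+\mathbb{P}\!\left(J_\varepsilon(v^\varepsilon+u_\infty)\cdot\nabla v^\varepsilon\right)-\mathbb{P}\!\left(J_\varepsilon(b^\varepsilon+B_\infty)\cdot\nabla b^\varepsilon\right)&=\nu\Delta v^\varepsilon,\\
\partial_tb^\varepsilon+J_\varepsilon(v^\varepsilon+u_\infty)\cdot\nabla b^\varepsilon-J_\varepsilon(b^\varepsilon+B_\infty)\cdot\nabla v^\varepsilon&=\mu\Delta b^\varepsilon,
\end{aligned}
\end{equation*}
with initial data $(v_0,b_0)$, where $\mathbb{P}$ is the Leray projector on $\mathbb{R}^3$. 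Because $J_\varepsilon$ smooths and bounds the transport coefficients in $L^\infty$, the right-hand sides are Lipschitz on $L^2_\sigma(\mathbb{R}^3)^2$, and a standard Picard/Banach fixed-point argument in $C([0,T];L^2_\sigma)$ yields a unique global mild solution $(v^\varepsilon,b^\varepsilon)$; elliptic regularity then promotes it to a classical solution. Multiplying by $(v^\varepsilon,b^\varepsilon)$, using divergence-freeness of $J_\varepsilon(v^\varepsilon+u_\infty)$ and $J_\varepsilon(b^\varepsilon+B_\infty)$, and the cancellation identity above, yields the $\varepsilon$-uniform energy identity
\begin{equation*}
\tfrac{1}{2}\|(v^\varepsilon,b^\varepsilon)\|_{L^2}^2(t)+\int_0^t\bigl(\nu\|\nabla v^\varepsilon\|_{L^2}^2+\mu\|\nabla b^\varepsilon\|_{L^2}^2\bigr)\dd s=\tfrac{1}{2}\|(v_0,b_0)\|_{L^2}^2.
\end{equation*}

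Next I would pass to the limit $\varepsilon\to0$. The uniform bound places $(v^\varepsilon,b^\varepsilon)$ in $L^\infty(0,T;L^2)\cap L^2(0,T;H^1)$, so a subsequence converges weakly-$\ast$ and weakly to some $(v,b)$ in these spaces. The PDE together with the Sobolev embedding $H^1\hookrightarrow L^6$ bounds $\partial_tv^\varepsilon$ and $\partial_tb^\varepsilon$ uniformly in $L^{4/3}(0,T;(H^1\cap L^2_\sigma)(B_R)^*)$ for every $R>0$; by Lemma 2.16 (Aubin--Lions) applied on balls and a diagonal extraction, $(v^\varepsilon,b^\varepsilon)\to(v,b)$ strongly in $L^2(0,T;L^2_{\mathrm{loc}}(\mathbb{R}^3))$. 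This strong local convergence is exactly enough to pass to the limit in each nonlinear term: e.g.\ $J_\varepsilon(b^\varepsilon+B_\infty)\otimes b^\varepsilon\to(b+B_\infty)\otimes b$ in $L^1(0,T;L^1_{\mathrm{loc}})$, and test functions in $L^2_\sigma\cap H^1(\mathbb{R}^3)$ can be cut off by a smooth spatial truncation whose commutator with $\Delta$ and $\nabla$ contributes only $O(\|v\|_{L^2}+\|b\|_{L^2})$ on the complement of a large ball. The limit therefore satisfies the weak formulation of (1.9). Weak lower semicontinuity of the $L^2$-norm and of $\int_0^t(\nu\|\nabla\cdot\|^2+\mu\|\nabla\cdot\|^2)$ gives the energy inequality (6.1). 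Finally, the bound on $(\partial_tv,\partial_tb)$ in a dual space together with the continuity of the initial data yields $(v,b)\in C_w([0,T];L^2_\sigma(\mathbb{R}^3))$ and attainment of the initial values.

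The main obstacle, in my estimation, is not the energy identity (which is immediate once one notices the cancellation produced by divergence-freeness and by the symmetric pairing of the $B_\infty$ cross-terms), but rather the passage to the limit in the nonlinear terms on an unbounded domain with general test functions in $L^2_\sigma\cap H^1(\mathbb{R}^3)$ rather than compactly supported ones. The standard cure is to combine the Aubin--Lions strong convergence on balls with a uniform-in-$\varepsilon$ cut-off estimate controlling the contribution from $|x|\geq R$ by $\|v^\varepsilon\|_{L^2(\{|x|\geq R\})}\cdot\|\zeta\|_{H^1}$-type quantities that vanish as $R\to\infty$ for fixed $\zeta\in H^1$; this step, while routine, is the place where the whole-space geometry requires real care, and is exactly what distinguishes Theorem A.1 from the bounded-domain existence theorems recalled in Section~2.
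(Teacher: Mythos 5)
Your proposal is correct and follows essentially the same route as the paper's Appendix A: mollified transport coefficients, a fixed-point construction of global approximate solutions satisfying the $\varepsilon$-uniform energy identity, a uniform bound on the time derivatives in $L^{4/3}(0,T;H^{1}(\mathbb{R}^{3})^{*})$, Aubin--Lions compactness on balls with a diagonal extraction, and passage to the limit against compactly supported solenoidal test functions (the paper then extends to $L^{2}_{\sigma}\cap H^{1}(\mathbb{R}^{3})$ by density rather than by your cut-off argument). The one imprecision is the claim that the mollified right-hand side is Lipschitz on $L^{2}_{\sigma}(\mathbb{R}^{3})^{2}$ --- the term $J_{\varepsilon}(v^{\varepsilon}+u_{\infty})\cdot\nabla v^{\varepsilon}$ is not, so one must write the nonlinearity in divergence form and use the heat-semigroup gain of one derivative as in the paper's estimate $(A.6)_2$; this is routine and does not affect the argument.
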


We take $\chi\in C^{\infty}_{c}(\mathbb{R}^{3})$ such that $\int_{\mathbb{R}^{3}}\chi \dd x=1$ and set $\chi_{\varepsilon}(x)=\varepsilon^{-3}\chi(\varepsilon^{-1}x)$ for $\varepsilon>0$. We look at the approximated system of (1.9): 

\begin{equation}
\begin{aligned}
v_t+(w+u_{\infty})\cdot \nabla v+\nabla p&=(d+B_{\infty})\cdot \nabla b+\nu \Delta v,\\
b_t+(w+u_{\infty})\cdot \nabla b+\nabla q&=(d+B_{\infty})\cdot \nabla v+\mu \Delta b,\\
\nabla \cdot v=\nabla \cdot b&=0, \\
w=\chi_{\varepsilon}*v,\ d&=\chi_{\varepsilon}*b.
\end{aligned}
\end{equation}\\
For solutions to (A.1), the energy equality 

\begin{align}
\frac{1}{2}\int_{\mathbb{R}^{3}}\left( |v|^{2}+|b|^{2}\right)\dd x
+\int_{0}^{t}\int_{\mathbb{R}^{3}}\left(\nu |\nabla v|^{2}+\mu |\nabla b|^{2}\right)\dd x\dd s
= \frac{1}{2}\int_{\mathbb{R}^{3}}\left( |v_0|^{2}+|b_0|^{2}\right)\dd x,
\end{align}\\
holds for all $t\in [0,T]$. For simplicity of notation, we apply block matrices

\begin{align}
\varphi=
\begin{bmatrix}
v \\
b \\
\end{bmatrix},\quad
F=
\begin{bmatrix}
(d+B_{\infty})\otimes b-(w+u_{\infty})\otimes v \\
(d+B_{\infty})\otimes v-(w+u_{\infty})\otimes b \\
\end{bmatrix},\quad
{\mathcal{L}}=
\begin{bmatrix}
\nu & 0 \\
0 & \mu \\
\end{bmatrix}
\Delta.
\end{align}\\  
We denote $v,b\in L^{2}_{\sigma}(\mathbb{R}^{3})$ by $\varphi={}^{t}[v,b]\in L^{2}_{\sigma}(\mathbb{R}^{3})$. In terms of the projection operator $\mathbb{P}$, associated with (2.1), the  system (A.1) can be expressed as 

\begin{equation}
\begin{aligned}
\varphi_t&={\mathcal{L}}\varphi +\mathbb{P}\nabla \cdot  F\quad \textrm{on}\ L^{2}_{\sigma}(\mathbb{R}^{3}).\\
\end{aligned}
\end{equation}\\
We denote the heat semigroup by 

\begin{align*}
S(t)=
\begin{bmatrix}
e^{t\nu\Delta} & 0 \\
0 & e^{t\mu\Delta} \\
\end{bmatrix}
.
\end{align*}\\  
The integral form of (A.4) is 

\begin{equation}
\begin{aligned}
\varphi&=S(t)\varphi_0+\int_{0}^{t}S(t-s)\mathbb{P}\nabla \cdot F(s)\dd s.
\end{aligned}
\end{equation}\\
We construct solutions of (A.5) in the energy space $C([0,T]; L^{2}(\mathbb{R}^{3}))\cap L^{2}(0,T; H^{1}(\mathbb{R}^{3}) )$ equipped with the norm $||\varphi||_{T}=||\varphi||_{L^{\infty}(0,T; L^{2}(\mathbb{R}^{3}) )}+||\varphi||_{L^{2}(0,T; \dot{H}^{1}(\mathbb{R}^{3}) )}$. By integration by parts, $S(t)$ satisfies the linear estimates 

\begin{equation}
\begin{aligned}
||S(t)\varphi_0||_{T}&\leq C||\varphi_0||_{L^{2}(\mathbb{R}^{3}) },\\
\left\|\int_{0}^{t}S(t-s)\mathbb{P}\nabla \cdot F_0(s)\dd s\right\|_T&\leq C'||F_0||_{L^{2}(0,T; L^{2}(\mathbb{R}^{3})) }.
\end{aligned}
\end{equation}\\
The constants $C$ and $C'$ are independent of $T$. For $F_0$ satisfying $\nabla \cdot F_0\in L^{2}(0,T; L^{2}(\mathbb{R}^{3}))$, 

\begin{align}
\nabla^{2}\int_{0}^{t}S(t-s)\mathbb{P}\nabla \cdot F_0\dd s \in L^{2}(0,T; L^{2}(\mathbb{R}^{3})  ),
\end{align}\\
by the maximal regularity, e.g., \cite[1.6.2 Lemma]{Sohr}. Solutions of the integral equation (A.5) in the energy space satisfies (A.4) for a.e. $t\in [0,T]$ since $\nabla \cdot F \in L^{2}(0,T; L^{2}(\mathbb{R}^{3}))$ by the estimate of the mollifier,

\begin{align}
||\chi_{\varepsilon}*\varphi ||_{L^{\infty}(\mathbb{R}^{3}) }\leq \frac{C}{\varepsilon^{3/2}} ||\varphi||_{L^{2}(\mathbb{R}^{3})}.
\end{align}

\vspace{5pt}

\begin{prop}
Let $T>0$ and $\varepsilon>0$. For $\varphi_0\in L^{2}_{\sigma}(\mathbb{R}^{3})$, there exists a unique solution $\varphi\in C([0,T]; L^{2}_{\sigma}(\mathbb{R}^{3}) )\cap L^{2}(0,T; H^{1}(\mathbb{R}^{3}))$ of (A.5). The solution $\varphi={}^{t}[v,b]$ satisfies 

\begin{align}
\varphi_t, \nabla^{2}\varphi \in L^{2}_{\textrm{loc}}(0,T; L^{2}(\mathbb{R}^{2})),
\end{align}\\
and (A.2) holds for all $t\in [0,T]$.
\end{prop}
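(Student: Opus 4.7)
The plan is to solve the Duhamel equation (A.5) by the Banach contraction mapping theorem in $X_T = C([0,T]; L^2_\sigma(\mathbb{R}^3)) \cap L^2(0,T; H^1(\mathbb{R}^3))$ with the norm $||\cdot||_T$, treating $\varepsilon>0$ as a fixed regularization parameter. Define $\Psi[\varphi](t) = S(t)\varphi_0 + \int_0^t S(t-s)\mathbb{P}\nabla \cdot F[\varphi](s)\dd s$. The linear bounds (A.6) reduce matters to a bilinear estimate on $F$: the mollifier inequality (A.8) gives $||w||_{L^\infty}, ||d||_{L^\infty}\leq C\varepsilon^{-3/2}||\varphi||_{L^2}$, so each product in $F[\varphi]$ is controlled in $L^2(\mathbb{R}^3)$ by $C\varepsilon^{-3/2}||\varphi||_{L^2}^2 + C(|u_\infty|+|B_\infty|)||\varphi||_{L^2}$. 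Integrating in time, and using $||\varphi||_{L^\infty_T L^2} \leq ||\varphi||_T$, yields
\[
||F[\varphi]||_{L^2(0,T;L^2)} \leq C_\varepsilon T^{1/2}\bigl(||\varphi||_T^2 + (|u_\infty|+|B_\infty|)\,||\varphi||_T\bigr),
\]
and an analogous estimate holds for $F[\varphi_1] - F[\varphi_2]$ after a routine bilinearization. Choosing $T_*>0$ small in terms of $\varepsilon, ||\varphi_0||_{L^2}, |u_\infty|, |B_\infty|$, the map $\Psi$ becomes a contraction on the closed ball $\{\varphi\in X_{T_*}: ||\varphi||_{T_*}\leq 2C||\varphi_0||_{L^2}\}$, producing a unique solution on $[0,T_*]$.

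Next I would establish the energy equality (A.2) and use it to reach the arbitrary horizon $T>0$. For smooth, rapidly decaying data the fixed point is smooth by standard parabolic theory; multiplying (A.4) by $\varphi$ and integrating by parts, the incompressibility $\nabla \cdot (w+u_\infty) = \nabla \cdot (d+B_\infty) = 0$ kills the transport terms, while the two cross terms combine as $\int_{\mathbb{R}^3}(d+B_\infty)\cdot \nabla(v\cdot b)\dd x = 0$. Hence (A.2) holds in the smooth case, and extends to $\varphi_0 \in L^2_\sigma$ by density together with the Lipschitz dependence coming out of the contraction argument. The identity delivers $||\varphi(t)||_{L^2}\leq ||\varphi_0||_{L^2}$; since the contraction window $T_*$ depends only on this $L^2$-norm, iterating on consecutive intervals of length $T_*$ yields a solution on all of $[0,T]$.

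For the higher regularity (A.9) I would exploit the smoothness of the mollified fields: because $w,d = \chi_\varepsilon * \varphi$ and all their spatial derivatives lie in $L^\infty(0,T; L^2(\mathbb{R}^3))$ with $\varepsilon$-dependent bounds, while $v,b \in L^2(0,T; H^1(\mathbb{R}^3))$, one has $\nabla \cdot F[\varphi]\in L^2(0,T; L^2(\mathbb{R}^3))$. Maximal regularity (A.7) applied to (A.5) then gives $\nabla^2\varphi \in L^2_{\mathrm{loc}}(0,T; L^2(\mathbb{R}^3))$, and (A.4) directly yields $\varphi_t \in L^2_{\mathrm{loc}}(0,T; L^2(\mathbb{R}^3))$. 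The main obstacle I anticipate is the rigorous passage of the formal energy identity to the $L^2_\sigma$ level: one must either approximate $\varphi_0$ by smooth data and pass to the limit using lower semi-continuity together with strong continuity of the contraction solution map, or invoke a Lions--Magenes-type identity $\frac{\dd}{\dd t}||\varphi||_{L^2}^2 = 2\langle \varphi_t,\varphi\rangle$ once $\varphi_t$ is placed in the correct dual space. Both routes are standard at fixed $\varepsilon$ but require some care in the functional-analytic setup.
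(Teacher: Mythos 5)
Your proposal follows essentially the same route as the paper: a Picard/contraction argument for the Duhamel equation (A.5) in the norm $\|\cdot\|_T$ using the linear bounds (A.6) and the mollifier estimate (A.8), maximal regularity (A.7) to obtain (A.9), and the energy equality to continue the local solution to all of $[0,T]$. The only divergence is your justification of (A.2): instead of approximating the data or invoking a Lions--Magenes pairing, the paper uses the already-established regularity (A.9) to multiply (A.4) by $\varphi$ directly on $[\delta,t]$ and then lets $\delta\to 0$ via the strong continuity of $\varphi$ at $t=0$ — slightly more direct, but your routes are also standard and would close the argument.
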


\begin{proof}
We set $\varphi_1=S(t)\varphi_0$ and $\varphi_{j+1}$ for $j\geq 1$ by 

\begin{equation*}
\begin{aligned}
\varphi_{j+1}&=S(t)\varphi_0+\int_{0}^{t}S(t-s)\mathbb{P}\nabla \cdot F_j(s)\dd s.
\end{aligned}
\end{equation*}\\
for $F_j$ defined by $\varphi_j$ and (A.3). By (A.8), 

\begin{align*}
||F_j||_{L^{2}(0,T; L^{2}(\mathbb{R}^{3}) ) }\leq CT^{1/2}\left(\frac{1}{\varepsilon^{3/2}}K_j+1\right)K_j,
\end{align*}
for $K_j=||\varphi_j||_{T}$. By (A.6),

\begin{align*}
K_{j+1}\leq  K_1+CT^{1/2}\left(\frac{1}{\varepsilon^{3/2}}K_j+1\right)K_j.
\end{align*}\\
The constant $C$ is independent of $T$ and $j$. Thus for some $T_0<T$, $\varphi_j\in C([0,T_0]; L^{2}_{\sigma}(\mathbb{R}^{3}) )\cap L^{2}(0,T_0; H^{1}(\mathbb{R}^{3}))$ is uniformly bounded. By estimating $\varphi_{j+1}-\varphi_j$ in a similar way, for some $T_1<T_0$, 

\begin{align*}
\varphi_j\to \varphi\quad  \textrm{in}\ C([0,T_1]; L^{2}(\mathbb{R}^{3}) )\cap L^{2}(0,T_1; H^{1}(\mathbb{R}^{3})),
\end{align*}\\
and $\varphi$ is a unique solution to (A.5). By (A.7), $\varphi$ satisfies (A.9) and (A.4) for a.e. $t\in [0,T_1]$. By multiplying $\varphi$ by (A.4) and integration by parts, for $\delta \leq t\leq T_1$,

\begin{align*}
\frac{1}{2}\int_{\mathbb{R}^{3}}\left( |v|^{2}+|b|^{2} \right)\dd x
+\int_{\delta }^{t}\int_{\mathbb{R}^{3}}\left(\nu |\nabla v|^{2}+\mu |\nabla b|^{2}\right)\dd x\dd s
= \frac{1}{2}\int_{\mathbb{R}^{3}}\left( |v|^{2}+|b|^{2}\right)(\delta)\dd x.
\end{align*}\\
The solution $\varphi$ is strongly continuous at $t=0$. Letting $\delta\to0$ implies (A.2) for $t\in [0,T_1]$. By the energy equality (A.2), the local-in-time solution is extendable, and (A.2) holds for all $t\in [0,T]$. 
\end{proof}

\begin{prop}
The solution $\varphi$ in Proposition A.2 satisfies 

\begin{align}
\sup_{\varepsilon>0}||\varphi_t||_{L^{4/3}(0,T; H^{1}(\mathbb{R}^{3})^{*} ) }<\infty. 
\end{align}
\end{prop}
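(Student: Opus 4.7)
\medskip

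The plan is to derive the pointwise-in-time bound
\[
\|\varphi_t(t)\|_{H^{1}(\mathbb{R}^{3})^{*}}\le C\bigl(\|\nabla \varphi(t)\|_{L^{2}}+\|F(t)\|_{L^{2}}\bigr)
\]
with a constant $C$ independent of $\varepsilon$, and then to promote it to an $L^{4/3}_t$ bound by combining the energy equality (A.2) with a Gagliardo--Nirenberg interpolation. Starting from the equation (A.4), which holds on $L^{2}_{\sigma}(\mathbb{R}^{3})$ for a.e.\ $t$ by Proposition A.2, I would pair against an arbitrary $\xi\in H^{1}(\mathbb{R}^{3})$. Since $\varphi(t)\in L^{2}_{\sigma}$ for a.e.\ $t$, the distributional time derivative $\varphi_t$ annihilates every gradient field, and thus $\langle \varphi_t,\xi\rangle=\langle \varphi_t,\mathbb{P}\xi\rangle$. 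Using that the Helmholtz projection $\mathbb{P}$ is a Fourier multiplier on $\mathbb{R}^{3}$ and hence bounded on $H^{1}(\mathbb{R}^{3})$, integration by parts in $\mathcal{L}\varphi$ together with the self-adjointness of $\mathbb{P}$ on $L^{2}$ gives
\[
|\langle \mathcal{L}\varphi,\mathbb{P}\xi\rangle|\lesssim \|\nabla \varphi\|_{L^{2}}\|\xi\|_{H^{1}},\qquad
|\langle \mathbb{P}\nabla\cdot F,\mathbb{P}\xi\rangle|=|\langle F,\nabla\mathbb{P}\xi\rangle|\lesssim \|F\|_{L^{2}}\|\xi\|_{H^{1}},
\]
and the pointwise estimate follows after taking the supremum over $\xi$ of unit $H^{1}$-norm.

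Next I would verify that $\|F\|_{L^{4/3}(0,T;L^{2}(\mathbb{R}^{3}))}$ is bounded independently of $\varepsilon$. By the energy equality (A.2) the pair $(v,b)$ is uniformly bounded in $L^{\infty}(0,T;L^{2}_{\sigma})\cap L^{2}(0,T;\dot{H}^{1})$, and in three dimensions the Gagliardo--Nirenberg inequality
\[
\|u\|_{L^{4}}^{8/3}\le C\,\|u\|_{L^{2}}^{2/3}\|\nabla u\|_{L^{2}}^{2}
\]
upgrades this, after integration in time, to a uniform bound on $v$ and $b$ in $L^{8/3}(0,T;L^{4}(\mathbb{R}^{3}))$. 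Young's inequality with $\|\chi_\varepsilon\|_{L^{1}}=1$ transfers the same bound to $w=\chi_\varepsilon\ast v$ and $d=\chi_\varepsilon\ast b$ uniformly in $\varepsilon$. H\"older in space and time then controls the bilinear parts of $F$ in (A.3),
\[
\|d\otimes b\|_{L^{4/3}_{t}L^{2}_{x}}+\|w\otimes v\|_{L^{4/3}_{t}L^{2}_{x}}
\le \|d\|_{L^{8/3}_{t}L^{4}_{x}}\|b\|_{L^{8/3}_{t}L^{4}_{x}}+\|w\|_{L^{8/3}_{t}L^{4}_{x}}\|v\|_{L^{8/3}_{t}L^{4}_{x}},
\]
while the constant contributions $B_{\infty}\otimes b$, $u_{\infty}\otimes v$ are bounded in $L^{4/3}_{t}L^{2}_{x}$ by $T^{3/4}(|B_{\infty}|+|u_{\infty}|)\|(v,b)\|_{L^{\infty}_{t}L^{2}_{x}}$. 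Similarly, $\|\nabla\varphi\|_{L^{4/3}_{t}L^{2}_{x}}\le T^{1/4}\|\nabla\varphi\|_{L^{2}_{t}L^{2}_{x}}$, and plugging these into the pointwise estimate yields (A.10).

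The only step that requires a moment of care is the reduction of the duality pairing with a non-solenoidal $\xi\in H^{1}(\mathbb{R}^{3})$ to a pairing with $\mathbb{P}\xi$; this rests on $\varphi_t$ being the time derivative of a family of divergence-free fields together with the $H^{1}$-boundedness of $\mathbb{P}$ on $\mathbb{R}^{3}$. Apart from this, the argument is a direct three-dimensional analogue of the $W^{1,4}_0$-dual bound used in \cite{FL20} for bounded domains, with the role of the Sobolev embedding $W^{1,4}_0\subset L^{\infty}$ replaced by the H\"older pairing $L^{4}\cdot L^{4}\subset L^{2}$ and the $L^{8/3}_tL^{4}_x$ interpolation from the energy identity.
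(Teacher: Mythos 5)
Your proposal is correct and follows essentially the same route as the paper: the uniform $L^{4/3}(0,T;L^{2})$ bound on $F$ via the interpolation embedding $L^{\infty}_{t}L^{2}_{x}\cap L^{2}_{t}\dot{H}^{1}_{x}\subset L^{8/3}_{t}L^{4}_{x}$ together with the energy equality (A.2), combined with the pointwise duality estimate $\|\varphi_t\|_{H^{1}(\mathbb{R}^{3})^{*}}\lesssim \|\nabla\varphi\|_{L^{2}}+\|F\|_{L^{2}}$ obtained by pairing (A.4) with $H^{1}$ test functions. You merely supply details the paper leaves implicit (Young's inequality for the mollifier, the reduction to $\mathbb{P}\xi$, the treatment of the constant terms), all of which are correct.
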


\vspace{5pt}

\begin{proof}
By $L^{\infty}(0,T; L^{2}(\mathbb{R}^{3}) )\cap L^{2}(0,T; \dot{H}^{1}(\mathbb{R}^{3}) )\subset L^{8/3}(0,T; L^{4}(\mathbb{R}^{3}) )$ and (A.2), $F$ in (A.3) is uniformly bounded in $L^{4/3}(0,T; L^{2}(\mathbb{R}^{3}))$. By multiplying functions in $H^{1}(\mathbb{R}^{3})$ by (A.4) and integration by parts,

\begin{align*}
||\varphi_t||_{H^{1}(\mathbb{R}^{3})^{*} }\leq C( ||\nabla \varphi||_{L^{2}(\mathbb{R}^{3}) }+||F||_{L^{2}(\mathbb{R}^{3}) }).
\end{align*}\\
Thus (A.10) holds. 
\end{proof}

\begin{proof}[Proof of Theorem A.1]
We take $\{\varepsilon_j\}$ such that $\varepsilon_j \to0$ as $j\to\infty$ and denote solutions in Proposition A.2 by $\varphi_j$. By $H^{1}_0(B(0,R))^{*}\supset H^{1}(\mathbb{R}^{3})^{*}$ for $R>0$, (A.2), (A.10), Lemma 2.16 and a diagonal argument, there exists a subsequence and $\varphi={}^{t}[v,b]$ such that 

\begin{align*} 
\nabla \varphi_j&\rightharpoonup  \nabla \varphi \quad \textrm{in}\ L^{2}(0,T; L^{2}(\mathbb{R}^{3})), \\
\varphi_j&\to  \varphi \quad \textrm{in}\ L^{2}(0,T; L^{2}_{\textrm{loc}}(\mathbb{R}^{3})), \\
\partial_t \varphi_j&\rightharpoonup  \partial_t \varphi \quad \textrm{in}\ L^{4/3}(0,T; H^{1}(\mathbb{R}^{3})^{*}),\\ 
\varphi_j&\stackrel{\ast}{\rightharpoonup}  \varphi \quad \textrm{in}\ L^{\infty}(0,T; L^{2}(\mathbb{R}^{3})).
\end{align*}\\
By $\varphi_t\in L^{4/3}(0,T; H^{1}(\mathbb{R}^{3})^{*} )$ and $\varphi\in L^{\infty}(0,T; L^{2}(\mathbb{R}^{3}) )$, $\varphi\in C_w([0,T]; L^{2}(\mathbb{R}^{3}) )$ follows \cite[Ch.III, Lemma 1.4]{Te}. By the lower semi-continuity for the weak-star convergence in $L^{\infty}(0,T; L^{2}(\mathbb{R}^{3}) ) $ and the weak continuity of the limit, the energy inequality (6.1) holds for all $t\in [0,T]$. Since $(L^{2}_{\sigma}\cap H^{1})(\mathbb{R}^{3})^{*}\supset H^{1}(\mathbb{R}^{3})^{*}$, $\varphi_t \in L^{1}(0,T; L^{2}_{\sigma}\cap H^{1}(\mathbb{R}^{3})^{*})$ follows.

It remains to show that $\varphi$ satisfies $\varphi(\cdot,0)={}^{t}[v_0,b_0]$ and (1.9) in the sense of Definition 6.1. We take arbitrary $\rho\in C^{\infty}_{c}[0,T)$ and $\xi,\zeta\in C_{c,\sigma}^{\infty}(\mathbb{R}^{3})$. Since $\varphi_j$ satisfies (A.4) for a.e. $t\in [0,T]$, by multiplying ${}^{t}[\rho\xi,  \rho \zeta]$ by $(A.4)$ and integration by parts, 

\begin{align*}
-\int_{0}^{T}\int_{\mathbb{R}^{3}}v_j\cdot \xi \dot{\rho}\dd x\dd t-\rho(0)\int_{\mathbb{R}^{3}}v_0\cdot \xi \dd x&+\int_{0}^{T}\int_{\mathbb{R}^{3}}((w_j+u_{\infty} )\cdot \nabla  v_j-(d_j+B_{\infty})\cdot \nabla b_j )\cdot \xi \rho\dd x\dd t \\
&+\nu \int_{0}^{T}\int_{\mathbb{R}^{3}}\nabla v_j: \nabla \xi\rho\dd x\dd t=0, \\
-\int_{0}^{T}\int_{\mathbb{R}^{3}}b_j\cdot \zeta \dot{\rho}\dd x\dd t-\rho(0)\int_{\mathbb{R}^{3}}b_0\cdot \zeta \dd x&+\int_{0}^{T}\int_{\mathbb{R}^{3}}((w_j+u_{\infty})\cdot \nabla b_j - (d_j+B_{\infty})\cdot \nabla  v_j)\cdot \zeta \rho\dd x\dd t \\
&+\mu \int_{0}^{T}\int_{\mathbb{R}^{3}}\nabla\times b_j\cdot \nabla\times  \zeta\rho\dd x\dd t=0.
\end{align*}\\
By using the strong convergence of $\varphi_j$ in $L^{2}(0,T; L^{2}_{\textrm{loc}}(\mathbb{R}^{3}))$ and letting $j\to\infty$,

\begin{align*}
-\int_{0}^{T}\int_{\mathbb{R}^{3}}v\cdot \xi \dot{\rho}\dd x\dd t-\rho(0)\int_{\mathbb{R}^{3}}v_0\cdot \xi \dd x&+\int_{0}^{T}\int_{\mathbb{R}^{3}}((v+u_{\infty} )\cdot \nabla  v-(b+B_{\infty})\cdot \nabla b )\cdot \xi \rho\dd x\dd t \\
&+\nu \int_{0}^{T}\int_{\mathbb{R}^{3}}\nabla v: \nabla \xi\rho\dd x\dd t=0, \\
-\int_{0}^{T}\int_{\mathbb{R}^{3}}b\cdot \zeta \dot{\rho}\dd x\dd t-\rho(0)\int_{\mathbb{R}^{3}}b_0\cdot \zeta \dd x&+\int_{0}^{T}\int_{\mathbb{R}^{3}}((v+u_{\infty})\cdot \nabla b - (b+B_{\infty})\cdot \nabla  v)\cdot \zeta \rho\dd x\dd t \\
&+\mu \int_{0}^{T}\int_{\mathbb{R}^{3}}\nabla\times b\cdot \nabla\times  \zeta\rho\dd x\dd t=0.
\end{align*}\\
We extend $\xi,\zeta$ to functions in $L^{2}_{\sigma}\cap H^{1}(\mathbb{R}^{3})$ by density of $C_{c,\sigma}^{\infty}(\mathbb{R}^{3})$ in $L^{2}_{\sigma}\cap H^{1}(\mathbb{R}^{3})$. Since $\rho\in C^{\infty}_{c}[0,T)$ is arbitrary, $\varphi (\cdot,t)\rightharpoonup \varphi(\cdot,0)={}^{t}[v_0,b_0]$ in $L^{2}(\mathbb{R}^{3})$ as $t\to0$. Thus $\varphi (\cdot,t)\to \varphi(\cdot,0)$ in $L^{2}(\mathbb{R}^{3})$ by (6.1). By integration by parts, $(v,b)$ satisfies (1.9) in the sense of Definition 6.1.
\end{proof}

\begin{proof}[Proof of Theorem 6.3]
The system (1.1) is invariant under the rotation around the $z$-axis. For constants $B_{\infty}$, $u_{\infty}$ parallel to $e_z$ and axisymmetric $\chi$, the system (1.9) as well as (A.1) are also invariant under the rotation. By the uniqueness, solutions in Proposition A.2 for $\varepsilon_j>0$ are axisymmetric for axisymmetric $v_0,b_0\in L^{2}_{\sigma,\textrm{axi}}(\mathbb{R}^{3})$. Their limits as $\varepsilon_j\to0$ are also axisymmetric.   
\end{proof}

\section{Taylor states in a ball}

We demonstrate that Taylor states in a ball are three. We show that all linear force-free fields are expressed by eigenfunctions of the Laplace operator subject to the Dirichlet boundary condition with spherical mean-zero by using the spherical coordinates and the poloidal-toroidal decomposition. The spherical mean-zero condition implies that the linear force-free fields with the smallest $f^{2}$ (Taylor states) are given by three eigenfunctions associated with the second smallest eigenvalue of the Dirichlet Laplacian.

\subsection{The spherical coordinates}

We use the spherical coordinates 

\begin{align*}
x_1&=\rho\sin \theta \cos\phi,  \\
x_2&=\rho\sin \theta \sin\phi, \\
x_3&=\rho\cos\theta,
\end{align*}\\
and the associated orthogonal frame 
 
\begin{equation*}
e_{\rho}=\left(
\begin{array}{c}
\cos\phi\sin \theta \\
\sin\phi\sin \theta \\
\cos\theta
\end{array}
\right),\quad 
e_\theta=\left(
\begin{array}{c}
\cos\phi\cos \theta \\
\sin\phi\cos \theta \\
-\sin\theta
\end{array}
\right),\quad 
e_{\phi}=\left(
\begin{array}{c}
-\sin\phi \\
\cos\phi \\
0
\end{array}
\right).
\end{equation*}\\
The basis $e_{\theta}$ and $e_{\phi}$ are the orthogonal frame on $\mathbb{S}^{2}$. We denote the vector field $U$ in $\mathbb{R}^{3}$ by 

\begin{align*}
U=U^{\rho}e_{\rho}+V,\quad V=U^{\theta}e_{\theta}+U^{\phi}e_{\phi}.
\end{align*}\\
We decompose the gradient in $\mathbb{R}^{3}$ with the surface gradient on $\mathbb{S}^{2}$

\begin{align*}
\nabla=e_{\rho}\partial_{\rho}+\frac{1}{\rho}\nabla_{\mathbb{S}^{2}},\quad 
\nabla_{\mathbb{S}^{2}}=e_{\theta}\partial_{\theta}+\frac{1}{\sin\theta}e_{\phi}\partial_{\phi}.
\end{align*}\\
We denote the $\pi/2$ counter-clockwise rotation of $V$ by $V^{\perp}=U^{\theta}e_{\phi}-U^{\phi}e_{\theta}$. We also set the operator $\nabla^{\perp}_{\mathbb{S}^{2}}=e_{\phi}\partial_{\theta}-(\sin\theta)^{-1}e_{\theta}\partial_{\phi}$. The divergence and the rotation of $U=U^{\rho}e_{\rho}+V$ are expressed as 

\begin{equation}
\begin{aligned}
\nabla \cdot U&=\frac{1}{\rho^{2}}\partial_{\rho}(\rho^{2}U^{\rho})+\frac{1}{\rho}\nabla_{\mathbb{S}^{2}} \cdot V,\\
\nabla \times U&=\frac{1}{\rho}(\nabla^{\perp}_{\mathbb{S}^{2}}\cdot V)e_{\rho}+\left(\partial_{\rho}+\frac{1}{\rho}\right)V^{\perp}-\frac{1}{\rho}\nabla^{\perp}_{\mathbb{S}^{2}}U^{\rho}.
\end{aligned}
\end{equation}\\
We express the Laplace operator in $\mathbb{R}^{3}$ by using the Laplace--Beltrami operator on $\mathbb{S}^{2}$ 

\begin{align*}
\Delta =\partial_{\rho}^{2}+\frac{2}{\rho}\partial_{\rho}+\frac{1}{\rho^{2}}\Delta_{\mathbb{S}^{2}},\quad \Delta_{\mathbb{S}^{2}}\varphi=\frac{1}{\sin\theta}\partial_{\theta}(\partial_{\theta}\varphi \sin\theta)+\frac{1}{\sin^{2}\theta}\partial_{\phi}^{2}\varphi.
\end{align*}\\
We use the identities 

\begin{equation}
\begin{aligned}
 \nabla \times (\nabla \times (x\varphi))&=-x\Delta \varphi+\nabla ( \varphi+x\cdot \nabla \varphi),\\
-\Delta_{\mathbb{S}^{2}}\varphi&=x\cdot \nabla \times (\nabla \times (x\varphi)).
\end{aligned}
\end{equation}\\
The first identity follows an expression of the Laplace operator in $\mathbb{R}^{3}$ using the rotation and the divergence. The second identity follows the multiplication of $x$ by the first identity.

The eigenvalues of the Laplace--Beltrami operator are $-n(n+1)$ for $n\in \mathbb{N}\cup \{0\}$ and its multiplicity is $2n+1$. The normalized spherical harmonics $\{Y^{m}_{n}\ |\ n\in \mathbb{N}\cup\{0\},\ -n\leq m\leq n\}$ make the complete orthonormal basis on $L^{2}(\mathbb{S}^{2})$. 

We define the Sobolev space $H^{s}(\mathbb{S}^{2})$ for $s\geq 0$ by the space of all functions $f\in L^{2}(\mathbb{S}^{2})$ such that its homogeneous extension to $\mathbb{R}^{3}\backslash \{0\}$ belongs to $H^{s}_{\textrm{loc}}(\mathbb{R}^{3}\backslash\{0\})$, i.e., $\bar{f}(x)=f(\theta,\phi)\in H^{s}_{\textrm{loc}}(\mathbb{R}^{3}\backslash\{0\})$ for $x=\rho e_{\rho}$. By the regularity theory in $\mathbb{R}^{3}$, the condition $\Delta_{\mathbb{S}^{2}}^{k}f\in L^{2} (\mathbb{S}^{2})$ for $k\geq 0$ is equivalent to $f\in H^{2k}(\mathbb{S}^{2})$.

We expand the function $f\in L^{2}(\mathbb{S}^{2})$ as

\begin{align*}
f=\sum_{n\geq 0,\ |m|\leq n}f^{m}_{n}Y^{m}_{n}.
\end{align*}\\
The $0$-th component $f^{0}_0$ vanishes for mean-zero functions on $\mathbb{S}^{2}$ by $Y^{0}_{0}=1$. We define the inverse operator for mean-zero functions by 

\begin{align*}
(-\Delta_{\mathbb{S}^{2}})^{-1} f=\sum_{n\geq 1,\ |n|\leq m}\frac{1}{n(n+1)}f^{m}_{n}Y^{m}_{n}.
\end{align*}\\
The function $\varphi=(-\Delta_{\mathbb{S}^{2}})^{-1} f$ satisfies $-\Delta_{\mathbb{S}^{2}}\varphi=f$ and $\varphi\in H^{2}(\mathbb{S}^{2})$. For $f\in H^{2k}(\mathbb{S}^{2})$ and $k\geq 0$, we have $\varphi \in H^{2k+2}(\mathbb{S}^{2})$.

\subsection{The poloidal-toroidal decomposition}
According to \cite[2.3.2]{Moffatt19}, we consider divergence-free vector fields in the ball $\Omega=\{x\in \mathbb{R}^{3}\ |\ |x|<R \}$ with radius $R>0$ expressed as

\begin{equation}
U=\nabla \times (\nabla \times (x P))+\nabla \times (xT), 
\end{equation}\\
with functions $P$ and $T$. The toroidal vector field $\nabla \times (xT)=-x\times \nabla T$ is a tangential vector field on the sphere $\{|x|=\rho\}$ for all $0<\rho<R$. The poloidal vector field has a non-zero radial component. By the identity (B.2$)_2$, 

\begin{align*}
-\Delta_{\mathbb{S}^{2}}P=x\cdot \nabla \times (\nabla \times (x P)).
\end{align*}\\
Adding radial functions to the functions $P$ and $T$ provides the same vector field $U$. We normalize potential functions and choose mean-zero functions 

\begin{equation}
\begin{aligned}
\int_{|x|=\rho}Pd H=0,\quad \int_{|x|=\rho}Td H=0,\quad 0<\rho<R.
\end{aligned}
\end{equation}\\
By taking curl to (B.3) and using the identity (B.2$)_1$, 

\begin{equation}
\nabla \times U=\nabla \times (\nabla \times (x T))-\nabla \times (x\Delta P). 
\end{equation}\\
The functions $P$ and $T$ satisfy the following equations on the sphere $\{|x|=\rho\}$ for each $0<\rho<R$:

\begin{equation}
\begin{aligned}
-\Delta_{\mathbb{S}^{2}}P=U\cdot x,\quad 
-\Delta_{\mathbb{S}^{2}}T=\nabla \times U\cdot x.
\end{aligned}
\end{equation}\\
Conversely, for a given smooth divergence-free vector field $U$ in $\Omega$, a vector potential $A$ exists such that $U=\nabla \times A$. The surface integrals of $U\cdot x= \nabla \times A\cdot x$ and $\nabla \times B\cdot x$ on the sphere $\{|x|=\rho\}$ vanish by Stokes's theorem. Thus the  functions 

\begin{align}
P=(-\Delta_{\mathbb{S}^{2}})^{-1}(U\cdot x),\quad T=(-\Delta_{\mathbb{S}^{2}})^{-1}(\nabla \times U\cdot x),
\end{align}\\
are smooth and satisfy the equations (B.6) and the condition (B.4). We show the representation (B.3) by using the non-existence of harmonic vector fields on the sphere.\\

\begin{prop}
Let $U$ be a smooth divergence-free vector field in $\Omega$ such that $U\cdot x=0$ and $\nabla \times U\cdot x=0$. Then $U=0$.
\end{prop}

\vspace{5pt}

\begin{proof}
We write $U=U^{\rho}e_{\rho}+V$ by the spherical coordinates. The identities (B.1) and the conditions $U\cdot x=0$ and $\nabla \times U\cdot x=0$ imply that $U^{\rho}=0$, $\nabla_{\mathbb{S}^{2}}\cdot V=0$ and $\nabla_{\mathbb{S}^{2}}^{\perp}\cdot V=0$. Thus, the tangential vector field $V$ is a harmonic vector field on the sphere $\{ |x|=\rho\}$ and $V=0$ follows.
\end{proof}

\begin{lem}[Poloidal-toroidal decomposition]
For a solenoidal vector field $U$ expressed as (B.3) with smooth functions $P$ and $T$ satisfying (B.4), $P$ and $T$ are uniquely determined by (B.7).
Conversely, for a smooth solenoidal vector field $U$ in $\Omega$, (B.3) holds with functions $P$ and $T$ defined by (B.7).    
\end{lem}

\vspace{5pt}

\begin{proof}
The first assertion follows from the fact that the Laplace--Beltrami operator is invertible for mean-zero functions. To demonstrate the latter assertion, we observe that the vector field $\tilde{U}=\nabla \times (\nabla \times (xP))+\nabla \times (x T)$ satisfies 

\begin{align*}
\tilde{U}\cdot x&=-\Delta_{\mathbb{S}^{2}}P=U\cdot x,\\
\nabla \times \tilde{U}\cdot x&=-\Delta_{\mathbb{S}^{2}}T=\nabla \times U\cdot x.
\end{align*}\\
The vector field $U-\tilde{U}$ is divergence-free and satisfies $(U-\tilde{U})\cdot x=\nabla \times (U-\tilde{U})\cdot x=0$. We apply Proposition B.1 and conclude that $\tilde{U}=U$.
\end{proof}

\subsection{Taylor states}
We recall the Laplace operator eigenvalue problem in a ball:  

\begin{equation}
\begin{aligned}
-\Delta \varphi&=\lambda \varphi \quad \textrm{in}\ \Omega,\\
\varphi&=0 \quad \textrm{on}\ \partial\Omega.
\end{aligned}
\end{equation}\\
All eigenvalues are countable positive constants; we denote them as $0<\lambda_1<\lambda_2<\cdots$. We expand the eigenfunction $\varphi$ associated with $\lambda>0$ by spherical harmonics 

\begin{align*}
\varphi(x)=\sum_{n\geq 0,\ |m|\leq n}\varphi^{m}_{n}(\rho)Y^{m}_{n}(\theta,\phi).
\end{align*}\\
By the equation (B.8),

\begin{align*}
0&=(\Delta +\lambda)\varphi=\sum_{n\geq 0,\ |m|\leq n}\left(\partial_{\rho}^{2}\varphi^{m}_{n}+\frac{2}{\rho}\partial_{\rho}\varphi^{m}_{n}+\left(\lambda-\frac{n(n+1)}{\rho^{2}}\right)\varphi^{m}_{n}  \right)Y^{m}_{n},\\
0&=\sum_{n\geq 0,\ |m|\leq n}\varphi^{m}_{n}(R)Y^{m}_{n}.
\end{align*}\\
Each component vanishes since $\{Y^{m}_{n}\}$ is an orthonormal basis on $L^{2}(\mathbb{S}^{2})$. By changing the variable, $\iota(s)=\sqrt{s}\varphi^{m}_{n}(s/\sqrt{\lambda})$ satisfies the Bessel's differential equation

\begin{align*}
\iota''+\frac{1}{s}\iota'+\left(1-\frac{\left(n+\frac{1}{2}\right)^{2} }{s^{2}}\right)\iota&=0,\quad 0<s<\sqrt{\lambda}R,\\
\iota(\sqrt{\lambda}R)&=0,
\end{align*}\\
and hence agrees with a constant multiple of $J_{n+1/2}(s)$. Thus, all eigenvalues to (B.8) are expressed as 

\begin{align*}
\lambda=\left(\frac{j_{n+1/2,k}}{R}\right)^{2},\quad n\in \mathbb{N}\cup\{0\},\ k\in \mathbb{N},
\end{align*}\\
with the $k$-th zero point $j_{n+1/2,k}$ of $J_{n+1/2}$ and the associated eigenfunctions form 

\begin{align*}
\varphi=\frac{1}{\sqrt{\rho}}J_{n+1/2}(\sqrt{\lambda}\rho)Y^{m}_{n},\quad |m|\leq n.
\end{align*}\\
By the zero points order, e.g., \cite[9.5.2]{AS64}, 

\begin{align*}
j_{\nu,1}<j_{\nu+1,1}<j_{\nu,2}<j_{\nu+1,2}<j_{\nu,3}<\cdots,
\end{align*}\\
the smallest zero point $j_{1/2,1}$ and the second smallest zero point $j_{3/2,1}$ provide the principal eigenvalue $\lambda_1=(j_{1/2,1}/R)^{2}$ and the second least eigenvalue $\lambda_{2}=(j_{3/2,1}/R)^{2}$. The principal eigenvalue $\lambda_1$ is simple, and the associated eigenfunction is radially symmetric. The second least eigenvalue $\lambda_{2}$ has a multiplicity of three. The associated eigenfunctions can be expressed as 

\begin{align}
\varphi_i=\frac{1}{\sqrt{\rho}}J_{3/2}(\sqrt{\lambda_2}\rho)\frac{x_i}{\rho},\quad i=1,2,3.
\end{align}\\

\begin{lem}
All linear force-free fields in $L^{2}_{\sigma}(\Omega)$ are expressed as 

\begin{equation}
\begin{aligned}
U=\nabla \times (\nabla \times (x \varphi))+\nabla \times (xf\varphi), 
\end{aligned}
\end{equation}\\
by eigenfunctions to (B.8) for $\lambda=f^{2}$ satisfying 

\begin{align}
\int_{|x|=\rho}\varphi d H=0,\quad 0<\rho<R.
\end{align}\\
In particular, linear force-free fields with the smallest $f^{2}$ are expressed by the eigenfunctions (B.9) and $f^{2}=(j_{3/2,1}/R)^{2}$.
\end{lem}

\vspace{5pt}

\begin{proof}
Linear force-free fields in $L^{2}_{\sigma}(\Omega)$ are smooth in $\Omega$ by the Helmholtz equation $-\Delta U=f^{2}U$. We apply Lemma B.2 and express $U$ by the poloidal-toroidal decomposition (B.3) with the functions $P$ and $T$ satisfying the conditions (B.4). By the representation (B.5) and the equation (1.3$)_1$,

\begin{align*}
0=\nabla \times U-fU=\nabla \times (\nabla \times (x(T-fP)))-\nabla \times (x(\Delta P+fT)).
\end{align*}\\
By the uniqueness of the potential functions in Lemma B.2,

\begin{align*}
T-fP&=0,\\
\Delta P+fT&=0.
\end{align*}\\
By the boundary condition (1.3$)_2$ and (B.6), $-\Delta_{\mathbb{S}^{2}}P=0$ on the sphere $\partial\Omega=\{|x|=R\}$. The mean-zero condition (B.4) implies $P=0$ on $\partial\Omega$.  Thus, $\varphi=P$ is an eigenfunction to the problem (B.8) for $\lambda=f^{2}$ satisfying (B.11). 

The smallest $f^{2}$ is not the principal eigenvalue $\lambda_1$ to the problem (B.8) since the associated radially symmetric eigenfunction does not satisfy the condition (B.11). The eigenfunctions (B.9) associated with the second smallest eigenvalue $\lambda_2$ fulfill the condition (B.11). Thus the linear-force free fields with the smallest $f^{2}=\lambda_2=(j_{3/2,1}/R)^{2}$ are expressed by (B.10) and (B.9). 
\end{proof}

\vspace{5pt}

\begin{thm}
The Taylor states with helicity $h>0$ (resp. $h<0$) in $\Omega$ are three: they have the proportionality factor $f^{+}_{1}=j_{3/2,1}/R$ (resp. $f^{-}_{1}=-f^{+}_{1}$) and are expressed as (B.10) with the functions (B.9).
\end{thm}

\vspace{5pt}

\begin{proof}
Taylor states with helicity $h>0$ are linear force-free fields with the smallest $f=f_{1}^{+}>0$ by Lemma 2.3. They are expressed as (B.10) with the eigenfunction (B.9) and $f_{1}^{+}=j_{3/2,1}/R$ by Lemma B.3.   
\end{proof}

\vspace{15pt}

\bibliographystyle{alpha}
\bibliography{ref}

\newcommand{\etalchar}[1]{$^{#1}$}
\begin{thebibliography}{HvNVW16}

\bibitem[ABC22]{ABC}
D.~Albritton, E.~Bru{\'e}, and M.~Colombo.
\newblock {N}on-uniqueness of {L}eray solutions of the forced {N}avier-{S}tokes
  equations.
\newblock {\em Ann. of Math. (2)}, 196:415--455, (2022).

\bibitem[ABC{\etalchar{+}}ss]{ABCDGK}
D.~Albritton, E.~Bru{\'e}, M.~Colombo, C.~De Lellis, V.~Giri, M.~Janisch, and
  H.~Kwon.
\newblock {\em {I}nstability and nonuniqueness for the 2d {E}uler equations in
  vorticity form, after {M}. {V}ishik}, volume 219.
\newblock Princeton University Press, in press.
\newblock arXiv:2112.04943.

\bibitem[ABDG98]{ABDG}
C.~Amrouche, C.~Bernardi, M.~Dauge, and V.~Girault.
\newblock Vector potentials in three-dimensional non-smooth domains.
\newblock {\em Math. Methods Appl. Sci.}, 21:823--864, (1998).

\bibitem[Abe22]{A8}
K.~Abe.
\newblock Existence of vortex rings in {B}eltrami flows.
\newblock {\em Commun. Math. Phys.}, 391:873--899, (2022).

\bibitem[AC14]{AC14}
L.~Ambrosio and G.~Crippa.
\newblock Continuity equations and {ODE} flows with non-smooth velocity.
\newblock {\em Proc. Roy. Soc. Edinburgh Sect. A}, 144(6):1191--1244, (2014).

\bibitem[AC22]{AC22}
K.~Abe and K.~Choi.
\newblock {S}tability of {L}amb dipoles.
\newblock {\em {A}rch. {R}ational {M}ech. {A}nal.}, 244:877--917, (2022).

\bibitem[AF88]{AF88}
C.~J. Amick and L.~E. Fraenkel.
\newblock The uniqueness of a family of steady vortex rings.
\newblock {\em Arch. Rational Mech. Anal.}, 100:207--241, (1988).

\bibitem[AFSJ08]{AFS}
K.~Astala, D.~Faraco, and L.~Sz\'{e}kelyhidi~Jr.
\newblock Convex integration and the {$L^p$} theory of elliptic equations.
\newblock {\em Ann. Sc. Norm. Super. Pisa Cl. Sci. (5)}, 7:1--50, (2008).

\bibitem[AK98]{AK98}
V.~I. Arnold and B.~A. Khesin.
\newblock {\em Topological methods in hydrodynamics}, volume 125 of {\em
  Applied Mathematical Sciences}.
\newblock Springer-Verlag, New York, 1998.

\bibitem[Alf42]{Alf}
H.~Alfv{\'e}n.
\newblock Existence of electromagnetic-hydrodynamic waves.
\newblock {\em Nature}, 150:405--406, (1942).

\bibitem[Alf50]{Alfven50}
H.~Alfv\'{e}n.
\newblock Discussion of the origin of the terrestrial and solar magnetic
  fields.
\newblock {\em Tellus}, 2:74--82, 1950.

\bibitem[Alu09]{Aluie}
H.~Aluie.
\newblock {\em Hydrodynamic and magnetohydrodynamic turbulence: invariants,
  cascades, and locality.}
\newblock PhD thesis, Johns Hopkins University, (2009).

\bibitem[AO]{AO}
D.~Albritton and W.~S. O\.{z}a\'{n}ski.
\newblock Linear and nonlinear instability of vortex columns.
\newblock arXiv:2310.20674v1.

\bibitem[AP09]{Pava}
J.~Angulo~Pava.
\newblock Nonlinear dispersive equations.
\newblock 156:xii+256, 2009.
\newblock Existence and stability of solitary and periodic travelling wave
  solutions.

\bibitem[Arn65]{Arnold1965}
V.~I. Arnold.
\newblock Variational principle for three-dimensional steady-state flows of an
  ideal fluid.
\newblock {\em Prikl. Math. I Mech.}, 29:846--851, (1965).

\bibitem[Arn66]{Arnold66}
V.~I. Arnold.
\newblock Sur la g\'{e}om\'{e}trie diff\'{e}rentielle des groupes de {L}ie de
  dimension infinie et ses applications \`a l'hydrodynamique des fluides
  parfaits.
\newblock {\em Ann. Inst. Fourier (Grenoble)}, 16:319--361, (1966).

\bibitem[Arn74]{Arnold74}
V.I. Arnold.
\newblock The asymptotic hopf invariant and it applications.
\newblock volume 561, (1974).

\bibitem[AS64]{AS64}
M.~Abramowitz and I.~A. Stegun.
\newblock {\em Handbook of mathematical functions with formulas, graphs, and
  mathematical tables}, volume No. 55 of {\em National Bureau of Standards
  Applied Mathematics Series}.
\newblock U. S. Government Printing Office, Washington, DC, 1964.
\newblock For sale by the Superintendent of Documents.

\bibitem[BBV20]{BBV}
R.~Beekie, T.~Buckmaster, and V.~Vicol.
\newblock Weak solutions of ideal {MHD} which do not conserve magnetic
  helicity.
\newblock {\em Ann. PDE}, 6:Paper No. 1, 40, (2020).

\bibitem[BCV22]{BCV22}
T.~Buckmaster, M.~Colombo, and V.~Vicol.
\newblock Wild solutions of the {N}avier--{S}tokes equations whose singular
  sets in time have {H}ausdorff dimension strictly less than 1.
\newblock {\em J. Eur. Math. Soc.}, 24:3333--3378, (2022).

\bibitem[BDLS11]{BDS11}
Y.~Brenier, C.~De~Lellis, and L.~Sz\'{e}kelyhidi, Jr.
\newblock Weak-strong uniqueness for measure-valued solutions.
\newblock {\em Comm. Math. Phys.}, 305:351--361, (2011).

\bibitem[Ben76]{Ben76}
T.~B. Benjamin.
\newblock The alliance of practical and analytical insights into the nonlinear
  problems of fluid mechanics.
\newblock pages 8--29. Lecture Notes in Math., 503, 1976.

\bibitem[BF84]{BF84}
M.A. Berger and G.B. Field.
\newblock The topological properties of magnetic helicity.
\newblock {\em J. Fluid Mech.}, 147:133--148, (1984).

\bibitem[BFKK58]{Bernstein1958}
I.~B. Bernstein, E.~A. Frieman, M.~D. Kruskal, and R.~M. Kulsrud.
\newblock An energy principle for hydromagnetic stability problems.
\newblock {\em Proc. R. Soc. Lond. A}, 244:17--40, (1958).

\bibitem[BFV22]{BFV21}
R.~Beekie, S.~Friedlander, and V.~Vicol.
\newblock On {M}offatt's magnetic relaxation equations.
\newblock {\em Comm. Math. Phys.}, 390:1311--1339, (2022).

\bibitem[BG21]{BG}
J.~Bedrossian and W.~Golding.
\newblock {U}niqueness {C}riteria for the {O}seen {V}ortex in the 3d
  {N}avier-{S}tokes {E}quations.
\newblock {\em Comm. Partial Differential Equations}, 46(6):1092--1136, (2021).
\newblock arXiv:2004.07302v2.

\bibitem[BGHG23]{BGH}
J.~Bedrossian, P.~Germain, and B.~Harrop-Griffiths.
\newblock {V}ortex filament solutions of the {N}avier-{S}tokes equations.
\newblock {\em Comm. Pure Appl. Math.}, 76(4):685--787, (2023).
\newblock arXiv:1809.04109.

\bibitem[BGM19]{BPM19}
J.~Bedrossian, P.~Germain, and N.~Masmoudi.
\newblock Stability of the {C}ouette flow at high {R}eynolds numbers in two
  dimensions and three dimensions.
\newblock {\em Bull. Amer. Math. Soc. (N.S.)}, 56:373--414, (2019).

\bibitem[BGS02]{BGS02}
C.~Bardos, Y.~Guo, and W.~Strauss.
\newblock Stable and unstable ideal plane flows.
\newblock {\em Chinese Ann. Math. Ser. B}, 23(2):149--164, 2002.

\bibitem[Bis93]{Biskamp93}
D.~Biskamp.
\newblock Nonlinear magnetohydrodynamics.
\newblock pages xiv+378, 1993.

\bibitem[BL96]{BL96}
O.~P. Bruno and P.~Laurence.
\newblock Existence of three-dimensional toroidal {MHD} equilibria with
  nonconstant pressure.
\newblock {\em Comm. Pure Appl. Math.}, 49:717--764, (1996).

\bibitem[BLFNL15]{BLL15}
A.~C. Bronzi, M.~C. Lopes~Filho, and H.~J. Nussenzveig~Lopes.
\newblock Wild solutions for 2{D} incompressible ideal flow with passive
  tracer.
\newblock {\em Commun. Math. Sci.}, 13:1333--1343, (2015).

\bibitem[BM15]{BM15}
J.~Bedrossian and N.~Masmoudi.
\newblock Inviscid damping and the asymptotic stability of planar shear flows
  in the 2{D} {E}uler equations.
\newblock {\em Publ. Math. Inst. Hautes \'{E}tudes Sci.}, 122:195--300, (2015).

\bibitem[BNLLF13]{BNL13}
G.~R. Burton, H.~J. Nussenzveig~Lopes, and M.~C. Lopes~Filho.
\newblock Nonlinear stability for steady vortex pairs.
\newblock {\em Comm. Math. Phys.}, 324:445--463, (2013).

\bibitem[Bre14]{Brenier14}
Y.~Brenier.
\newblock Topology-preserving diffusion of divergence-free vector fields and
  magnetic relaxation.
\newblock {\em Comm. Math. Phys.}, 330:757--770, (2014).

\bibitem[BSS88]{BSS}
C.~Bardos, C.~Sulem, and P.-L. Sulem.
\newblock Longtime dynamics of a conductive fluid in the presence of a strong
  magnetic field.
\newblock {\em Trans. Amer. Math. Soc.}, 305:175--191, (1988).

\bibitem[Bur21]{B21}
G.~R. Burton.
\newblock Compactness and stability for planar vortex-pairs with prescribed
  impulse.
\newblock {\em J. Differential Equations}, 270:547--572, (2021).

\bibitem[BV19]{BV19}
T.~Buckmaster and V.~Vicol.
\newblock Nonuniqueness of weak solutions to the {N}avier-{S}tokes equation.
\newblock {\em Ann. of Math. (2)}, 189:101--144, (2019).

\bibitem[BV21]{BV21}
T.~Buckmaster and V.~Vicol.
\newblock Convex integration constructions in hydrodynamics.
\newblock {\em Bull. Amer. Math. Soc. (N.S.)}, 58:1--44, (2021).

\bibitem[BV22]{BV22}
J.~Bedrossian and V.~Vicol.
\newblock {\em The mathematical analysis of the incompressible {E}uler and
  {N}avier-{S}tokes equations---an introduction}, volume 225 of {\em Graduate
  Studies in Mathematics}.
\newblock American Mathematical Society, Providence, RI, [2022] \copyright
  2022.

\bibitem[CC15]{CC15}
D.~Chae and P.~Constantin.
\newblock Remarks on a {L}iouville-type theorem for {B}eltrami flows.
\newblock {\em Int. Math. Res. Not. IMRN}, pages 10012--10016, (2015).

\bibitem[CCFS08]{Constantin08}
A.~Cheskidov, P.~Constantin, S.~Friedlander, and R.~Shvydkoy.
\newblock Energy conservation and {O}nsager's conjecture for the {E}uler
  equations.
\newblock {\em Nonlinearity}, 21:1233--1252, (2008).

\bibitem[CCL19]{CCL19}
\'{A}. Castro, D.~C\'{o}rdoba, and D.~Lear.
\newblock Global existence of quasi-stratified solutions for the confined {IPM}
  equation.
\newblock {\em Arch. Ration. Mech. Anal.}, 232:437--471, (2019).

\bibitem[CDG21a]{CDG21b}
P.~Constantin, T.~D. Drivas, and D.~Ginsberg.
\newblock Flexibility and rigidity in steady fluid motion.
\newblock {\em Commun. Math. Phys.}, 385:521--563, (2021).

\bibitem[CDG21b]{CDG21}
P.~Constantin, T.~D. Drivas, and D.~Ginsberg.
\newblock On quasisymmetric plasma equilibria sustained by small force.
\newblock {\em J. Plasma Phys.}, 87:905870111, (2021).

\bibitem[CDG22]{CDG22}
P.~Constantin, T.~D. Drivas, and D.~Ginsberg.
\newblock Flexibility and rigidity of free boundary {M}{H}{D} equilibria.
\newblock {\em Nonlinearity}, 35(5):2363--2384, (2022).
\newblock arXiv:2108.05977v2.

\bibitem[CET94]{CET}
P.~Constantin, W.~E, and E.~S. Titi.
\newblock Onsager's conjecture on the energy conservation for solutions of
  {E}uler's equation.
\newblock {\em Comm. Math. Phys.}, 165:207--209, (1994).

\bibitem[CFLS16]{CLNS}
A.~Cheskidov, M.~C.~Lopes Filho, H.~J.~Nussenzveig Lopes, and R.~Shvydkoy.
\newblock Energy conservation in two-dimensional incompressible ideal fluids.
\newblock {\em Commun. Math. Phys.}, 348:129--143, (2016).

\bibitem[Cha56]{Chandra}
S.~Chandrasekhar.
\newblock On force-free magnetic fields.
\newblock {\em Proc. Nat. Acad. Sci. U.S.A.}, 42:1--5, (1956).

\bibitem[Cha58]{Chandrasekhar1958}
S.~Chandrasekhar.
\newblock On the equilibrium configurations of an incompressible fluid with
  axisymmetric motions and magnetic fields.
\newblock {\em Proc. Natl. Acad. Sci. U. S.}, 44:842--847, (1958).

\bibitem[Cha61]{Chandra61}
S.~Chandrasekhar.
\newblock Hydrodynamic and hydromagnetic stability.
\newblock pages xix+654 pp. (16 plates), 1961.

\bibitem[Cho24]{Choi22}
K.~Choi.
\newblock {S}tability of {H}ill's spherical vortex.
\newblock {\em {C}omm. {P}ure {A}ppl. {M}ath.}, 77(1):52--138, (2024).

\bibitem[CIN18]{CIN}
P.~Constantin, M.~Ignatova, and H.~Q. Nguyen.
\newblock Inviscid limit for {S}{Q}{G} in bounded domains.
\newblock {\em SIAM Journal on Mathematical Analysis}, 50:6196--6207, (2018).

\bibitem[CK20]{CK20}
J.~N. Clelland and T.~Klotz.
\newblock Beltrami fields with nonconstant proportionality factor.
\newblock {\em Arch. Ration. Mech. Anal.}, 236:767--800, (2020).

\bibitem[CKS97]{CKS97}
R.~E. Caflisch, I.~Klapper, and G.~Steele.
\newblock Remarks on singularities, dimension and energy dissipation for ideal
  hydrodynamics and {MHD}.
\newblock {\em Comm. Math. Phys.}, 184:443--455, (1997).

\bibitem[CL18]{Lei18}
Y.~Cai and Z.~Lei.
\newblock Global well-posedness of the incompressible magnetohydrodynamics.
\newblock {\em Arch. Ration. Mech. Anal.}, 228:969--993, (2018).

\bibitem[CL22]{CL22}
A.~Cheskidov and X.~Luo.
\newblock Sharp nonuniqueness for the {N}avier--{S}tokes equations.
\newblock {\em Invent. Math.}, 229(3):987--1054, (2022).

\bibitem[CLMS93]{CLMS}
R.~Coifman, P.-L. Lions, Y.~Meyer, and S.~Semmes.
\newblock Compensated compactness and hardy spaces.
\newblock {\em J.Math. Pures Appl.}, 72:247--286, (1993).

\bibitem[CLV19]{CLV}
P.~Constantin, J.~La, and V.~Vicol.
\newblock Remarks on a paper by {G}avrilov: {G}rad-{S}hafranov equations,
  steady solutions of the three dimensional incompressible {E}uler equations
  with compactly supported velocities, and applications.
\newblock {\em Geom. Funct. Anal.}, 29:1773--1793, (2019).

\bibitem[CMZ10]{CMZ}
Q.~Chen, C.~Miao, and Z.~Zhang.
\newblock On the well-posedness of the ideal {MHD} equations in the
  {T}riebel-{L}izorkin spaces.
\newblock {\em Arch. Ration. Mech. Anal.}, 195:561--578, (2010).

\bibitem[CP23]{CP23}
P.~Constantin and F.~Pasqualotto.
\newblock Magnetic relaxation of a {V}oigt-{MHD} system.
\newblock {\em Commun. Math. Phys.}, 402(2):1931--1952, (2023).

\bibitem[CS15]{CS15}
G.~Crippa and S.~Spirito.
\newblock Renormalized solutions of the 2{D} {E}uler equations.
\newblock {\em Comm. Math. Phys.}, 339:191--198, (2015).

\bibitem[CSTY08]{CSTY1}
C.-C. Chen, R.~M. Strain, T.-P. Tsai, and H.-T. Yau.
\newblock Lower bound on the blow-up rate of the axisymmetric {N}avier-{S}tokes
  equations.
\newblock {\em Int. Math. Res. Not. IMRN}, pages Art. ID rnn016, 31, (2008).

\bibitem[CSTY09]{CSTY2}
C.-C. Chen, R.~M. Strain, T.-P. Tsai, and H.-T. Yau.
\newblock Lower bounds on the blow-up rate of the axisymmetric
  {N}avier-{S}tokes equations. {II}.
\newblock {\em Comm. Partial Differential Equations}, 34:203--232, (2009).

\bibitem[CW58]{CW58}
S.~Chandrasekhar and L.~Woltjer.
\newblock On force-free magnetic fields.
\newblock {\em Proc. Natl. Acad. Sci. U. S.}, 44:285--289, (1958).

\bibitem[Dai21]{Dai}
M.~Dai.
\newblock {N}on-unique weak solutions in {L}eray-{H}opf class of the 3d
  {H}all-{MHD} system.
\newblock {\em SIAM J. Math. Anal.}, 53(5):5979--6016, (2021).

\bibitem[Dav01]{Davidson}
P.~A. Davidson.
\newblock {\em An introduction to magnetohydrodynamics}.
\newblock Cambridge Texts in Applied Mathematics. Cambridge University Press,
  Cambridge, 2001.

\bibitem[DL72]{DL72}
G.~Duvaut and J.-L. Lions.
\newblock In{\'e}quations en thermo{\'e}lasticit{\'e} et
  magn{\'e}tohydrodynamique.
\newblock {\em Arch. Rational Mech. Anal.}, 46:241--279, (1972).

\bibitem[DL89]{DL89}
R.~J. DiPerna and P.-L. Lions.
\newblock Ordinary differential equations, transport theory and {S}obolev
  spaces.
\newblock {\em Invent. Math.}, 98:511--547, (1989).

\bibitem[DLS09]{DS09}
C.~De~Lellis and L.~Sz\'{e}kelyhidi, Jr.
\newblock The {E}uler equations as a differential inclusion.
\newblock {\em Ann. of Math. (2)}, 170:1417--1436, (2009).

\bibitem[DM87]{DM87m}
R.~J. DiPerna and A.~J. Majda.
\newblock Oscillations and concentrations in weak solutions of the
  incompressible fluid equations.
\newblock {\em Comm. Math. Phys.}, 108:667--689, (1987).

\bibitem[DR82]{DR82}
P.~G. Drazin and W.~H. Reid.
\newblock Hydrodynamic stability.
\newblock pages xiv+527, 1982.

\bibitem[DVEPS21]{DEPS21}
M.~Dom\'{\i}nguez-V\'{a}zquez, A.~Enciso, and D.~Peralta-Salas.
\newblock Piecewise smooth stationary {E}uler flows with compact support via
  overdetermined boundary problems.
\newblock {\em Arch. Ration. Mech. Anal.}, 239:1327--1347, (2021).

\bibitem[dVVS13]{Van13}
S.~de~Valeriola and J.~Van~Schaftingen.
\newblock Desingularization of vortex rings and shallow water vortices by a
  semilinear elliptic problem.
\newblock {\em Arch. Ration. Mech. Anal.}, 210:409--450, (2013).

\bibitem[Elg17]{Elgindi17}
T.~M. Elgindi.
\newblock On the asymptotic stability of stationary solutions of the inviscid
  incompressible porous medium equation.
\newblock {\em Arch. Ration. Mech. Anal.}, 225:573--599, (2017).

\bibitem[Elg21]{Elgindi}
T.~M. Elgindi.
\newblock Finite-time singularity formation for {$C^{1,\alpha}$} solutions to
  the incompressible {E}uler equations on {$\Bbb R^3$}.
\newblock {\em Ann. of Math. (2)}, 194:647--727, (2021).

\bibitem[ELPSss]{ELP21}
A.~Enciso, A.~Luque, and D.~Peralta-Salas.
\newblock {MHD} equilibria with nonconstant pressure in nondegenerate toroidal
  domains.
\newblock {\em J. Eur. Math. Soc.}, in press.
\newblock arXiv:2104.08149v1.

\bibitem[EM20]{Elgindi20}
T.~M. Elgindi and N.~Masmoudi.
\newblock {$L^\infty$} ill-posedness for a class of equations arising in
  hydrodynamics.
\newblock {\em Arch. Ration. Mech. Anal.}, 235:1979--2025, (2020).

\bibitem[EMS]{Elgindi2022}
T.~M. Elgindi, R.~W. Murray, and A.~R. Said.
\newblock On the long-time behavior of scale-invariant solutions to the 2d
  {E}uler equation and applications.
\newblock arXiv:2211.08418v1.

\bibitem[EPS]{EPS23}
A.~Enciso and D.~Peralta-Salas.
\newblock Obstructions to topological relaxation for generic magnetic fields.
\newblock arXiv:2308.15383v2.

\bibitem[EPS12]{EP12}
A.~Enciso and D.~Peralta-Salas.
\newblock Knots and links in steady solutions of the {E}uler equation.
\newblock {\em Ann. of Math. (2)}, 175:345--367, (2012).

\bibitem[EPS15]{EP15}
A.~Enciso and D.~Peralta-Salas.
\newblock Existence of knotted vortex tubes in steady {E}uler flows.
\newblock {\em Acta Math.}, 214:61--134, (2015).

\bibitem[EPS16]{EP16}
A.~Enciso and D.~Peralta-Salas.
\newblock Beltrami fields with a nonconstant proportionality factor are rare.
\newblock {\em Arch. Ration. Mech. Anal.}, 220:243--260, (2016).

\bibitem[Eva10]{E}
L.~C. Evans.
\newblock {\em Partial differential equations}, volume~19 of {\em Graduate
  Studies in Mathematics}.
\newblock American Mathematical Society, Providence, RI, second edition, 2010.

\bibitem[FA85]{FA85}
J.M. Finn and T.M. Antonsen.
\newblock Magnetic helicity: what is it and what is it good for ?
\newblock {\em Comments Plasma Phys. Controlled Fusion}, 9:111--126, (1985).

\bibitem[Far03]{Faraco03}
D.~Faraco.
\newblock Milton's conjecture on the regularity of solutions to isotropic
  equations.
\newblock {\em Ann. Inst. H. Poincar\'{e} C Anal. Non Lin\'{e}aire}, 20(5),
  (2003).

\bibitem[FB74]{FB74}
L.~E. Fraenkel and M.~S. Berger.
\newblock A global theory of steady vortex rings in an ideal fluid.
\newblock {\em Acta Math.}, 132:13--51, (1974).

\bibitem[FDJ21]{FJ21}
P.~G. Fern\'{a}ndez-Dalgo and O.~Jarr\'{\i}n.
\newblock Discretely self-similar solutions for 3{D} {MHD} equations and global
  weak solutions in weighted {$L^2$} spaces.
\newblock {\em J. Math. Fluid Mech.}, 23:Paper No. 22, 30, (2021).

\bibitem[FL]{FL18}
D.~Faraco and S.~Lindberg.
\newblock Magnetic helicity and subsolutions in ideal {MHD}.
\newblock arXiv:1801.04896.

\bibitem[FL20]{FL20}
D.~Faraco and S.~Lindberg.
\newblock Proof of {T}aylor's conjecture on magnetic helicity conservation.
\newblock {\em Comm. Math. Phys.}, 373:707--738, (2020).

\bibitem[FLJ24]{FLS21}
D.~Faraco, S.~Lindberg, and L.~Sz\'{e}kelyhidi Jr.
\newblock {M}agnetic helicity, weak solutions and relaxation of ideal
  {M}{H}{D}.
\newblock {\em Comm. Pure Appl. Math.}, 77:2387--2412, (2024).

\bibitem[FLMV22]{FL22}
D.~Faraco, S.~Lindberg, D.~MacTaggart, and A.~Valli.
\newblock On the proof of {T}aylor's conjecture in multiply connected domains.
\newblock {\em Appl. Math. Lett.}, 124:Paper No. 107654, 7, (2022).

\bibitem[FLS22]{FLS22}
D.~Faraco, S.~Lindberg, and L.~Sz\'{e}kelyhidi.
\newblock Rigorous results on conserved and dissipated quantities in ideal
  {MHD} turbulence.
\newblock {\em Geophys. Astrophys. Fluid Dyn.}, 116:237--260, (2022).

\bibitem[FLSJ21]{FLS}
D.~Faraco, S.~Lindberg, and L.~Sz\'{e}kelyhidi~Jr.
\newblock Bounded solutions of ideal {MHD} with compact support in space-time.
\newblock {\em Arch. Ration. Mech. Anal.}, 239:51--93, (2021).

\bibitem[Fra92]{Fra92}
L.~E. Fraenkel.
\newblock On steady vortex rings with swirl and a {S}obolev inequality.
\newblock In {\em Progress in partial differential equations: calculus of
  variations, applications ({P}ont-\`a-{M}ousson, 1991)}, volume 267 of {\em
  Pitman Res. Notes Math. Ser.}, pages 13--26. Longman Sci. Tech., Harlow,
  1992.

\bibitem[Fra00]{Fra00}
L.~E. Fraenkel.
\newblock {\em An introduction to maximum principles and symmetry in elliptic
  problems}, volume 128.
\newblock Cambridge University Press, Cambridge, 2000.

\bibitem[Fri82]{Friedman82}
A.~Friedman.
\newblock {\em Variational principles and free-boundary problems}.
\newblock John Wiley \& Sons, Inc., New York, 1982.

\bibitem[FS72]{FS72}
C.~Fefferman and E.~Stein.
\newblock $h_p$ spaces of several variables.
\newblock {\em Act Math.}, 129:137--193, (1972).

\bibitem[F{\v{S}}15]{FengSverak}
H.~Feng and V.~{\v{S}}ver{\'a}k.
\newblock On the {C}auchy problem for axi-symmetric vortex rings.
\newblock {\em Arch. Ration. Mech. Anal.}, 215:89--123, (2015).

\bibitem[FT81]{FT81}
A.~Friedman and B.~Turkington.
\newblock Vortex rings: existence and asymptotic estimates.
\newblock {\em Trans. Amer. Math. Soc.}, 268:1--37, (1981).

\bibitem[FV90]{FV90}
S.~Friedlander and M.~Vishik.
\newblock Nonlinear stability for stratified magnetohydrodynamics.
\newblock {\em Geophys. Astrophys. Fluid Dynam.}, 55:19--45, (1990).

\bibitem[FV91]{FV91}
S.~Friedlander and M.~Vishik.
\newblock Dynamo theory, vorticity generation, and exponential stretching.
\newblock {\em Chaos}, (no. 2):198--205, (1991).

\bibitem[FV92]{FV92}
S.~Friedlander and M.~Vishik.
\newblock Instability criteria for steady flows of a perfect fluid.
\newblock {\em Chaos}, 2(no. 3):455--460, (1992).

\bibitem[FV95]{FV95}
S.~Friedlander and M.~M. Vishik.
\newblock On stability and instability criteria for magnetohydrodynamics.
\newblock {\em Chaos}, 5(2):416--423, (1995).

\bibitem[Gal11]{Gal}
G.~P. Galdi.
\newblock {\em An introduction to the mathematical theory of the
  {N}avier-{S}tokes equations}.
\newblock Springer Monographs in Mathematics. Springer, New York, second
  edition, 2011.

\bibitem[Gal20]{Gallay20}
Th. Gallay.
\newblock Stability of vortices in ideal fluids: the legacy of {K}elvin and
  {R}ayleigh.
\newblock In {\em Hyperbolic problems: theory, numerics, applications},
  volume~10 of {\em AIMS Ser. Appl. Math.}, pages 42--59. Am. Inst. Math. Sci.
  (AIMS), Springfield, MO, [2020] \copyright 2020.

\bibitem[Gav19]{Gav}
A.~V. Gavrilov.
\newblock A steady {E}uler flow with compact support.
\newblock {\em Geom. Funct. Anal.}, 29:190--197, (2019).

\bibitem[GHPW23]{GHPW}
Y.~Guo, C.~Huang, B.~Pausader, and K.~Widmayer.
\newblock On the stabilizing effect of rotation in the 3d {E}uler equations.
\newblock {\em Comm. Pure Appl. Math.}, 76(12):3553--3641, (2023).

\bibitem[GHvV15]{GSV15}
N.~Glatt-Holtz, V.~\v{S}ver\'{a}k, and V.~Vicol.
\newblock On inviscid limits for the stochastic {N}avier-{S}tokes equations and
  related models.
\newblock {\em Arch. Ration. Mech. Anal.}, 217:619--649, (2015).

\bibitem[GLBL06]{GLL}
J.~Gerbeau, C.~Le~Bris, and T.~Leli\`evre.
\newblock {\em Mathematical methods for the magnetohydrodynamics of liquid
  metals}.
\newblock Numerical Mathematics and Scientific Computation. Oxford University
  Press, Oxford, 2006.

\bibitem[GPW23]{GPW}
Y.~Guo, B.~Pausader, and K.~Widmayer.
\newblock Global axisymmetric {E}uler flows with rotation.
\newblock {\em Invent. math.}, 231:169--262, (2023).

\bibitem[GR58]{Grad}
H.~Grad and H.~Rubin.
\newblock Hydromagnetic equilibria and force-free fields.
\newblock {\em Proceedings of the Second United Nations Conference on the
  Peaceful Uses of Atomic Energy}, 31:190--197, (1958).

\bibitem[GR01]{GR01}
Y.~Guo and G.~Rein.
\newblock Isotropic steady states in galactic dynamics.
\newblock {\em Comm. Math. Phys.}, 219:607--629, (2001).

\bibitem[Gra67]{Grad67}
H.~Grad.
\newblock Toroidal containment of a plasma.
\newblock {\em The Physics of Fluids}, 10:137--154, (1967).

\bibitem[G{\v{S}}]{GS23}
Th. Gallay and V.~{\v{S}}ver{\'a}k.
\newblock Vanishing viscosity limit for axisymmetric vortex rings.
\newblock arXiv:2301.01092v2.

\bibitem[G{\v{S}}15]{GallaySverak}
Th. Gallay and V.~{\v{S}}ver{\'a}k.
\newblock Remarks on the {C}auchy problem for the axisymmetric
  {N}avier-{S}tokes equations.
\newblock {\em Confluentes Mathematici}, 7:67--92, (2015).

\bibitem[GS18]{GalleySmets19}
Th. Galley and D.~Smets.
\newblock On the linear stability of vortex columns in the energy space.
\newblock {\em J. Math. Fluid Mech.}, 21:Paper No. 48, (2018).

\bibitem[G{\v{S}}19]{GallaySverak2}
Th. Gallay and V.~{\v{S}}ver{\'a}k.
\newblock Uniqueness of axisymmetric viscous flows originating from circular
  vortex filaments.
\newblock {\em Ann. Sci. \'Ec. Norm. Sup\'er. (4)}, 52:1025--1071, (2019).

\bibitem[GS20]{GalleySmets18}
Th. Gallay and D.~Smets.
\newblock Spectral stability of inviscid columnar vortices.
\newblock {\em Anal. PDE}, 13(6):1777--1832, (2020).

\bibitem[G{\v{S}}23]{GuS}
J.~Guillod and V.~{\v{S}}ver{\'a}k.
\newblock {N}umerical investigations of non-uniqueness for the
  {N}avier-{S}tokes initial value problem in borderline spaces.
\newblock {\em J. Math. Fluid Mech.}, 25(3):Paper No. 46, 25, (2023).

\bibitem[G{\v{S}}ss]{GS21}
Th. Gallay and V.~{\v{S}}ver{\'a}k.
\newblock Arnold's variational principle and its application to the stability
  of planar vortices.
\newblock {\em Anal. PDE}, in press.
\newblock arXiv:2110.13739v1.

\bibitem[GSS87]{GSS}
M.~Grillakis, J.~Shatah, and W.~Strauss.
\newblock Stability theory of solitary waves in the presence of symmetry. {I}.
\newblock {\em J. Funct. Anal.}, 74:160--197, (1987).

\bibitem[GT01]{GT}
D.~Gilbarg and N.~S. Trudinger.
\newblock {\em Elliptic partial differential equations of second order}.
\newblock Springer-Verlag, Berlin, 2001.

\bibitem[Gun27]{Gunther}
N.~Gunther.
\newblock On the motion of fluid in a moving container.
\newblock {\em Izvestia Akad. Nauk USSR, Ser. Fiz.--Mat.}, 20:1323--1348,
  (1927).

\bibitem[Guo99]{Guo99}
Y.~Guo.
\newblock Variational method for stable polytropic galaxies.
\newblock {\em Arch. Ration. Mech. Anal.}, 150:209--224, (1999).

\bibitem[Has85]{Hasegawa85}
A.~Hasegawa.
\newblock Self-organization processes in continuous media.
\newblock {\em Advances in Physics}, 34:1--42, (1985).

\bibitem[Hic85]{Hicks85}
W.~M. Hicks.
\newblock Researches on the theory of vortex rings--part ii.
\newblock {\em Philos. Trans. Roy. Soc. London, A}, 176:725--780, (1885).

\bibitem[HMRW85]{Holm85}
D.~D. Holm, J.~E. Marsden, T.~Ratiu, and A.~Weinstein.
\newblock Nonlinear stability of fluid and plasma equilibria.
\newblock {\em Phys. Rep.}, 123:116, (1985).

\bibitem[HvNVW16]{TNVW}
T.~Hyt\"{o}nen, J.~van Neerven, M.~Veraar, and L.~Weis.
\newblock Analysis in {B}anach spaces. {V}ol. {I}. {M}artingales and
  {L}ittlewood-{P}aley theory.
\newblock 2016.

\bibitem[HXY18]{He18}
L.-B. He, L.~Xu, and P.~Yu.
\newblock On global dynamics of three dimensional magnetohydrodynamics:
  nonlinear stability of {A}lfv\'{e}n waves.
\newblock {\em Ann. PDE}, 4:Paper No. 5, 105, (2018).

\bibitem[IJ20]{IJ20}
A.~D. Ionescu and H.~Jia.
\newblock Inviscid damping near the {C}ouette flow in a channel.
\newblock {\em Comm. Math. Phys.}, 374:2015--2096, (2020).

\bibitem[IJ22]{IJ22}
A.~D. Ionescu and H.~Jia.
\newblock Axi-symmetrization near point vortex solutions for the 2{D} {E}uler
  equation.
\newblock {\em Comm. Pure Appl. Math.}, 75:818--891, (2022).

\bibitem[IJ23]{IJ22b}
A.~D. Ionescu and H.~Jia.
\newblock Nonlinear inviscid damping near monotonic shear flows.
\newblock {\em Acta Math.}, 230(2):321--399, (2023).

\bibitem[Jan08]{jang08}
J.~Jang.
\newblock Nonlinear instability in gravitational {E}uler-{P}oisson systems for
  {$\gamma=\frac 65$}.
\newblock {\em Arch. Ration. Mech. Anal.}, 188:265--307, (2008).

\bibitem[Jan14]{jang14}
J.~Jang.
\newblock Nonlinear instability theory of {L}ane--{E}mden stars.
\newblock {\em Commun. Pure Appl. Math.}, 67:1418--1465, (2014).

\bibitem[JC84]{JC84}
T.~H. Jensen and M.~S. Chu.
\newblock Current drive and helicity injection.
\newblock {\em The Physics of Fluids}, 27:2881, (1984).

\bibitem[J{\v{S}}14]{JS}
H.~Jia and V.~{\v{S}}ver{\'a}k.
\newblock Local-in-space estimates near initial time for weak solutions of the
  {N}avier-{S}tokes equations and forward self-similar solutions.
\newblock {\em Invent. Math.}, 196(1):233--265, (2014).

\bibitem[J{\v{S}}15]{JS15}
H.~Jia and V.~{\v{S}}ver{\'a}k.
\newblock Are the incompressible 3d {N}avier--{S}tokes equations locally
  ill-posed in the natural energy space?
\newblock {\em J. Funct. Anal.}, 268:3734--3766, (2015).

\bibitem[JS17]{Seis17}
Robert~L. Jerrard and Christian Seis.
\newblock On the vortex filament conjecture for {E}uler flows.
\newblock {\em Arch. Ration. Mech. Anal.}, 224(1):135--172, (2017).

\bibitem[KL07]{KL07}
E.~Kang and J.~Lee.
\newblock Remarks on the magnetic helicity and energy conservation for ideal
  magneto-hydrodynamics.
\newblock {\em Nonlinearity}, 20:2681--2689, (2007).

\bibitem[KMeS23]{KMK23}
B.~Khesin, G.~Misio\l~ek, and A.~Shnirelman.
\newblock Geometric hydrodynamics in open problems.
\newblock {\em Arch. Ration. Mech. Anal.}, 247(2):Paper No. 15, 43, (2023).

\bibitem[KNS{\v{S}}09]{KNSS}
G.~Koch, N.~Nadirashvili, G.~A. Seregin, and V.~{\v{S}}ver{\'a}k.
\newblock Liouville theorems for the {N}avier-{S}tokes equations and
  applications.
\newblock {\em Acta Math.}, 203:83--105, (2009).

\bibitem[Kom22]{Kom21}
R.~Komendarczyk.
\newblock {O}n {W}oltjer's force free minimizers and {M}offatt's magnetic
  relaxation.
\newblock {\em Bull. Lond. Math. Soc.}, 54(1):233--241, (2022).

\bibitem[KPSY21]{KPY}
B.~Khesin, D.~Peralta-Salas, and C.~Yang.
\newblock A basis of casimirs in 3d {M}agnetohydrodynamics.
\newblock {\em Int. Math. Res}, 18:13645--13660, (2021).

\bibitem[LA91]{Laurence91}
P.~Laurence and M.~Avellaneda.
\newblock On {W}oltjer's variational principle for force-free fields.
\newblock volume~32, pages 1240--1253, (1991).

\bibitem[Lad59]{La59}
O.~A. Ladyzhenskaya.
\newblock Solution ``in the large'' of the nonstationary boundary value problem
  for the {N}avier-{S}tokes system with two space variables.
\newblock {\em Comm. Pure Appl. Math.}, 12:427--433, 1959.

\bibitem[LFMNL06]{LNM06}
M.~C. Lopes~Filho, A.~L. Mazzucato, and H.~J. Nussenzveig~Lopes.
\newblock Weak solutions, renormalized solutions and enstrophy defects in 2{D}
  turbulence.
\newblock {\em Arch. Ration. Mech. Anal.}, 179:353--387, (2006).

\bibitem[LH91]{LH91}
A.~Lifschitz and E.~Hameiri.
\newblock Local stability conditions in fluid dynamics.
\newblock {\em Phys. Fluids A}, 3(no. 11):2644--2651, (1991).

\bibitem[LH93]{LH93}
A.~Lifschitz and E.~Hameiri.
\newblock Localized instabilities of vortex rings with swirl.
\newblock {\em Comm. Pure Appl. Math.}, 46(10):1379--1408, (1993).

\bibitem[Lic25]{Lichtenstein}
L.~Lichtenstein.
\newblock \"{U}ber einige {E}xistenzprobleme der {H}ydrodynamik homogener,
  unzusammendr\"{u}ckbarer, reibungsloser {F}l\"{u}ssigkeiten und die
  {H}elmholtzschen {W}irbels\"{a}tze.
\newblock {\em Math. Z.}, 23:89--154, (1925).

\bibitem[Lin04]{Lin04}
Z.~Lin.
\newblock Nonlinear instability of ideal plane flows.
\newblock {\em Int. Math. Res. Not.}, (41):2147--2178, (2004).

\bibitem[Lio84a]{Lions84a}
P.-L. Lions.
\newblock The concentration-compactness principle in the calculus of
  variations. {T}he locally compact case. {I}.
\newblock {\em Ann. Inst. H. Poincar\'{e} Anal. Non Lin\'{e}aire}, 1:109--145,
  (1984).

\bibitem[Lio84b]{Lions84b}
P.-L. Lions.
\newblock The concentration-compactness principle in the calculus of
  variations. {T}he locally compact case. {II}.
\newblock {\em Ann. Inst. H. Poincar\'{e} Anal. Non Lin\'{e}aire}, 1:223--283,
  (1984).

\bibitem[LL01]{LL01}
E.~H. Lieb and M.~Loss.
\newblock Analysis.
\newblock 14:xxii+346, 2001.

\bibitem[LMNP99]{LMNP}
S.~Leonardi, J.~M{\'a}lek, J.~Ne{\v{c}}as, and M.~Pokorn{\'y}.
\newblock On axially symmetric flows in {$\bold R^3$}.
\newblock {\em Z. Anal. Anwendungen}, 18:639--649, (1999).

\bibitem[LMZZ22]{LMZZ22}
H.~Liu, N.~Masmoudi, C.~Zhai, and W.~Zhao.
\newblock Linear damping and depletion in flowing plasma with strong sheared
  magnetic fields.
\newblock {\em J. Math. Pures Appl.}, 158:1--41, (2022).

\bibitem[LR02]{Lemarie}
P.~G. Lemari\'e-Rieusset.
\newblock {\em Recent developments in the {N}avier-{S}tokes problem}, volume
  431.
\newblock Chapman \& Hall/CRC, Boca Raton, FL, 2002.

\bibitem[LS54]{LS54}
R.~L\"{u}st and A.~Schl\"{u}ter.
\newblock Kraftfreie magnetfelder.
\newblock {\em Z.Astrophys.}, 34:263--282, (1954).

\bibitem[LS09]{LS09}
T.~Luo and J.~Smoller.
\newblock Existence and non-linear stability of rotating star solutions of the
  compressible {E}uler-{P}oisson equations.
\newblock {\em Arch. Ration. Mech. Anal.}, 191:447--496, (2009).

\bibitem[Lun50]{Lundquist}
S.~Lundquist.
\newblock Magneto-hydrostatic fields.
\newblock {\em Arkiv Fysik 2}, 361, (1950).

\bibitem[Luo19]{Luo19}
X.~Luo.
\newblock Stationary solutions and nonuniqueness of weak solutions for the
  navier--stokes equations in high dimensions.
\newblock {\em Arch. Ration. Mech. Anal.}, 233:701--747, (2019).

\bibitem[LZ13]{LZ13}
Z.~Lin and C.~Zeng.
\newblock Unstable manifolds of euler equations.
\newblock {\em Comm. Pure Appl. Math.}, 66(11):1803--1836, (2013).

\bibitem[LZ14]{LZ14}
Z.~Lin and C.~Zeng.
\newblock Corrigendum: Unstable manifolds of euler equations.
\newblock {\em Comm. Pure Appl. Math.}, 67(7):1215--1217, (2014).

\bibitem[LZ22]{LZ22}
Z.~Lin and C.~Zeng.
\newblock Instability, index theorem, and exponential trichotomy for linear
  {H}amiltonian {PDE}s.
\newblock {\em Mem. Amer. Math. Soc.}, 275(1347):v+136, (2022).

\bibitem[LZZ22]{YZZ}
Y.~Li, Z.~Zeng, and D.~Zhang.
\newblock {N}on-uniqueness of weak solutions to 3d magnetohydrodynamic
  equations.
\newblock {\em J. Math. Pures Appl.}, 165:232--285, (2022).
\newblock arXiv:2112.10515.

\bibitem[Mas84]{Masuda1984}
K.~Masuda.
\newblock Weak solutions of {N}avier-{S}tokes equations.
\newblock {\em Tohoku Math. J. (2)}, 36:623--646, (1984).

\bibitem[MK19]{Moffatt19}
Dormy~E. Moffatt~K.
\newblock {\em Self-Exciting Fluid Dynamos}.
\newblock Cambridge University Press, 2019.

\bibitem[Mof69]{Moffatt}
H.~K. Moffatt.
\newblock Degree of knottedness of tangled vortex lines.
\newblock {\em J. Fluid Mech.}, 35:117--129, (1969).

\bibitem[Mof85]{Moffatt85}
H.~K. Moffatt.
\newblock Magnetostatic equilibria and analogous {E}uler flows of arbitrarily
  complex topology. {I}. {F}undamentals.
\newblock {\em J. Fluid Mech.}, 159:359--378, (1985).

\bibitem[Mof86]{Moffatt85b}
H.~K. Moffatt.
\newblock Magnetostatic equilibria and analogous {E}uler flows of arbitrarily
  complex topology. {II}. {S}tability considerations.
\newblock {\em J. Fluid Mech.}, 166:359--378, (1986).

\bibitem[Mof90]{Moffatt90}
H.~K. Moffatt.
\newblock Structure and stability of solutions of the {E}uler equations: a
  {L}agrangian approach.
\newblock {\em Philos. Trans. Roy. Soc. London Ser. A}, 333(1631):321--342,
  (1990).

\bibitem[Mof21]{Moffatt21}
H.~K. Moffatt.
\newblock Some topological aspects of fluid dynamics.
\newblock {\em J. Fluid Mech.}, 914:Paper No. P1, 56, (2021).

\bibitem[Mor61]{Moreau}
J.-J. Moreau.
\newblock Constantes d'un {\^\i}lot tourbillonnaire en fluid parfait barotrope.
\newblock {\em C. R. Hebd. Seances Acad. Sci., Paris}, 252:2810--2812, (1961).

\bibitem[MP94]{MP94}
C.~Marchioro and M.~Pulvirenti.
\newblock {\em Mathematical theory of incompressible nonviscous fluids},
  volume~96 of {\em Applied Mathematical Sciences}.
\newblock Springer-Verlag, New York, 1994.

\bibitem[MV11]{MV11}
C.~Mouhot and C.~Villani.
\newblock On {L}andau damping.
\newblock {\em Acta Math.}, 207:29--201, (2011).

\bibitem[MV19]{MV19}
D.~MacTaggart and A.~Valli.
\newblock Magnetic helicity in multiply connected domains.
\newblock {\em J. Plasma Phys.}, 85:775850501, (2019).

\bibitem[MV20]{MV20}
K.~Modin and M.~Viviani.
\newblock A {C}asimir preserving scheme for long-time simulation of spherical
  ideal hydrodynamics.
\newblock {\em J. Fluid Mech.}, 884:A22, 27, (2020).

\bibitem[MV22]{MV22}
K.~Modin and M.~Viviani.
\newblock Canonical scale separation in two-dimensional incompressible
  hydrodynamics.
\newblock {\em J. Fluid Mech.}, 943:Paper No. A36, 22, (2022).

\bibitem[MZ]{MZ20}
N.~Masmoudi and W.~Zhao.
\newblock {N}onlinear inviscid damping for a class of monotone shear flows in
  finite channel.
\newblock arXiv: 2001.08564.

\bibitem[Nad14]{Na14}
N.~Nadirashvili.
\newblock Liouville theorem for {B}eltrami flow.
\newblock {\em Geom. Funct. Anal.}, 24:916--921, (2014).

\bibitem[Nis02]{Ni02}
T.~Nishiyama.
\newblock Magnetohydrodynamic approach to solvability of the three-dimensional
  stationary euler equa- tions.
\newblock {\em Glasg. Math. J.}, 44:411--418, (2002).

\bibitem[Nn07]{Nunez}
M.~N\'{u}\~{n}ez.
\newblock The limit states of magnetic relaxation.
\newblock {\em J. Fluid Mech.}, 580:251--260, (2007).

\bibitem[NS22]{NS22}
C.~Nobili and C.~Seis.
\newblock Renormalization and energy conservation for axisymmetric fluid flows.
\newblock {\em Math. Ann.}, 382:1--36, (2022).

\bibitem[OS93]{OS93}
S.~Ortolani and D.~Schnack.
\newblock {\em Magnetohydrodynamics of plasma relaxation}.
\newblock World Scientific, 1993.

\bibitem[Pas20]{Pasqualotto}
F.~Pasqualotto.
\newblock {\em Nonlinear {W}aves in {G}eneral {R}elativity and {F}luid
  {D}ynamics}.
\newblock PhD thesis, 2020.
\newblock Thesis (Ph.D.)--Princeton University.

\bibitem[PCRH16]{PCRH}
D.~I. Pontin, S.~Candelaresi, A.~J.~B. Russell, and G.~Hornig.
\newblock Braided magnetic fields: equilibria, relaxation and heating.
\newblock {\em Plasma Phys. Contr. Fusion}, 58:054008, (2016).

\bibitem[PWSHG11]{PWHG}
D.~I. Pontin, A.~L. Wilmot-Smith, G.~Hornig, and K.~Galsgaard.
\newblock Dynamics of braided coronal loops. ii. cascade to multiple
  small-scale reconnection events astron.
\newblock {\em Astrophys}, 525:A57, (2011).

\bibitem[Ray17]{Rayleigh1917}
L.~Rayleigh.
\newblock On the dynamics of revolving fluids.
\newblock {\em Proc. R. Soc. Lond. A}, 93:148--154, (1917).

\bibitem[Rei80]{Reiman}
A.~Reiman.
\newblock Minimum energy state of a toroidal discharge.
\newblock {\em The Physics of Fluids}, 23:230--231, (1980).

\bibitem[Rei03]{Rein03}
G.~Rein.
\newblock Non-linear stability of gaseous stars.
\newblock {\em Arch. Ration. Mech. Anal.}, 168:115--130, (2003).

\bibitem[Rei23]{Rein23}
G.~Rein.
\newblock Stability and instability results for equilibria of a (relativistic)
  self-gravitating collisionless gas---a review.
\newblock {\em Classical Quantum Gravity}, 40(19):Paper No. 193001, 56, (2023).

\bibitem[Rou91]{Rouchon}
P.~Rouchon.
\newblock On the arnol'd stability criterion for steady-state flows of an ideal
  fluid.
\newblock {\em Eur. J. Mech. B/Fluids}, 10:651--661, (1991).

\bibitem[RZ17]{RZ17}
S.~Ren and W.~Zhao.
\newblock Linear damping of alfv{\'e}n waves by phase mixing.
\newblock {\em SIAM J. Math. Anal.}, 49:2101--2137, (2017).

\bibitem[SF05]{FS05}
R.~Shvydkoy and S.~Friedlander.
\newblock On recent developments in the spectral problem for the linearized
  {E}uler equation.
\newblock In {\em Nonlinear partial differential equations and related
  analysis}, volume 371, pages 271--295. Amer. Math. Soc., Providence, RI,
  2005.

\bibitem[SH88]{SH88}
A.~Szeri and P.~Holmes.
\newblock Nonlinear stability of axisymmetric swirling flows.
\newblock {\em Phil. Trans. R. Soc. London A}, 326:327--354, (1988).

\bibitem[Sha58]{Shafranov}
V.~D. Shafranov.
\newblock On magnetohydrodynamical equilibrium configurations.
\newblock {\em Soviet Physics JETP}, 6:545--554, (1958).

\bibitem[Shn13]{Sh13}
A.~Shnirelman.
\newblock On the long time behavior of fluid flows.
\newblock {\em Procedia IUTAM}, 7:151--160, 2013.

\bibitem[Soh01]{Sohr}
H.~Sohr.
\newblock {\em The {N}avier-{S}tokes equations}.
\newblock Birkh\"auser Advanced Texts: Basler Lehrb\"ucher. Birkh\"auser
  Verlag, Basel, 2001.

\bibitem[SSS96]{SSS96}
M.~E. Schonbek, T.~P. Schonbek, and E.~S\"{u}li.
\newblock Large-time behaviour of solutions to the magnetohydrodynamics
  equations.
\newblock {\em Math. Ann.}, 304:717--756, (1996).

\bibitem[ST83]{ST83}
M.~Sermange and R.~Temam.
\newblock Some mathematical questions related to the {MHD} equations.
\newblock {\em Comm. Pure Appl. Math.}, 36:635--664, (1983).

\bibitem[SV09]{SV09}
T.~C. Sideris and L.~Vega.
\newblock Stability in {$L^1$} of circular vortex patches.
\newblock {\em Proc. Amer. Math. Soc.}, 137:4199--4202, (2009).

\bibitem[{\v{S}}ve]{SverakLec}
V.~{\v{S}}ver{\'a}k.
\newblock Lecture notes on ``topics in mathematical physics".
\newblock http://math.umn.edu/ sverak/course-notes2011.

\bibitem[Tay74]{T74}
J.~B. Taylor.
\newblock Relaxation of toroidal plasma and generation of reverse magnetic
  fields.
\newblock {\em Phys. Rev. Lett.}, 33:1139--1141, 1974.

\bibitem[Tay86]{T86}
J.~B. Taylor.
\newblock Relaxation and magnetic reconnection in plasmas.
\newblock {\em Rev. Mod. Phys.}, 58:741--763, 1986.

\bibitem[Tem77]{Te}
R.~Temam.
\newblock {\em Navier-{S}tokes equations. {T}heory and numerical analysis}.
\newblock North-Holland Publishing Co., Amsterdam-New York-Oxford, 1977.

\bibitem[TLK10]{kelvin1880}
W.~Thomson (Lord~Kelvin).
\newblock Maximum and minimum energy in vortex motion, {N}ature 574, 618--620
  (1880).
\newblock In {\em Mathematical and Physical Papers 4}, pages 172--183.
  Cambridge: Cambridge University Press, 1910.

\bibitem[Tre51]{Trehan}
K.~Trehan.
\newblock On the stability of force-free magnetic fields.
\newblock {\em Astrophys. J.}, 126:429--456, (1951).

\bibitem[Tur89]{Tu89}
B.~Turkington.
\newblock Vortex rings with swirl: axisymmetric solutions of the {E}uler
  equations with nonzero helicity.
\newblock {\em SIAM J. Math. Anal.}, 20:57--73, (1989).

\bibitem[UY68]{UI}
M.~R. Ukhovskii and V.~I. Yudovich.
\newblock Axially symmetric flows of ideal and viscous fluids filling the whole
  space.
\newblock {\em J. Appl. Math. Mech.}, 32:52--61, (1968).

\bibitem[VC62]{VC}
D.~Voslamber and D.~K. Caliebaut.
\newblock Stability of force-free magnetic fields.
\newblock {\em Physical review}, 128:2016--2021, (1962).

\bibitem[VCY89]{Vallis}
G.~K. Vallis, G.~F. Carnevale, and W.~R. Young.
\newblock Extremal energy properties and construction of stable solutions of
  the {E}uler equations.
\newblock {\em J. Fluid Mech.}, 207:133--152, (1989).

\bibitem[VF98]{VF98}
M.~Vishik and S.~Friedlander.
\newblock Asymptotic methods for magnetohydrodynamic instability.
\newblock {\em Quart. Appl. Math.}, 56:377--398, (1998).

\bibitem[VF03]{VF03}
M.~Vishik and S.~Friedlander.
\newblock Nonlinear instability in two dimensional ideal fluids: the case of a
  dominant eigenvalue.
\newblock {\em Comm. Math. Phys.}, 243:261--273, (2003).

\bibitem[Visa]{Vishik18}
M.~Vishik.
\newblock {I}nstability and non-uniqueness in the {C}auchy problem for the
  {E}uler equations of an ideal incompressible fluid. {P}art {I}.
\newblock arXiv:1805.09426.

\bibitem[Visb]{Vishik18b}
M.~Vishik.
\newblock {I}nstability and non-uniqueness in the {C}auchy problem for the
  {E}uler equations of an ideal incompressible fluid. {P}art {II}.
\newblock arXiv:1805.09440.

\bibitem[Vis93]{V96}
M.~Vishik.
\newblock Dynamo theory methods for hydrodynamic stability.
\newblock {\em J. Math. Pures Appl. (9)}, 72(no. 2):145--180, (1993).

\bibitem[VM95]{Moffatt95}
V.~A. Vladimirov and H.~K. Moffatt.
\newblock On general transformations and variational principles for the
  magnetohydrodynamics of ideal fluids. part {I}. {F}undamental principles.
\newblock {\em J. Fluid Mech.}, 283:125--139, (1995).

\bibitem[VMI96]{Moffatt96}
V.~A. Vladimirov, H.~K. Moffatt, and K.~I. Ilin.
\newblock On general transformations and variational principles for the
  magnetohydrodynamics of ideal fluids. part {II}. {S}tability criteria for
  two-dimensional flows.
\newblock {\em J. Fluid Mech.}, 329:187--205, (1996).

\bibitem[VMI97]{Moffatt97}
V.~A. Vladimirov, H.~K. Moffatt, and K.~I. Ilin.
\newblock {O}n general transformations and variational principles for the
  magnetohydrodynamics of ideal fluids. {P}art {III}. {S}tability criteria for
  axisymmetric flows.
\newblock {\em J. Plasma Phys.}, 57:89--120, (1997).

\bibitem[VMI99]{Moffatt99}
V.~A. Vladimirov, H.~K. Moffatt, and K.~I. Ilin.
\newblock On general transformations and variational principles for the
  magnetohydrodynamics of ideal fluids. part {IV}. {G}eneralized isovorticity
  principle for three-dimensional flows.
\newblock {\em J. Fluid Mech.}, 390:127--150, (1999).

\bibitem[Wan86]{Wan86}
Y.~H. Wan.
\newblock The stability of rotating vortex patches.
\newblock {\em Comm. Math. Phys.}, 107:1--20, (1986).

\bibitem[Wol]{Woltjerd}
L.~Woltjer.
\newblock Hydromagnetic equilibrium. iii. axisymmetric incompressible media.
\newblock {\em Astrophysical Journal}, 130:p.400.

\bibitem[Wol58a]{Woltjerb}
L.~Woltjer.
\newblock On hydromagnetic equilibrium.
\newblock {\em Proc. Natl. Acad. Sci. U. S.}, 44:489, (1958).

\bibitem[Wol58b]{Woltjera}
L.~Woltjer.
\newblock The stability of force-free magnetic fields.
\newblock {\em Astrophys. J.}, 128(384), (1958).

\bibitem[Wol58c]{W58}
L.~Woltjer.
\newblock A theorem on force-free magnetic fields.
\newblock {\em Proc. Nat. Acad. Sci. U.S.A.}, 44:489--491, (1958).

\bibitem[Wol59]{Woltjerc}
L.~Woltjer.
\newblock On hydromagnetic equilibrium ii.
\newblock {\em Proc. Natl. Acad. Sci. U. S.}, 45:769, (1959).

\bibitem[Wol93]{Wolff93}
T.~H. Wolff.
\newblock Recent work on sharp estimates in second-order elliptic unique
  continuation problems.
\newblock {\em J. Geom. Anal.}, 3:621--650, (1993).

\bibitem[WP85]{WP85}
Y.~H. Wan and M.~Pulvirenti.
\newblock Nonlinear stability of circular vortex patches.
\newblock {\em Comm. Math. Phys.}, 99:435--450, (1985).

\bibitem[Yea20]{Yeates}
A.R. Yeates.
\newblock Magnetohydrodynamic relaxation theory.
\newblock In D.~MacTaggart and A.~Hillier, editors, {\em Topics in
  Magnetohydro- dynamic Topology, Reconnection and Stability Theory}, pages
  117--143. Springer, 2020.

\bibitem[YG90]{YG90}
Z.~Yoshida and Y.~Giga.
\newblock Remarks on spectra of operator rot.
\newblock {\em Math. Z.}, 204:235--245, (1990).

\bibitem[Yos91]{Yoshida90}
Z.~Yoshida.
\newblock Discrete eigenstates of plasmas described by the
  {C}handrasekhar-{K}endall functions.
\newblock {\em Progr. Theoret. Phys.}, 86:45--55, (1991).

\bibitem[YRH15]{YRH}
A.~R. Yeates, A.~J.~B. Russell, and G.~Hornig.
\newblock Physical role of topological constraints in localized magnetic
  relaxation.
\newblock {\em Proc. Roy. Soc. A}, 471:20150012, (2015).

\bibitem[ZZZ21]{ZZZ22}
C.~Zhai, Z.~Zhang, and W.~Zhao.
\newblock Long-time behavior of alfv{\'e}n waves in a flowing plasma:
  Generation of the magnetic island.
\newblock {\em Arch. Rational Mech. Anal.}, 242:1317--1394, (2021).

\end{thebibliography}

\end{document}